\documentclass[nosumlimits,twoside]{amsart}
\usepackage{amsfonts, amsmath, amssymb}
\usepackage[bookmarksnumbered,plainpages,driverfallback=dvipdfm,backref=page]{hyperref}
\usepackage{graphicx}
\usepackage{float}
\usepackage{srcltx}
\usepackage[all]{xy}
\usepackage{version}
\usepackage[final]{showlabels} 
\usepackage[T1]{fontenc}
\usepackage[shortlabels]{enumitem}
\usepackage[normalem]{ulem}
\usepackage{bm}
\usepackage{latexsym}
\usepackage[2emode]{psfrag}
\usepackage{yhmath}
\usepackage{array}
\usepackage{dsfont}
\usepackage{mathrsfs}
\usepackage{tikz-cd}
\usepackage{comment}
\usepackage{xcolor}
\usepackage{tabu}
\usepackage{makecell}
\usepackage{aligned-overset}
\usepackage{stmaryrd}
\usepackage{cite}

\newtheorem{theorem}{\sc Theorem}[section]
\newtheorem{proposition}[theorem]{\sc Proposition}
\newtheorem{notation}[theorem]{\sc Notation}
\newtheorem{lemma}[theorem]{\sc Lemma}
\newtheorem{corollary}[theorem]{\sc Corollary}
\theoremstyle{definition}
\newtheorem{definition}[theorem]{\sc Definition}
\newtheorem{example}[theorem]{\sc Example}
\newtheorem{remark}[theorem]{\sc Remark}

\allowdisplaybreaks
\excludeversion{invisible}

\newcommand{\A}{\mathcal{A}}

\renewcommand{\t}{\mathfrak{t}}
\newcommand{\T}{\mathfrak{T}}
\newcommand{\op}{\mathrm{op}}

\newcommand{\sgn}{\textup{sgn}}

\begin{document}

\title{On Separable and Frobenius Cowreaths of Type $(A \otimes H^{\op},H,\psi)$}

\author{Fabio Renda} 

\email{fabio.renda@outlook.it}

\date{\today}

\keywords{Cowreath, Coseparable coalgebra, Frobenius coalgebra, Integral, Clifford Algebra}

\subjclass[2020]{16T05, 15A66, 18M05}

\begin{abstract}
Cowreaths of type $(A \otimes H^{\op},H,\psi)$ have been investigated in \cite{BT,mt,mt2,mt3,FR3} as examples of (h-)separable and Frobenius coalgebras in monoidal categories. They are entwining structures built from a Hopf algebra $H$ and an $H$-comodule algebra $(A,\rho_A)$. In this article we develop a general theory that allows us to recover results from the aforementioned papers in an easier way and also to extend them to cowreaths in higher dimension. Focusing on separability, the crucial observation is that, when $H$ has bijective antipode, one should work with integrals on the coalgebra $(H,\psi)$ in place of Casimir morphisms, for they are easier to classify. On the other hand, in dealing with Frobenius properties, we obtain that a cowreath $(A \otimes H^{\op},H,\psi)$ is Frobenius if and only if the morphism $(\mathrm{Id}_A \otimes \mu)\rho_A$ is inner ($\mu$ is the distinguished grouplike element in $H^*$). While Frobenius cowreaths are always (h-)separable, examples of (h-)separable cowreaths that are not Frobenius will be presented at the end of this article.
\end{abstract}

\maketitle

\tableofcontents

\section{Introduction}

Cowreaths are generalized entwining structures that can be defined in the context of 2-categories \cite{BT-app} and that were first introduced by Brzezi\'nski in \cite{br} in order to develop Galois theory for coalgebras. When working in monoidal categories, cowreaths can be characterized in the following way. Consider an algebra $A$ in a monoidal category $\mathcal{M}$. One can build a new category $\mathcal{T}_A^{\#}$ whose objects are pairs $(X, \psi)$, where $X$ is an object in $\mathcal{M}$ and $\psi: X \otimes A \rightarrow A \otimes X$ is a morphism in $\mathcal{M}$ compatible with the algebra structure of $A$ in a suitable sense. The triple $(A, X, \psi)$ is called a cowreath in $\mathcal{M}$ if $(X, \psi)$ is a coalgebra in $\mathcal{T}_A^{\#}$. The widespread presence of cowreaths and their corepresentations (called entwined modules) in (quasi-)Hopf algebra theory has been illustrated in \cite{bc,bct2}: coring structures of the form $A \otimes X$ are in bijection with cowreaths $(A,X,\psi)$ and relative Hopf modules, Doi-Hopf modules and Yetter-Drinfeld modules are all examples of entwined modules over a cowreath. The study of Frobenius and separable properties for forgetful functors defined on categories of entwined modules over cowreaths that was conducted in \cite{bct,bct2}, and that has its roots in \cite{BT-app}, further led to the definitions of Frobenius and separable cowreath, which are the main objects of interest in this article.

When $H$ is a Hopf algebra over a field $\Bbbk$ and $(A,\rho_A)$ is a right $H$-comodule algebra, we find that $A \otimes H^{\op}$ is naturally a right $H \otimes H^{\op}$-comodule algebra and $H$ is a $H \otimes H^{\op}$-module coalgebra and we can build a cowreath $(A \otimes H^{\op}, H, \psi)$ in the category $\textrm{Vec}_{\Bbbk}$ of $\Bbbk$-vector spaces following the instructions of Proposition \ref{propXA}. This type of cowreaths were presented for the first time in \cite{BT} to give an example of a separable Frobenius Galois cowreath for which the associated subalgebra of coinvariants $A^{\textrm{co}X} \hookrightarrow A$ was not a separable extension. Examples of this form later appeared in \cite{mt,mt2}, where the study of h-coseparable coalgebras started in \cite{am} was extended to cowreaths, and in \cite{FR3}, whose aim was to generalize the instances presented in \cite{mt}. In each of these cases, the (h-)separability of the cowreath $(A \otimes H^{\op}, H, \psi)$ was to be understood as a list of equalities satisfied by a certain $\Bbbk$-linear map $B: H  \otimes H \rightarrow A \otimes H^{\op}$ called the Casimir morphism, but in dealing with concrete examples it became soon clear that this was not always the best approach, for the number of equalities to verify could grow pretty quickly. Suffice it to say that a great deal of effort was required in \cite{mt3} to obtain a partial classification of Casimir morphisms when $H$ and $A$ are just eight-dimensional algebras. 

A first simplification we propose to overcome these complications is to work with integrals on $(A \otimes H^{\op}, H, \psi)$ in place of Casimir morphisms. Integrals on cowreaths (also called integrals on the underlying coalgebra) were defined in \cite{BT2} and are a generalization of classic integrals for Hopf algebras. A remarkable connection between Casimir morphisms and integrals was observed for the first time in \cite[Prop. 3.5]{BT2}; in our case it boils down to the fact that given a Casimir morphism $B: H  \otimes H \rightarrow A \otimes H^{\op}$, the map $B(\-- \otimes 1_H): H \rightarrow A \otimes H^{\op}$ is an integral on $(A \otimes H^{\op}, H, \psi)$. The correspondence between Casimir morphisms and integrals can be made bijective, provided $H$ has an invertible antipode. Moreover, this bijection naturally restricts to one between normalized Casimir morphisms (i.e. those realizing separability) and total integrals, thus producing a Maschke type result for cowreaths of this form: a cowreath $(A \otimes H^{\op}, H, \psi)$ is separable if and only if it admits a total integral. The same happens when we move our attention to h-separability (a stronger notion introduced in \cite{am}), that is, normalized Casimir morphisms satisfying the h-separability condition are in bijective correspondence with total integrals verifying what we call the \emph{h-property}, therefore a cowreath $(A \otimes H^{\op}, H, \psi)$ is h-separable if and only if it admits a total integral with the h-property. Surprisingly enough, the hypothesis that $H$ has a bijective antipode is just what we require in order to produce a canonical total integral on $(A \otimes H^{\op}, H, \psi)$. Since the map $\underline{\T}:H \rightarrow A \otimes H^{\op}$ given by $\underline{\T}(h)=1_A \otimes S^{-1}(h)$ for every $h \in H$ is a total integral with the h-property, we find that every cowreath $(A \otimes H^{\op}, H, \psi)$ where $H$ has bijective antipode is automatically (h-)separable. 

Another important consequence of our Maschke type results is that they highlight the presence of a class of non-Frobenius Galois cowreaths for which the existence of a total integral is equivalent to separability, thus suggesting that the hypothesis of \cite[Thm. 5.4]{BT} could be relaxed. This becomes clear once we have proved that the Frobenius property for cowreaths $(A \otimes H^{\op}, H, \psi)$ is equivalent to the existence of an invertible element $\mathcal{A} \in A$ satisfying $a\mathcal{A}=\mu(a_1)\mathcal{A}a_0$ for every $a \in A$, where $\mu$ is the distinguished grouplike element of $H^*$ (this is an extension of \cite[Prop. 6.10]{BT}).  
In other words, $(A \otimes H^{\op}, H, \psi)$ is Frobenius if and only if the morphism $(\mathrm{Id}_A \otimes \mu)\rho_A$ is inner. In this case, such an element $\A$ must be unique up to a central invertible element \--- a consequence of the essential uniqueness of Frobenius pairs for (co)algebras in monoidal categories, which also agrees with the fact that $\mathrm{Inn}(A)\cong \frac{A^{\times}}{\mathcal{Z}(A^{\times})}$. If we consider an Hopf algebra $H$ with bijective antipode and an $H$-comodule algebra $A$ where no such element $\A$ can be found, then the cowreath $(A \otimes H^{\op}, H, \psi)$ is clearly separable, yet not Frobenius. Concrete examples will be presented in the last section of the paper.

On the other hand, Frobenius cowreaths of type $(A \otimes H^{\op}, H, \psi)$ are always (h-)separable, since the Frobenius property forces $H$ to be finite-dimensional (see \cite[Prop. 1.3]{bct2}) and thus its antipode must be invertible. Then we can build the canonical total integral $\underline{\T}$ and $(A \otimes H^{\op}, H, \psi)$ is evidently (h-)separable. We should however point out that, although $\underline{\T}$ arises quite naturally when dealing with separability, it is almost of no use if one tries to build a Frobenius pair with it. In fact the corresponding Casimir morphism $\underline{B}$ is part of a Frobenius pair only if $H \cong \Bbbk$.

As an application of the obtained theoretical results we provide a complete classification of (total) integrals on the cowreath $(A \otimes H^{\op},H,\psi)$, when $H=E(n)$, a $2^{n+1}$-dimensional pointed Hopf algebra that generalizes Sweedler's Hopf algebra, and $A=Cl(\alpha, \beta_i, \gamma_i, \lambda_{ij})$ a Clifford algebra which is naturally endowed with the structure of $E(n)$-comodule algebra \cite{PVO2}. Theorem \ref{thm:mainint} integrates the partial results obtained in \cite{mt2, mt3} and generalizes them to algebras in higher dimension. We will also show how to recover \cite[Thm. 5.7]{FR3} from it. 

\medskip

The article is organized as follows. In Section \ref{preliminaries} we recall some basics on (h-)separable and Frobenius coalgebras and cowreaths in monoidal categories, with particular attention to cowreaths of type $(A \otimes H^{\op},H,\psi)$. We present the definition of integral on $(A \otimes H^{\op},H,\psi)$ and how it can be conveniently rephrased if $H$ has bijective antipode.
Section \ref{Casvsint} explains how integrals on $(A \otimes H^{\op},H,\psi)$ can be used to describe (h-)separability when $H$ has bijective antipode and, in that case, we provide a canonical choice to show that each cowreath $(A \otimes H^{\op},H,\psi)$ is automatically (h-)separable. Theoretical results on Frobenius cowreaths are also included in this section, among which is a complete description of Frobenius pairs for a Frobenius cowreath $(A \otimes H^{\op},H,\psi)$. It shows that they are in bijective correspondence with invertible elements of $C_{A \otimes H^{\op}}(coH)$, the centralizer in $A \otimes H^{\op}$ of the subalgebra of coinvariants in the sense of \cite{BT2}. In Section \ref{sec:integrals} we recall the definitions of the Hopf algebra $H=E(n)$ and of the Clifford algebra $A=Cl(\alpha,\beta_i,\gamma_i, \lambda_{ij})$ and we obtain a complete classification of (total) integrals on $(A \otimes H^{\op},H,\psi)$. Although we could not provide a full list of integrals with the h-property when $n$ is unspecified, we were able to reduce the number of conditions that an integral has to check in order to satisfy the h-property. We have gathered in Section \ref{sec:classification} an explicit description of integrals realizing (h-)separability of the cowreath $(A \otimes H^{\op},H,\psi)$ when $n=1,2$ (to be compared with results in \cite{mt2, mt3}), in addition with a proof of \cite[Thm. 5.7]{FR3} that now follows from Theorem \ref{thm:mainint}. In the last section we present some interesting examples of Frobenius cowreaths and h-separable non-Frobenius cowreaths built on $H=E(n)$.

\medskip

\noindent\textit{Notations and conventions}. All vector spaces are understood to be over a fixed field $\Bbbk$ and by linear maps we mean $\Bbbk$-linear maps. $\Bbbk^{\times}$ is used to indicate the multiplicative group of the field $\Bbbk$, $A^{\times}$ the invertible elements of the algebra $A$. The unadorned tensor product $\otimes$ stands for $\otimes_{\Bbbk}$. We will write a tensor symbol before the exponent when denoting iterated tensor products: $X^{\otimes 2}=X \otimes X$, $X^{\otimes 3}=X \otimes X \otimes X$ and so on. All linear maps whose domain is a tensor product will usually be defined on generators and understood to be extended by linearity. 
Algebras over $\Bbbk$ will be associative and unital and coalgebras over $\Bbbk$ will be coassociative and counital. For an algebra, the multiplication and the unit are denoted by $m$ and $u$, respectively, while for a coalgebra, the comultiplication and the counit are denoted by $\Delta$ and $\varepsilon$, respectively. $S:H \rightarrow H$ denotes the antipode map of the Hopf algebra $H$. We use the classical Sweedler’s notation and we shall write $\Delta(h)= h_{1}\otimes h_{2}$ for any $h\in H$, where we omit the summation symbol. $H^{\op}$ denotes the opposite Hopf algebra, i.e. the Hopf algebra $H$ where the multiplication is replaced by $m^{\op}(h \otimes h'):=h'h$ for every $h,h' \in H$. For a (right) $H$-comodule $M$ we write its structure map as $\rho: M\to M\otimes H$, $\rho(m)=m_0\otimes m_1$ for every $m \in M$. Categories of (bi)modules will be indicated as usual with subscripts $\mathcal{M}_A$, ${_A}\mathcal{M}$, ${_A}\mathcal{M}_A$ and categories of (bi)comodules with superscripts $\mathcal{M}^C$, ${^C}\mathcal{M}$, ${^C}\mathcal{M}^C$. We use $\sqcup$ to denote the disjoint union of two sets, we write $|Q|$ for the cardinality of the set $Q$ and $Q^c$ to indicate its complement in $\textbf{n}:=\lbrace 1, 2, \ldots, n \rbrace$. The writing $n \equiv_2 m$ means that the natural numbers $n$ and $m$ share the same parity.

\medskip

\noindent\textbf{Important}. We find it very confusing to omit the multiplication symbol when working in $A \otimes H^{\op}$, because years of habit trick us into associating $(a \otimes b)(c \otimes d)$ to $(ac \otimes bd)$ and not to $(ac \otimes db)$. To avoid disastrous results we preferred to indicate $m_{A \otimes H^{\op}}$ with $\bullet$ when really needed, and write everything else as if we were working in $A \otimes H$, performing preventive suitable swaps. For example, equality \eqref{eq:Casimir1} could have been written
\[
(a_{0} \otimes g_1) \bullet B(g_{2}ha_{1}\otimes g_{3}h^{\prime }a_{2})=B(h\otimes h^{\prime }) \bullet (a \otimes g),
\]
but we decided to write it so that omitted multiplications on each side of the tensor can be understood as the usual ones. 

\section{Preliminaries}\label{preliminaries}

We start by recalling some preliminary notions on separable and Frobenius (co)algebras in monoidal categories. We will then introduce cowreaths and present in detail what it means for a cowreath to be (h-)separable or Frobenius, further specializing this definitions to the case of cowreaths of type $(A \otimes H^{\op},H,\psi)$. Next we will present the definition of (total) integral on a cowreath $(A \otimes H^{\op},H,\psi)$ and show that when $H$ has invertible antipode, there is an equivalent characterization which will be helpful with later calculations. A list of basics in classic integral theory for Hopf algebras concludes this section. All monoidal categories involved are assumed to be strict, thanks to the well-known Mac Lane's coherence theorem.

\subsection{(Co)separable (co)algebras}\label{sepcosep}

The concept of separable algebra is a generalization of that of separable field extension and as such was first defined in the context of vector spaces \cite{AG}. It is usually said that a $k$-algebra $A$ is separable if the multiplication map $m_A: A \otimes A \rightarrow A$ admits a section $\gamma: A \rightarrow A \otimes A$ which is also a morphism of $A$-$A$-bimodules, and this is easily generalized to the case of algebras in a monoidal category \cite{nbo}. Given some additional property on the unit object $\underline{1}$ of the category, it is possible to derive a series of equivalent conditions.

\begin{definition}{\cite[Def. 3.1]{BT}}\label{lgen}
An object $P$ of a monoidal category $(\mathcal{M}, \otimes, \underline{1})$ is a left $\otimes$-generator of $\mathcal{M}$ if, given morphisms $f, g : Y \otimes Z \rightarrow W$ in $\mathcal{M}$ such that $f (\lambda \otimes \mathrm{Id}_Z) = g(\lambda \otimes \mathrm{Id}_Z)$ for all $\lambda : P \rightarrow Y$ in $\mathcal{M}$, we then have $f = g$.
\end{definition}

\begin{proposition}{\cite[Prop. 4.3]{BT}}\label{propsep}
Let $(\mathcal{M}, \otimes, \underline{1})$ be a monoidal category such that $\underline{1}$ is a left $\otimes$-generator for $\mathcal{M}$. Then the following are equivalent
\begin{enumerate}
\item $A$ is separable.
\item The forgetful functor $F:\mathcal{M}_A \rightarrow \mathcal{M}$ is separable.
\item There exists a morphism $e: \underline{1} \rightarrow A \otimes A$ in $\mathcal{M}$, such that
\[(m_A \otimes \mathrm{Id}_A)(\mathrm{Id}_A \otimes e)=(\mathrm{Id}_A \otimes m_A)(e \otimes \mathrm{Id}_A) \quad \textrm{and} \quad m_A e =u_A.\]
Such $e$ is called the separability morphism of $A$.
\end{enumerate}
\end{proposition}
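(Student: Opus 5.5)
The plan is to prove the cycle (1)$\Rightarrow$(3)$\Rightarrow$(2)$\Rightarrow$(1). Here (1) means that $m_A$ has a section $\gamma: A \rightarrow A\otimes A$ which is a morphism of $A$-bimodules. The $A$-bimodule structures are $m_A$ on $A$, and $m_A\otimes \mathrm{Id}_A$, $\mathrm{Id}_A\otimes m_A$ on $A\otimes A$.

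\emph{(1)$\Rightarrow$(3).} Put $e:=\gamma u_A$. Then $m_A e = m_A\gamma u_A = u_A$. Left $A$-linearity gives $\gamma m_A = (m_A\otimes \mathrm{Id}_A)(\mathrm{Id}_A\otimes \gamma)$, and hence $\gamma = (m_A\otimes\mathrm{Id}_A)(\mathrm{Id}_A\otimes e)$ by unitality. Symmetrically, right linearity gives $\gamma=(\mathrm{Id}_A\otimes m_A)(e\otimes \mathrm{Id}_A)$. Comparing the two expressions yields the identity required in (3).

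\emph{(3)$\Rightarrow$(1).} Define $\gamma:=(m_A\otimes \mathrm{Id}_A)(\mathrm{Id}_A\otimes e)$, which by hypothesis equals $(\mathrm{Id}_A\otimes m_A)(e\otimes \mathrm{Id}_A)$. Associativity shows that the first form is left $A$-linear and the second is right $A$-linear. Moreover $m_A\gamma = m_A(\mathrm{Id}_A\otimes m_Ae)=\mathrm{Id}_A$, using associativity and $m_Ae=u_A$. So (1) and (3) are equivalent without any hypothesis on $\underline{1}$.

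\emph{(3)$\Rightarrow$(2).} For $(M,\nu)\in\mathcal{M}_A$, define $\sigma_M:=(\nu\otimes \mathrm{Id}_A)(\mathrm{Id}_M\otimes e): M\rightarrow M\otimes A$. This is a right $A$-linear map. The hypothesis $m_Ae=u_A$ gives $\nu\sigma_M=\mathrm{Id}_M$, and $\sigma_M$ is natural in $M$. This furnishes a natural section of the counit-type map $\nu: M\otimes A\to M$ of the adjunction $F\dashv -\otimes A$. By Rafael's criterion, the existence of such a section is exactly separability of $F$.

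\emph{(2)$\Rightarrow$(3).} This is the step where the left $\otimes$-generator hypothesis enters, and I expect it to be the main obstacle. Separability of $F$ gives a natural section $\sigma$ of $\nu_M: M\otimes A\rightarrow M$ in $\mathcal{M}_A$. I take $M=A$ and set $e:=\sigma_A u_A:\underline{1}\to A\otimes A$. Then $m_Ae=u_A$ is immediate. For the centrality identity, I apply naturality of $\sigma$ to the right $A$-linear maps $m_A(\lambda\otimes\mathrm{Id}_A): A\to A$, for arbitrary $\lambda:\underline{1}\to A$. Combined with right $A$-linearity of $\sigma_A$, this should give
\[(m_A\otimes \mathrm{Id}_A)(\mathrm{Id}_A\otimes e)(\lambda)=(\mathrm{Id}_A\otimes m_A)(e\otimes \mathrm{Id}_A)(\lambda)\]
for every $\lambda$. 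The left $\otimes$-generator property of $\underline{1}$, applied with $Z=\underline{1}$, then upgrades this to equality of the two morphisms $A\to A\otimes A$.

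The delicate point is checking that naturality with respect to left multiplications by "elements" $\lambda$ suffices. If it does not, I would instead use naturality for the free modules $X\otimes A$ with general $X$, again testing against all $\lambda:\underline{1}\to X$ via Definition \ref{lgen}.
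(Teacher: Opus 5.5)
Your proof is correct. The paper gives no argument of its own for this statement; it quotes it from \cite[Prop. 4.3]{BT}, so there is no internal proof to compare against.

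First, a labelling slip: the adjunction is $-\otimes A\dashv F$, since the free-module functor is the \emph{left} adjoint of the forgetful functor, not $F\dashv -\otimes A$. The criterion you actually use, a natural section of the counit $\nu_M:M\otimes A\to M$, is the correct one in Rafael's theorem for the right adjoint $F$. So only the name of the adjunction is wrong.

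Second, the step you flag as delicate goes through exactly as you propose. Naturality of $\sigma$ at the right $A$-linear map $m_A(\lambda\otimes\mathrm{Id}_A):A\to A$, precomposed with $u_A$, gives
\[
\sigma_A\lambda=(m_A\otimes\mathrm{Id}_A)(\lambda\otimes e)=(m_A\otimes\mathrm{Id}_A)(\mathrm{Id}_A\otimes e)\lambda .
\]
Right $A$-linearity of $\sigma_A$ together with unitality gives
\[
\sigma_A=\sigma_A m_A(u_A\otimes\mathrm{Id}_A)=(\mathrm{Id}_A\otimes m_A)(e\otimes\mathrm{Id}_A).
\]
So the two sides of the required identity agree after composing with every $\lambda:\underline{1}\to A$. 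Definition \ref{lgen} with $Z=\underline{1}$ then gives equality of the two morphisms, and your fallback via free modules $X\otimes A$ is not needed.

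Your argument only ever uses Definition \ref{lgen} with $Z=\underline{1}$, that is, that $\underline{1}$ is a generator of $\mathcal{M}$ in the ordinary sense. The implications (1)$\Leftrightarrow$(3) and (3)$\Rightarrow$(2) hold in any monoidal category.
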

If we want to dualize Proposition \ref{propsep} to the case of coalgebras, the definition of coseparable coalgebra needs to be presented. It was introduced by Larson in \cite{Lar}.

\begin{definition}{\cite[p. 262]{Lar}}
A coalgebra $C$ in a monoidal category $(\mathcal{M}, \otimes, \underline{1})$ is coseparable if it is a relative injective $C$-bicomodule in $C$, which comes down to the following property. If $q : M \rightarrow N$ in ${^C}\mathcal{M}{^C}$ has a left inverse
$p : M \rightarrow N$ in $\mathcal{M}$, then every $f : M \rightarrow C$ in ${^C}\mathcal{M}{^C}$ factors through $q$ in ${^C}\mathcal{M}{^C}$: there exists a $C$-bicolinear morphism $g : N \rightarrow C$ such that $g q = f$.
\end{definition}
\begin{proposition}{\cite[Prop. 7.3]{bct}}\label{eqcosep}
Let $(\mathcal{M}, \otimes, \underline{1})$ be a monoidal category. Then the following are equivalent
\begin{enumerate}
\item $C$ is coseparable.
\item There exists a morphism $\gamma: C \otimes C \rightarrow C$ of bicomodules such that $\gamma \Delta_C= \mathrm{Id}_C$.
\item There exists a morphism $B: C \otimes C \rightarrow \underline{1}$ in $\mathcal{M}$, such that
\begin{equation*}
(\mathrm{Id}_C \otimes B)(\Delta_C \otimes \mathrm{Id}_C)=(B \otimes \mathrm{Id}_C)(\mathrm{Id}_C \otimes \Delta_C) \quad \mathrm{and} \quad B \Delta_C =\varepsilon_C.
\end{equation*}
\end{enumerate}
A morphism $B: C \otimes C \rightarrow \underline{1}$ satisfying the first of the two conditions above is called a Casimir morphism for $C$. A Casimir morphism satisfying the latter is said to be normalized. 
\end{proposition}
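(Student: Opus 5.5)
We prove the cycle (1)$\Rightarrow$(2)$\Rightarrow$(3)$\Rightarrow$(1), working throughout with the bicomodules $C\otimes C$ (coaction $\Delta_C\otimes\mathrm{Id}_C$ on the left, $\mathrm{Id}_C\otimes\Delta_C$ on the right) and $C$ (coaction $\Delta_C$ on both sides).

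\emph{(1)$\Rightarrow$(2).} By coassociativity, $\Delta_C: C\to C\otimes C$ is a morphism in ${^C}\mathcal{M}{^C}$. It has the left inverse $p=\varepsilon_C\otimes\mathrm{Id}_C$ in $\mathcal{M}$, by counitality. Apply the definition of coseparability with $M=C$, $N=C\otimes C$, $q=\Delta_C$ and $f=\mathrm{Id}_C$. It gives a bicolinear $\gamma: C\otimes C\to C$ with $\gamma\Delta_C=\mathrm{Id}_C$.

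\emph{(2)$\Leftrightarrow$(3).} Given $\gamma$, set $B:=\varepsilon_C\gamma$.
\begin{itemize}
\item Normalization: $B\Delta_C=\varepsilon_C\gamma\Delta_C=\varepsilon_C$.
\item Casimir identity: left colinearity of $\gamma$ reads $\Delta_C\gamma=(\mathrm{Id}_C\otimes\gamma)(\Delta_C\otimes\mathrm{Id}_C)$. Applying $\mathrm{Id}_C\otimes\varepsilon_C$ gives $\gamma=(\mathrm{Id}_C\otimes B)(\Delta_C\otimes\mathrm{Id}_C)$. Right colinearity, after applying $\varepsilon_C\otimes\mathrm{Id}_C$, gives $\gamma=(B\otimes\mathrm{Id}_C)(\mathrm{Id}_C\otimes\Delta_C)$. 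Equating the two expressions yields the Casimir identity.
\end{itemize}
Conversely, given a normalized Casimir morphism $B$, define $\gamma:=(\mathrm{Id}_C\otimes B)(\Delta_C\otimes\mathrm{Id}_C)$.
\begin{itemize}
\item Left colinearity follows from coassociativity.
\item Right colinearity follows from rewriting $\gamma$ via the Casimir identity as $(B\otimes\mathrm{Id}_C)(\mathrm{Id}_C\otimes\Delta_C)$ and then using coassociativity on the right.
\item The section property: $\gamma\Delta_C=(\mathrm{Id}_C\otimes B\Delta_C)\Delta_C=(\mathrm{Id}_C\otimes\varepsilon_C)\Delta_C=\mathrm{Id}_C$.
\end{itemize}

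\emph{(2)$\Rightarrow$(1).} This is the step I expect to be the main obstacle, since it is the only one that needs a construction rather than a calculation. Let $q:M\to N$ be bicolinear with a left inverse $p$ in $\mathcal{M}$, and let $f:M\to C$ be bicolinear. The morphism $fp:N\to C$ is not colinear, so we symmetrize it with a standard averaging trick (the dual of the Maschke argument):
\[
g:=\gamma\,(\mathrm{Id}_C\otimes \varepsilon_C\otimes\mathrm{Id}_C)\,(\mathrm{Id}_C\otimes fp\otimes\mathrm{Id}_C)\,(\rho^l_N\otimes\mathrm{Id}_C)\,\rho^r_N ,
\]
or, more cleanly, $g:=\gamma\,(\mathrm{Id}_C\otimes\varepsilon_C fp\otimes\mathrm{Id}_C)(\mathrm{Id}_C\otimes\rho^r_N)\rho^l_N$. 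Equivalently, $g=\big(\mathrm{Id}_C\otimes B\big)\big(\mathrm{Id}_C\otimes\varepsilon_C fp\otimes\mathrm{Id}_C\otimes\mathrm{Id}_C\big)\cdots$ written using $B$.

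The checks are as follows.
\begin{itemize}
\item Bicolinearity of $g$ follows from bicolinearity of $\gamma$ together with the coassociativity of the coactions of $N$.
\item For $gq=f$: since $q$ is bicolinear, we may move $q$ past $\rho^l_N$ and $\rho^r_N$, so $pq=\mathrm{Id}_M$ collapses the middle factor to $\varepsilon_C f$. Then bicolinearity of $f$ turns the composite into $\gamma(\Delta_C\otimes\mathrm{Id}_C)\Delta_C f$ after applying the counit. Precisely, it becomes $\gamma\Delta_C f$ via $(\mathrm{Id}\otimes\varepsilon_C\otimes\mathrm{Id})(\Delta_C\otimes\mathrm{Id})\Delta_C=\Delta_C$. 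Hence $gq=\gamma\Delta_C f=f$.
\end{itemize}

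The delicate part is the bookkeeping: one has to write $g$ correctly and confirm that only $\gamma$'s colinearity, and no colinearity of $p$, is used.
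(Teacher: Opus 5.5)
Your proof is correct. The paper gives no proof of this statement (it only quotes it from \cite{bct}), and your argument is the standard one. $\Delta_C$ is a bicolinear monomorphism split in $\mathcal{M}$ by $\varepsilon_C\otimes\mathrm{Id}_C$. The assignments $B=\varepsilon_C\gamma$ and $\gamma=(\mathrm{Id}_C\otimes B)(\Delta_C\otimes\mathrm{Id}_C)$ translate between (2) and (3). For (2)$\Rightarrow$(1), the map $(\mathrm{Id}_C\otimes\varepsilon_C fp\otimes\mathrm{Id}_C)(\mathrm{Id}_C\otimes\rho^r_N)\rho^l_N\colon N\to C\otimes C$ is bicolinear and equals $\Delta_C f$ after precomposing with $q$, so composing with $\gamma$ gives $gq=\gamma\Delta_C f=f$.

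Two small fixes:
\begin{itemize}
\item In the bicolinearity check, name the left--right compatibility $(\rho^l_N\otimes\mathrm{Id}_C)\rho^r_N=(\mathrm{Id}_C\otimes\rho^r_N)\rho^l_N$ explicitly alongside coassociativity. Bicolinearity of that map holds for any morphism $N\to\underline{1}$ in the middle slot, and it needs both facts.
\item The dangling ``Equivalently'' formula is mistyped: its middle factor does not match the outer $\mathrm{Id}_C\otimes B$. Either delete it, or use $\gamma=(B\otimes\mathrm{Id}_C)(\mathrm{Id}_C\otimes\Delta_C)$ and write $g=(B\otimes\mathrm{Id}_C)(\mathrm{Id}_C\otimes\varepsilon_C fp\otimes\mathrm{Id}_C\otimes\mathrm{Id}_C)(\mathrm{Id}_C\otimes\mathrm{Id}_N\otimes\Delta_C)(\mathrm{Id}_C\otimes\rho^r_N)\rho^l_N$.
\end{itemize}
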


Notice that Proposition \ref{eqcosep} is not completely dual to Proposition \ref{propsep}, in that it lacks an interpretation of coseparability of $C$ in terms of a functorial property and the fact that $\underline{1}$ be a left $\otimes$-generator is not required. The picture becomes complete once we take into account a fundamental result contained in \cite{mt}.

\begin{theorem}
\label{Theorem:B} \cite[Thm. 2.5]{mt} Assume that $\underline{1}$ is a left $\otimes $-generator for a monoidal category $(\mathcal{M}, \otimes, \underline{1})$, and let $C$ be a coalgebra in $\mathcal{M}$. The forgetful functor $F:\mathcal{M}^C \rightarrow \mathcal{M}$ is separable if and only if the coalgebra $C$ is coseparable.
\end{theorem}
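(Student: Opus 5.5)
We prove both directions using the characterization of coseparability in Proposition \ref{eqcosep}: we show that a normalized Casimir morphism produces a natural retraction for $F$, and conversely. Recall that $F:\mathcal{M}^C\to\mathcal{M}$ is separable exactly when the natural transformation $\mathrm{Hom}_{\mathcal{M}^C}(-,-)\to\mathrm{Hom}_{\mathcal{M}}(F-,F-)$ has a natural left inverse. By Rafael's theorem this is equivalent to the unit $\eta:\mathrm{Id}_{\mathcal{M}^C}\to GF$ of the adjunction $F\dashv G=-\otimes C$ admitting a natural retraction $\nu:GF\to\mathrm{Id}$, with $\nu_M\circ\rho_M=\mathrm{Id}_M$. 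Here $\eta_M=\rho_M:M\to M\otimes C$.

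\emph{($\Leftarrow$)} Let $B:C\otimes C\to\underline{1}$ be a normalized Casimir morphism. For a right comodule $(M,\rho_M)$ we define
\[
\nu_M:=(\mathrm{Id}_M\otimes B)(\rho_M\otimes \mathrm{Id}_C): M\otimes C\to M .
\]
The following three properties are checked by routine manipulation, using only coassociativity and naturality of the tensor product.
\begin{itemize}
\item[(i)] $\nu_M\rho_M=(\mathrm{Id}_M\otimes B\Delta_C)\rho_M=\mathrm{Id}_M$, using coassociativity and $B\Delta_C=\varepsilon_C$.
\item[(ii)] $\nu_M$ is right $C$-colinear. The comodule structure of $M\otimes C$ is $\mathrm{Id}_M\otimes\Delta_C$, and colinearity follows from the Casimir identity $(B\otimes\mathrm{Id}_C)(\mathrm{Id}_C\otimes\Delta_C)=(\mathrm{Id}_C\otimes B)(\Delta_C\otimes\mathrm{Id}_C)$ combined with coassociativity of $\rho_M$.
\item[(iii)] $\nu$ is natural in $M$, since comodule morphisms commute with the coactions.
\end{itemize}
No assumption on $\underline{1}$ is needed in this direction.

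\emph{($\Rightarrow$)} Let $\nu$ be a natural retraction of $\eta$. The candidate Casimir morphism comes from the cofree comodule $C$ itself: we set
\[
\gamma:=\nu_{C\otimes C},
\]
where $C\otimes C$ carries the comodule structure $\mathrm{Id}_C\otimes\Delta_C$, so that $\gamma$ is a map $C\otimes C\otimes C\to C\otimes C$. The cleaner route is to construct a bicolinear retraction $C\otimes C\to C$ of $\Delta_C$ and then invoke Proposition \ref{eqcosep}(2). We take $\nu_C: C\otimes C\to C$, which is right colinear and satisfies $\nu_C\Delta_C=\mathrm{Id}_C$. It remains to prove that $\nu_C$ is also left colinear, that is,
\[
(\mathrm{Id}_C\otimes\nu_C)(\Delta_C\otimes \mathrm{Id}_C)=\Delta_C\,\nu_C .
\]
This is the step where the left $\otimes$-generator hypothesis enters, and I expect it to be the main obstacle. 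The plan is as follows.
\begin{itemize}
\item For every object $X$, the comodule $X\otimes C$ is cofree. For every morphism $\lambda:\underline{1}\to X$, the map $\lambda\otimes\mathrm{Id}_C:C\to X\otimes C$ is right colinear, so naturality gives $\nu_{X\otimes C}(\lambda\otimes\mathrm{Id}_{C\otimes C})=(\lambda\otimes\mathrm{Id}_C)\nu_C$.
\item Since $\underline{1}$ is a left $\otimes$-generator, this forces $\nu_{X\otimes C}=\mathrm{Id}_X\otimes\nu_C$.
\item Apply naturality to the colinear map $\Delta_C:C\to C\otimes C$, where $C\otimes C$ is viewed as the cofree comodule $X\otimes C$ with $X=C$. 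This gives $\nu_{C\otimes C}(\Delta_C\otimes\mathrm{Id}_C)=\Delta_C\nu_C$.
\item Substituting $\nu_{C\otimes C}=\mathrm{Id}_C\otimes\nu_C$ from the second step yields exactly the left colinearity of $\nu_C$.
\end{itemize}
Hence $\nu_C$ is a bicomodule retraction of $\Delta_C$, so $C$ is coseparable by Proposition \ref{eqcosep}. Explicitly, $B=\varepsilon_C\nu_C$ is a normalized Casimir morphism.
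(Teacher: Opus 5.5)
The paper gives no proof of this statement; it quotes it from \cite[Thm.~2.5]{mt} as background, so there is nothing internal to compare with. Your proof is correct and self-contained.

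In ($\Leftarrow$), coassociativity of $\rho_M$ turns both $\rho_M\nu_M$ and $(\nu_M\otimes\mathrm{Id}_C)(\mathrm{Id}_M\otimes\Delta_C)$ into $\mathrm{Id}_M$ tensored with one side of the Casimir identity, precomposed with $\rho_M\otimes\mathrm{Id}_C$. As you note, no hypothesis on $\underline{1}$ is used there.

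In ($\Rightarrow$) the decisive step is sound. The two composites $\nu_{X\otimes C}(\lambda\otimes\mathrm{Id}_{C\otimes C})$ and $(\mathrm{Id}_X\otimes\nu_C)(\lambda\otimes\mathrm{Id}_{C\otimes C})$ both equal $(\lambda\otimes\mathrm{Id}_C)\nu_C$. The first follows by naturality at the colinear map $\lambda\otimes\mathrm{Id}_C\colon C\to X\otimes C$, the second by the interchange law. Definition~\ref{lgen} with $Y=X$, $Z=C\otimes C$ then gives $\nu_{X\otimes C}=\mathrm{Id}_X\otimes\nu_C$. Naturality at the colinear map $\Delta_C\colon C\to C\otimes C$ then yields left colinearity of $\nu_C$.

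Two minor points. First, the sentence introducing $\gamma:=\nu_{C\otimes C}$ is a false start and should be removed.

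Second, you can record that your two constructions are mutually inverse. Naturality at the colinear map $\rho_M\colon M\to M\otimes C$, combined with $\nu_{M\otimes C}=\mathrm{Id}_M\otimes\nu_C$, gives $\rho_M\nu_M=(\mathrm{Id}_M\otimes\nu_C)(\rho_M\otimes\mathrm{Id}_C)$. Applying $\mathrm{Id}_M\otimes\varepsilon_C$ shows $\nu_M=(\mathrm{Id}_M\otimes\varepsilon_C\nu_C)(\rho_M\otimes\mathrm{Id}_C)$. So under the generator hypothesis, natural retractions of the unit correspond bijectively to normalized Casimir morphisms via $\nu\mapsto\varepsilon_C\nu_C$. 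This is how the argument extends to the heavily separable version (Theorem~\ref{Chcosep}), where the extra condition is imposed on $\nu$ itself.
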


\medskip

Starting from an example related to the tensor algebra, A. Ardizzoni and C. Menini introduced in \cite{am} a stronger version of the notion of separability for functors, called heavy separability (for short, $h$-separability). This led to the discovery of a Rafael-type theorem for h-separable functors \cite[Thm. $2.1$]{am} and to the definition of h-coseparable coalgebra. It was initially proved in \cite{am} that the only h-coseparable coalgebras $C$ over a field $\Bbbk$ are $C=\Bbbk$ and $C=\lbrace 0 \rbrace$, while non-trivial examples of h-coseparable coalgebras were later exhibited in \cite{mt} as instance of cowreaths of type $(A \otimes H^{\op},H,\psi)$.

\begin{theorem}\label{Chcosep}\cite[Thm. 2.8 and Def. 2.2]{mt}
Assume that $\underline{1}$ is a left $\otimes $-generator for a monoidal category $(\mathcal{M}, \otimes, \underline{1})$, and let $C$ be a coalgebra in $\mathcal{M}$. The forgetful functor $F:\mathcal{M}^C \rightarrow \mathcal{M}$ is heavily separable if and only if there exists a normalized Casimir morphism $B: C \otimes C \rightarrow \underline{1}$ which fulfills \[(B \otimes B)(\mathrm{Id}_C \otimes \Delta_C \otimes \mathrm{Id}_C) = B(\mathrm{Id}_C \otimes \varepsilon_C \otimes \mathrm{Id}_C) \quad \textrm{(h-coseparability condition)}.\]
In this case $C$ is called an h-coseparable coalgebra.

\end{theorem}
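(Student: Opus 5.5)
The proof would go through the Rafael-type characterization of heavy separability \cite[Thm.~2.1]{am}, applied to the adjunction $F \dashv G$, where $G=-\otimes C:\mathcal{M}\rightarrow \mathcal{M}^C$ is the cofree comodule functor. The unit of this adjunction is $\eta_M=\rho_M: M\rightarrow M\otimes C$, and $GF(M)=M\otimes C$ carries the coaction $\mathrm{Id}_M\otimes\Delta_C$. By the Rafael-type theorem, $F$ is h-separable if and only if there is a natural transformation $\nu: GF\rightarrow \mathrm{Id}_{\mathcal{M}^C}$ satisfying
\[
\nu\circ\eta=\mathrm{Id} \quad\text{and}\quad \nu\circ GF\nu=\nu\circ \nu GF .
\]
The first condition is exactly the separability datum used in Theorem~\ref{Theorem:B}. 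So the plan is to reuse the bijection underlying that result and then translate the extra ``heavy'' condition.

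\textbf{Step 1: from $\nu$ to $B$ and back.} Given a normalized Casimir morphism $B$, define
\[
\nu_M=(\mathrm{Id}_M\otimes B)(\rho_M\otimes \mathrm{Id}_C).
\]
\begin{itemize}
\item Naturality of $\nu$ is clear.
\item Right $C$-colinearity of $\nu_M$ follows from the Casimir identity $(\mathrm{Id}_C\otimes B)(\Delta_C\otimes\mathrm{Id}_C)=(B\otimes \mathrm{Id}_C)(\mathrm{Id}_C\otimes\Delta_C)$, combined with coassociativity of $\rho_M$.
\item The identity $\nu_M\rho_M=\mathrm{Id}_M$ follows from $B\Delta_C=\varepsilon_C$ and counitality.
\end{itemize}
Conversely, given $\nu$, set $B=\varepsilon_C\,\nu_C$, using the cofree comodule $C=GF(\underline{1})$-type object $C\otimes C$, i.e.\ $B=\varepsilon_C\nu_C: C\otimes C\rightarrow \underline{1}$. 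To show that this $B$ recovers $\nu$, I would argue as follows.
\begin{itemize}
\item For every $\lambda:\underline{1}\rightarrow X$, the map $\lambda\otimes \mathrm{Id}_C: C\rightarrow X\otimes C$ is colinear. Naturality of $\nu$ with respect to these maps, together with the left $\otimes$-generator hypothesis, forces $\nu_{X\otimes C}=\mathrm{Id}_X\otimes \nu_C$ on cofree comodules.
\item For arbitrary $M$, the coaction $\rho_M: M\rightarrow M\otimes C$ is a colinear map into a cofree comodule. Naturality along it, composed with $\mathrm{Id}_M\otimes\varepsilon_C$, then gives $\nu_M=(\mathrm{Id}_M\otimes B)(\rho_M\otimes\mathrm{Id}_C)$.
\item Colinearity of $\nu_C$ and $\nu\eta=\mathrm{Id}$ give back the Casimir property and normalization of $B$.
\end{itemize}
This is essentially the argument behind Theorem~\ref{Theorem:B}, which I would cite or redo briefly.

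\textbf{Step 2: translating the heavy condition.} I would evaluate both sides of $\nu\circ GF\nu=\nu\circ\nu GF$ at a comodule $M$, as maps $M\otimes C\otimes C\rightarrow M$.
\begin{itemize}
\item $\nu_M\circ(\nu_M\otimes\mathrm{Id}_C)$ contracts twice with $B$. Using the Casimir identity to move the second coaction, it becomes $(\mathrm{Id}_M\otimes (B\otimes B)(\mathrm{Id}_C\otimes\Delta_C\otimes\mathrm{Id}_C))(\rho_M\otimes \mathrm{Id}_{C\otimes C})$, after coassociativity.
\item $\nu_M\circ\nu_{M\otimes C}$ is computed using the coaction $\mathrm{Id}_M\otimes\Delta_C$ on $M\otimes C$. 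Together with the normalization, it reduces to $(\mathrm{Id}_M\otimes B(\mathrm{Id}_C\otimes\varepsilon_C\otimes\mathrm{Id}_C))(\rho_M\otimes\mathrm{Id}_{C\otimes C})$.
\end{itemize}
So the h-coseparability condition on $B$ implies the heavy condition on $\nu$ for every $M$. Conversely, specializing to $M=C$ and composing with $\varepsilon_C$ on the left recovers the identity between morphisms $C^{\otimes 3}\rightarrow\underline{1}$, by counitality.

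\textbf{Main obstacle.} I expect the main difficulty to be the bookkeeping in Step 2. It is not obvious which of $\nu_{GF M}$ and $GF\nu_M$ corresponds to which side of the h-condition. Matching them up requires repeatedly applying the Casimir identity to slide $B$ past comultiplications. I would first carry out this computation in Sweedler-style notation for $\mathrm{Vec}_{\Bbbk}$ to fix the correct form, and then transcribe it into the strict monoidal setting.
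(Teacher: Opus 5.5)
The paper gives no proof of this statement; it quotes it from \cite{mt}. Your overall route is the natural one: apply the Rafael-type theorem of \cite{am} to $F\dashv G=-\otimes C$, and use the correspondence between $\nu$ and $B$. Step 1 is correct. The generator argument gives $\nu_{X\otimes C}=\mathrm{Id}_X\otimes\nu_C$, naturality along $\rho_M$ gives $\nu_M=(\mathrm{Id}_M\otimes B)(\rho_M\otimes\mathrm{Id}_C)$, and the Casimir and normalization properties come back as you describe.

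Step 2, however, has a genuine gap, and it is not a bookkeeping issue: the heavy condition you attribute to the Rafael-type theorem is wrong. The identity $\nu\circ GF\nu=\nu\circ\nu GF$ holds for \emph{every} natural transformation $\nu:GF\to\mathrm{Id}_{\mathcal{M}^C}$. Indeed, $\nu_M:GF(M)\to M$ is a comodule map, and naturality of $\nu$ along $\nu_M$ is exactly $\nu_M\circ GF(\nu_M)=\nu_M\circ\nu_{GF(M)}$.

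With your condition the argument would therefore show that $F$ is h-separable whenever it is separable, which is false. Over a field, the grouplike coalgebra $\Bbbk g\oplus\Bbbk h$ is coseparable via $B(x\otimes y)=\delta_{x,y}$. Yet $B(g\otimes h)B(h\otimes g)=0\neq 1=\varepsilon(h)B(g\otimes g)$, which agrees with \cite{am}, where only $\Bbbk$ and $0$ are h-coseparable.

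For the same reason, your claim that $\nu_M\circ\nu_{M\otimes C}$ ``reduces, using normalization'' to the $\varepsilon$-expression is incorrect. A direct computation with $\rho_{M\otimes C}=\mathrm{Id}_M\otimes\Delta_C$ gives
\[
\nu_M\circ\nu_{M\otimes C}=\bigl(\mathrm{Id}_M\otimes(B\otimes B)(\mathrm{Id}_C\otimes\Delta_C\otimes\mathrm{Id}_C)\bigr)(\rho_M\otimes\mathrm{Id}_{C\otimes C}).
\]
This is the same expression you correctly obtained for $\nu_M\circ(\nu_M\otimes\mathrm{Id}_C)$.

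The missing ingredient is the counit $\epsilon_X=\mathrm{Id}_X\otimes\varepsilon_C$ of the adjunction. The heavy condition in \cite[Thm.~2.1]{am} reads $\nu\circ\nu GF=\nu\circ G\epsilon F$. Since $(G\epsilon F)_M=\mathrm{Id}_M\otimes\varepsilon_C\otimes\mathrm{Id}_C$, the second side is
\[
\nu_M\circ(G\epsilon F)_M=\bigl(\mathrm{Id}_M\otimes B(\mathrm{Id}_C\otimes\varepsilon_C\otimes\mathrm{Id}_C)\bigr)(\rho_M\otimes\mathrm{Id}_{C\otimes C}).
\]
The first side is the displayed formula above. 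With this correction the rest of your Step 2 goes through. The h-coseparability condition gives the heavy condition for every $M$. Conversely, taking $M=C$ and composing with $\varepsilon_C$ on the left recovers $(B\otimes B)(\mathrm{Id}_C\otimes\Delta_C\otimes\mathrm{Id}_C)=B(\mathrm{Id}_C\otimes\varepsilon_C\otimes\mathrm{Id}_C)$ by counitality.
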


\subsection{Frobenius (co)algebras and Frobenius pairs}\label{subsec:Frob}

An extensive treatment of Frobenius algebras in monoidal categories is found in \cite{BT-Frob}, to which we owe the first part of what follows. Let us consider a monoidal category $(\mathcal{M},\otimes, \underline{1})$ and an algebra $A$ in $\mathcal{M}$. The algebra $A$ is called Frobenius if there exist morphisms $\vartheta:  A \rightarrow \underline{1}$ and $e: \underline{1} \rightarrow A \otimes A$ in $\mathcal{M}$ such that
\begin{align*}
&(m_A \otimes \mathrm{Id}_A)(\mathrm{Id}_A \otimes e)=(\mathrm{Id}_A \otimes m_A)(e \otimes \mathrm{Id}_A),\\
&(\vartheta \otimes \mathrm{Id}_A)e=u_A=(\mathrm{Id}_A \otimes \vartheta)e.
\end{align*}
The pair $(\vartheta, e)$ is usually called a Frobenius pair (or Frobenius system) for $A$; $\vartheta$, $e$ are called Frobenius, respectively Casimir, morphism for $A$.
A standard result for Frobenius algebras is that Frobenius pairs are essentially unique. To see this, let $A_0:=\mathrm{Hom}_{\mathcal{M}}(\underline{1},A)$ with the monoid structure given by $m_{A_0}(\tilde{a} \otimes \tilde{b}):=m_A(\tilde{a} \otimes \tilde{b})$ for every $\tilde{a}, \tilde{b} \in A_0$. The invertible elements of $A_0$ form a group denoted by $A_0^{\times}$ and we write $\tilde{a}^{-1}$ for the inverse of an element $\tilde{a} \in A_0^{\times}$.

\begin{proposition}\cite[Prop. 2]{BT-Frob}\label{prop:FSalg}
Let $(\phi,e)$, $(\psi,f)$ be two Frobenius pairs for a Frobenius algebra $A$ in the monoidal category $(\mathcal{M}, \otimes, \underline{1})$. Then $\tilde{a}:=(\psi \otimes \mathrm{Id}_A)e: \underline{1} \rightarrow A$ is invertible in $A_0$ with inverse given by $\tilde{a}^{-1}:=(\phi \otimes \mathrm{Id}_A)f: \underline{1} \rightarrow A$ and $f=(\mathrm{Id}_A \otimes m_A(\tilde{a}^{-1} \otimes \mathrm{Id}_A))e$, $\psi= \phi m_A(\tilde{a} \otimes \mathrm{Id}_A)$.
\end{proposition}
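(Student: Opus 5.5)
We argue purely with the Frobenius pair axioms, the monoid structure of $A_0=\mathrm{Hom}_{\mathcal{M}}(\underline{1},A)$, and naturality of the tensor product (interchange law in a strict monoidal category). The key tool is the standard consequence of the Casimir axiom: for any Casimir morphism $e$ and any $\tilde{x}\in A_0$,
\[
(m_A(\tilde{x}\otimes \mathrm{Id}_A)\otimes \mathrm{Id}_A)e=(\mathrm{Id}_A\otimes m_A(\mathrm{Id}_A\otimes \tilde{x}))e ,
\]
obtained by precomposing the equality $(m_A \otimes \mathrm{Id}_A)(\mathrm{Id}_A \otimes e)=(\mathrm{Id}_A \otimes m_A)(e \otimes \mathrm{Id}_A)$ with $\tilde{x}$. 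In words, elements of $A_0$ can be moved across $e$.

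\textbf{Step 1: the formulas for $f$ and $\psi$.} Start from $f$ and insert the counit-type identity for $(\phi,e)$, namely $(\mathrm{Id}_A\otimes\phi)e=u_A$. Using the Casimir property of $f$, write
\[
f=(\mathrm{Id}_A\otimes m_A)(f\otimes u_A)=(\mathrm{Id}_A\otimes m_A)(f\otimes (\mathrm{Id}_A\otimes\phi)e).
\]
Rearranging by naturality gives
\[
f=(\mathrm{Id}_A\otimes\phi\otimes \mathrm{Id}_A)\big((\mathrm{Id}_A\otimes m_A)(f\otimes\mathrm{Id}_A)\otimes\mathrm{Id}_A\big)e.
\]
Now apply the Casimir identity for $f$ to convert $(\mathrm{Id}_A\otimes m_A)(f\otimes \mathrm{Id}_A)$ into $(m_A\otimes\mathrm{Id}_A)(\mathrm{Id}_A\otimes f)$. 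Then contract $\phi$ against the second leg of $f$. What remains is $e$ with its left leg multiplied by $(\phi\otimes \mathrm{Id}_A)f=\tilde{a}^{-1}$; more precisely, after the leg swap one obtains the factor $(\mathrm{Id}_A\otimes m_A(\tilde{a}^{-1}\otimes\mathrm{Id}_A))e$. Some care with which leg is contracted is needed, and the sliding identity above lets one move $\tilde{a}^{-1}$ to whichever side the formula requires.

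The formula for $\psi$ follows similarly. Write $\psi=\psi\,(\phi\otimes\mathrm{Id}_A\otimes \cdots)$, or more cleanly compute $\psi\,m_A(\tilde{x}\otimes\mathrm{Id}_A)$ with $\tilde{x}=(\ldots)$ by inserting $u_A=(\phi\otimes \mathrm{Id}_A)e$. Precisely: $\phi\, m_A(\tilde a\otimes \mathrm{Id}_A)=\phi\, m_A\big((\psi\otimes\mathrm{Id}_A)e\otimes\mathrm{Id}_A\big)$. Move the multiplication across $e$ using the Casimir identity to get $\psi\,m_A(\mathrm{Id}_A\otimes (\phi\otimes\ldots))$, and then use $(\mathrm{Id}_A\otimes \phi)e=u_A$ to arrive at $\psi$.

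\textbf{Step 2: invertibility of $\tilde{a}$.} Compute
\[
m_A(\tilde{a}\otimes\tilde{a}^{-1})=m_A\big((\psi\otimes\mathrm{Id}_A)e\otimes(\phi\otimes\mathrm{Id}_A)f\big).
\]
Use the Casimir identity for $e$ to slide $(\phi\otimes\mathrm{Id}_A)f$ into the first leg of $e$. Then recognize $\psi\, m_A(\mathrm{Id}_A\otimes(\phi\otimes\mathrm{Id}_A)f)$. Applying the Casimir identity for $f$ and using $(\psi\otimes\mathrm{Id}_A)f=u_A$ gives $\phi$, and finally $(\phi\otimes\mathrm{Id}_A)e=u_A$. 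The product in the other order is handled symmetrically by swapping the roles of $(\phi,e)$ and $(\psi,f)$. An alternative route is to apply Step 1's formula with the roles exchanged and compose the two.

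\textbf{Main obstacle.} The main difficulty is bookkeeping: keeping track of which leg of $e$ or $f$ the Frobenius morphism contracts, and applying the two counit identities $(\vartheta\otimes\mathrm{Id})e=u_A=(\mathrm{Id}\otimes\vartheta)e$ on the correct sides. To control this, we would first isolate the sliding lemma above as a separate claim, and then do every computation in string-diagram form before translating it back into compositions of morphisms.
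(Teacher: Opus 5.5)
Your argument for $\psi=\phi\,m_A(\tilde a\otimes\mathrm{Id}_A)$ is fine. Your Step 1 derivation of $f=(\mathrm{Id}_A\otimes m_A(\tilde a^{-1}\otimes\mathrm{Id}_A))e$, however, has a genuine gap.

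Write $e=e^1\otimes e^2$ and $f=f^1\otimes f^2$ as shorthand for the diagrams. Then the Casimir identity reads $be^1\otimes e^2=e^1\otimes e^2b$, and your sliding lemma is the special case $b=\tilde x$.

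\begin{itemize}
\item Naturality applied to $(\mathrm{Id}_A\otimes m_A)(f\otimes(\mathrm{Id}_A\otimes\phi)e)$ puts $\phi$ in the \emph{third} slot, giving $f^1\otimes f^2e^1\phi(e^2)$. The Casimir swap for $f$ then just returns $e^1\phi(e^2)f^1\otimes f^2=f$, so nothing is gained.
\item The expression you actually display has $\phi$ in the middle slot, $f^1\phi(f^2e^1)\otimes e^2$. It does equal $f$, but this uses the Casimir identity of $e$ and $(\phi\otimes\mathrm{Id}_A)e=u_A$, not naturality.
\item After the Casimir swap for $f$ it becomes $e^1\tilde b\otimes e^2$ with $\tilde b=(\mathrm{Id}_A\otimes\phi)f$. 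Contracting $\phi$ with the second leg of $f$ gives $(\mathrm{Id}_A\otimes\phi)f$, not $\tilde a^{-1}=(\phi\otimes\mathrm{Id}_A)f$. It also sits to the right of the \emph{first} leg of $e$, whereas the target has $\tilde a^{-1}$ to the left of the \emph{second} leg.
\end{itemize}

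The sliding lemma cannot bridge this. It only relates $\tilde x e^1\otimes e^2$ and $e^1\otimes e^2\tilde x$, and says nothing about $e^1\tilde x\otimes e^2$ versus $e^1\otimes\tilde x e^2$. Over a field these two differ by the Nakayama automorphism of $\phi$.

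Here is a concrete counterexample. Take $A=H_4$, let $\phi(h)=\delta_{h,gx}$ on the basis $\{1,g,x,gx\}$, and set $\psi=\phi\,m_A(g\otimes\mathrm{Id}_A)$. Then:
\begin{itemize}
\item $e=1\otimes gx-g\otimes x+x\otimes g+gx\otimes 1$;
\item $\tilde a=\tilde a^{-1}=g$;
\item $f=1\otimes x-g\otimes gx+x\otimes 1+gx\otimes g$;
\item $(\mathrm{Id}_A\otimes\phi)f=-g$;
\item $e^1g\otimes e^2=-f$.
\end{itemize}
So your chain proves a true but different identity. ``Moving $\tilde a^{-1}$ to whichever side the formula requires'' is exactly the step that fails.

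The fix is to insert the unit beside the \emph{first} leg of $f$.
\begin{itemize}
\item By $(\mathrm{Id}_A\otimes\phi)e=u_A$, write $f=f^1e^1\phi(e^2)\otimes f^2$.
\item The Casimir identity of $e$ with input $f^1$ turns this into $e^1\otimes\phi(e^2f^1)f^2$.
\item The Casimir identity of $f$ with input $e^2$ turns it into $e^1\otimes\phi(f^1)f^2e^2=e^1\otimes\tilde a^{-1}e^2$, as required.
\end{itemize}

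Step 2 needs the same care with sides. Sliding $\tilde a^{-1}$ across $e$ yields $\psi(\tilde a^{-1}e^1)e^2$. So the identity you need is $\psi\,m_A(\tilde a^{-1}\otimes\mathrm{Id}_A)=\phi$, which follows from the Casimir identity of $f$ together with $(\mathrm{Id}_A\otimes\psi)f=u_A$, not $(\psi\otimes\mathrm{Id}_A)f=u_A$. The version $\psi\,m_A(\mathrm{Id}_A\otimes\tilde a^{-1})=\phi$ that you wrote is false in the example above: evaluate both sides on $gx$.

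With these corrections the proof goes through. The reverse product $\tilde a^{-1}\tilde a$ is handled by sliding $\tilde a$ across $f$ and then using your $\psi$-formula and $(\psi\otimes\mathrm{Id}_A)f=u_A$.
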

It easily follows that there is a bijective correspondence between the set of Frobenius pairs for $A$ and the set $A_0^{\times}$. Explicitly, once we fix a Frobenius pair $(\vartheta,e)$ for $A$, we have
\[
FS_{\mathcal{M}}(A)=\left\lbrace ( \vartheta m_A(\tilde{a} \otimes \mathrm{Id}_A), (\mathrm{Id}_A \otimes m_A(\tilde{a}^{-1} \otimes \mathrm{Id}_A))e) \ | \ \tilde{a} \in A_0^{\times} \right\rbrace,
\]
where $FS_{\mathcal{M}}(A)$ denotes the set of all Frobenius pairs for $A$ in $\mathcal{M}$.

\bigskip

If we consider algebras in the opposite category, we get the dual notion of Frobenius coalgebra. Let $C$ be a coalgebra in a monoidal category $(\mathcal{M},\otimes, \underline{1})$. The coalgebra $C$ is called Frobenius if there exist morphisms $\t:  \underline{1} \rightarrow C$ and $B: C \otimes C \rightarrow \underline{1}$ in $\mathcal{M}$ such that
\begin{align*}
&(B \otimes \mathrm{Id}_C)(\mathrm{Id}_C \otimes \Delta_C)=(\mathrm{Id}_C \otimes B)(\Delta_C \otimes \mathrm{Id}_C),\\
&B(\t \otimes \mathrm{Id}_C)=\varepsilon_C=B(\mathrm{Id}_C \otimes \t).
\end{align*}
Again, the pair $(B, \t)$ is called a Frobenius pair (or system) for $C$; $\t$, $B$ are called Frobenius, respectively Casimir, morphism for $C$. Let $C_0:=\mathrm{Hom}_{\mathcal{M}}(C,\underline{1})$ with the monoid structure given by $m_{C_0}(\tilde{c} \otimes \tilde{d}):=(\tilde{c} \otimes \tilde{d})\Delta$ for every $\tilde{c}, \tilde{d} \in C_0$. The invertible elements of $C_0$ form a group denoted by $C_0^{\times}$ and we write $\tilde{c}^{-1}$ for the inverse of an element $\tilde{c} \in C_0^{\times}$.
The analogous of Proposition \ref{prop:FSalg} is
\begin{proposition}\label{prop:FScoalg}
Let $(B,\t)$, $(B',\t')$ be two Frobenius pairs for a Frobenius coalgebra $C$ in the monoidal category $(\mathcal{M}, \otimes, \underline{1})$. Then $\tilde{c}:=B(\t' \otimes \mathrm{Id}_C): C \rightarrow \underline{1}$ is invertible in $C_0$ with inverse given by $\tilde{c}^{-1}:=B'(\t \otimes \mathrm{Id}_C): C \rightarrow \underline{1}$ and
$B'=B(\mathrm{Id}_C \otimes (\tilde{c}^{-1} \otimes \mathrm{Id}_C)\Delta_C)$, $\t'= (\tilde{c} \otimes \mathrm{Id}_C)\Delta_C \t$.
\end{proposition}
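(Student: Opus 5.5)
The cleanest route is duality: a Frobenius coalgebra in $\mathcal{M}$ is exactly a Frobenius algebra in $\mathcal{M}^{\op}$ (same objects and tensor product, arrows reversed). Under this dictionary a Frobenius pair $(B,\t)$ for $C$ becomes a Frobenius pair $(\vartheta,e)=(\t,B)$ for the algebra $C$ in $\mathcal{M}^{\op}$. Moreover $C_0=\mathrm{Hom}_{\mathcal{M}}(C,\underline{1})=\mathrm{Hom}_{\mathcal{M}^{\op}}(\underline{1},C)$, and the convolution product $(\tilde c\otimes\tilde d)\Delta_C$ is precisely $m(\tilde c\otimes\tilde d)$ computed in $\mathcal{M}^{\op}$. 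So Proposition \ref{prop:FScoalg} should follow formally from Proposition \ref{prop:FSalg} applied in $\mathcal{M}^{\op}$.

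The one delicate point is the order of composition. Reversing arrows turns $B(\t'\otimes\mathrm{Id}_C)$ into $(\t'\otimes \mathrm{Id}_C)\circ^{\op}B$, which has the shape $(\psi\otimes\mathrm{Id})e$ with $\psi=\t'$ and $e=B$. This is exactly the element $\tilde a$ of Proposition \ref{prop:FSalg}, and the remaining formulas translate the same way. Since this bookkeeping is where an error could hide, I will instead give a direct verification in $\mathcal{M}$, which is short.

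\textbf{Step 1: $\tilde c^{-1}$ is a two-sided inverse.} Put $\tilde c=B(\t'\otimes\mathrm{Id})$ and $\tilde d=B'(\t\otimes\mathrm{Id})$. I must show $(\tilde c\otimes\tilde d)\Delta=\varepsilon=(\tilde d\otimes\tilde c)\Delta$. For the first product, write
\[
(\tilde c\otimes\tilde d)\Delta=\bigl(B(\t'\otimes\mathrm{Id})\otimes B'(\t\otimes\mathrm{Id})\bigr)\Delta .
\]
Apply the Casimir identity of $B$,
\[
(\mathrm{Id}\otimes B)(\Delta\otimes\mathrm{Id})=(B\otimes\mathrm{Id})(\mathrm{Id}\otimes\Delta),
\]
to move the comultiplication onto $\t'$. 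Then use the Casimir identity of $B'$ together with $B'(\mathrm{Id}\otimes\t')=\varepsilon$ and $B(\t\otimes\mathrm{Id})=\varepsilon$ to collapse the expression to $\varepsilon$. The second product is symmetric, with the roles of $(B,\t)$ and $(B',\t')$ exchanged. The key auxiliary identity I expect to use is
\[
(\mathrm{Id}\otimes B)(\Delta\t\otimes\mathrm{Id})=\mathrm{Id}_C \quad\text{and}\quad (B\otimes\mathrm{Id})(\mathrm{Id}\otimes\Delta\t)=\mathrm{Id}_C .
\]
These are the zig-zag (snake) identities. They follow from the Casimir identity combined with $B(\mathrm{Id}\otimes\t)=\varepsilon$ and $B(\t\otimes\mathrm{Id})=\varepsilon$, respectively.

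\textbf{Step 2: the formulas for $B'$ and $\t'$.} To get $\t'=(\tilde c\otimes\mathrm{Id})\Delta\t$, substitute $\tilde c=B(\t'\otimes\mathrm{Id})$. This gives
\[
(B\otimes\mathrm{Id})(\t'\otimes\Delta\t)=(B\otimes\mathrm{Id})(\mathrm{Id}\otimes\Delta\t)\,\t' ,
\]
which equals $\t'$ by the second snake identity. For $B'$, expand
\[
B(\mathrm{Id}\otimes(\tilde c^{-1}\otimes\mathrm{Id})\Delta)=B\bigl(\mathrm{Id}\otimes(B'(\t\otimes\mathrm{Id})\otimes\mathrm{Id})\Delta\bigr).
\]
Use the Casimir identity of $B'$ to rewrite $(B'\otimes\mathrm{Id})(\t\otimes\Delta)$ as $(\mathrm{Id}\otimes B')(\Delta\t\otimes\mathrm{Id})$. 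Then the snake identity for $(B,\t)$ cancels $B$ against $\Delta\t$, leaving $B'$.

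\textbf{Main obstacle.} The hardest part is keeping track of the tensor positions and of which side each snake identity acts on. If the direct computation gets tangled, I will fall back on the formal duality argument from the first paragraph, checking carefully that left and right are not swapped.
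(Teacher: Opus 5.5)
Your proposal is correct, and your direct verification takes a different route from the paper. The paper gives no argument of its own. It simply records the statement as the dual (``analogous'') of Proposition~\ref{prop:FSalg}, which is precisely the $\mathcal{M}^{\op}$ argument of your first paragraph. Your dictionary there is right, including the order of composition: $\vartheta=\t$, $e=B$, $\psi=\t'$, $f=B'$, $\tilde a=\tilde c$, and convolution in $C_0$ equals $m$ in $\mathcal{M}^{\op}$.

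The paper's route buys brevity, but it leans on the cited result and on the left/right bookkeeping. Your direct computation is self-contained and works verbatim in any strict monoidal category, using only the interchange law. It also makes visible that the formulas for $\t'$ and $B'$ use only the snake identities of $(B,\t)$ (plus the Casimir identity of $B'$), while invertibility of $\tilde c$ is where both pairs are needed.

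Step~1 closes exactly as you sketch:
\[
(\tilde c\otimes\tilde d)\Delta=\tilde d\,(\mathrm{Id}_C\otimes B)(\Delta\t'\otimes\mathrm{Id}_C)=B\bigl((\tilde d\otimes\mathrm{Id}_C)\Delta\t'\otimes\mathrm{Id}_C\bigr)=B(\t\otimes\mathrm{Id}_C)=\varepsilon .
\]
This uses $(\tilde c\otimes\mathrm{Id}_C)\Delta=(B\otimes\mathrm{Id}_C)(\mathrm{Id}_C\otimes\Delta)(\t'\otimes\mathrm{Id}_C)=(\mathrm{Id}_C\otimes B)(\Delta\t'\otimes\mathrm{Id}_C)$ and $(\tilde d\otimes\mathrm{Id}_C)\Delta\t'=(B'\otimes\mathrm{Id}_C)(\mathrm{Id}_C\otimes\Delta\t')\t=\t$.

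One small slip: the attributions in your snake identities are swapped. Precomposing the Casimir identity with $\t\otimes\mathrm{Id}_C$ gives $(\mathrm{Id}_C\otimes B)(\Delta\t\otimes\mathrm{Id}_C)=(B(\t\otimes\mathrm{Id}_C)\otimes\mathrm{Id}_C)\Delta$. So the first identity uses $B(\t\otimes\mathrm{Id}_C)=\varepsilon$, and dually the second uses $B(\mathrm{Id}_C\otimes\t)=\varepsilon$. Since both are hypotheses, nothing breaks.
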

Once we fix a Frobenius pair $(B,\t)$ for $C$, we have that
\[
FS_{\mathcal{M}}(C)=\left\lbrace (B(\mathrm{Id}_C \otimes (\tilde{c}^{-1} \otimes \mathrm{Id}_C)\Delta_C), (\tilde{c} \otimes \mathrm{Id}_C)\Delta_C \t) \ | \ \tilde{c} \in C_0^{\times} \right\rbrace,
\]
where $FS_{\mathcal{M}}(C)$ denotes the set of all Frobenius pairs for $C$ in $\mathcal{M}$.

\subsection{Cowreaths in monoidal categories}\label{cowreathsec}
Wreaths and cowreaths are generalized entwining structures that can be introduced in the context of $2$-categories. The definition of wreath was given in \cite[p. 256]{ls}, while the dual notion of cowreath appeared in \cite[p. 1056]{bc}. 
In \cite{bc} the authors show that when $\mathcal{M}$ is a monoidal category, a cowreath in $\mathcal{M}$ is equivalently defined by the choice of an algebra $A$ in $\mathcal{M}$, an object $X$ in $\mathcal{M}$, and an entwining map $\psi: X \otimes A \rightarrow A \otimes X$ satisfying appropriate relations. Among these relations is the fact that $(X, \psi)$ is a coalgebra in a suitable monoidal category associated to the algebra $A$. Further details about (co)wreaths in a $2$-category, their (co)representations and motivations for their investigation can be found in \cite{bc,bct,bct2,BT2, BT-app}.

\begin{definition}\cite[Subsec. 3.1]{bct}\label{transf}
Let $(\mathcal{M}, \otimes, \underline{1})$ be a monoidal category and let $A$ be an algebra in $\mathcal{M}$. A (right) transfer morphism through $A$ is a pair $(X, \psi)$ with $X \in \mathcal{M}$ and $\psi : X \otimes A \rightarrow A \otimes X$ in $\mathcal{M}$
such that
\begin{align*}
\psi (\mathrm{Id}_X \otimes m_A) &= (m_A \otimes \mathrm{Id}_X) (\mathrm{Id}_A \otimes \psi) (\psi \otimes \mathrm{Id}_A),\\
\psi (\mathrm{Id}_X \otimes u_A) &= u_A \otimes \mathrm{Id}_X.
\end{align*}
The category of all right transfer morphism through $A$ is denoted $\mathcal{T}_A^{\#}$ and a morphism $f:X \rightarrow Y$ for this category is a morphism $f: X \rightarrow A \otimes Y$ in $\mathcal{M}$ such that
\begin{equation*}\label{morphT}
(m_A \otimes \mathrm{Id}_Y) (\mathrm{Id}_A \otimes f) \psi_X =(m_A \otimes \mathrm{Id}_Y) (\mathrm{Id}_A \otimes \psi_Y) (f \otimes \mathrm{Id}_A).
\end{equation*}
The composition of two morphisms $f : X \rightarrow Y$ and $g : Y \rightarrow Z$ in $\mathcal{T}_A^{\#}$ is
\begin{equation*}\label{compT}
g \odot f = (m_A \otimes \mathrm{Id}_Z) (\mathrm{Id}_A \otimes g) f
\end{equation*}
and we have
\[\mathrm{Id}_{(X,\psi)} = u_A \otimes \mathrm{Id}_X.\]
The tensor product of $(X, \psi_X)$ and $(Y, \psi_Y )$ is
\begin{equation*}\label{tensorpsi}
X \circledast Y = (X \otimes Y, \ \psi_{X \otimes Y} = (\psi_X \otimes \mathrm{Id}_Y )(\mathrm{Id}_X \otimes \psi_Y )).
\end{equation*} 
The tensor product of morphisms $f : X \rightarrow X'$ and $g : Y \rightarrow Y'$ is
\begin{equation*}\label{tensorfg}
f \circledast g = (m_A \otimes \mathrm{Id}_{X' \otimes Y'}) (\mathrm{Id}_A \otimes \psi_{X'} \otimes \mathrm{Id}_{Y'}) (f \otimes g).
\end{equation*}
The unit object of $\mathcal{T}_A^{\#}$ is
\[(\underline{1}, r_A^{-1} l_A : \underline{1} \otimes A \rightarrow A \otimes \underline{1}).\]
\end{definition}

\begin{definition}\cite[Subsec. 3.2]{bct}\label{defcowreath}
A cowreath in $\mathcal{M}$ is a triple $(A, X, \psi)$ where $A$ is an algebra in $\mathcal{M}$ and $(X, \psi)$ is a coalgebra in $\mathcal{T}_A^{\#}$. 
This means that $(X, \psi) \in \mathcal{T}_A^{\#}$ and there are morphisms
$\delta : X \rightarrow A \otimes X \otimes X$ and $\epsilon : X \rightarrow A$ in $\mathcal{M}$ such that 
\begin{equation*}\label{deltamor}
(m_A \otimes \mathrm{Id}_{X^{\otimes 2}})(\mathrm{Id}_A \otimes \psi \otimes \mathrm{Id}_X)(\mathrm{Id}_{A \otimes X} \otimes \psi)(\delta \otimes \mathrm{Id}_A)= (m_A \otimes \mathrm{Id}_{X^{\otimes 2}})(\mathrm{Id}_A \otimes \delta)\psi \\
\end{equation*}
($\delta$ is a morphism in $\mathcal{T}_A^{\#}$),
\begin{equation*}\label{cowcoass}
(m_A \otimes \mathrm{Id}_{X^{\otimes 3}})(\mathrm{Id}_A \otimes \delta \otimes \mathrm{Id}_X)\delta = (m_A \otimes \mathrm{Id}_{X^{\otimes 3}})(\mathrm{Id}_A \otimes \psi \otimes \mathrm{Id}_{X^{\otimes 2}})(\mathrm{Id}_{A \otimes _X} \otimes \delta)\delta
\end{equation*}
(coassociativity),
\begin{equation*}\label{epsmor} 
m_A (\mathrm{Id}_A \otimes \epsilon) \psi = m_A (\epsilon \otimes \mathrm{Id}_A) 
\end{equation*}
($\epsilon$ is a morphism in $\mathcal{T}_A^{\#}$),
\begin{equation*}\label{leftcou}
(m_A \otimes \mathrm{Id}_X) (\mathrm{Id}_A \otimes \epsilon \otimes \mathrm{Id}_X) \delta = u_A \otimes \mathrm{Id}_X 
\end{equation*}
(left counit property),
\begin{equation*}\label{rightcou}
(m_A \otimes \mathrm{Id}_X)(\mathrm{Id}_A \otimes \psi)(\mathrm{Id}_{A \otimes X} \otimes \epsilon) \delta = u_A \otimes \mathrm{Id}_X 
\end{equation*}
 (right counit property).
\end{definition}

\subsection{(h-)Separable and Frobenius cowreaths in monoidal categories}

Given that cowreaths in $\mathcal{M}$ can be regarded as coalgebras in the category $\mathcal{T}^{\#}_A$, the notions of (h-)separable cowreath and Frobenius cowreath can be introduced and examples of such structures can be presented, see \cite{bct2,BT,mt,mt2,FR3}.
\begin{definition}\cite[Def. 2.5]{BT}\label{def:cowprop}
Let $(A, X, \psi)$ be a cowreath in a monoidal category $(\mathcal{M}, \otimes, \underline{1})$. We say that
\begin{itemize}
\item A morphism $B: X \otimes X \rightarrow A$ in $\mathcal{M}$ is a Casimir morphism for the coalgebra $(X, \psi) \in \mathcal{T}^{\#}_A$ if $B$ satisfies
\begin{equation*}
m_A(B \otimes \mathrm{Id}_A) = m_A(\mathrm{Id}_A \otimes B)(\psi \otimes \mathrm{Id}_X)(\mathrm{Id}_X \otimes \psi)
\end{equation*}
($B$ is a morphism in $\mathcal{T}_A^{\#}$),
\begin{align*}
&(m_A \otimes \mathrm{Id}_X)(\mathrm{Id}_A \otimes \psi)(\mathrm{Id}_{A \otimes X} \otimes B)(\delta \otimes \mathrm{Id}_X)\\
=&(m_A \otimes \mathrm{Id}_X)(\mathrm{Id}_A  \otimes B \otimes \mathrm{Id}_X)(\psi \otimes \mathrm{Id}_{X^{\otimes 2}})(\mathrm{Id}_X \otimes \delta),
\end{align*}
($B$ satisfies the Casimir condition).

\item The cowreath $(A, X, \psi)$ is called separable if there exists a Casimir morphism $B: X \otimes X \rightarrow A$ for $(X, \psi)$ that also satisfies
\begin{equation*}
m_A(\mathrm{Id}_A \otimes B)\delta =\epsilon,
\end{equation*}
($B$ is normalized).

\item The cowreath $(A, X, \psi)$ is called h-separable if there exists a normalized Casimir morphism $B: X \otimes X \rightarrow A$ such that
\begin{align*}
&m_A[m_A(\mathrm{Id}_A \otimes B) \otimes B](\psi \otimes \mathrm{Id}_{X^{ \otimes  3}})(\mathrm{Id}_X \otimes \delta \otimes\mathrm{Id}_X)\\
=&m_A(\mathrm{Id}_A  \otimes B)( \psi \otimes \mathrm{Id}_X)(\mathrm{Id}_X \otimes \epsilon \otimes \mathrm{Id}_X),
\end{align*}
(h-coseparability condition).

\item The cowreath $(A, X, \psi)$ is called Frobenius if $(X,\psi)$ is a Frobenius coalgebra in $\mathcal{T}^{\#}_A$. This means that there exist a Casimir morphism $B: X \otimes X \rightarrow A$ for $(X, \psi)$ and a morphism $\t: \underline{1} \rightarrow A  \otimes X$ in $\mathcal{M}$ such that
\begin{equation*}
(m_A \otimes \mathrm{Id}_X)(\mathrm{Id}_A \otimes \t)=(m_A \otimes \mathrm{Id}_X )(\mathrm{Id}_A  \otimes \psi)(\t \otimes \mathrm{Id}_A),
\end{equation*}
($\t$ is a morphism in $\mathcal{T}_A^{\#}$),
\begin{equation*}
m_A(\mathrm{Id}_A \otimes B)(\t \otimes \mathrm{Id}_X) =\epsilon= m_A(\mathrm{Id}_A \otimes B)(\psi \otimes \mathrm{Id}_X)(\mathrm{Id}_X \otimes \t).
\end{equation*}
The second condition assures that $\t$ is a Frobenius morphism, i.e. that $(B,\t)$ is a Frobenius pair for the coalgebra $(X, \psi)$.
\end{itemize}
\end{definition}

\subsection{Cowreaths of type \texorpdfstring{$(A \otimes H^{\op},H,\psi)$}{}}

From now on we will consider $\mathcal{M}=\mathrm{Vec}_{\Bbbk}$ the monoidal category of vector spaces over a fixed field $\Bbbk$. Let $A$ be an algebra in $\mathrm{Vec}_{\Bbbk}$ and let $H$ be a Hopf algebra in $\mathrm{Vec}_{\Bbbk}$. Given a $\Bbbk$-linear map $\rho_A: A \rightarrow A \otimes H$, the pair $(A, \rho_A)$ is called a (right) $H$-comodule algebra if the following conditions are satisfied.
\begin{align*}
(A, \rho_A) &  \textrm{ is a (right)} \ H\textrm{-comodule,}\\
\rho_A (ab) &= a_0b_0 \otimes a_1b_1,\\
\rho_A (1_A)&=1_A \otimes 1_H,
\end{align*}
for all $a,b \in A$.
The map $\rho_A$ is also called a (right) $H$-coaction on $A$. The following proposition shows how to build a cowreath in $\mathrm{Vec}_{\Bbbk}$ from a Hopf algebra $H$ and an $H$-comodule algebra $(A,\rho_A)$.
\begin{proposition}\cite[Sec. 5]{mt}\label{propXA}
Let $H$ be a Hopf algebra and let $\left(A,\rho_A \right)$ be a right $H$-comodule algebra.
Let $\psi: H \otimes A \otimes H^{\op} \rightarrow A \otimes H^{\op} \otimes H$ be defined by
\begin{equation*}\label{psiHAHop}
\psi(h \otimes a \otimes l)=a_0 \otimes l_1 \otimes l_2 h a_1
\end{equation*}
for every $h \in H$ and every $a \otimes l \in A \otimes H^{\op}$.
Then $(H, \psi) \in \mathcal{T}_{A\otimes H^{\op}}^{\#}$ and $(H, \psi)$ is a coalgebra in $\mathcal{T}_{A \otimes H^{\op}}^{\#}$, with comultiplication $\delta_H:  H \rightarrow A \otimes H^{\op} \otimes H \otimes H$ and counit $\epsilon_H: H \rightarrow A \otimes H^{\op}$ given by
\begin{eqnarray*}
\delta_H(h)=1_A \otimes 1_H\otimes h_1 \otimes h_2, \label{deltaH}\\
\epsilon_H(h)=\varepsilon_H(h)1_A \otimes 1_H, \label{epsH}
\end{eqnarray*}
for every $h \in H$.
\end{proposition}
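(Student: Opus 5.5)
The plan is to verify directly, in $\mathrm{Vec}_{\Bbbk}$, each axiom of Definition \ref{transf} and Definition \ref{defcowreath} for the algebra $R:=A\otimes H^{\op}$, the object $X=H$ and the map $\psi$. Two facts drive all computations. First, the multiplication of $R$ is $(a\otimes l)\bullet(b\otimes k)=ab\otimes kl$. Second, $\psi$ factors as a tensor product of a map on the $A$-part and a map on the $H^{\op}$-part:
\[
\psi(h\otimes a\otimes l)=a_0\otimes l_1\otimes l_2ha_1 .
\]
So $h$ first absorbs $a_1$ on the right and is then multiplied by $l_2$ on the left. It is convenient to view $H$ as a right $A$-"module" through $\rho_A$ and as a left $H^{\op}$-"module" through $\Delta$, and to treat the two contributions separately.

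\textbf{Transfer morphism.} Take $\psi(\mathrm{Id}_H\otimes \bullet)$ applied to $h\otimes(a\otimes l)\otimes(b\otimes k)$. The product is $ab\otimes kl$, so $\psi$ returns $a_0b_0\otimes k_1l_1\otimes k_2l_2\,h\,a_1b_1$. Here I use multiplicativity of $\rho_A$ and of $\Delta$.

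On the other side, first $\psi\otimes\mathrm{Id}$ gives $a_0\otimes l_1\otimes l_2ha_1$. Applying $\psi$ to $l_2ha_1\otimes b\otimes k$ then gives $b_0\otimes k_1\otimes k_2l_2ha_1b_1$. Multiplying in $R$ yields $a_0b_0\otimes k_1l_1\otimes k_2l_2ha_1b_1$, which is the same expression.

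Unitality is immediate from $\rho_A(1_A)=1_A\otimes 1_H$ and $\Delta(1)=1\otimes1$.

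\textbf{Coalgebra axioms.} Since $\delta_H$ and $\epsilon_H$ have trivial $R$-component $1_R$, coassociativity and both counit axioms collapse to those of $H$. The only twist is the right counit axiom, which contains $\psi(h_1\otimes 1_R)=1_R\otimes h_1$, and this is trivial.

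For $\epsilon_H$ being a morphism in $\mathcal{T}^{\#}_R$, I would compute $\varepsilon(l_2ha_1)\,a_0\otimes l_1=\varepsilon(h)\,a\otimes l$. Both sides then equal $\varepsilon(h)(a\otimes l)$.

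\textbf{Main step: $\delta_H$ is a morphism in $\mathcal{T}^{\#}_R$.} This is the only nontrivial point. On $h\otimes a\otimes l$, the right side is $(\mathrm{Id}\otimes\delta_H)\psi$, namely
\[
a_0\otimes l_1\otimes (l_2ha_1)_1\otimes(l_2ha_1)_2=a_0\otimes l_1\otimes l_2h_1a_1\otimes l_3h_2a_2 .
\]
The left side applies $\psi$ twice, first moving $a\otimes l$ past $h_2$ and then past $h_1$. The first move gives $a_0\otimes l_1\otimes l_2h_2a_1$. Moving the result past $h_1$ gives $a_0\otimes l_1\otimes l_2h_1a_1\otimes l_3h_2a_2$ after relabelling coassociatively.

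The point needing care is the ordering of the iterated $\psi$ and the opposite multiplication when combining the $H^{\op}$-legs. This is exactly where one must swap into the $\bullet$ convention. I expect bookkeeping of the Sweedler indices for $\rho_A$ (via comodule coassociativity) and for $\Delta$ to be the only obstacle; no antipode is needed.
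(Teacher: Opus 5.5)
The paper gives no proof of this proposition; it is quoted from \cite[Sec. 5]{mt}. Judged on its own, your direct verification is correct. I rechecked each step and everything comes out as you claim, with no antipode involved:
\begin{itemize}
\item the transfer identity and unitality;
\item the fact that $\epsilon_H$ is a morphism;
\item coassociativity and both counit properties;
\item your main step, including the order of the two applications of $\psi$ (first past $h_2$, then past $h_1$) and the use of coassociativity of $\rho_A$ and $\Delta$ to relabel.
\end{itemize}

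Two small imprecisions. First, $\psi$ is not a tensor product of two maps. It is the composite $(\mathrm{Id}_A\otimes\psi_2)(\psi_1\otimes\mathrm{Id}_{H^{\op}})$, with $\psi_1(h\otimes a)=a_0\otimes ha_1$ and $\psi_2(h\otimes l)=l_1\otimes l_2h$; this is what your next sentence actually describes. Second, the right-hand side of the coassociativity axiom also applies $\psi$, to $h_1\otimes 1_A\otimes 1_H$. So the right counit axiom is not the only place where $\psi(h\otimes 1_A\otimes 1_H)=1_A\otimes 1_H\otimes h$ is used. This is harmless, since that identity is just unitality of the transfer morphism.

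For comparison, the introduction hints at a more conceptual route. Put $K=H\otimes H^{\op}$; this is a bialgebra, which is all that is needed. Let $K$ coact on $A\otimes H^{\op}$ by $a\otimes l\mapsto a_0\otimes l_1\otimes a_1\otimes l_2$, which is a comodule algebra structure. Let $K$ act on $H$ by $h\cdot(x\otimes y)=yhx$, which is a module coalgebra structure. Then $\psi(h\otimes b)=b_0\otimes h\cdot b_1$ is the Doi--Koppinen entwining. The proposition becomes a special case of the general fact that such data give a cowreath with $\delta(c)=1\otimes c_1\otimes c_2$ and $\epsilon(c)=\varepsilon(c)1$.

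In that language:
\begin{itemize}
\item the transfer condition is the comodule-algebra axiom plus associativity of the action;
\item your main step is the module-coalgebra identity $\Delta(h\cdot k)=h_1\cdot k_1\otimes h_2\cdot k_2$ together with coassociativity of the coaction.
\end{itemize}
Your computation is exactly this argument unpacked for this particular $K$. It is self-contained and just as short. The conceptual version makes it transparent why the antipode plays no role, and it applies verbatim to any comodule algebra and module coalgebra over a common bialgebra.
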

At this point, by explicitly writing the conditions in Definition \ref{def:cowprop}, we can describe when a cowreath of this type is (h-)separable or Frobenius (cf. \cite{BT,mt}). 

\begin{enumerate}[\textbf{\roman*})]
\item A Casimir morphism for the coalgebra $(H,\psi)$ is a $\Bbbk$-linear map $B:H\otimes H\rightarrow A\otimes H^{\op}$ given by
\begin{equation*}
h\otimes h^{\prime }\mapsto B(h \otimes h')= B^{A}\left( h\otimes h^{\prime }\right)
\otimes B^{H}\left( h\otimes h^{\prime }\right)
\end{equation*}%
and such that
\begin{align}
(a_{0} \otimes 1_H)B(g_{2}ha_{1}\otimes g_{3}h^{\prime }a_{2})(1 \otimes g_{1})=(1_A \otimes g)B(h\otimes h^{\prime })(a \otimes 1_H),  \label{eq:Casimir1}\\
B^{A}(h_{2}\otimes h^{\prime })_{0}\otimes B^{H}(h_{2}\otimes h^{\prime
})_{1}\otimes B^{H}(h_{2}\otimes h^{\prime })_{2}h_{1}B^{A}(h_{2}\otimes
h^{\prime })_{1}=B(h\otimes h_{1}^{\prime })\otimes h_{2}^{\prime },  \label{eq:Casimir2}
\end{align}
for every $a \in A$ and every $g,h,h' \in H$.

\item The cowreath $(A \otimes H^{\op}, H, \psi)$ is separable if and only if there is a Casimir morphism $B:H\otimes H\rightarrow A\otimes H^{\op}$ for $(H,\psi)$ such that
\begin{equation}\label{eq:sepcas}
B(h_1 \otimes h_2)=\varepsilon(h)1_A \otimes 1_H,
\end{equation}
for every $h \in H$.

\item The cowreath $(A \otimes H^{\op}, H, \psi)$ is h-separable if and only if there is a normalized Casimir morphism $B:H\otimes H\rightarrow A\otimes H^{\op}$ for $(H,\psi)$ such that
\begin{equation}\label{eq:hsepcas}
B^{A}\left( h\otimes h_{1}^{\prime }\right) B^{A}\left( h_{2}^{\prime
}\otimes h^{\prime \prime }\right) \otimes B^{H} \left( h_{2}^{\prime }\otimes h^{\prime \prime}\right) B^{H}\left( h\otimes h_{1}^{\prime }\right) =\varepsilon_H \left( h^{\prime }\right) B\left( h\otimes h^{\prime \prime }\right),
\end{equation}
for every $h,h',h'' \in H$.

\item The cowreath $(A \otimes H^{\op}, H, \psi)$ is Frobenius if and only if there is a Casimir morphism $B:H\otimes H\rightarrow A\otimes H^{\op}$ for $(H,\psi)$ and an element $\t=\t^1 \otimes \t^2 \otimes \t^3 \in A \otimes H \otimes H$ such that
\begin{align}
a\t^1 \otimes \t^2h \otimes \t^3=\t^1 a_0 \otimes h_1\t^2 \otimes h_2 \t^3 a_1, \label{eq:Frobel1}\\
(\t^1 \otimes 1_H)B(\t^3 \otimes h)(1_A \otimes \t^2)=\varepsilon(h)1_A \otimes 1_H=(\t^1_0 \otimes 1_H)B(\t^2_2h\t^1_1 \otimes \t^3)(1_A \otimes \t^2_1),\label{eq:Frobel2}
\end{align}
for every $a \in A$ and every $h \in H$. In this case $\t$ is called a Frobenius element for $(A \otimes H^{\op}, H, \psi)$. 
\end{enumerate}

\subsection{Integrals on cowreaths of type \texorpdfstring{$(A \otimes H^{\op},H,\psi)$}{}}

Cowreaths of the form $(A \otimes H^{\op},H,\psi)$ defined in the above proposition, are Galois cowreath, see \cite[Prop. 12.3]{BT3}. (Pre-)Galois cowreaths in monoidal categories were introduced in \cite{BT3} and further investigated in \cite{BT}. An integral theory for this kind of cowreaths was developed in \cite{BT2, BT} and it will prove very useful in the sequel. In fact, we will show that the (h-)separability and the Frobenius property of cowreaths of type $(A \otimes H^{\op},H,\psi)$ can be described in terms of integrals instead of Casimir morphisms, if we assume that $H$ has bijective antipode. We present the definition of integral on cowreaths of our interest, obtained from the general one given in \cite{BT}.

\begin{definition}\cite[Def. 2.2]{BT}
An integral on $(A \otimes H^{\op}, H,\psi)$ \--- also called an integral on the coalgebra $(H,\psi)$ \--- is a $\Bbbk$-linear map $\T: H \rightarrow A \otimes H^{\op}$ given by $\T(h)=\T^A(h) \otimes \T^H(h)$ and such that
\begin{align}
\T^A(h_2)_0 \otimes \T^H(h_2)_1 \otimes \T^H(h_2)_2 h_1\T^A(h_2)_1=\T(h) \otimes 1_H,\label{eq:weakint}\\
(a_0 \otimes 1_H)\T(S^{-1}(a_2)ha_1)(1_A \otimes S^{-1}(a_3))=(1_A \otimes S^{-1}(a_1))\T(h)(a_0 \otimes 1_H),\label{eq:int}
\end{align}
for every $a \in A$ and every $h \in H$. $\T$ is called a total integral if in addition
\begin{equation}\label{eq:totint}
\T(1_H)=1_A \otimes 1_H.
\end{equation}
\end{definition}

\begin{proposition}\label{prop:weakint2}
If $H$ has bijective antipode, conditions \eqref{eq:weakint} and \eqref{eq:int} are equivalent to
\begin{align}
\T(h)=\varepsilon(\T^H(h_2))\T^A(h_2)_0 \otimes S^{-1}(h_1\T^A(h_2)_1),\label{eq:weakint2}\\
a_0[(\mathrm{Id}_A \otimes \varepsilon)\T(S^{-1}(a_2)ha_1)]=[(\mathrm{Id}_A \otimes \varepsilon)\T(h)]a.\label{eq:int2}
\end{align}
\end{proposition}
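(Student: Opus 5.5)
The plan is to prove the two equivalences separately. The first is \eqref{eq:weakint}$\Leftrightarrow$\eqref{eq:weakint2}. The second is \eqref{eq:int}$\Leftrightarrow$\eqref{eq:int2}, proved under the standing assumption that \eqref{eq:weakint} (equivalently \eqref{eq:weakint2}) holds. Throughout, write $\T(h)=x(h)\otimes y(h)$, with the paper's convention for the product in $A\otimes H^{\op}$. We will use that $S^{-1}$ is an anti-algebra and anti-coalgebra map with $\varepsilon S^{-1}=\varepsilon$. We will also use that $\rho_A$ is an algebra map.

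For \eqref{eq:weakint}$\Rightarrow$\eqref{eq:weakint2}, apply to both sides of \eqref{eq:weakint} the linear map $A\otimes H\otimes H\to A\otimes H$ given by $a\otimes l\otimes k\mapsto a\otimes S^{-1}(k)l$. On the right-hand side this gives $x(h)\otimes S^{-1}(1_H)y(h)=\T(h)$. On the left-hand side we get
\[
x(h_2)_0\otimes S^{-1}(x(h_2)_1)S^{-1}(h_1)S^{-1}(y(h_2)_2)y(h_2)_1 .
\]
Since $S^{-1}(l_2)l_1=\varepsilon(l)1_H$, this collapses to $\varepsilon(y(h_2))\,x(h_2)_0\otimes S^{-1}(h_1x(h_2)_1)$, which is \eqref{eq:weakint2}.

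For the converse, set $t:=(\mathrm{Id}_A\otimes\varepsilon)\T$. Then \eqref{eq:weakint2} reads $\T(h)=t(h_2)_0\otimes S^{-1}(h_1t(h_2)_1)$. Substitute this into the left-hand side of \eqref{eq:weakint}. Comultiplying $S^{-1}(h_2t(h_3)_1)$ yields $S^{-1}(h_3t(h_4)_2)\otimes S^{-1}(h_2t(h_4)_1)$ after coassociativity of $\rho_A$ and relabelling. Multiplying the last factor by $h_1t(h_4)_3$ then produces telescoping products $S^{-1}(u_2)u_1$. These reduce the expression to $t(h_2)_0\otimes S^{-1}(h_1t(h_2)_1)\otimes 1_H=\T(h)\otimes 1_H$.

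For \eqref{eq:int}$\Rightarrow$\eqref{eq:int2}, apply $\mathrm{Id}_A\otimes\varepsilon$ to \eqref{eq:int}. Since $\varepsilon$ is multiplicative and $\varepsilon S^{-1}=\varepsilon$, the counits absorb the outer factors $S^{-1}(a_3)$ and $S^{-1}(a_1)$. What remains is exactly \eqref{eq:int2}.

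For the converse, the key point is that by \eqref{eq:weakint2} the whole of $\T$ is recovered from $t$. I would therefore rewrite both sides of \eqref{eq:int} in terms of $t$. For the left-hand side, put $k=S^{-1}(a_2)ha_1$ (with higher indices) into $\T(k)=t(k_2)_0\otimes S^{-1}(k_1t(k_2)_1)$. Then apply $\rho_A$ to \eqref{eq:int2}, written for the shifted argument, to trade $a_0\,t(\dots)_0\otimes a_1t(\dots)_1$ for $t(h)_0a_0\otimes t(h)_1a_1$. The antipode factors coming from $k_1$ should then cancel against $S^{-1}(a_3)$ and $S^{-1}(a_1)$ on the two sides, leaving $(1_A\otimes S^{-1}(a_1))\T(h)(a_0\otimes 1_H)$.

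I expect this last step to be the main obstacle. It requires carefully tracking the Sweedler indices of $a$ through $\Delta$ of $S^{-1}(a_2)ha_1$, the comultiplicativity of $\rho_A$, and the opposite multiplication in $H^{\op}$, and making sure the telescoping goes through in the right order.
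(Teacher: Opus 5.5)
Your proposal follows essentially the same route as the paper: the map $a\otimes l\otimes k\mapsto a\otimes S^{-1}(k)l$ for \eqref{eq:weakint}$\Rightarrow$\eqref{eq:weakint2}, substitution plus telescoping for the converse, $\mathrm{Id}_A\otimes\varepsilon$ for \eqref{eq:int}$\Rightarrow$\eqref{eq:int2}, and, for the last direction, expanding via \eqref{eq:weakint2}, then applying $(\mathrm{Id}_A\otimes S^{-1})\rho_A$ to \eqref{eq:int2} at $h_2$, then using \eqref{eq:weakint2} again. One bookkeeping fix is needed in the telescoping step: the factor $\T^A(h_2)_1$ carries the lowest coaction index, so the expression reads $t(h_4)_0\otimes S^{-1}(h_3t(h_4)_3)\otimes S^{-1}(h_2t(h_4)_2)h_1t(h_4)_1$, whereas with your labels $S^{-1}(h_2t(h_4)_1)h_1t(h_4)_3$ would not telescope.
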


\begin{proof}
To obtain \eqref{eq:weakint2} from \eqref{eq:weakint}, it is enough to apply $(\mathrm{Id}_A \otimes m^{\op}_H)(\mathrm{Id}_A \otimes \mathrm{Id}_H \otimes S^{-1})$.
Conversely, if \eqref{eq:weakint2} holds, then
\begin{align*}
&\quad \T^A(h_2)_0 \otimes \T^H(h_2)_1 \otimes \T^H(h_2)_2 h_1\T^A(h_2)_1\\
\overset{\eqref{eq:weakint2}}&{=}\varepsilon(\T^H(h_{2_2}))\T^A(h_{2_2})_{0_0} \otimes S^{-1}(h_{2_1}\T^A(h_{2_2})_1)_1 \otimes S^{-1}(h_{2_1}\T^A(h_{2_2})_1)_2 h_1\T^A(h_{2_2})_{0_1} \\
&=\varepsilon(\T^H(h_3))\T^A(h_3)_0 \otimes S^{-1}(h_{2_2}\T^A(h_3)_{2_2}) \otimes S^{-1}(h_{2_1}\T^A(h_3)_{2_1}) h_1\T^A(h_3)_1\\
&=\varepsilon(\T^H(h_4))\T^A(h_4)_0 \otimes S^{-1}(h_3\T^A(h_4)_3) \otimes S^{-1}(\T^A(h_4)_2)S^{-1}(h_2) h_1\T^A(h_4)_1\\
&=\varepsilon(\T^H(h_2))\T^A(h_2)_0 \otimes S^{-1}(h_1\T^A(h_2)_3) \otimes S^{-1}(\T^A(h_2)_2)\T^A(h_2)_1\\
&=\varepsilon(\T^H(h_2))\T^A(h_2)_0 \otimes S^{-1}(h_1\T^A(h_2)_1) \otimes 1_H\\
\overset{\eqref{eq:weakint2}}&{=}\T(h) \otimes 1_H,
\end{align*}
thus \eqref{eq:weakint} is verified.
Similarly \eqref{eq:int2} can be obtained from \eqref{eq:int} by applying $(\mathrm{Id}_A \otimes \varepsilon)$ on both sides, whereas if \eqref{eq:weakint2} and \eqref{eq:int2} holds we find
\begin{align*}
&\quad (a_0 \otimes 1_H)\T(S^{-1}(a_2)ha_1)(1_A \otimes S^{-1}(a_3))\\
\overset{\eqref{eq:weakint2}}&{=}(a_0 \otimes 1_H)\left[\varepsilon(\T^H(S^{-1}(a_3)h_2a_2))\T^A(S^{-1}(a_3)h_2a_2)_0 \otimes S^{-1}(a_5S^{-1}(a_4)h_1a_1\T^A(S^{-1}(a_3)h_2a_2)_1)\right]\\
&=\left[\varepsilon(\T^H(S^{-1}(a_3)h_2a_2))a_0\T^A(S^{-1}(a_3)h_2a_2)_0 \otimes S^{-1}(a_1\T^A(S^{-1}(a_3)h_2a_2)_1)\right](1_A \otimes S^{-1}(h_1))\\
&=\varepsilon(\T^H(S^{-1}(a_2)h_2a_1))\left[(\mathrm{Id}_A \otimes S^{-1})\rho\left(a_0\T^A(S^{-1}(a_2)h_2a_1)\right)\right](1_A \otimes S^{-1}(h_1))\\
\overset{\eqref{eq:int2}}&{=}\varepsilon(\T^H(h_2))\left[(\mathrm{Id}_A \otimes S^{-1})\rho\left(\T^A(h_2)a\right)\right](1_A \otimes S^{-1}(h_1))\\
&=(1_A \otimes S^{-1}(a_1))\varepsilon(\T^H(h_2))\left[\T^A(h_2)_0 \otimes S^{-1}(h_1\T^A(h_2)_1)\right](a_0 \otimes 1_H)\\
\overset{\eqref{eq:weakint2}}&{=}(1_A \otimes S^{-1}(a_1))\T(h)(a_0 \otimes 1_H),
\end{align*}
i.e. \eqref{eq:int} is satisfied.
\end{proof}

\begin{remark}
Notice that while \eqref{eq:weakint} and \eqref{eq:weakint2} are completely equivalent, \eqref{eq:int} and \eqref{eq:int2} can be used interchangeably only if one of the preceding already holds.
\end{remark}

\subsection{Classic integrals in Hopf algebras}

For a thorough treatment of integrals in Hopf algebras we refer to \cite[Ch. 10]{Rad3}. 
We recall that a left integral in a Hopf algebra $H$ is an element $\Lambda \in H$  such that $h\Lambda=\varepsilon(h)\Lambda$ for every $h \in H$. A right integral in $H^*$ is a $\Bbbk$-linear map $\lambda \in H^*$ such that $\lambda(h_1)h_2=\lambda(h)1_H$ for every $h \in H$. Integrals in $H$ (and in $H^*$) exist if and only if $H$ is finite-dimensional. In that case each space of integrals is one-dimensional and $\Lambda$ and $\lambda$ can be chosen such that $\lambda(\Lambda)=1$. Given a left integral $\Lambda \in H$, there exists an algebra map $\mu \in H^*$ such that $\Lambda h= \mu(h)\Lambda$ for every $h\ \in H$. This map is called the distinguished grouplike element of $H^*$ \cite{Rad3} or the modular element in $H^*$ \cite{BT} and it is invertible (with respect to the convolution product in $H^*$) with inverse $\mu^{-1}=\mu S$. The Hopf algebra $H$ is called unimodular when $\mu=\varepsilon_H$. The element $S(\Lambda)$ is a right integral in $H$, i.e. it satisfies $S(\Lambda)h=\varepsilon(h)S(\Lambda)$ for every $h \in H$. We have $\lambda(S(\Lambda))=1$, provided that $\lambda(\Lambda)=1$.

\section{Casimir morphisms vs integrals}\label{Casvsint}

Let $H$ be a Hopf algebra and $(A,\rho_A)$ be an $H$-comodule algebra. It is easy to check that if $B:H \otimes H \rightarrow A \otimes H^{\op}$ is a Casimir morphism for the coalgebra $(H,\psi)$, then $B(\mathrm{Id}_H \otimes u_H):H \rightarrow A \otimes H^{\op}$ is an integral on $(H,\psi)$. This was already established in \cite[Prop. 3.5]{BT2} in a more general context. Our first goal is to show that if $H$ has invertible antipode, the correspondence between Casimir morphisms for $(H,\psi)$ and integrals on $(H,\psi)$ is bijective. 
First, we prove a useful lemma.

\begin{lemma}\label{Lemma:B(hx1)}
Let $H$ be a Hopf algebra with bijective antipode and let $B:H \otimes H \rightarrow A \otimes H^{\op}$ be a map satisfying \eqref{eq:Casimir1}. Then every element $B(h \otimes h')$ can be written on elements of the form $B(h \otimes 1)$. Namely
\begin{equation}\label{eq:B(hx1)}
B(h \otimes h')=(1_A \otimes h'_3)B(S^{-1}(h'_1)h \otimes 1_H)(1_A \otimes S^{-1}(h'_2))
\end{equation}
for every $h,h' \in H$.
\end{lemma}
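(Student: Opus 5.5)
The plan is to specialize the Casimir condition \eqref{eq:Casimir1} to $a=1_A$ and then make a suitable substitution, so that only the $H$-variables move. Since $\rho_A(1_A)=1_A\otimes 1_H$, every $a_i$ becomes $1$, and \eqref{eq:Casimir1} reduces to the $H$-equivariance identity
\begin{equation*}
B(g_2x\otimes g_3y)(1_A\otimes g_1)=(1_A\otimes g)B(x\otimes y)
\end{equation*}
for all $g,x,y\in H$. Here, following the paper's convention, all products are written as in $A\otimes H$. So $(1_A\otimes g)B(x\otimes y)$ means $B^A(x\otimes y)\otimes gB^H(x\otimes y)$, and $B(\cdots)(1_A\otimes g_1)$ means $B^A(\cdots)\otimes B^H(\cdots)g_1$.

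Next I start from the right-hand side of \eqref{eq:B(hx1)} and use the identity with $y=1_H$, $g=h'_3$ and $x=S^{-1}(h'_1)h$. After renumbering Sweedler indices (with $g_1,g_2,g_3$ becoming $h'_3,h'_4,h'_5$), this gives
\begin{equation*}
(1_A\otimes h'_3)B(S^{-1}(h'_1)h\otimes 1_H)(1_A\otimes S^{-1}(h'_2))
=B(h'_4S^{-1}(h'_1)h\otimes h'_5)(1_A\otimes h'_3)(1_A\otimes S^{-1}(h'_2)).
\end{equation*}
The two factors on the right multiply in the $H$-component as $h'_3S^{-1}(h'_2)$.

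I then collapse the antipode terms using two instances of the identity $k_2S^{-1}(k_1)=\varepsilon(k)1_H$. This identity holds because $S^{-1}$ is the antipode of $H^{\mathrm{cop}}$, and it is the only place where bijectivity of the antipode enters.
\begin{itemize}
\item First, $h'_3S^{-1}(h'_2)=\varepsilon(h'_2)1_H$ removes the trailing factor.
\item After renumbering, the expression becomes $B(h'_2S^{-1}(h'_1)h\otimes h'_3)$.
\item A second application of the same identity turns this into $B(h\otimes h')$, which proves \eqref{eq:B(hx1)}.
\end{itemize}

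There is no real obstacle here. The one point needing care is the bookkeeping of the opposite multiplication, since the paper writes products in $A\otimes H^{\op}$ as if they were in $A\otimes H$. I must make sure that the right multiplication by $(1_A\otimes g_1)$ and the left multiplication by $(1_A\otimes h'_3)$ produce the product $h'_3S^{-1}(h'_2)$ in this order, rather than $S^{-1}(h'_2)h'_3$, which would not simplify. I will also track the Sweedler indices through coassociativity so that the two cancellations are applied to adjacent legs.
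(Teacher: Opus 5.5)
Your proof is correct and is the paper's argument run in reverse. The paper starts from $B(h\otimes h')$, inserts $\varepsilon(h'_1)1_H=h'_2S^{-1}(h'_1)$ and then $\varepsilon(h'_2)1_H=h'_3S^{-1}(h'_2)$, and finishes with a single application of \eqref{eq:Casimir1} at $a=1_A$ — the same two cancellations and the same use of the Casimir condition that you describe.
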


\begin{proof}
We calculate
\begin{align*}
B(h \otimes h')&=B(\varepsilon(h'_1)h \otimes h'_2)=B(h'_2S^{-1}(h'_1)h \otimes h'_3)=B(h'_3S^{-1}(h'_1)h \otimes h'_4)(1_A \otimes \varepsilon(h'_2)1_H)\\
&=B(h'_4S^{-1}(h'_1)h \otimes h'_5)(1_A \otimes h'_3S^{-1}(h'_2))\\\overset{\eqref{eq:Casimir1}}&{=}(1_A \otimes h'_3)B(S^{-1}(h'_1)h \otimes 1_H)(1_A \otimes S^{-1}(h'_2))
\end{align*}
\end{proof}

\begin{remark}
Similarly, one can prove that under the same hypothesis
\begin{equation}\label{eq:B(1xh)}
B(h \otimes h')=(1_A \otimes h_2)B(1_H \otimes S(h_3)h')(1_A \otimes S^{-1}(h_1))
\end{equation}
also holds true for every $h,h' \in H$.
\end{remark}
Equality \eqref{eq:B(hx1)} suggests how we can invert the assignment $B \mapsto \T_B:=B(\mathrm{Id}_H \otimes u_H)$ described at the beginning of the section. We denote by $\mathrm{Casim}(H,\psi)$ the set of Casimir morphisms for the coalgebra $(H,\psi)$
and by $\int_{(H,\psi)}$ the set of all integrals on $(H,\psi)$.

\begin{theorem}\label{thm:maincas}
Let $H$ be a Hopf algebra with bijective antipode. Then 
\[
\begin{matrix}
\Phi: & \mathrm{Casim}(H,\psi) & \longrightarrow & \int_{(H,\psi)}\\
&B(h \otimes h') & \longmapsto & B(h \otimes 1_H)=:\T_B(h)
\end{matrix}
\]
is a bijective map. Moreover, it induces a bijective correspondence
\[
\begin{matrix}
 \mathrm{Casim_{norm}}(H,\psi) & \overset{\cong}{\longleftrightarrow} & \int^{\mathrm{tot}}_{(H,\psi)}
\end{matrix}
\]
between the sets of normalized Casimir morphisms for the coalgebra $(H,\psi)$ and of total integrals on $(H,\psi)$.

\end{theorem}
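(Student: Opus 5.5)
Well-definedness of $\Phi$ is the general fact recalled at the start of the section (\cite[Prop. 3.5]{BT2}), but I will verify it directly. Setting $h'=1_H$ in \eqref{eq:Casimir2} gives \eqref{eq:weakint} for $\T_B$. Setting $h'=1_H$ and $g=S^{-1}(a_2)$ in \eqref{eq:Casimir1}, with the appropriate coproduct bookkeeping and $\rho_A$ applied to $a$, should yield \eqref{eq:int}. Concretely, I take \eqref{eq:Casimir1} with $h'=1$ and general $g$, replace $h$ by $S^{-1}(g)h$ style substitutions, and multiply on the right by $1\otimes S^{-1}(\cdot)$ to cancel the $g_1$ factor.

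Injectivity of $\Phi$ is immediate from Lemma \ref{Lemma:B(hx1)}: a Casimir morphism is determined by its values $B(h\otimes 1_H)$ through \eqref{eq:B(hx1)}. For surjectivity, given an integral $\T$, I define
\[
B_\T(h\otimes h'):=(1_A\otimes h'_3)\,\T(S^{-1}(h'_1)h)\,(1_A\otimes S^{-1}(h'_2)).
\]
Then $B_\T(h\otimes 1_H)=\T(h)$ trivially, and it remains to check \eqref{eq:Casimir1} and \eqref{eq:Casimir2} for $B_\T$.

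For \eqref{eq:Casimir1}, I substitute the definition on the left-hand side, with $h'\mapsto g_3h'a_2$. The terms $S^{-1}(g_3h'a_2)_{(1)}g_2ha_1$ collapse via the antipode identities, reducing the expression to a conjugate of $\T(S^{-1}(a_\bullet)S^{-1}(h'_1)h\,a_\bullet)$ by $a_0$. I then apply \eqref{eq:int} with $S^{-1}(h'_1)h$ in place of $h$ to move $a$ to the right, and reassemble the result into $(1\otimes g)B_\T(h\otimes h')(a\otimes 1)$. The remaining $g$-factors cancel as $g_3 S^{-1}(g_2)\cdots$.

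For \eqref{eq:Casimir2}, I would first treat it using the equivalent form \eqref{eq:weakint2} from Proposition \ref{prop:weakint2}, which expresses $\T$ through $\varepsilon\circ\T^H$. I then compute $\rho_A$ and $\Delta$ of the $H^{\op}$-part of $B_\T(h_2\otimes h')$, using that the conjugating factors $h'_3$ and $S^{-1}(h'_2)$ split under $\Delta$. Applying \eqref{eq:weakint} to $\T(S^{-1}(h'_1)h_2)$ should make the extra tensor leg collapse to $h'_{\text{last}}$, producing $B_\T(h\otimes h'_1)\otimes h'_2$. I expect this Sweedler-index bookkeeping, with the opposite multiplication in the second factor, to be the main obstacle. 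My first move will be to reduce to the case $h'=1$ by using the left $H$-linearity just established in \eqref{eq:Casimir1} (take $a=1$), which shows $B_\T(h\otimes h')$ is a conjugate of $B_\T(\cdot\otimes 1)$, so that only the pure integral identity \eqref{eq:weakint} is needed.

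For the normalized case, \eqref{eq:B(hx1)} gives
\[
B(h_1\otimes h_2)=(1\otimes h_4)\,\T_B(S^{-1}(h_2)h_1)\,(1\otimes S^{-1}(h_3))=\varepsilon(h)\,\T_B(1_H)
\]
after the antipode collapses, using $(1\otimes x)(y)(1\otimes z)$ with $x,z$ recombining as $h_3S^{-1}(h_2)$. Hence \eqref{eq:sepcas} holds if and only if $\T_B(1_H)=1_A\otimes 1_H$, that is, \eqref{eq:totint}, and the bijection restricts to one between normalized Casimir morphisms and total integrals.
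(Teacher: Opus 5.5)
Your route is the paper's: injectivity from Lemma \ref{Lemma:B(hx1)}, surjectivity through the same $B_{\T}$, with \eqref{eq:Casimir1} coming from \eqref{eq:int} applied to $S^{-1}(h'_1)h$, and \eqref{eq:Casimir2} from pushing $h'_3$ and $S^{-1}(h'_2)$ through $\psi$ and then applying \eqref{eq:weakint} to $k=S^{-1}(h'_1)h$. The key observation there is that one coproduct leg of $S^{-1}(h'_2)$ lands next to $h_1$, which produces $\psi(k_1\otimes \T(k_2))$.

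You also check directly that $\T_B$ is an integral, which the paper only cites from \cite{BT2}. This works in the first form you state: put $h'=1_H$ and $a\otimes g\mapsto a_0\otimes S^{-1}(a_1)$ in \eqref{eq:Casimir1}. Then $S^{-1}(a_3)a_2=\varepsilon(a_2)1_H$ makes the second argument trivial. The ``concretely'' variant $h\mapsto S^{-1}(g)h$ cannot do that.

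The step that fails is in your last paragraph. After $S^{-1}(h_2)h_1=\varepsilon(h_1)1_H$ collapses, what remains is
\[
B(h_1\otimes h_2)=(1_A\otimes h_2)\,\T_B(1_H)\,(1_A\otimes S^{-1}(h_1))=\T_B^A(1_H)\otimes h_2\,\T_B^H(1_H)\,S^{-1}(h_1).
\]
The outer factors cannot recombine into $h_2S^{-1}(h_1)=\varepsilon(h)1_H$, because $\T_B^H(1_H)$ sits between them and need not commute with $H$. So $B(h_1\otimes h_2)=\varepsilon(h)\T_B(1_H)$ is false in general.

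A counterexample: take $H=H_4$, $A=Cl(0,0,0)$, and the integral of Section \ref{sec:classification} with $\T(1_{H_4})=\textbf{U}(1_A\otimes 1_{H_4})+\textbf{V}(GX\otimes 1_{H_4})+\textbf{V}(G\otimes x)$ and $\textbf{V}\neq 0$. It gives $B_{\T}(g\otimes g)=(1_A\otimes g)\T(1_{H_4})(1_A\otimes g)$, in which the term $\textbf{V}(G\otimes x)$ changes sign because $gxg=-x$.

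Your conclusion is still true, but it must be argued one direction at a time, as the paper does. If $B$ is normalized, \eqref{eq:sepcas} at $h=1_H$ gives $\T_B(1_H)=B(1_H\otimes 1_H)=1_A\otimes 1_H$. If $\T$ is total, inserting $\T(1_H)=1_A\otimes 1_H$ into the displayed formula makes the middle $H$-factor equal to $1_H$. Only then does $h_2S^{-1}(h_1)=\varepsilon(h)1_H$ yield $B_{\T}(h_1\otimes h_2)=\varepsilon(h)1_A\otimes 1_H$.
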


\begin{proof}
The map $\Phi$ is injective. Indeed if $B$ and $B'$ are Casimir morphisms for $(H,\psi)$ and $B(h \otimes 1)=B'(h \otimes 1_H)$ for every $h \in H$, then $B$ and $B'$ must coincide, by Lemma \ref{Lemma:B(hx1)}.
To prove that $\Phi$ is also surjective we consider an integral $\T \in \int_{(H,\psi)}$ and define a map $B_{\T}: H \otimes H \rightarrow A \otimes H^{\op}$ such that 
\[B_{\T}(h \otimes h')=(1_A \otimes h'_3)\T(S^{-1}(h'_1)h)(1_A \otimes S^{-1}(h'_2))\]
for every $h, h' \in H$. We are going to prove that $B_{\T}$ is a Casimir morphism for $(H,\psi)$. We have
\begin{align*}
&\quad (a_{0} \otimes 1_H)B_{\T}(g_{2}ha_{1}\otimes g_{3}h^{\prime }a_{2})(1_A \otimes g_{1})\\
&=(a_{0} \otimes (g_{3}h^{\prime }a_{2})_3)\T(S^{-1}((g_{3}h^{\prime }a_{2})_1)g_{2}ha_{1})(1_A \otimes S^{-1}((g_{3}h^{\prime }a_{2})_2)g_{1})\\
&=(a_{0} \otimes g_5h'_3a_4)\T(S^{-1}(g_{3}h'_1 a_{2})g_{2}ha_{1})(1_A \otimes S^{-1}(g_4 h'_2a_3)g_{1})\\
&=(a_{0} \otimes g_{4}h^{\prime }_3a_{4})\T(S^{-1}(h^{\prime }_1a_{2})ha_{1})(1_A \otimes S^{-1}(\varepsilon(g_{2})g_{3}h^{\prime }_2a_{3})g_{1})\\
&=(a_{0} \otimes g_{3}h^{\prime }_3a_{4})\T(S^{-1}(h^{\prime }_1a_{2})ha_{1})(1_A \otimes S^{-1}(g_{2}h^{\prime }_2a_{3})g_{1})\\
&=(a_{0} \otimes g_{2}h^{\prime }_3a_{4})\T(S^{-1}(h^{\prime }_1a_{2})ha_{1})(1_A \otimes S^{-1}(h^{\prime }_2a_{3})\varepsilon(g_{1}))\\
&=(a_{0} \otimes gh^{\prime }_3a_{4})\T(S^{-1}(a_2)S^{-1}(h^{\prime }_1)ha_{1})(1_A \otimes S^{-1}(a_3)S^{-1}(h^{\prime }_2)))\\
&=(a_{0_0} \otimes gh^{\prime }_3a_{1})\T(S^{-1}(a_{0_2})S^{-1}(h^{\prime }_1)ha_{0_1})(1_A \otimes S^{-1}(a_{0_3})S^{-1}(h^{\prime }_2)))\\
\overset{\eqref{eq:int}}&{=}(1_A \otimes gh^{\prime }_3a_{1}S^{-1}(a_{0_1}))\T(S^{-1}(h^{\prime }_1)h)(a_{0_0} \otimes S^{-1}(h^{\prime }_2)))\\
&=(1_A \otimes gh'_3)\T(S^{-1}(h'_1)h)(a \otimes S^{-1}(h'_2))\\
&=(1_A \otimes g)B_{\T}(h\otimes h^{\prime })(a \otimes 1_H),
\end{align*}
thus $B_{\T}$ satisfies \eqref{eq:Casimir1}. Moreover
\begin{align*}
&\quad \psi(h_1 \otimes B_{\T}(h_2 \otimes h'))\\
&=\psi \left(h_1 \otimes  (1_A \otimes h'_3)\T(S^{-1}(h'_1)h_2)(1_A \otimes S^{-1}(h'_2))\right)\\
&=\psi \left(h_1 \otimes  \T^A(S^{-1}(h'_1)h_2) \otimes h'_3\T^H(S^{-1}(h'_1)h_2)S^{-1}(h'_2)\right)\\
&=\T^A(S^{-1}(h'_1)h_2)_0 \otimes h'_{4}\T^H(S^{-1}(h'_1)h_2)_1S^{-1}(h'_{3}) \otimes h'_{5}\T^H(S^{-1}(h'_1)h_2)_2S^{-1}(h'_{2})h_1\T^A(S^{-1}(h'_1)h_2)_1\\
&=(1_A \otimes h'_4 \otimes h'_5)\psi \left(S^{-1}(h'_2)h_1 \otimes \T(S^{-1}(h'_1)h_2)\right)(1_A \otimes S^{-1}(h'_3) \otimes  1_H)\\
&=(1_A \otimes h'_3 \otimes h'_4)\psi \left(S^{-1}(h'_1)_1h_1 \otimes \T(S^{-1}(h'_1)_2 h_2)\right)(1_A \otimes S^{-1}(h'_2) \otimes  1_H)\\
\overset{\eqref{eq:weakint}}&{=}(1_A \otimes h'_3 \otimes h'_4)\left(\T(S^{-1}(h'_1) h) \otimes 1_H\right)(1_A \otimes S^{-1}(h'_2) \otimes  1_H)\\
&=(1_A \otimes h'_3)\T(S^{-1}(h'_1)h)(1_A \otimes S^{-1}(h'_2)) \otimes h'_4\\
&=B_{\T}(h \otimes h'_1) \otimes h'_2,
\end{align*}
which means that $B_{\T}$ also satisfies \eqref{eq:Casimir2}. Clearly the map $\Phi$ has inverse $\Phi^{-1}$ defined by the assignment $\T \mapsto B_{\T}$.
To conclude, consider a normalized Casimir morphism $B$. Then
\[\T_{B}(1_H)=\Phi(B)(1_H)=B(1_H \otimes 1_H)\overset{\eqref{eq:sepcas}}{=}1_A \otimes 1_H,\]
thus $\T_{B}$ is total. Conversely, consider a total integral $\T$. Then
\[
B_{\T}(h_1 \otimes h_2)=(1_A \otimes h_4)\T(S^{-1}(h_2)h_1)(1_A \otimes S^{-1}(h_3)) \overset{\eqref{eq:totint}}{=}1_A \otimes h_3 S^{-1}(\varepsilon(h_1)h_2)=\varepsilon(h) 1_A \otimes 1_H,
\]
thus $B_{\T}$ is normalized.
\end{proof}

\subsection{Separable cowreaths of type \texorpdfstring{$(A \otimes H^{\op},H,\psi)$}{}}

Remember that a cowreath $(A \otimes H^{\op},H,\psi)$ is separable if and only if $\mathrm{Casim_{norm}}(H,\psi) \neq \emptyset$. We claim that this is true whenever $H$ has bijective antipode and therefore that in this case all the sets mentioned in the statement of Theorem \ref{thm:maincas} are non-empty.
\begin{proposition}\label{prop:principal}
Let $H$ be a Hopf algebra with bijective antipode. The map 
\[
\begin{matrix}
\underline{B}:& H \otimes H &\longrightarrow & A \otimes H^{\op}\\
& h \otimes h' &\longmapsto &1_A \otimes h'S^{-1}(h)
\end{matrix}
\]
is a normalized Casimir morphism for $(H,\psi)$. We will call it the canonical separability morphism for $(H,\psi)$. Equivalently, the map
\[
\begin{matrix}
\Phi(\underline{B})=:\underline{\T}:& H &\longrightarrow & A \otimes H^{\op}\\
& h &\longmapsto &1_A \otimes S^{-1}(h)
\end{matrix}
\]
is a total integral on $(H,\psi)$. We will call it the canonical total integral on $(H,\psi)$.
\end{proposition}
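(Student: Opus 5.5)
The cleanest route is through Theorem \ref{thm:maincas}. I will check that $\underline{\T}(h)=1_A\otimes S^{-1}(h)$ is a total integral, using the equivalent conditions \eqref{eq:weakint2} and \eqref{eq:int2} of Proposition \ref{prop:weakint2}. Then $B_{\underline{\T}}=\Phi^{-1}(\underline{\T})$ is a normalized Casimir morphism, and it remains to show that $B_{\underline{\T}}=\underline{B}$.

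\emph{Totality.} This is immediate: $\underline{\T}(1_H)=1_A\otimes S^{-1}(1_H)=1_A\otimes 1_H$, which is \eqref{eq:totint}.

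\emph{Condition \eqref{eq:weakint2}.} Here $\T^A(h)=1_A$ and $\T^H(h)=S^{-1}(h)$. Since $\rho_A(1_A)=1_A\otimes 1_H$, the right-hand side becomes
\[
\varepsilon(S^{-1}(h_2))\,1_A\otimes S^{-1}(h_1)=1_A\otimes S^{-1}(h),
\]
as required.

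\emph{Condition \eqref{eq:int2}.} We have $(\mathrm{Id}_A\otimes\varepsilon)\underline{\T}(h)=\varepsilon(h)1_A$. The left-hand side is therefore
\[
a_0\,\varepsilon(S^{-1}(a_2)ha_1)=\varepsilon(h)\,a_0\varepsilon(a_1)\varepsilon(a_2)=\varepsilon(h)a,
\]
which equals the right-hand side $\varepsilon(h)1_A\,a$. By Proposition \ref{prop:weakint2} (the antipode is bijective), $\underline{\T}$ is an integral, and by the above it is total.

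\emph{Identifying the Casimir morphism.} By the formula in the proof of Theorem \ref{thm:maincas},
\[
B_{\underline{\T}}(h\otimes h')=(1_A\otimes h'_3)(1_A\otimes S^{-1}(S^{-1}(h'_1)h))(1_A\otimes S^{-1}(h'_2)).
\]
With the convention that the second tensor factors are multiplied as in $H$ in the written order, this is
\[
1_A\otimes h'_3\,S^{-1}(h)\,h'_1\,S^{-1}(h'_2).
\]
The main obstacle is exactly this bookkeeping of $H$ versus $H^{\op}$ multiplication and of which legs of $h'$ appear where. I expect the pair $h'_1S^{-1}(h'_2)$, or its reordering, to collapse via the antipode identity $S^{-1}(x_2)x_1=\varepsilon(x)1$, leaving $1_A\otimes h'S^{-1}(h)$, which is $\underline{B}(h\otimes h')$.

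If the orders do not line up directly, I would fall back on Lemma \ref{Lemma:B(hx1)}. By the injectivity part of Theorem \ref{thm:maincas}, any Casimir morphism restricting to $\underline{\T}$ on $H\otimes 1_H$ equals $B_{\underline{\T}}$. So it suffices to verify directly that $\underline{B}$ satisfies \eqref{eq:Casimir1}, \eqref{eq:Casimir2} and \eqref{eq:sepcas}; these are short Sweedler computations in which $A$ contributes only through $\varepsilon(a_1)$ or cancelling pairs $a_1S^{-1}(a_2)$. Since $\underline{B}(h\otimes 1_H)=\underline{\T}(h)$, this identifies $\underline{B}$ with $\Phi^{-1}(\underline{\T})$ and yields both claims.
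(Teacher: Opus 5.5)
Your verification that $\underline{\T}$ is a total integral is correct. With $\T^A\equiv 1_A$ and $(\mathrm{Id}_A\otimes\varepsilon)\underline{\T}(h)=\varepsilon(h)1_A$, both \eqref{eq:weakint2} and \eqref{eq:int2} are one-liners.

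The problem is the identification $B_{\underline{\T}}=\underline{B}$. You wrote $S^{-1}\bigl(S^{-1}(h'_1)h\bigr)=S^{-1}(h)h'_1$, which silently assumes $S^{-2}=\mathrm{Id}_H$. Since $S^{-1}$ is an algebra anti-morphism, the correct value is $S^{-1}(h)S^{-2}(h'_1)$. Your expression $h'_3S^{-1}(h)h'_1S^{-1}(h'_2)$ cannot be finished, for two reasons:
\begin{itemize}
\item the factors $h'_1S^{-1}(h'_2)$ stand in that order and cannot be swapped;
\item if $h'_1S^{-1}(h'_2)=\varepsilon(h')1_H$ held for all $h'$, then $S^{-1}$ would be a right convolution inverse of $\mathrm{Id}_H$, forcing $S^{-1}=S$. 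For instance, in $E(1)$ one has $S^{-1}(x)=xg$, so $x_1S^{-1}(x_2)=xg+xg=2xg\neq 0$ when $\mathrm{char}\,\Bbbk\neq 2$.
\end{itemize}
With the correct formula the computation closes at once, using $y_2S^{-1}(y_1)=\varepsilon(y)1_H$:
\[
B_{\underline{\T}}(h\otimes h')=1_A\otimes h'_3S^{-1}(h)S^{-2}(h'_1)S^{-1}(h'_2)=1_A\otimes h'_3S^{-1}(h)S^{-1}\bigl(h'_2S^{-1}(h'_1)\bigr)=1_A\otimes h'S^{-1}(h)=\underline{B}(h\otimes h').
\]
Theorem \ref{thm:maincas} then shows that $\underline{B}$ is a normalized Casimir morphism, so your main route is complete once this slip is fixed.

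The paper leaves the whole statement as an easy computation. Your fallback, checking \eqref{eq:Casimir1}, \eqref{eq:Casimir2} and \eqref{eq:sepcas} for $\underline{B}$ directly, is the self-contained way to do it, and it works. However, the same ordering mistake recurs in your description of it. The cancelling pairs are never of the form $x_1S^{-1}(x_2)$. Since $S^{-1}$ is the antipode of $H^{\mathrm{cop}}$, the valid identities are $x_2S^{-1}(x_1)=\varepsilon(x)1_H=S^{-1}(x_2)x_1$. Concretely:
\begin{itemize}
\item the left side of \eqref{eq:Casimir1} is $a_0\otimes g_3h'a_2S^{-1}(a_1)S^{-1}(h)S^{-1}(g_2)g_1$, which reduces to $a\otimes gh'S^{-1}(h)$;
\item the left side of \eqref{eq:Casimir2} is $1_A\otimes h'_1S^{-1}(h_3)\otimes h'_2S^{-1}(h_2)h_1$, which reduces to $1_A\otimes h'_1S^{-1}(h)\otimes h'_2$;
\item \eqref{eq:sepcas} is just $h_2S^{-1}(h_1)=\varepsilon(h)1_H$.
\end{itemize}
Compared with this direct check, your integral route replaces three Casimir verifications by trivial integral conditions plus one evaluation of $\Phi^{-1}$. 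The price is that it depends on Proposition \ref{prop:weakint2} and Theorem \ref{thm:maincas}.
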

\begin{proof}
It is an easy computation we leave for the reader to check.
\end{proof}

\begin{corollary}\label{cor:mainsep}
Let $H$ be a Hopf algebra with bijective antipode. The cowreath $(A \otimes H^{\op},H,\psi)$ is separable.
\end{corollary}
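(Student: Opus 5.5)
This follows directly from Proposition \ref{prop:principal} together with the characterization of separability in item \textbf{ii}). By that characterization, $(A \otimes H^{\op},H,\psi)$ is separable if and only if some Casimir morphism $B$ satisfies \eqref{eq:sepcas}. So it is enough to exhibit one normalized Casimir morphism, and the natural candidate is the canonical separability morphism $\underline{B}(h\otimes h') = 1_A \otimes h'S^{-1}(h)$.

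The cleanest route uses Theorem \ref{thm:maincas}, which gives a bijection $\mathrm{Casim_{norm}}(H,\psi)\cong\int^{\mathrm{tot}}_{(H,\psi)}$. It therefore suffices to check that $\underline{\T}(h)=1_A\otimes S^{-1}(h)$ is a total integral.

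\textbf{Totality.} Condition \eqref{eq:totint} is immediate, since $S^{-1}(1_H)=1_H$.

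\textbf{Condition \eqref{eq:weakint2}.} Here $\underline{\T}^A(h)=1_A$, and $\rho_A(1_A)=1_A\otimes 1_H$. The right-hand side of \eqref{eq:weakint2} therefore becomes $\varepsilon(S^{-1}(h_2))\,1_A\otimes S^{-1}(h_1)$, which equals $1_A\otimes S^{-1}(h)$ by counitality.

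\textbf{Condition \eqref{eq:int2}.} The left-hand side is $a_0\,\varepsilon(S^{-1}(a_2)ha_1)=\varepsilon(h)a$, and the right-hand side is $\varepsilon(h)a$, so the two agree.

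By Proposition \ref{prop:weakint2}, these two conditions are equivalent to \eqref{eq:weakint} and \eqref{eq:int}, so $\underline{\T}$ is a total integral. The bijection of Theorem \ref{thm:maincas} then yields the normalized Casimir morphism $B_{\underline{\T}}$, and this settles separability.

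It is worth recording that $B_{\underline{\T}}=\underline{B}$. Indeed, $B_{\underline{\T}}(h\otimes h')=(1_A\otimes h'_3)(1_A\otimes S^{-1}(S^{-1}(h'_1)h))(1_A\otimes S^{-1}(h'_2))$. This product is taken in $H^{\op}$, so the $H$-components multiply in reverse order. Reading off the components gives $h'_3\,S^{-1}(h)\,h'_1\,S^{-1}(h'_2)$ arranged in the appropriate order, and this collapses to $h'S^{-1}(h)$ once one uses $h'_1S^{-1}(h'_2)=\varepsilon(h')$.

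The main obstacle is purely bookkeeping: keeping track of the $H^{\op}$ multiplication convention when verifying \eqref{eq:int}. The plan avoids this by working with the simplified conditions \eqref{eq:weakint2} and \eqref{eq:int2}. For those conditions the bijectivity of $S$ is needed only so that $S^{-1}$ exists.
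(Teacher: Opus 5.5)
Your argument is correct and is essentially the paper's: the corollary is read off from Proposition \ref{prop:principal}, whose omitted ``easy computation'' is exactly your check that $\underline{\T}$ is a total integral (via \eqref{eq:weakint2}, \eqref{eq:int2} and Proposition \ref{prop:weakint2}), transported to the normalized Casimir morphism by Theorem \ref{thm:maincas}.

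The only flaw is in your optional aside on $B_{\underline{\T}}=\underline{B}$, which the corollary does not depend on. Under the paper's convention, products written without $\bullet$ are read in $A\otimes H$ order, so the $H$-component is $h'_3S^{-1}(h)S^{-2}(h'_1)S^{-1}(h'_2)$, not $h'_3S^{-1}(h)h'_1S^{-1}(h'_2)$. It collapses because $S^{-2}(k_1)S^{-1}(k_2)=S^{-1}(k_2S^{-1}(k_1))=\varepsilon(k)1_H$. The identity $h'_1S^{-1}(h'_2)=\varepsilon(h')1_H$ that you quote is false in general: for $h'=x\in H_4$ it gives $2xg$.
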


Theorem 	\ref{thm:maincas} can be of help to obtain classification results for Casimir morphisms like the one contained in \cite{mt2}. This will be shown in detail in Section \ref{sec:integrals}. In addition, the map $\Phi$ induces a bijection between the set $\mathrm{Casim^h_{norm}}(H,\psi)$ of normalized Casimir morphisms realizing h-separability and a suitable subset of $\int^{\mathrm{tot}}_{(H,\psi)}$ as well. To identify $\Phi \left(\mathrm{Casim^h_{norm}}(H,\psi) \right)$ we will use the following lemma.

\begin{lemma}\label{lemma:newhsepcondition}
Let $H$ be a Hopf algebra with bijective antipode and let $B:H \otimes H \rightarrow A \otimes H^{\op}$ be a normalized Casimir morphism for the coalgebra $(H,\psi)$. Then $B$ satisfies \eqref{eq:hsepcas} for every $h' \in H$ if and only if it satisfies \eqref{eq:hsepcas} with $h'=1_H$.
\end{lemma}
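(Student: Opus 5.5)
One direction is trivial: if \eqref{eq:hsepcas} holds for every $h'$, it holds in particular for $h'=1_H$. For the converse, the plan is to use Lemma \ref{Lemma:B(hx1)} to move the middle variable $h'$ out of both factors of \eqref{eq:hsepcas}. The variable sits in the second slot of $B(h\otimes h'_1)$, which \eqref{eq:B(hx1)} can pull out, and in the first slot of $B(h'_2\otimes h'')$, which \eqref{eq:B(1xh)} can pull out.

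Concretely, I rewrite the left-hand side of \eqref{eq:hsepcas} in the product form
\[
B(h'_2\otimes h'')\bullet B(h\otimes h'_1),
\]
where $\bullet$ is the product of $A\otimes H^{\op}$; this is exactly how the first and second tensor components combine. Then I substitute
\begin{align*}
B(h\otimes h'_1)&=(1_A\otimes h'_3)\,B(S^{-1}(h'_1)h\otimes 1_H)\,(1_A\otimes S^{-1}(h'_2)),\\
B(h'_4\otimes h'')&=(1_A\otimes h'_5)\,B(1_H\otimes S(h'_6)h'')\,(1_A\otimes S^{-1}(h'_4)),
\end{align*}
with Sweedler indices adjusted accordingly. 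In the product, the middle factors $(1_A\otimes S^{-1}(h'_4))$ and $(1_A\otimes h'_3)$ become adjacent. Once the multiplication is written out (remembering the swap in $H^{\op}$), they contract via $S^{-1}(h'_{(2)})h'_{(1)}=\varepsilon(h')$. Because these are elements $1_A\otimes k$, they commute with nothing in $A$ but sit in the $H$-component, so they can be collapsed directly.

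After the collapse, the left-hand side has the form
\[
(1_A\otimes h'_{\text{last}})\,\bigl[B(1_H\otimes S(h'_{\cdot})h'')\bullet B(S^{-1}(h'_1)h\otimes 1_H)\bigr]\,(1_A\otimes S^{-1}(h'_{\cdot})).
\]
The bracket is exactly the left-hand side of \eqref{eq:hsepcas} with middle variable $1_H$, with $h$ replaced by $S^{-1}(h'_1)h$ and $h''$ replaced by $S(h'_{\cdot})h''$. Applying the hypothesis (the case $h'=1_H$) turns the bracket into $B(S^{-1}(h'_1)h\otimes S(h'_{\cdot})h'')$.

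It remains to show that the resulting conjugated expression equals $\varepsilon(h')B(h\otimes h'')$. For this I use \eqref{eq:Casimir1} with $a=1_A$, in the form
\[
B(g_2k\otimes g_3k')(1_A\otimes g_1)=(1_A\otimes g)B(k\otimes k').
\]
With $g$ a suitable Sweedler component of $h'$, this reabsorbs the outer factors and the $S^{\pm1}(h')$ appearing in the arguments. The remaining coproduct pieces then cancel via the antipode axioms, leaving $\varepsilon(h')B(h\otimes h'')$.

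The main obstacle is this final bookkeeping: all the Sweedler indices of $h'$ and the $H^{\op}$ ordering must line up so that \eqref{eq:Casimir1} applies in exactly the right shape. If that fails, my fallback is to expand both $B$'s entirely through $\T_B=B(-\otimes 1_H)$ using \eqref{eq:B(hx1)}, and compare both sides of \eqref{eq:hsepcas} as expressions in $\T_B$.
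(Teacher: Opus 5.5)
Your plan is correct and is essentially the paper's own proof: \eqref{eq:B(hx1)} on $B(h\otimes h'_1)$, \eqref{eq:B(1xh)} on $B(h'_2\otimes h'')$, contraction of the adjacent $H$-factors $S^{-1}(h'_4)h'_3$, the hypothesis \eqref{eq:13} with $h\mapsto S^{-1}(h'_1)h$ and $h''\mapsto S(h'_4)h''$, and finally \eqref{eq:Casimir1} with $a=1_A$ followed by the antipode identities. One small slip: with $\bullet=m_A\otimes m_H^{\op}$, the left-hand side of \eqref{eq:hsepcas} is $B(h\otimes h'_1)\bullet B(h'_2\otimes h'')$, and the $h'=1_H$ case is $B(k\otimes 1_H)\bullet B(1_H\otimes k'')$, so both of your $\bullet$-products are written in reversed order; since you reverse them consistently and carry out the middle contraction in the correct order, the argument is unaffected.
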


\begin{proof}
We need to prove that if
\begin{equation}\label{eq:13}
B^{A}\left( h\otimes 1_H \right) B^{A}\left( 1_H \otimes h^{\prime \prime }\right) \otimes B^{H} \left( 1_H \otimes h^{\prime \prime}\right) B^{H}\left( h\otimes 1_H \right) =B\left( h\otimes h^{\prime \prime }\right),
\end{equation}
then \eqref{eq:hsepcas} is satisfied for every $h, h', h'' \in H$.
We calculate
\begin{align*}
&\quad B^{A}\left( h\otimes h'_1 \right) B^{A}\left( h'_2 \otimes h^{\prime \prime }\right) \otimes B^{H} \left( h'_2 \otimes h^{\prime \prime}\right) B^{H}\left( h\otimes h'_1 \right)\\
\overset{\eqref{eq:B(hx1)}}&{=}B^A(S^{-1}(h'_1)h \otimes 1_H)B^A\left( h'_{4}\otimes h''\right) \otimes B^H\left( h'_{4}\otimes h''\right)h'_3B^H(S^{-1}(h'_1)h \otimes 1_H)S^{-1}(h'_2)\\
\overset{\eqref{eq:B(1xh)}}&{=}B^A(S^{-1}(h'_1)h \otimes 1_H)B^A(1_H \otimes S(h'_6)h'') \otimes\\
&\quad \otimes h'_5B^H(1_H \otimes S(h'_6)h'')S^{-1}(h'_4)h'_3B^H(S^{-1}(h'_1)h \otimes 1_H)S^{-1}(h'_2)\\
&=B^A(S^{-1}(h'_1)h \otimes 1_H)B^A(1_H \otimes S(h'_4)h'') \otimes h'_3B^H(1_H \otimes S(h'_4)h'')B^H(S^{-1}(h'_1)h \otimes 1_H)S^{-1}(h'_2)\\
\overset{\eqref{eq:13}}&{=}B^A(S^{-1}(h'_1)h \otimes S(h'_4)h'') \otimes h'_3B^H(S^{-1}(h'_1)h \otimes S(h'_4)h'')S^{-1}(h'_2)\\
&=(1_A \otimes h'_3)B(S^{-1}(h'_1)h \otimes S(h'_4)h'')(1_A \otimes S^{-1}(h'_2))\\
\overset{\eqref{eq:Casimir1}}&{=}B(h'_{4}S^{-1}(h'_1)h \otimes h'_{5}S(h'_6)h'')(1_A \otimes h'_3S^{-1}(h'_2))\\
&=B(h'_{3}S^{-1}(h'_1)h \otimes \varepsilon(h'_4)h'')(1_A \otimes \varepsilon(h'_2)1_H)\\
&=B(h'_{2}S^{-1}(h'_1)h \otimes h'')\\
&=\varepsilon(h')B(h \otimes h'')
\end{align*}
and thus we are done.
\end{proof}

By the previous lemma, if $H$ has bijective antipode and $B$ is a normalized Casimir morphism for $(H,\psi)$ satisfying \eqref{eq:hsepcas}, then
\[\T_B(h)\bullet[(1_A \otimes h'_3)\T_B(S^{-1}(h'_1))(1_A \otimes S^{-1}(h'_2))]=(1_A \otimes h'_3)\T_B(S^{-1}(h'_1)h)(1_A \otimes S^{-1}(h'_2)),\]
where $\bullet=m_A \otimes m_H^{\op}$ denotes the product in $A \otimes H^{\op}$, and $\T_B=\Phi(B)$. 
To see this, one simply rewrites \eqref{eq:13} using \eqref{eq:B(hx1)}. If we further apply $(\mathrm{Id}_A \otimes \varepsilon)$ on both sides of the equality and make use of the invertibility of $S$, then we get
\[
(\mathrm{Id}_A \otimes \varepsilon)(\T_B(h)\T_B(h'))=(\mathrm{Id}_A \otimes \varepsilon)(\T_B(h'h))
\]
for every $h,h' \in H$.
Total integrals satisfying this condition are of particular interest to us, thus we introduce the following definition.
\begin{definition}
Let $H$ be a Hopf algebra with bijective antipode and let $\T:H \rightarrow A \otimes H^{\op}$ be a total integral on $(H,\psi)$. We say that $\T$ satisfies the \emph{h-property} whenever 
\begin{equation}\label{eq:hprop2}
(\mathrm{Id}_A \otimes \varepsilon)(\T(h)\T(h'))=(\mathrm{Id}_A \otimes \varepsilon)(\T(h'h))
\end{equation}
holds
for every $h,h' \in H$.
\end{definition}
\begin{remark}
\begin{enumerate}[1)]
\item Notice that a total integral satisfies the h-property if and only if $(\mathrm{Id}_A \otimes \varepsilon)\T$ is an anti-homomorphism of algebras.
\item We have seen that a total integral $\T$ satisfying
\begin{equation}\label{eq:hprop1}
\T(h)\bullet[(1_A \otimes h'_3)\T(S^{-1}(h'_1))(1_A \otimes S^{-1}(h'_2))]=(1_A \otimes h'_3)\T(S^{-1}(h'_1)h)(1_A \otimes S^{-1}(h'_2))
\end{equation}
clearly verifies \eqref{eq:hprop2}, but the two conditions are actually equivalent. In fact suppose that \eqref{eq:hprop2} is verified. Then
\begin{align*}
&\quad \T(h)\bullet[(1_A \otimes h'_3)\T(S^{-1}(h'_1))(1_A \otimes S^{-1}(h'_2))]\\
\overset{\eqref{eq:weakint2}}&{=}[\varepsilon(\T^H(h_2))\T^A(h_2)_0 \otimes S^{-1}(h_1\T^A(h_2)_1)] \bullet\\
&\quad \bullet[\varepsilon(\T^H(S^{-1}(h'_1)_2))\T^A(S^{-1}(h'_1)_2)_0 \otimes h'_3S^{-1}(h'_2S^{-1}(h'_1)_1\T^A(S^{-1}(h'_1)_2)_1)] \\
&=\varepsilon(\T^H(h_2))\varepsilon(\T^H(S^{-1}(h'_1)))\left[\T^A(h_2)_0\T^A(S^{-1}(h'_1))_0 \otimes h'_2S^{-1}\left(h_1\T^A(h_2)_1\T^A(S^{-1}(h'_1))_1 \right)\right]\\
\overset{\eqref{eq:hprop2}}&{=}\varepsilon(\T^H(S^{-1}(h'_1)h_2))\left[\T^A(S^{-1}(h'_1)h_2)_0 \otimes h'_2S^{-1}\left(h_1\T^A(S^{-1}(h'_1)h_2)_1)\right)\right]\\
\overset{\eqref{eq:weakint2}}&{=}(1_A \otimes h'_3)\T(S^{-1}(h'_1)h)(1_A \otimes S^{-1}(h'_2)).
\end{align*}
\end{enumerate}
\end{remark}

Remember that $\mathrm{Casim^h_{norm}}(H,\psi)$ denotes the set of all normalized Casimir morphisms for $(H,\psi)$ that satisfy condition \eqref{eq:hsepcas}. Let $\int^{\mathrm{h-tot}}_{(H,\psi)}$ denote the set of all total integrals on $(H,\psi)$ that satisfy the h-property \eqref{eq:hprop2}.
\begin{theorem}
Let $H$ be a Hopf algebra with bijective antipode. The map $\Phi$ of Theorem \ref{thm:maincas}, induces a bijective correspondence
\[
\begin{matrix}
\mathrm{Casim^h_{norm}}(H,\psi) & \overset{\cong}{\longleftrightarrow} & \int^{\mathrm{h-tot}}_{(H,\psi)}.
\end{matrix}
\]
\end{theorem}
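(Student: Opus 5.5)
Since $\Phi$ is already a bijection $\mathrm{Casim_{norm}}(H,\psi)\to\int^{\mathrm{tot}}_{(H,\psi)}$ by Theorem \ref{thm:maincas}, it suffices to show that a normalized Casimir morphism $B$ satisfies \eqref{eq:hsepcas} if and only if $\T_B=\Phi(B)$ satisfies the h-property \eqref{eq:hprop2}. Both implications reduce to Lemma \ref{lemma:newhsepcondition}, which replaces \eqref{eq:hsepcas} by its special case \eqref{eq:13} with $h'=1_H$. The equivalence of \eqref{eq:hprop1} and \eqref{eq:hprop2} from the preceding remark is the second ingredient.

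\emph{Forward direction.} Let $B\in\mathrm{Casim^h_{norm}}(H,\psi)$. Then \eqref{eq:13} holds. I rewrite its left side using \eqref{eq:B(hx1)} on the factor $B(1_H\otimes h'')$, namely $B(1_H\otimes h'')=(1_A\otimes h''_3)\T_B(S^{-1}(h''_1))(1_A\otimes S^{-1}(h''_2))$. I also rewrite its right side $B(h\otimes h'')$ through \eqref{eq:B(hx1)}. With the convention for $\bullet$, the left side of \eqref{eq:13} is exactly $\T_B(h)\bullet B(1_H\otimes h'')$. So \eqref{eq:13} becomes \eqref{eq:hprop1} for $\T_B$, with $h''$ in place of $h'$. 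Applying $\mathrm{Id}_A\otimes\varepsilon$ gives
\[
(\mathrm{Id}_A\otimes\varepsilon)(\T_B(h)\T_B(S^{-1}(h'')))=(\mathrm{Id}_A\otimes\varepsilon)\T_B(S^{-1}(h'')h),
\]
using $h''_3S^{-1}(h''_2)$ collapsing under $\varepsilon$. Since $S^{-1}$ is surjective, this is \eqref{eq:hprop2}. Hence $\T_B\in\int^{\mathrm{h-tot}}_{(H,\psi)}$.

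\emph{Backward direction.} Let $\T\in\int^{\mathrm{h-tot}}_{(H,\psi)}$ and set $B_\T=\Phi^{-1}(\T)$, which is a normalized Casimir morphism by Theorem \ref{thm:maincas}. By part 2) of the remark, \eqref{eq:hprop2} implies \eqref{eq:hprop1} for $\T$. Reading the previous paragraph's computation backwards turns \eqref{eq:hprop1} into \eqref{eq:13} for $B_\T$. For this step I use $B_\T(h\otimes 1_H)=\T(h)$ and the defining formula of $B_\T$, which is precisely \eqref{eq:B(hx1)}. Lemma \ref{lemma:newhsepcondition} then gives \eqref{eq:hsepcas} for all $h'$, so $B_\T\in\mathrm{Casim^h_{norm}}(H,\psi)$.

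The main obstacle is bookkeeping: checking that the left side of \eqref{eq:13} really equals $\T_B(h)\bullet[(1_A\otimes h''_3)\T_B(S^{-1}(h''_1))(1_A\otimes S^{-1}(h''_2))]$ with the correct order in $H^{\op}$. The product $B^H(1\otimes h'')B^H(h\otimes 1)$ is the $H^{\op}$-product of $B(h\otimes1)$ followed by $B(1\otimes h'')$, which is consistent with the $\bullet$ convention. The remaining steps only invoke results already established.
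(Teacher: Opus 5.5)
Your proposal is correct and follows the paper's argument. The forward inclusion rewrites \eqref{eq:13} via \eqref{eq:B(hx1)} as \eqref{eq:hprop1}, then applies $\mathrm{Id}_A\otimes\varepsilon$ and uses surjectivity of $S^{-1}$; the backward inclusion uses \eqref{eq:hprop2}$\Rightarrow$\eqref{eq:hprop1} to check \eqref{eq:13} for $B_{\T}$ and then invokes Lemma \ref{lemma:newhsepcondition}, and your handling of the $H^{\op}$ order in the $\bullet$ product is also right.
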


\begin{proof}
In view of the comments after the proof of Lemma \ref{lemma:newhsepcondition}, it is enough to check that if $\T \in \int^{\mathrm{h-tot}}_{(H,\psi)}$, then the map 
\[B_{\T}(h \otimes h')=(1_A \otimes h'_3)\T(S^{-1}(h'_1)h)(1_A \otimes S^{-1}(h'_2))\]
is an element of $\mathrm{Casim^h_{norm}}(H,\psi)$. We already know, by Theorem \ref{thm:maincas}, that $B_{\T} \in \mathrm{Casim_{norm}}(H,\psi)$, thus, thanks to Lemma \ref{lemma:newhsepcondition}, everything boils down to verify that $B_{\T}$ satisfies \eqref{eq:13}. Given that $\T \in \int^{\mathrm{h-tot}}_{(H,\psi)}$, it satisfies \eqref{eq:hprop1}. Hence calculate
\begin{align*}
&\quad B_{\T}^{A}\left( h\otimes 1_H \right) B_{\T}^{A}\left( 1_H \otimes h^{\prime \prime }\right) \otimes B_{\T}^{H} \left( 1_H \otimes h^{\prime \prime}\right) B_{\T}^{H}\left( h\otimes 1_H \right)\\
&=\T^A(h)\T^A(S^{-1}(h''_1)) \otimes h''_3\T^H(S^{-1}(h''_1))S^{-1}(h''_2)\T^H(h)\\
\overset{\eqref{eq:hprop1}}&{=}(1_A \otimes h''_3)\T(S^{-1}(h''_1)h)(1_A \otimes S^{-1}(h''_2))\\
&=B_{\T}(h \otimes h'').
\end{align*}
\end{proof}
Now we can strengthen Proposition \ref{prop:principal} and Corollary \ref{cor:mainsep}.
\begin{proposition}\label{Prop:hsepcow}
Let $H$ be a Hopf algebra with bijective antipode. Then $\underline{B} \in \mathrm{Casim^h_{norm}}(H,\psi)$ --- and equivalently $\underline{\T} \in \int^{\mathrm{h-tot}}_{(H,\psi)}$ --- therefore the cowreath $(A \otimes H^{\op},H,\psi)$ is h-separable.
\end{proposition}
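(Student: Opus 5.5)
By Proposition \ref{prop:principal}, $\underline{\T}=\Phi(\underline{B})$ is already a total integral on $(H,\psi)$, and $\underline{B}=B_{\underline{\T}}$ is a normalized Casimir morphism. By the bijection of the preceding theorem between $\mathrm{Casim^h_{norm}}(H,\psi)$ and $\int^{\mathrm{h-tot}}_{(H,\psi)}$, it therefore suffices to show that $\underline{\T}$ satisfies the h-property \eqref{eq:hprop2}. Then $\underline{B}\in\mathrm{Casim^h_{norm}}(H,\psi)$, and h-separability follows from the characterization \eqref{eq:hsepcas}.

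To check \eqref{eq:hprop2}, first compute $(\mathrm{Id}_A\otimes\varepsilon)\underline{\T}(h)=\varepsilon(S^{-1}(h))1_A=\varepsilon(h)1_A$. For the left-hand side, the product in $A\otimes H^{\op}$ gives
\[
\underline{\T}(h)\bullet\underline{\T}(h')=1_A\otimes S^{-1}(h')S^{-1}(h)=1_A\otimes S^{-1}(hh').
\]
Applying $\mathrm{Id}_A\otimes\varepsilon$ yields $\varepsilon(hh')1_A=\varepsilon(h)\varepsilon(h')1_A$. The right-hand side is $(\mathrm{Id}_A\otimes\varepsilon)\underline{\T}(h'h)=\varepsilon(h'h)1_A$, which is the same scalar since $\varepsilon$ is multiplicative. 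Hence \eqref{eq:hprop2} holds. Equivalently, $(\mathrm{Id}_A\otimes\varepsilon)\underline{\T}=\varepsilon(-)1_A$ is an algebra map into the commutative subalgebra $\Bbbk1_A$, and so it is trivially an anti-homomorphism, as in the first part of the remark following the definition of the h-property.

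As a cross-check, one could verify \eqref{eq:13} directly for $\underline{B}$. The left-hand side of \eqref{eq:13} is $1_A\otimes h''S^{-1}(h)$, because $\underline{B}(1\otimes h'')=1_A\otimes h''$ and $\underline{B}(h\otimes1)=1_A\otimes S^{-1}(h)$, and the $H$-factor is multiplied in the order written. This equals $\underline{B}(h\otimes h'')$, and Lemma \ref{lemma:newhsepcondition} then gives \eqref{eq:hsepcas} for all $h'$.

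There is no real obstacle here. The only points needing care are keeping the $H^{\op}$ multiplication order straight, as the paper's warning notes, and making sure the earlier bijection is invoked with its hypothesis that the antipode is bijective.
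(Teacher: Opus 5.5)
Your proposal is correct and amounts to the ``easy calculation'' the paper leaves to the reader. Since $(\mathrm{Id}_A\otimes\varepsilon)\underline{\T}=\varepsilon(-)1_A$, the h-property \eqref{eq:hprop2} holds trivially, whichever order is used for the product on the $H$-factor, and either the bijection $\mathrm{Casim^h_{norm}}(H,\psi)\cong\int^{\mathrm{h-tot}}_{(H,\psi)}$ or your direct check of \eqref{eq:13} combined with Lemma \ref{lemma:newhsepcondition} then gives $\underline{B}\in\mathrm{Casim^h_{norm}}(H,\psi)$.
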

\begin{proof}
An easy calculation left to the reader.
\end{proof}

\subsection{Frobenius cowreaths of type \texorpdfstring{$(A \otimes H^{\op},H,\psi)$}{}}

We can assume that $H$ is a finite-dimensional Hopf algebra throughout this subsection. This is justified by the following remark.
\begin{remark}\label{rmk:Frob-fd}
The fact that $(A \otimes H^{\op}, H,\psi)$ is Frobenius automatically guarantees that $H$ is finite-dimensional thanks to \cite[Prop. 1.3]{bct2} and thus that $H$ has a right dual in $\mathcal{M}$ (i.e. the classical dual $H^*$), and that the inverse of the antipode $S^{-1}$ is also well-defined.
\end{remark}

Pairing this with Proposition \ref{Prop:hsepcow}, we can immediately deduce 
\begin{theorem}
If $(A \otimes H^{\op},H,\psi)$ is a Frobenius cowreath, then it is also h-separable.
\end{theorem}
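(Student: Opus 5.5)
The argument is a chain of two facts already stated earlier. Suppose $(A \otimes H^{\op},H,\psi)$ is Frobenius. By Remark \ref{rmk:Frob-fd}, which rests on \cite[Prop. 1.3]{bct2}, the coalgebra $H$ must then be finite-dimensional. A finite-dimensional Hopf algebra over a field has bijective antipode (see e.g. \cite[Ch. 10]{Rad3}), so $S^{-1}$ exists. This is the only hypothesis needed in Proposition \ref{Prop:hsepcow}. That proposition gives $\underline{B} \in \mathrm{Casim^h_{norm}}(H,\psi)$, with $\underline{B}(h \otimes h') = 1_A \otimes h'S^{-1}(h)$. Hence $\underline{B}$ is a normalized Casimir morphism satisfying \eqref{eq:hsepcas}, which by definition means the cowreath is h-separable.

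The Frobenius pair $(B,\t)$ coming from the hypothesis is not used beyond finite-dimensionality. I do not try to turn the Casimir morphism of that pair into an h-separability morphism, since the introduction warns that Frobenius Casimir morphisms and $\underline{B}$ are generally unrelated. Separability also follows as a byproduct, because h-separable implies separable.

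The one point to check is that Proposition \ref{Prop:hsepcow} truly needs nothing beyond bijectivity of $S$. If I were to verify that proof (left to the reader in the paper), I would check the following via the integral side:
\begin{itemize}
\item $\underline{\T}(h) = 1_A \otimes S^{-1}(h)$ satisfies \eqref{eq:weakint2}. Since $\T^A = 1_A$ is coinvariant, the right-hand side becomes $\varepsilon(S^{-1}(h_2))\, 1_A \otimes S^{-1}(h_1) = 1_A \otimes S^{-1}(h)$.
\item $\underline{\T}$ satisfies \eqref{eq:int2}, because $(\mathrm{Id}_A \otimes \varepsilon)\underline{\T}(h) = \varepsilon(h)1_A$ and $a_0\varepsilon(S^{-1}(a_2)ha_1) = \varepsilon(h)a$.
\item $\underline{\T}$ is total, since $S^{-1}(1_H) = 1_H$.
\item $\underline{\T}$ has the h-property \eqref{eq:hprop2}, because $(\mathrm{Id}_A \otimes \varepsilon)\underline{\T} = \varepsilon(-)1_A$ is multiplicative.
\end{itemize}
The main (minor) obstacle is only making sure the finite-dimensionality deduction from \cite{bct2} applies in $\mathrm{Vec}_{\Bbbk}$ with this $\psi$, which Remark \ref{rmk:Frob-fd} already asserts.
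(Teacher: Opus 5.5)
Your argument is correct and is exactly the paper's: Remark \ref{rmk:Frob-fd} gives finite-dimensionality of $H$, hence a bijective antipode, and Proposition \ref{Prop:hsepcow} then supplies the canonical h-separability morphism $\underline{B}$. Your verification of $\underline{\T}$, which the paper leaves to the reader, is also sound: you check \eqref{eq:weakint2}, \eqref{eq:int2}, totality and the h-property, and that suffices by Proposition \ref{prop:weakint2} and the bijection $\mathrm{Casim^h_{norm}}(H,\psi)\cong\int^{\mathrm{h-tot}}_{(H,\psi)}$.
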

This is an extension of \cite[Prop. 6.11]{BT}, where it is shown that a Frobenius cowreath $(A \otimes H^{\op},H,\psi)$ is automatically separable.

\medskip

Our next goal is to prove that the Frobenius property for a cowreath $(A \otimes H^{\op}, H,\psi)$ is equivalent to the existence of a distinguished element in the algebra $A$. We start with a proposition proved in \cite{BT}.

\begin{proposition}\cite[Prop. 6.10]{BT}\label{prop:6.10}
Let $H$ be a finite-dimensional Hopf algebra and $(A,\rho_A)$ a right $H$-comodule algebra. If there exists an invertible element $\A \in A$ satisfying $a\A = \mu(a_1)\A a_0$ for all $a \in A$, then the cowreath $(A\otimes H^{\op}, H,\psi)$ is Frobenius.
\end{proposition}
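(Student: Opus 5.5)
We construct a Frobenius pair $(B,\t)$ for $(H,\psi)$ directly from $\A$ and the classical integrals of $H$. Fix a left integral $\Lambda\in H$ and a right integral $\lambda\in H^*$ with $\lambda(\Lambda)=1$, and let $\mu$ be the distinguished grouplike element, so $\Lambda h=\mu(h)\Lambda$. Since $H$ is finite-dimensional, $S$ is bijective. By Theorem \ref{thm:maincas} it suffices to produce an integral $\T$ on $(H,\psi)$ rather than a Casimir morphism, because $B_{\T}$ is then automatically a Casimir morphism. The natural candidate is
\[
\T(h)=\lambda(\,\cdot\,)\text{-weighted, e.g. } \T(h)=\A^{-1}\otimes \lambda(S^{-1}(h)_1)\,S^{-1}(h)_2 \quad\text{or}\quad \T(h)=\lambda(h)\,\A^{-1}\otimes 1_H,
\]
adjusted by $S^{\pm1}$ as needed. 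Accordingly, the Frobenius element is taken of the form
\[
\t=\A\otimes S(\Lambda_1)\otimes\Lambda_2
\]
or a close variant: $\A$ sits in the $A$-leg and the integral $\Lambda$ is split across the two $H$-legs.

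The verification proceeds in this order.

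First, we check \eqref{eq:Frobel1} for $\t$. The $H$-part requires $S(\Lambda_1)h\otimes\Lambda_2=h_1S(\Lambda_1)\otimes h_2\Lambda_2 a_1$ up to the $\mu$-twist, which is the standard identity $h_1S(\Lambda_1)\otimes h_2\Lambda_2$ for left integrals combined with $\Lambda a_1=\mu(a_1)\Lambda$. Applying this twist to the $A$-leg turns the requirement into exactly $a\A=\mu(a_1)\A a_0$. This step pins down where $\A$ versus $\A^{-1}$ must be placed.

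Second, we verify the integral conditions for the candidate $\T$ using the simplified form in Proposition \ref{prop:weakint2}, namely \eqref{eq:weakint2} and \eqref{eq:int2}. Condition \eqref{eq:int2} reduces, via $\lambda(h_1)h_2=\lambda(h)1_H$ and the relation $\lambda(S^{-1}(a_2)ha_1)$ (which brings in $\mu$ through the modular element of $\lambda$), to $a_0\A^{-1}\mu(\cdot)=\A^{-1}a$. This is equivalent to the hypothesis rewritten for $\A^{-1}$.

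Third, we check the two normalizations \eqref{eq:Frobel2} for $B=B_{\T}$. Using \eqref{eq:B(hx1)} and the identity $\lambda(S(\Lambda))=\lambda(\Lambda)=1$, the expression $(\t^1\otimes1)B(\t^3\otimes h)(1\otimes\t^2)$ collapses to $\A\A^{-1}\otimes\lambda(\ldots)\cdot 1=\varepsilon(h)1_A\otimes1_H$, via the Fourier-type identity $\lambda(S^{-1}(h)\Lambda_2)S(\Lambda_1)$-type expressions equal to $h$-multiples. The second equality in \eqref{eq:Frobel2} additionally involves the coaction $\t^1_0\otimes\t^1_1$ of $\A$. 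Here we use that $\A$ is essentially coinvariant up to $\mu$: from the hypothesis, $\rho(\A)$ is controlled, and the resulting factor should cancel against the $\mu$ coming from moving $\Lambda$ past $\A_1$.

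The main obstacle is this last step. We expect to have to determine $\rho_A(\A)$, or at least to handle it abstractly, and to get the $S$ versus $S^{-1}$ and the $\mu$ versus $\mu^{-1}=\mu S$ conventions exactly right. Our first attempt will be to carry out the second equality with $\A$ as a free symbol, apply the hypothesis to $a=\A$ itself (giving $\A\A=\mu(\A_1)\A\A_0$), and use that to absorb the $\A_1$-terms. If this fails, we will switch $\t$ to $\A\otimes\Lambda_1\otimes S^{-1}(\Lambda_2)$ and recheck.
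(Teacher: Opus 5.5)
Your strategy is the paper's (Lemma \ref{lemma:Frobpair}): write down an explicit Frobenius pair built from $\A$, $\Lambda$ and $\lambda$, then check \eqref{eq:Frobel1}, the Casimir (or integral) conditions and \eqref{eq:Frobel2}. However, the formulas you propose fail, and the idea needed to fix them is exactly the step you leave open, so there is a genuine gap.

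\textbf{The Frobenius element.} For $\t=\A\otimes T$, taking $a=1_A$ in \eqref{eq:Frobel1} forces $T(h\otimes 1_H)=\Delta(h)T$ for all $h\in H$. The element $T=1_H\otimes\Lambda$ satisfies this, but $T=S(\Lambda_1)\otimes\Lambda_2$ does not. There is no identity turning $h_1S(\Lambda_1)\otimes h_2\Lambda_2$ into $S(\Lambda_1)h\otimes\Lambda_2$. Take $A=\Bbbk$, $H=H_4$, $\Lambda=x+gx$. Then $S(\Lambda_1)\otimes\Lambda_2=gx\otimes g+1\otimes x-x\otimes 1+g\otimes gx$, and for $h=g$ the two sides are $-x\otimes g+g\otimes x+gx\otimes 1+1\otimes gx$ and $x\otimes 1+g\otimes gx-gx\otimes g+1\otimes x$. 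Your fallback $\Lambda_1\otimes S^{-1}(\Lambda_2)$ fails the same test. With the paper's $\t_{\A}=\A\otimes 1_H\otimes\Lambda$, condition \eqref{eq:Frobel1} is literally the hypothesis, via $\Lambda a_1=\mu(a_1)\Lambda$.

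\textbf{The integral.} This is the more serious problem. Both your candidates have the form $\T(h)=f(h)\A^{-1}\otimes 1_H$; the first equals $\lambda(S^{-1}(h))\A^{-1}\otimes 1_H$ because $\lambda$ is a right integral. Such a map ignores the coaction on $\A^{-1}$. For $f=\lambda S^{-1}$, the right-hand side of \eqref{eq:weakint2} is $\lambda(S^{-1}(h))\A^{-1}_0\otimes S^{-1}(\A^{-1}_1)$, so \eqref{eq:weakint2} would force $\rho_A(\A^{-1})=\A^{-1}\otimes 1_H$. This fails in the paper's own $H_4$ example, where $\A=GX$, $\A^{-1}=-GX$ and $\rho(GX)=GX\otimes 1+G\otimes gx$. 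For $f=\lambda$ it fails even when $A=\Bbbk$, unless $\lambda$ is also a left integral. No adjustment by $S^{\pm1}$ in the $H$-legs repairs this.

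The missing idea is to feed $\rho_A(\A^{-1})$ into the Casimir morphism:
\[
B_{\A}(h\otimes h')=\lambda(S(h\A^{-1}_1)h')\,\A^{-1}_0\otimes 1_H, \qquad \T(h)=B_{\A}(h\otimes 1_H)=\lambda(S(h\A^{-1}_1))\,\A^{-1}_0\otimes 1_H .
\]
With this choice each condition goes through:
\begin{itemize}
\item \eqref{eq:Casimir2} follows from $\lambda(xy_1)y_2=\lambda(x_1y)S^{-1}(x_2)$.
\item The first equality in \eqref{eq:Frobel2} uses $\Lambda\A^{-1}_1=\mu(\A^{-1}_1)\Lambda$, the hypothesis for $a=\A^{-1}$ (i.e.\ $\mu(\A^{-1}_1)\A\A^{-1}_0=1_A$), and $\lambda(S(\Lambda)h)=\varepsilon(h)$.
\item The second equality, your ``main obstacle'', reduces to $\lambda(S(h\A_1\A^{-1}_1)\Lambda)\A_0\A^{-1}_0\otimes 1_H=\varepsilon(h)1_A\otimes 1_H$. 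This holds simply because $\rho_A$ is multiplicative, so $\A_0\A^{-1}_0\otimes\A_1\A^{-1}_1=1_A\otimes 1_H$. Applying the hypothesis to $a=\A$, as you plan, only gives $\mu(\A_1)\A_0=\A$ and cannot produce that cancellation.
\item \eqref{eq:Casimir1}, or \eqref{eq:int2} if you go through Theorem \ref{thm:maincas}, needs the identity $\lambda(yx)=\lambda(S^2(\mu(x_1)x_2)y)$ together with $\rho_A$ applied to $\A^{-1}a=\mu(a_1)a_0\A^{-1}$.
\end{itemize}
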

In the same paper it was shown that the converse also holds when $H=H_4$, Sweedler's Hopf algebra, see \cite[Prop. 6.7]{BT}. We extend this result to any finite-dimensional Hopf algebra $H$.

\begin{theorem}\label{thm:Frob-cow}
Let $H$ be a finite-dimensional Hopf algebra and $(A,\rho_A)$ a right $H$-comodule algebra. The cowreath $(A \otimes H^{\op},H,\psi)$ is Frobenius if and only if there is an invertible element $\A \in A$ such that 
\begin{equation}\label{eq:propA}
a\A=\mu(a_1)\A a_0 \quad \textrm{for every } a \in A.
\end{equation}
Equivalently $(A \otimes H^{\op},H,\psi)$ is Frobenius if and only if the morphism $(\mathrm{Id}_A \otimes \mu)\rho_A$ is inner.
\end{theorem}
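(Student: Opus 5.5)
The direction ``$\Leftarrow$'' is exactly Proposition \ref{prop:6.10}, so the plan concentrates on ``$\Rightarrow$''. The final reformulation is then immediate: $(\mathrm{Id}_A\otimes\mu)\rho_A$ is an algebra automorphism of $A$, with inverse $(\mathrm{Id}_A\otimes\mu S)\rho_A$. Condition \eqref{eq:propA} says precisely that $\A^{-1}a\A=\mu(a_1)a_0$, i.e. that this automorphism is conjugation by $\A$.

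For ``$\Rightarrow$'', assume $(B,\t)$ is a Frobenius pair. By Remark \ref{rmk:Frob-fd}, $H$ is finite-dimensional and $S$ is bijective, so Theorem \ref{thm:maincas} applies. Let $\T:=\Phi(B)$ be the integral associated with $B$. Using \eqref{eq:B(hx1)}, both identities in \eqref{eq:Frobel2} can be rewritten purely in terms of $\T$. Proposition \ref{prop:weakint2} then shows that $\T$ is determined by the map $\tau:=(\mathrm{Id}_A\otimes\varepsilon)\T:H\to A$, which satisfies the twisted equivariance \eqref{eq:int2}.

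The key idea is to compare $(B,\t)$ with the explicit Frobenius pair constructed in Proposition \ref{prop:6.10} from a candidate $\A$. In the converse direction we do not yet have $\A$, so instead we extract it from $\t$. Condition \eqref{eq:Frobel1} says that $\t=\t^1\otimes\t^2\otimes\t^3$ is a ``$\psi$-invariant'' element. Taking $h=1$ gives $a\t^1\otimes\t^2\otimes\t^3=\t^1a_0\otimes\t^2\otimes\t^3a_1$. The $h$-part says that $\t^2\otimes\t^3$ behaves like a coproduct-type invariant under left multiplication on $\t^2$ and diagonal action.

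I would apply $\mathrm{Id}_A\otimes\varepsilon\otimes\lambda$, with $\lambda$ a right integral in $H^*$ (or, more likely, its composite with $S^{-1}$, chosen so that $\lambda(\Lambda)=1$), to obtain an element $\A\in A$. The relation $\lambda(xh_1)h_2=\ldots$ together with $\lambda(hy)=\lambda(y\,\text{twisted by }\mu)$ should convert $a\t^1\otimes\t^3=\t^1a_0\otimes\t^3a_1$ into $a\A=\mu(a_1)\A a_0$. This uses the standard identity $\lambda(xy)=\lambda(y\,(\mu\rightharpoonup x))$ up to the appropriate side conventions, which is where the modular element enters.

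Invertibility of $\A$ should come from \eqref{eq:Frobel2}. Evaluate the first equality at $h=\Lambda$, a left integral, and write $B$ via $\T$. Then $\varepsilon$-projecting should produce an element $\A'$, built from $\tau$ applied to elements of the form $\t^3$ times $\Lambda$, with $\A\A'=1$; symmetrically the second equality gives $\A'\A=1$. Alternatively, one can argue abstractly: by Proposition \ref{prop:FScoalg}, comparing with any pair coming from Proposition \ref{prop:6.10} would suffice, but such a pair is not yet available, so the direct computation is needed.

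The main obstacle is bookkeeping these two computations. Specifically, one must choose the right functional (among $\lambda$, $\lambda S^{-1}$, and left versus right integrals) so that the $\mu$-twist appears on the correct side and matches \eqref{eq:propA} rather than its inverse. One must also check that the candidate inverse obtained from \eqref{eq:Frobel2} genuinely multiplies with $\A$ to give $1_A$, rather than a scalar multiple of $1_A$. I would first test the computation against the known case $H=H_4$ from \cite[Prop. 6.7]{BT} to fix these conventions.
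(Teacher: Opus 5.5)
Several pieces of your plan agree with the paper: the candidate $\A=(\mathrm{Id}_A\otimes\varepsilon\otimes\lambda)(\t)$, the idea of getting the twisted commutation from \eqref{eq:Frobel1} and a one-sided inverse from \eqref{eq:Frobel2}, and your argument for the ``inner'' reformulation. The gap is the step where $\mu$ is supposed to appear, and the mechanism you propose for it does not work. Applying $\mathrm{Id}_A\otimes\varepsilon\otimes\lambda$ to \eqref{eq:Frobel1} at $h=1_H$ only gives $a\A=\t^1a_0\,\varepsilon(\t^2)\lambda(\t^3a_1)$. No identity for the right integral $\lambda$ removes $a_1$ from inside $\lambda$. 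The Nakayama relation \eqref{eq:aux2} only moves it to the other side, as $\lambda(S^2(\mu(a_1)a_2)\t^3)$.

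You noticed that the $h$-part of \eqref{eq:Frobel1} is an invariance condition, but you did not draw the consequence that matters. Take $a=1_A$ and kill the middle leg with $\varepsilon$: this gives $\t^1\otimes\varepsilon(\t^2)\,h\t^3=\varepsilon(h)\,\t^1\otimes\varepsilon(\t^2)\t^3$ for all $h$. Writing $\t=\sum a_i\otimes T_i$ with the $a_i$ independent, each $(\varepsilon\otimes\mathrm{Id}_H)(T_i)$ is therefore a left integral. Hence $(\mathrm{Id}_A\otimes\varepsilon\otimes\mathrm{Id}_H)(\t)=\A\otimes\Lambda$ once $\lambda(\Lambda)=1$. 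Now apply $\mathrm{Id}_A\otimes\varepsilon\otimes\mathrm{Id}_H$ to \eqref{eq:Frobel1} at $h=1_H$. You get $a\A\otimes\Lambda=\A a_0\otimes\Lambda a_1=\mu(a_1)\A a_0\otimes\Lambda$, which is \eqref{eq:propA}. So $\mu$ enters through $\Lambda a_1=\mu(a_1)\Lambda$, not through $\lambda$. This also settles your worry about which functional to use: any functional not vanishing on $\Lambda$ gives a scalar multiple of the same $\A$.

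The invertibility step needs the same fact, and there your plan also fails as stated. Evaluating the first identity in \eqref{eq:Frobel2} at $h=\Lambda$ puts $\varepsilon(\Lambda)1_A$ on the right, which is $0$ whenever $H$ is not semisimple (e.g. $E(n)$). Evaluate it at $h=1_H$ instead and apply $\mathrm{Id}_A\otimes\varepsilon$. Because $\t^1\otimes\varepsilon(\t^2)\t^3=\A\otimes\Lambda$, the result factors as $\A\,\mathfrak{B}=1_A$ with $\mathfrak{B}=(\mathrm{Id}_A\otimes\varepsilon)B(\Lambda\otimes 1_H)$.

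For the left inverse, do not expect the second identity in \eqref{eq:Frobel2} to be a mirror image, since it also has $\t^1$ to the left of $B$. You do not need it. With $\nu=(\mathrm{Id}_A\otimes\mu)\rho_A$, relation \eqref{eq:propA} reads $a\A=\A\nu(a)$, so $\nu^{-1}(\mathfrak{B})\A=\A\mathfrak{B}=1_A$. The paper obtains exactly this left inverse, $(\mathrm{Id}_A\otimes\mu)B(S^{-1}(\Lambda)\otimes 1_H)$, from \eqref{eq:Casimir2}. Your detour through $\T=\Phi(B)$ and Theorem \ref{thm:maincas} is harmless but plays no role.
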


\begin{proof}
Suppose the cowreath $(A \otimes H^{\op}, H, \psi)$ is Frobenius with Frobenius pair $(B,\t)$. Mimicking the steps performed in the proof of \cite[Prop. 6.7]{BT}, we can write $\mathfrak{t}= \sum a_i \otimes T_i$ and assume that the $a_i$'s are linearly independent. Then \eqref{eq:Frobel1} implies that for every $i$ we have
\begin{equation}\label{eq:T_i}
T_i(h \otimes 1_H)=(h_1 \otimes h_2)T_i
\end{equation}
for every $h \in H$. By applying $(\varepsilon \otimes \mathrm{Id}_H)$ on both sides we find that the element $(\varepsilon \otimes \mathrm{Id}_H)(T_i)$ is a left integral in $H$, thus we can write $(\varepsilon \otimes \mathrm{Id}_H)(T_i)=\eta_i \Lambda$ for some $\eta_i \in \Bbbk$ and a fixed left integral $\Lambda \in H$. Then we have
\[
(\mathrm{Id}_A \otimes \varepsilon \otimes \lambda)(\mathfrak{t})=\sum a_i \otimes \lambda(\eta_i\Lambda)=\sum \eta_i a_i,\]
for a right integral $\lambda \in H^*$ such that $\lambda(\Lambda)=1$.
Let $\A:=\sum \eta_i a_i$. It follows that
\begin{align*}
a \A &= a\t^1 \lambda(\t^2 \Lambda)\lambda(\t^3)=(\mathrm{Id}_A \otimes \lambda \otimes \lambda)[(a \t^1 \otimes \t^2 \otimes \t^3)(1_A \otimes \Lambda \otimes 1_H) ]\\
\overset{\eqref{eq:Frobel1}}&{=}(\mathrm{Id}_A \otimes \lambda \otimes \lambda)[ (\t^1a_0 \otimes \t^2 \otimes \t^3 a_1)(1_A \otimes \Lambda \otimes 1_H) ]=\sum(\mathrm{Id}_A \otimes \lambda \otimes \lambda)[ a_ia_0 \otimes T_i(\Lambda \otimes a_1) ]\\
&=\sum(\mathrm{Id}_A \otimes \lambda \otimes \lambda)[ \eta_i a_ia_0 \otimes \Lambda \otimes \Lambda a_1]=\sum(\mathrm{Id}_A \otimes \lambda \otimes \lambda)[ \eta_i \mu(a_1)a_ia_0 \otimes \Lambda \otimes \Lambda]=\sum \eta_i \mu(a_1)a_ia_0\\
&=\mu(a_1)\A a_0
\end{align*}
for every $a \in A$, i.e. \eqref{eq:propA} holds. Now we prove that $\A$ is invertible. Remember that $(B,\t)$ is a Frobenius system for $(A \otimes H^{\op}, H, \psi)$, then consider $\mathfrak{B}:=(\mathrm{Id}_A \otimes \varepsilon)B(\Lambda \otimes 1_H)$. We have
\[
(\A \otimes 1_H)B(\Lambda \otimes 1_H)=\sum(a_i \otimes 1_H)B(\eta_i\Lambda \otimes 1_H)=(\t^1 \otimes 1_H)B(\varepsilon(\t^2)\t^3 \otimes 1_H)=\varepsilon(\t^2)(\t^1 \otimes 1_H)B(\t^3 \otimes 1_H),
\]
thus
\begin{align*} 
\A \mathfrak{B}&=(\mathrm{Id}_A \otimes \varepsilon)[(\A \otimes 1_H)B(\Lambda \otimes 1_H)]=(\mathrm{Id}_A \otimes \varepsilon)[(\t^1 \otimes 1_H)B(\t^3 \otimes 1_H)(1_A \otimes \t^2)]\\
\overset{\eqref{eq:Frobel2}}&{=}(\mathrm{Id}_A \otimes \varepsilon)(1_A \otimes 1_H)=1_A,
\end{align*}
which means that $\mathfrak{B}$ is a right inverse for $\A$. Let $\mathfrak{B}':=(\mathrm{Id} \otimes \mu)B(S^{-1}(\Lambda) \otimes 1_H)$. We will prove that $\mathfrak{B}'$ is a left inverse for $\A$.
First of all, observe that, in view of \eqref{eq:Casimir2}, we have
\[
B^{A}(\Lambda_{2}\otimes 1_H)_{0}\otimes B^{H}(\Lambda_{2}\otimes 1_H)_{1}\otimes B^{H}(\Lambda_{2}\otimes 1_H)_{2}\Lambda_{1}B^{A}(\Lambda_{2}\otimes
1_H)_{1}=B(\Lambda\otimes 1_H)\otimes 1_H.
\]
If we apply $(\mathrm{Id}_A \otimes \varepsilon \otimes \mu)$ on both sides we find
\[
B^{A}(\Lambda_{2}\otimes 1_H)_{0} \mu \left( B^{H}(\Lambda_{2}\otimes 1)\Lambda_{1}B^{A}(\Lambda_{2}\otimes 1_H)_{1} \right)=(\mathrm{Id}_A \otimes \varepsilon)B(\Lambda\otimes 1_H)=\mathfrak{B}.
\]
Since $\mu$ is an algebra map and $\mu(\Lambda_1)\Lambda_2=S^{-1}(\Lambda)$ by \cite[Thm. 10.5.4]{Rad3}, the first term can be rewritten as 
\[\mu \left(B^{A}( S^{-1}(\Lambda)\otimes 1_H)_1 \right)B^{A}( S^{-1}(\Lambda)\otimes 1_H)_{0} \mu \left( B^{H}( S^{-1}(\Lambda)\otimes 1_H) \right)=\mu(\mathfrak{B}'_1)\mathfrak{B}'_0.\]
It follows that $\mu(\mathfrak{B}'_1)\mathfrak{B}'_0=\mathfrak{B}$ and therefore
\[
\mathfrak{B}'\A=\mu(\mathfrak{B}'_1)\A\mathfrak{B}'_0=\A \mathfrak{B}=1_A.
\]
This shows that $\A$ is invertible and that $\A^{-1}=\mathfrak{B}=\mathfrak{B}'$. 
\end{proof}

A straightforward consequence (already pointed out in \cite[Sec 6.3]{bct2}) is that the unimodularity of $H$ implies that the cowreath $(A \otimes H^{\op},H,\psi)$ is Frobenius. More examples of Frobenius cowreaths can be obtained from central simple algebras. When $A$ is a central simple algebra over $\Bbbk$ the endomorphism $(\mathrm{Id}_A \otimes \mu)\rho_A:A \rightarrow A$ is automatically inner, thanks to the Skolem-Noether Theorem \cite[Cor. IV.1.9]{La}.

\begin{corollary}\label{cor:simpleA}
Let $H$ be a finite-dimensional Hopf algebra and $A$ a right $H$-comodule algebra, such that $A$ is central simple over $\Bbbk$. The cowreath $(A \otimes H^{\op},H,\psi)$ is Frobenius.
\end{corollary}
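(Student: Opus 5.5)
The plan is to reduce Corollary \ref{cor:simpleA} to Theorem \ref{thm:Frob-cow}: by that theorem, it suffices to show that the linear map $\phi:=(\mathrm{Id}_A \otimes \mu)\rho_A: A \rightarrow A$, $\phi(a)=\mu(a_1)a_0$, is an inner automorphism of $A$. Once we have an invertible $\A \in A$ with $\phi(a)=\A a\A^{-1}$ for every $a \in A$, we get $a\A = \A\phi(a)$ rather than $\phi(a)\A$. So it will be more convenient to show that $\phi^{-1}$ is inner, or equivalently to rename the element; see the last step below.

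\textbf{Step 1: $\phi$ is an algebra map.} Since $\rho_A$ is multiplicative and unital and $\mu$ is an algebra map, we have $\phi(ab)=\mu(a_1b_1)a_0b_0=\mu(a_1)\mu(b_1)a_0b_0=\phi(a)\phi(b)$. Moreover $\phi(1_A)=\mu(1_H)1_A=1_A$.

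\textbf{Step 2: $\phi$ is bijective.} Let $\phi'(a):=\mu^{-1}(a_1)a_0$ with $\mu^{-1}=\mu S$. Coassociativity of $\rho_A$ gives $\phi\phi'(a)=(\mu*\mu^{-1})(a_1)a_0=\varepsilon(a_1)a_0=a$, and similarly $\phi'\phi=\mathrm{Id}_A$. Hence $\phi$ is a $\Bbbk$-algebra automorphism of $A$.

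\textbf{Step 3: Skolem--Noether.} $A$ is central simple over $\Bbbk$; we also need it to be finite-dimensional, which is the standing meaning of ``central simple'' in \cite{La}. Under this assumption, \cite[Cor. IV.1.9]{La} says every $\Bbbk$-algebra automorphism of $A$ is inner. Applying this to $\phi'=\phi^{-1}$, we obtain an invertible $\A\in A$ with $\phi'(b)=\A^{-1} b \A$ for every $b\in A$, so $\phi(b)=\A b\A^{-1}$. The inner-ness of $\phi$ and of $\phi^{-1}$ are equivalent, so the choice does not matter. Now substitute $b=\phi^{-1}(a)$, i.e.\ apply the relation to $\phi'$ directly. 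To match \eqref{eq:propA}, we need $a\A=\A\phi(a)$, that is, $\phi(a)=\A^{-1}a\A$. So we take $\A$ to be the element with $\phi=\mathrm{Ad}(\A^{-1})$, which exists by Skolem--Noether applied to $\phi$ itself. Then \eqref{eq:propA} holds, and Theorem \ref{thm:Frob-cow} (or already Proposition \ref{prop:6.10}) yields that the cowreath is Frobenius.

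The only delicate point is the finite-dimensionality needed for Skolem--Noether and the direction of conjugation. Neither is a real obstacle: both are settled by the choices above.
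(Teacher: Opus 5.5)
Your proposal is correct and follows the paper's route. The paper simply notes that $(\mathrm{Id}_A\otimes\mu)\rho_A$ is inner by Skolem--Noether \cite[Cor. IV.1.9]{La} and then invokes Theorem \ref{thm:Frob-cow}; you additionally check that this map is an algebra automorphism (inverse given via $\mu S$) and fix the direction of conjugation by taking $\A=u^{-1}$ when $\phi(a)=uau^{-1}$, while the detour through $\phi'$ in Step 3 is redundant but harmless.
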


Nonetheless the cowreath $(A \otimes H^{\op},H,\psi)$ can be Frobenius even if $H$ is not unimodular and $A$ is not central simple over $\Bbbk$. If $A$ is endowed with the trivial $H$-coaction $\rho_A=\mathrm{Id}_A \otimes u_H$, then $(\mathrm{Id}_A \otimes \mu)\rho_A=\mathrm{Id}_A$ for any choice of $H$ and any algebra $A$. 

\begin{remark}\label{rmk:ess-uniq-A}
If it exists, the element $\A$ of Theorem \ref{thm:Frob-cow} is essentially unique in the following sense. Let $\A' \in A$ be another invertible element satisfying \eqref{eq:propA}. Then
\[a\A'(\A)^{-1}=\mu(a_1)\A'a_0(\A)^{-1}=\A'(\A)^{-1}a\]
for every $a \in A$, thus $\A$ is unique up to multiplication by an invertible element of the center of $A$.
\end{remark}

The previous remark describes what is actually a consequence of the essential uniqueness of the Frobenius pair for a Frobenius coalgebra that we have recalled in Subsection \ref{subsec:Frob}. Let us write the details to make this clear. We begin by showing how one can produce a Frobenius pair $(B_{\A},\t_{\A})$, given the element $\A$ of the previous proposition. This will give a slightly different proof of Proposition \ref{prop:6.10}.

\begin{lemma}\label{lemma:Frobpair}
Let $\A \in A$ be an invertible element satisfying \eqref{eq:propA} and let $\Lambda \in H$ be a left integral, $\lambda \in H^*$ be a right integral such that $\lambda(\Lambda)=1$. The element $\t_{\A}=\A \otimes 1_H \otimes \Lambda$ and the map $B_{\A}: H \otimes H \rightarrow A \otimes H^{\op}$ defined by $h \otimes h' \mapsto \lambda(S(h \A_1^{-1})h')\A_0^{-1}\otimes 1_H$ are part of a Frobenius pair $(B_{\A},\t_{\A})$ for the coalgebra $(H,\psi)$.
\end{lemma}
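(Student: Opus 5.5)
We verify the four conditions of item iv) for the pair $(B_{\A},\t_{\A})$: \eqref{eq:Casimir1} and \eqref{eq:Casimir2} for $B_{\A}$, then \eqref{eq:Frobel1} and both equalities of \eqref{eq:Frobel2}. Throughout we use three facts. First, $\A^{-1}$ inherits a twisted commutation rule from \eqref{eq:propA}: inverting $a\A=\mu(a_1)\A a_0$ gives
\[
\A^{-1}a=\mu(a_1)a_0\A^{-1}.
\]
Second, since $\rho_A$ is an algebra map, $\rho_A(\A^{-1})=\rho_A(\A)^{-1}$, and combining this with \eqref{eq:propA} yields the relation between $\A_0\otimes\A_1$ and $\A$ that we will need. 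Third, we use the standard integral identities $h\Lambda=\varepsilon(h)\Lambda$, $\Lambda h=\mu(h)\Lambda$, $\lambda(h_1)h_2=\lambda(h)1_H$, $\lambda(S(\Lambda))=1$, together with the formula $\lambda(S(h)\Lambda')$-type nondegeneracy.

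\eqref{eq:Frobel1} is the easiest step. With $\t^1=\A$, $\t^2=1_H$, $\t^3=\Lambda$, the left side is $a\A\otimes h\otimes\Lambda$. The right side is $\A a_0\otimes h_1\otimes h_2\Lambda a_1=\A a_0\otimes h\otimes\Lambda a_1=\mu(a_1)\A a_0\otimes h\otimes\Lambda$, which agrees with the left side by \eqref{eq:propA}.

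\eqref{eq:Frobel2}. The first equality reads $(\A\otimes1)B_{\A}(\Lambda\otimes h)=\lambda(S(\Lambda\A^{-1}_1)h)\A\A^{-1}_0\otimes1$. Using $\Lambda\A^{-1}_1=\mu(\A^{-1}_1)\Lambda$, this becomes $\lambda(S(\Lambda)h)\A\,\mu(\A^{-1}_1)\A^{-1}_0$. Since $S(\Lambda)$ is a right integral, $\lambda(S(\Lambda)h)=\varepsilon(h)$. It therefore remains to show $\mu(\A^{-1}_1)\A\A^{-1}_0=1_A$. This follows by applying \eqref{eq:propA} with $a=\A^{-1}$: we get $\A^{-1}\A=\mu(\A^{-1}_1)\A\A^{-1}_0$. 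The second equality has $\t^1_0\otimes\t^1_1=\A_0\otimes\A_1$ and $\t^2=1$, so we must compute $(\A_0\otimes1)B_{\A}(h\A_1\otimes\Lambda)$. This involves $\lambda(S(h\A_1\A^{-1}_1)\Lambda)\A_0\A^{-1}_0$. The element $\A_0\A^{-1}_0\otimes\A_1\A^{-1}_1$ is not $\rho_A(\A\A^{-1})$ in general, because the Sweedler indices belong to different elements, so we first rewrite $\A^{-1}_0\otimes \A_1^{-1}$ in terms of $\A$. We use $\lambda(S(x)\Lambda)=\lambda(S(x)\Lambda)$ together with $\lambda(g\Lambda)$ relations (here $\lambda(S(x)\Lambda)=\varepsilon(x)$ up to $\mu$-twist). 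The plan is to reduce the expression to $\varepsilon(h)1_A$ by the identity $\mu(\A^{-1}_1)\A\A^{-1}_0=1$ again.

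The Casimir conditions for $B_{\A}$ are the main obstacle. We will avoid checking \eqref{eq:Casimir1} and \eqref{eq:Casimir2} directly. Instead, we observe that $B_{\A}(h\otimes1)=\lambda(S(h\A_1^{-1}))\A^{-1}_0\otimes1$ and invoke Theorem~\ref{thm:maincas}: it suffices to show that $\T(h):=\lambda(S(h\A^{-1}_1))\A^{-1}_0\otimes1$ is an integral via Proposition~\ref{prop:weakint2}, and that $B_{\T}$ coincides with $B_{\A}$. Showing $B_{\T}=B_{\A}$ uses $\lambda(S(x))=\lambda S$ being a left integral in $H^*$. Condition \eqref{eq:weakint2} becomes the equality $\lambda(S(h\A^{-1}_1))\A^{-1}_0\otimes1=\lambda(S(h_2\A^{-1}_2))\A^{-1}_0\otimes S^{-1}(h_1\A^{-1}_1)$, which follows from the left-integral property of $\lambda S$. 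Condition \eqref{eq:int2} follows from the twisted commutation of $\A^{-1}$ together with $\lambda S$ absorbing $S^{-1}(a_2)(\cdot)a_1$, the latter giving a factor $\mu$ from the modular property of $\lambda$. The bookkeeping of $\mu$ versus $\mu^{-1}=\mu S$ is the delicate point, and we expect it to be the hardest step.
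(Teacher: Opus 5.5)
The overall plan is workable, and your route for the Casimir conditions is a legitimate alternative to the paper's. However, one step rests on a false claim, and the step you call hardest is only announced.

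\textbf{The false claim (second equality in \eqref{eq:Frobel2}).} You assert that $\A_0\A^{-1}_0\otimes\A_1\A^{-1}_1$ ``is not $\rho_A(\A\A^{-1})$ in general''. That is wrong. Since $(A,\rho_A)$ is a comodule algebra, $\rho_A(ab)=a_0b_0\otimes a_1b_1$, so this element is exactly $\rho_A(\A\A^{-1})=1_A\otimes 1_H$. This is the same fact as your own ``second fact'' $\rho_A(\A^{-1})=\rho_A(\A)^{-1}$. Hence
\[
(\A_0\otimes 1_H)B_{\A}(h\A_1\otimes\Lambda)=\lambda(S(h)\Lambda)1_A\otimes 1_H=\varepsilon(h)1_A\otimes 1_H .
\]
There is no $\mu$-twist here. $\Lambda$ is a \emph{left} integral, so $S(h)\Lambda=\varepsilon(h)\Lambda$. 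Even without multiplicativity, $\lambda(S(x)\Lambda)=\varepsilon(x)$ together with the counit axiom would finish the step. Your plan of looking for a twist and reusing $\mu(\A^{-1}_1)\A\A^{-1}_0=1_A$ aims at something that is not there, so as written this equality is unproved. The paper does it in the one line above.

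\textbf{The unfinished step (\eqref{eq:int2} for $\T(h)=\lambda(S(h\A^{-1}_1))\A^{-1}_0\otimes 1_H$).} Your ``modular property'' has to be Radford's formula $\lambda(yx)=\lambda(S^2(\mu(x_1)x_2)y)$, which is the paper's \eqref{eq:aux2}. It must be combined with $\rho_A$ applied to $\A^{-1}a=\mu(a_1)a_0\A^{-1}$, which is \eqref{eq:aux1}. With these, the two $\mu$'s cancel and no $\mu$ versus $\mu^{-1}$ issue arises:
\begin{align*}
\lambda(S(ha_1\A^{-1}_1)a_2)\,a_0\A^{-1}_0
&=\mu(a_2)\,\lambda\bigl(S(ha_1\A^{-1}_1S(a_3))\bigr)\,a_0\A^{-1}_0\\
&=\lambda\bigl(S(h\A^{-1}_1a_1S(a_2))\bigr)\,\A^{-1}_0a_0=\lambda(S(h\A^{-1}_1))\,\A^{-1}_0a .
\end{align*}

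\textbf{Comparison of routes for the Casimir conditions.} Your reduction via Theorem \ref{thm:maincas} is valid. $H$ is finite-dimensional, so $S$ is bijective. Condition \eqref{eq:weakint2} is precisely the statement that $\lambda\circ S$ is a left integral in $H^*$, as you say. The identity $B_{\T}=B_{\A}$ needs no integral property at all: it follows from $S(S^{-1}(h'_1)k)=S(k)h'_1$ and $h'_3S^{-1}(h'_2)=\varepsilon(h'_2)1_H$.

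The paper instead checks \eqref{eq:Casimir2} and \eqref{eq:Casimir1} directly for arbitrary $h'$. It uses $\lambda(xy_1)y_2=\lambda(x_1y)S^{-1}(x_2)$, together with \eqref{eq:aux1} and \eqref{eq:aux2}. Your route needs only the $h'=1_H$ case of the same computations, which fits the paper's integral-first philosophy. The price is that it leans on Theorem \ref{thm:maincas} rather than being self-contained.

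Your verifications of \eqref{eq:Frobel1} and of the first equality in \eqref{eq:Frobel2} are correct and coincide with the paper's.
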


\begin{proof}
We have
\begin{align*}
a\A \otimes h \otimes \Lambda\overset{\eqref{eq:propA}}&{=}\mu(a_1)\A a_0 \otimes h \otimes \Lambda=\A a_0 \otimes h \otimes \mu(a_1)\Lambda=\A a_0 \otimes h \otimes \Lambda a_1\\
&=\A a_0 \otimes h_1  \otimes \varepsilon(h_2)\Lambda a_1=\A a_0 \otimes h_1 \otimes h_2\Lambda a_1
\end{align*}
for every $a \in A$, $h \in H$, and therefore \eqref{eq:Frobel1} is satisfied.
Then observe that $\lambda(xy_1)y_2=\lambda(x_1y)S^{-1}(x_2)$ for every $x,y \in H$, since $\lambda$ is a right integral in $H^*$. Hence
\begin{align*}
B_{\A}(h\otimes h_1')\otimes h_{2}^{\prime }&=\lambda(S(h \A_1^{-1})h'_1)\A_0^{-1}\otimes 1_H \otimes h_2'\\
&=\A_0^{-1}\otimes 1_H \otimes \lambda(S(h \A_1^{-1})h'_1)h_2'\\
&=\A_0^{-1}\otimes 1_H \otimes \lambda(S(h \A_1^{-1})_1h')S^{-1}(S(h \A_1^{-1})_2)\\
&=\A_0^{-1}\otimes 1_H \otimes \lambda(S(h_2 \A_2^{-1})h')h_1 \A_1^{-1}\\
&=\lambda(S(h_2 \A_2^{-1})h')\A_0^{-1}\otimes 1_H \otimes h_1 \A_1^{-1}\\
&=\psi(h_1 \otimes \lambda(S(h_2 \A_1^{-1})h')\A_0^{-1}\otimes 1_H)\\
&=\psi(h_1 \otimes B_{\A}(h_2 \otimes h')).
\end{align*}
The above chain of equalities shows that \eqref{eq:Casimir2} holds (see Proposition \ref{propXA} for the definition of $\psi$). Furthermore
\begin{align*}
(\t_{\A}^1 \otimes 1_H)B_{\A}(\t_{\A}^3 \otimes h)(1_A \otimes \t_{\A}^2)&=\lambda(S(\Lambda \A_1^{-1})h)\A\A_0^{-1}\otimes 1_H=\lambda(S(\Lambda )h)\mu(\A_1^{-1})\A\A_0^{-1}\otimes 1_H\\
&=\lambda(S(\Lambda )h)\A^{-1}\A\otimes 1_H=\varepsilon(h)\lambda(S(\Lambda )) 1_A \otimes 1_H\\
&=\varepsilon(h)1_A \otimes 1_H
\end{align*}
and
\begin{align*}
((\t^1_{\A})_0 \otimes 1_H)B_{\A}((\t^2_{\A})_2h(\t^1_{\A})_1 \otimes \t^3_{\A})(1_A \otimes (\t^2_{\A})_1)&=(\A_0 \otimes 1_H)B_{\A}(h\A_1 \otimes \Lambda)\\
&=\lambda(S(h)\Lambda)1_A\otimes 1_H=\varepsilon(h)1_A \otimes 1_H,
\end{align*}
therefore \eqref{eq:Frobel2} is also verified. In order to prove that \eqref{eq:Casimir1} holds, we need a couple of auxiliary identities. First of all remember that we have $\A^{-1}a=\mu(a_1)a_0 \A^{-1}$ for every $a \in A$ and therefore also
\begin{equation}\label{eq:aux1}
\A_0^{-1}a_0 \otimes \A_1^{-1}a_1 =\mu(a_1)a_{0_0} \A_0^{-1} \otimes a_{0_1} \A_1^{-1}
\end{equation}
for every $a \in A$. Since $\lambda$ is a right integral in $H^*$ we also have that
\begin{equation}\label{eq:aux2}
\lambda(yx)=\lambda(S^2(\mu(x_1)x_2)y)
\end{equation}
for every $x,y \in H$, by \cite[Thm. 10.5.4]{Rad3}. Then we can see that
\begin{align*}
(a_{0} \otimes 1_H)B_{\A}(g_{2}ha_{1}\otimes g_{3}h'a_{2})(1 \otimes g_{1})&=\lambda(S(g_{2}ha_{1} \A_1^{-1})g_{3}h'a_{2})a_0\A_0^{-1}\otimes g_1\\
&=\lambda(S(ha_{1} \A_1^{-1})h'a_{2})a_0\A_0^{-1}\otimes \varepsilon(g_2)g_1\\
&=\lambda(S(ha_{1} \A_1^{-1})h'a_{2})a_0\A_0^{-1}\otimes g\\
\overset{\eqref{eq:aux2}}&{=}\lambda(S^2(\mu(a_2)a_3)S(ha_1 \A_1^{-1})h')a_0\A_0^{-1}\otimes g\\
&=\lambda(S^2(a_3)S(h \mu(a_2)a_1\A_1^{-1})h')a_0\A_0^{-1}\otimes g\\
&=\lambda(S^2(a_2)S(h \mu(a_1) a_{0_1}\A_1^{-1})h')a_{0_0}\A_0^{-1}\otimes g\\
\overset{\eqref{eq:aux1}}&{=}\lambda(S^2(a_2)S(h \A_1^{-1}a_1)h')\A_0^{-1}a_0\otimes g\\
&=\lambda(S(h \A_1^{-1}a_1S(a_2))h')\A_0^{-1}a_0\otimes g\\
&=\lambda(S(h \A_1^{-1})h')\A_0^{-1}a\otimes g\\
&=(1 \otimes g)B_{\A}(h \otimes h')(a \otimes 1).
\end{align*}
\end{proof}
Once we fix an invertible element $\A \in A$ satisfying \eqref{eq:propA} (and a left integral $\Lambda \in H$) we can define the Frobenius pair $(B_{\A},\t_{\A})$ and use it to give a presentation of the whole set $FS_{\T^{\#}_{A \otimes H^{\op}}}(H,\psi)$ of Frobenius pairs for $(H,\psi)$, using the description reported in Subsection \ref{subsec:Frob}. First of all we need a convenient way to display the set $C_0^{\times}=(\mathrm{Hom}_{\T^{\#}_{A \otimes H^{\op}}}(H,\Bbbk))^{\times}$.
\begin{proposition}\label{prop:C-CoH}
Let 
\[
C_{A \otimes H^{\op}}(coH):=\lbrace b \otimes u \in A \otimes H^{\op} \ | \  ba_0 \otimes S^{-1}(a_1)u=a_0b \otimes u S^{-1}(a_1), \textrm{ for every } a \in A \rbrace.
\]
There is a bijective correspondence
\[
\begin{matrix}
\Theta: &(\mathrm{Hom}_{\T^{\#}_{A \otimes H^{\op}}}(H,\Bbbk))^{\times} & \overset{\cong}{\longleftrightarrow} & (C_{A \otimes H^{\op}}(coH))^\times\\
&\tilde{c} & \longmapsto & \tilde{c}(1_H)
\end{matrix}
\]
\end{proposition}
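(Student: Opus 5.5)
The plan is to identify $\mathrm{Hom}_{\T^{\#}_{A \otimes H^{\op}}}(H,\Bbbk)$ explicitly, show that every such morphism is determined by its value at $1_H$, and check that the monoid structure of $C_0$ becomes the multiplication $\bullet$ of $A \otimes H^{\op}$ under evaluation at $1_H$. The bijection on invertible elements then follows formally.

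\textbf{Step 1: unwind the definitions.} By Definition \ref{transf}, with the unit object $(\Bbbk,\mathrm{trivial})$, a morphism $H \to \Bbbk$ in $\T^{\#}_{A \otimes H^{\op}}$ is a linear map $\tilde c: H \to A \otimes H^{\op}$ satisfying $m(\mathrm{Id}\otimes \tilde c)\psi = m(\tilde c \otimes \mathrm{Id})$. Written out with the definition of $\psi$ from Proposition \ref{propXA}, and in the paper's swap convention, this reads
\[
(a_0 \otimes 1_H)\,\tilde c(l_2 h a_1)\,(1_A \otimes l_1) = (1_A \otimes l)\,\tilde c(h)\,(a \otimes 1_H)
\]
for all $a \in A$ and $h,l \in H$. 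This is \eqref{eq:Casimir1} with the second tensor variable suppressed.

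The product in $C_0$ is $(\tilde c\otimes \tilde d)\Delta$ computed in $\T^{\#}$. Since $\delta_H(h)=1_A\otimes 1_H\otimes h_1\otimes h_2$ and the unit object carries the trivial transfer, I expect this to come out as $(\tilde c * \tilde d)(h)=\tilde c(h_1)\bullet\tilde d(h_2)$, possibly with a harmless reordering. I will double-check this by composing $\tilde c\circledast\tilde d$ with $\delta_H$. The unit of $C_0$ is $\epsilon_H$, whose value at $1_H$ is $1_A\otimes 1_H$.

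\textbf{Step 2: reconstruction from $\tilde c(1_H)$.} Take $a=1_A$ in the displayed condition. Exactly as in the proof of Lemma \ref{Lemma:B(hx1)}, this gives a formula expressing $\tilde c(h)$ through $\tilde c(1_H)$ via conjugation by the $H$-legs. Heuristically it should be
\[
\tilde c(h) = (1_A \otimes h_2)\,\tilde c(1_H)\,(1_A\otimes S^{-1}(h_1)),
\]
and I will fix the exact leg indices carefully; this uses that $S$ is bijective, which holds by Remark \ref{rmk:Frob-fd}. Hence $\Theta$ is injective.

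Next take $h=1_H$, $l=1_H$ in the condition and substitute the reconstruction formula for $\tilde c(a_1)$. This turns the condition into exactly the defining relation of $C_{A\otimes H^{\op}}(coH)$ for $b\otimes u=\tilde c(1_H)$.

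Conversely, given $b\otimes u$ in the centralizer, I define $\tilde c$ by the reconstruction formula and verify the full compatibility condition. I will first treat the $l$-part, which is a pure Hopf-algebra computation in the style of the first computation in Theorem \ref{thm:maincas}. Then I treat the $a$-part, where the centralizer relation is used once, in the same way \eqref{eq:int} was used there.

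\textbf{Step 3: multiplicativity.} Check that $C_{A\otimes H^{\op}}(coH)$ is a subalgebra of $A\otimes H^{\op}$; this is immediate from the defining relation. Then verify that
\[
(\tilde c*\tilde d)(1_H)=\tilde c(1_H)\bullet \tilde d(1_H)
\]
(or the same with the factors in the opposite order, in which case $\Theta$ is an anti-isomorphism, which still suffices). Together with Steps 1 and 2, $\Theta$ is then a monoid (anti-)isomorphism between $\mathrm{Hom}_{\T^{\#}}(H,\Bbbk)$ and $C_{A\otimes H^{\op}}(coH)$. It therefore restricts to a bijection between the groups of units.

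\textbf{Main obstacle.} The difficulty is bookkeeping, not conceptual. First, the product of $C_0$ must be computed honestly through $\circledast$ and $\odot$ in $\T^{\#}$, because ordering issues from $H^{\op}$ can flip it. Second, the converse verification in Step 2 requires getting the Sweedler indices and the placement of $S^{-1}$ exactly right. I would first work everything out with $A=\Bbbk$ as a sanity check, where the centralizer condition becomes vacuous, and then insert the $A$-coaction.
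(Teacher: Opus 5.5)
Your proposal is correct and follows the paper's proof closely. Both use the same morphism condition \eqref{eq:c-morph}, the same reconstruction formula $\tilde c(h)=(1_A\otimes h_2)\tilde c(1_H)(1_A\otimes S^{-1}(h_1))$ for injectivity, and the same inverse assignment $b\otimes u\mapsto\tilde d$ with $\tilde d(h)=b\otimes h_2uS^{-1}(h_1)$, whose morphism property is checked by one use of the centralizer relation.

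The differences are cosmetic. The paper gets the centralizer relation in one step by substituting $h=1_H$ and $a\otimes l=\mathfrak a_0\otimes S^{-1}(\mathfrak a_1)$. Your substitution $h=l=1_H$ plus the reconstruction formula instead yields $a_0b\otimes a_2uS^{-1}(a_1)=ba\otimes u$, which is equivalent to the defining relation but not literally it. Also, the paper checks invertibility in each direction by hand, whereas you observe that $(\tilde c*\tilde d)(1_H)=\tilde c(1_H)\bullet\tilde d(1_H)$ (no reordering occurs). This makes $\Theta$ a monoid isomorphism onto $C_{A\otimes H^{\op}}(coH)$, which handles the units more cleanly.
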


\begin{proof}
Let $\tilde{c} \in (\mathrm{Hom}_{\T^{\#}_{A \otimes H^{\op}}}(H,\Bbbk))^{\times} $. Then $\tilde{c}:H \rightarrow A \otimes H^{\op}$ is a $\Bbbk$-linear map satisfying
\begin{equation}\label{eq:c-morph}
(a_0 \otimes 1_H)\tilde{c}(l_2ha_1)(1_A \otimes l_1) =(1_A \otimes l)\tilde{c}(h)(a \otimes 1_H)
\end{equation}
for every $a \in A$ and every $h,l \in H$. If we write $b \otimes u=\tilde{c}(1_H)$ and put $a \otimes l=\mathfrak{a}_0 \otimes S^{-1}(\mathfrak{a}_1)$ we find that
\begin{align*}
b\mathfrak{a}_0 \otimes S^{-1}(\mathfrak{a}_1)u&=({\mathfrak{a}_0}_0 \otimes 1_H)\tilde{c}(S^{-1}(\mathfrak{a}_1)_2{\mathfrak{a}_0}_1)(1_A \otimes S^{-1}(\mathfrak{a}_1)_1)\\
&=(\mathfrak{a}_0 \otimes 1_H)\tilde{c}(S^{-1}(\mathfrak{a}_2)\mathfrak{a}_1)(1_A \otimes S^{-1}(\mathfrak{a}_3))\\
&=(\mathfrak{a}_0 \otimes 1_H)\tilde{c}(1_H)(1_A \otimes S^{-1}(\mathfrak{a}_1))\\
&=\mathfrak{a}_0b \otimes uS^{-1}(\mathfrak{a}_1)
\end{align*}
for every $\mathfrak{a} \in A$. Moreover there is a $\tilde{c}^{-1} \in \mathrm{Hom}_{\T^{\#}_{A \otimes H^{\op}}}(H,\Bbbk)$ such that $(\tilde{c} \circledast \tilde{c}^{-1})\odot \delta=\epsilon$, i.e.
\[
m_{A \otimes H^{\op}}(\mathrm{Id}_{A \otimes H^{\op}} \otimes m_{A \otimes H^{\op}}(\tilde{c} \otimes \tilde{c}^{-1}))\delta=\epsilon.
\]
Evaluating both sides on $1_H$ we find
\[
m_{A \otimes H^{\op}}(\tilde{c}(1_H) \otimes \tilde{c}^{-1}(1_H))=1_A \otimes 1_H,
\]
which implies that $\tilde{c}(1_H)$ is a right-invertible element of $A \otimes H^{\op}$. Similarly, left-invertibility of $\tilde{c}$ implies that $\tilde{c}^{-1}(1_H)$ is a left inverse of $\tilde{c}(1_H)$. We can conclude that $\tilde{c}(1_H)$ belongs to $(C_{A \otimes H^{\op}}(coH))^\times$. The map $\Theta$ of the statement is injective, in fact
\begin{align*}
\tilde{c}(h)&=\varepsilon(h_1)\tilde{c}(h_2)=\tilde{c}(h_2)(1_A \otimes h_{1_2}S^{-1}(h_{1_1}))=\tilde{c}(h_{2_2})(1_A \otimes h_{2_1}S^{-1}(h_1))\\
&\overset{\eqref{eq:c-morph}}{=}(1_A \otimes h_2)\tilde{c}(1_H)(1_A \otimes S^{-1}(h_1))
\end{align*}
for every $h \in H$. To conclude the proof it is enough to show that the map defined by
\[\tilde{d}(h):=b \otimes h_2uS^{-1}(h_1)\]
for every $h \in H$, is in $(\mathrm{Hom}_{\T^{\#}_{A \otimes H^{\op}}}(H,\Bbbk))^{\times}$, provided that $b \otimes u \in (C_{A \otimes H^{\op}}(coH))^\times$.
We calculate
\begin{align*}
(a_0 \otimes 1_H)\tilde{d}(l_2ha_1)(1_A \otimes l_1)&=(a_0 \otimes 1_H)(b \otimes (l_2ha_1)_2uS^{-1}((l_2h a_1)_1))(1_A \otimes l_1)\\
&=a_0b \otimes l_3h_2a_2 uS^{-1}(l_2h_1 a_1)l_1\\
&=ba_0 \otimes l_3h_2a_2S^{-1}(a_1) uS^{-1}(l_2h_1)l_1\\
&=ba \otimes l_3h_2uS^{-1}(h_1)S^{-1}(l_2)l_1\\
&=ba \otimes l h_2uS^{-1}(h_1)\\
&=(1_A \otimes l)(b \otimes h_2uS^{-1}(h_1))(a \otimes 1_H)\\
&=(1_A \otimes l)\tilde{d}(h)(a \otimes 1_H).
\end{align*}
Then
\begin{align*}
[(\tilde{d} \circledast \tilde{d'})\odot \delta](h)&=m_{A \otimes H^{\op}}(\mathrm{Id}_{A \otimes H^{\op}} \otimes m_{A \otimes H^{\op}}(\tilde{d} \otimes \tilde{d'}))\delta(h)\\
&=m_{A \otimes H^{\op}}(\tilde{d}(h_1) \otimes \tilde{d'}(h_2))\\
&=m_{A \otimes H^{\op}}(b \otimes h_2uS^{-1}(h_1) \otimes b' \otimes h_4u'S^{-1}(h_3))\\
&=bb' \otimes h_4u'S^{-1}(h_3)h_2uS^{-1}(h_1)\\
&=bb' \otimes h_2u'uS^{-1}(h_1),
\end{align*}
thus if $\tilde{d'}$ is defined via $b'\otimes u'=(b \otimes u)^{-1}$, we find that $[(\tilde{d} \circledast \tilde{d'})\odot \delta](h)=\varepsilon(h)1_A \otimes 1_H=\epsilon(h)$. By swapping the roles of $\tilde{d}$ and $\tilde{d}'$ it follows that $\tilde{d}$ is an invertible element of $\mathrm{Hom}_{\T^{\#}_{A \otimes H^{\op}}}(H,\Bbbk)$.
\end{proof}

\begin{remark}\label{rmk:coinv}
The set $C_{A \otimes H^{\op}}(coH)$ corresponds to the centralizer in $A \otimes H^{\op}$ of the subset
\[coH=\lbrace a_0 \otimes S^{-1}(a_1) \ | \ a \in A \rbrace \subseteq A \otimes H^{\op},\]
which is actually the so-called subalgebra of coinvariants in the Galois context for cowreaths introduced in \cite{BT2}. For further reference on this see \cite[Subss. 2.3 and 6.3]{BT}. This set is tipically denoted by the letter $B$, but we used $coH$ to avoid confusion with the notation for Casimir morphisms.
\end{remark}

Now we are ready to give a complete description of $FS_{\T^{\#}_{A \otimes H^{\op}}}(H,\psi)$.

\begin{theorem}\label{thm:Frobpair}
Fix an invertible element $\A \in A$ satisfying \eqref{eq:propA}, a left integral $\Lambda \in H$ and a right integral $\lambda \in H^*$ such that $\lambda(\Lambda)=1$. Then all Frobenius pairs $(B',\t')$ for the cowreath $(A \otimes H^{\op},H,\psi)$ are given by
\begin{align*}
B'(h \otimes h')&=\lambda(S(\overline{u}_2S^{-1}(h'_1) h \overline{b}_1\A_1^{-1}))\overline{b}_0\A_0^{-1}\otimes  h'_3\overline{u}_1 S^{-1}(h'_2)\\
\t'&=\A b \otimes \Lambda_2 u S^{-1}(\Lambda_1) \otimes \Lambda_3,
\end{align*}
with $b \otimes u \in (C_{A \otimes H^{\op}}(coH))^{\times}$ and $\overline{b} \otimes \overline{u}=(b \otimes u)^{-1}$.
\end{theorem}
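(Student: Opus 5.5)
The plan is to combine three earlier results: Lemma \ref{lemma:Frobpair}, which gives a base Frobenius pair $(B_{\A},\t_{\A})$; the essential uniqueness statement Proposition \ref{prop:FScoalg}, applied in the monoidal category $\T^{\#}_{A \otimes H^{\op}}$; and the parametrization of $C_0^{\times}$ from Proposition \ref{prop:C-CoH}. By Proposition \ref{prop:FScoalg}, every Frobenius pair has the form
\[
\bigl(B_{\A}(\mathrm{Id}_H \otimes (\tilde{c}^{-1} \otimes \mathrm{Id}_H)\Delta),\ (\tilde{c} \otimes \mathrm{Id}_H)\Delta\,\t_{\A}\bigr)
\]
for a unique $\tilde{c} \in C_0^{\times}$. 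Every such expression is also a Frobenius pair. All operations here are taken in $\T^{\#}_{A \otimes H^{\op}}$, so ``$\otimes$'' means $\circledast$, composition means $\odot$, and $\Delta$ means $\delta_H$. By Proposition \ref{prop:C-CoH}, $\tilde{c}$ corresponds to $b \otimes u \in (C_{A \otimes H^{\op}}(coH))^{\times}$ via
\[
\tilde{c}(h) = b \otimes h_2 u S^{-1}(h_1).
\]
The inverse $\tilde{c}^{-1}$ corresponds in the same way to $\overline{b} \otimes \overline{u}$, as the end of that proof shows. So it only remains to write both transported morphisms out explicitly in $\mathrm{Vec}_{\Bbbk}$.

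\textbf{Computing $\t'$.} I will unwind $(\tilde{c} \circledast \mathrm{Id})\odot\delta\odot\t_{\A}$. The Frobenius morphism is $\t_{\A} = \A \otimes 1_H \otimes \Lambda$, and $\delta(h) = 1 \otimes 1 \otimes h_1 \otimes h_2$. Applying $\tilde{c}$ to the first tensor leg and multiplying the $A \otimes H^{\op}$-components in the correct order (recall the $\bullet$ convention) gives
\[
(\A \otimes 1_H)\bullet(b \otimes \Lambda_2 u S^{-1}(\Lambda_1)) \otimes \Lambda_3 = \A b \otimes \Lambda_2 u S^{-1}(\Lambda_1) \otimes \Lambda_3.
\]
I have to check that the $\circledast$ structure does not insert a $\psi$ here. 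Since $\tilde{c}$ sits in the leftmost slot, the formula for $f \circledast g$ applies $\psi$ only to the $A \otimes H^{\op}$-output of $g = \mathrm{Id}$, which is $1$, and $\psi$ acts trivially on the unit. So no twist appears and the stated formula for $\t'$ follows.

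\textbf{Computing $B'$.} Here I compute $B_{\A} \odot (\mathrm{Id} \circledast ((\tilde{c}^{-1} \circledast \mathrm{Id})\odot\delta))$. First, $(\tilde{c}^{-1}\circledast\mathrm{Id})\odot\delta$ sends
\[
h' \mapsto (\overline{b} \otimes h'_2 \overline{u} S^{-1}(h'_1)) \otimes h'_3.
\]
Next, $\mathrm{Id}\circledast(\cdot)$ must move this $A \otimes H^{\op}$-coefficient to the left past $h$ using $\psi$:
\[
\psi(h \otimes \overline{b} \otimes l) = \overline{b}_0 \otimes l_1 \otimes l_2 h \overline{b}_1, \qquad l = h'_2 \overline{u} S^{-1}(h'_1).
\]
Then $B_{\A}$ is applied to $l_2 h \overline{b}_1 \otimes h'_3$, and the outputs are multiplied in $A \otimes H^{\op}$. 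After expanding $\Delta(l)$ and simplifying, using $\lambda(S(x)y_1)y_2$-type identities (as in the proof of Lemma \ref{lemma:Frobpair}) together with the centralizer condition on $\overline{b} \otimes \overline{u}$, the expression should collapse to the displayed formula
\[
B'(h \otimes h') = \lambda\bigl(S(\overline{u}_2 S^{-1}(h'_1) h \overline{b}_1 \A_1^{-1})\bigr)\overline{b}_0 \A_0^{-1} \otimes h'_3 \overline{u}_1 S^{-1}(h'_2).
\]
A cleaner alternative is to compute $\T' = B'(\cdot \otimes 1_H)$ and then recover $B'$ through Lemma \ref{Lemma:B(hx1)}, i.e.
\[
B'(h \otimes h') = (1 \otimes h'_3)\T'(S^{-1}(h'_1)h)(1 \otimes S^{-1}(h'_2)).
\]
The displayed formula has exactly this shape, so this route reduces the work to evaluating $B'$ at $h' = 1_H$.

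\textbf{Main obstacle.} I expect the difficulty to lie in computing $B'$: handling the order of multiplications in $H^{\op}$ correctly, and reducing a term like $\lambda(S(h\overline{b}_1\A^{-1}_1)\cdots)$ in which the $\overline{u}$ and $h'$ legs end up inside $\lambda$. I will attempt this via the $h' = 1_H$ reduction above. At that point the remaining identity should follow from $\lambda(x y_1) y_2 = \lambda(x_1 y) S^{-1}(x_2)$ and counit manipulations.
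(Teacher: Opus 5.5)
Your proposal is the paper's proof: you fix $(B_{\A},\t_{\A})$ from Lemma \ref{lemma:Frobpair}, transport it by an arbitrary $\tilde c\in C_0^{\times}$ via Proposition \ref{prop:FScoalg} in $\mathcal{T}^{\#}_{A\otimes H^{\op}}$, parametrize $\tilde c$ and $\tilde c^{-1}$ by $b\otimes u$ and $\overline{b}\otimes\overline{u}$ through Proposition \ref{prop:C-CoH}, and unwind $\odot$ and $\circledast$ exactly as you describe, including the $\psi$-step for $B'$. The obstacle you anticipate does not actually arise: after the $\psi$-step one gets $\lambda\bigl(S(h'_4\overline{u}_2S^{-1}(h'_1)h\overline{b}_1\A_1^{-1})h'_5\bigr)\overline{b}_0\A_0^{-1}\otimes h'_3\overline{u}_1S^{-1}(h'_2)$, and the stated formula follows at once from $S(h'_4X)h'_5=S(X)S(h'_4)h'_5=\varepsilon(h'_4)S(X)$, so neither the integral identity, nor the centralizer condition, nor the detour through Lemma \ref{Lemma:B(hx1)} is needed.
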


\begin{proof}
Let $(A,X,\psi)$ be a Frobenius cowreath in the monoidal category $\mathcal{T}^{\#}_A$ built from $\mathrm{Vec}_{\Bbbk}$ and let $(B,\t)$ be a fixed Frobenius pair. Then every Frobenius pair $(B',\t')$ for $(A,X,\psi)$ is of the form
\begin{align*}
B'&=B \odot (\mathrm{Id}_{(X,\psi)} \circledast \tilde{c}^{-1} \circledast \mathrm{Id}_{(X,\psi)} ) \odot(\mathrm{Id}_{(X,\psi)} \circledast \delta),\\
\t'&= (\tilde{c} \circledast \mathrm{Id}_{(X,\psi)}) \odot \delta \odot \t,
\end{align*} 
for an invertible morphism $\tilde{c}: X \rightarrow \Bbbk$ in $\mathcal{T}^{\#}_A$ (see Subsection \ref{subsec:Frob}).
Explicitly, these two equalities read
\begin{align*}
B'&=m_A(m_A \otimes B)(\mathrm{Id}_A \otimes \psi_X \otimes \textrm{Id}_X)(\psi_X \otimes \tilde{c}^{-1} \otimes \textrm{Id}_X)(\textrm{Id}_X\otimes \delta),\\
\t'&= (m_A \otimes \mathrm{Id}_X)(m_A \otimes \tilde{c}\otimes \textrm{Id}_X)(\mathrm{Id}_A \otimes \delta)\t.
\end{align*} 
By specializing to the case of cowreaths $(A \otimes H^{\op},H,\psi)$, we get to \begin{align*}
B'(h \otimes h')&=m_{A \otimes H^{\op}}(\mathrm{Id}_{A \otimes H^{\op}} \otimes B)(\psi(h \otimes \tilde{c}^{-1}(h'_1)) \otimes h'_2),\\
\t'&=(\t^1 \otimes 1_H)\tilde{c}(\t^3_1)(1_A \otimes \t^2) \otimes \t^3_2
\end{align*}
for $\tilde{c} \in (\mathrm{Hom}_{\T^{\#}_{A \otimes H^{\op}}}(H,\Bbbk))^{\times}$, i.e.
\begin{align*}
B'(h \otimes h')&=m_{A \otimes H^{\op}}(\mathrm{Id}_{A \otimes H^{\op}} \otimes B)(\psi(h \otimes \overline{b} \otimes h'_2\overline{u}S^{-1}(h'_1)) \otimes h'_3),\\
\t'&=\t^1b \otimes \t^3_2uS^{-1}(\t^3_1)\t^2 \otimes \t^3_3
\end{align*}
for $b \otimes u \in (C_{A \otimes H^{\op}}(coH))^\times$, thanks to Proposition \ref{prop:C-CoH}; here $\overline{b} \otimes \overline{u}$ is used to denote $(b \otimes u)^{-1}$. Once we choose $(B,\t)=(B_{\A},\t_{\A})$ we find
\begin{align*}
\t'&=\A b \otimes \Lambda_2 u S^{-1}(\Lambda_1) \otimes \Lambda_3
\end{align*}
and
\begin{align*}
B'(h \otimes h')&=m_{A \otimes H^{\op}}(\mathrm{Id}_{A \otimes H^{\op}} \otimes B_{\A})(\overline{b}_0 \otimes h'_3\overline{u}_1 S^{-1}(h'_2) \otimes h'_4 \overline{u}_2S^{-1}(h'_1) h \overline{b}_1  \otimes h'_5)\\
&=(\overline{b}_0 \otimes 1_H)B_{\A}(h'_4 \overline{u}_2S^{-1}(h'_1) h \overline{b}_1  \otimes h'_5)(1_A \otimes h'_3\overline{u}_1 S^{-1}(h'_2))\\
&=(\overline{b}_0 \otimes 1_H)(\lambda(S(h'_4 \overline{u}_2S^{-1}(h'_1) h\overline{b}_1 \A_1^{-1})h'_5)\A_0^{-1}\otimes 1_H)(1_A \otimes h'_3\overline{u}_1 S^{-1}(h'_2))\\
&=\lambda(S(\overline{u}_2S^{-1}(h'_1) h\overline{b}_1 \A_1^{-1}))\overline{b}_0\A_0^{-1}\otimes  h'_3\overline{u}_1 S^{-1}(h'_2).
\end{align*}
\end{proof}

\begin{remark}
Following the proof of Theorem \ref{thm:Frob-cow} we set $\A':=(\mathrm{Id}_A \otimes \varepsilon \otimes \lambda)(\t')$. Then, from the expression of $\t'$ we have found, we get
\[\A':=\A b\varepsilon(\Lambda_2 u S^{-1}(\Lambda_1))\lambda(\Lambda_3)=\A b\varepsilon(u),\]
while it is easy to see that the image of $(C_{A \otimes H^{\op}}(coH))^\times$ via $(\mathrm{Id}_A \otimes \varepsilon)$ is the group of invertible elements in the center of $A$. Thus we recover the content of Remark \ref{rmk:ess-uniq-A}.
\end{remark}

Given that for a Frobenius cowreath $(A \otimes H^{\op},H,\psi)$ the Hopf algebra $H$ has a bijective antipode, Theorem \ref{thm:maincas} applies and we can describe Frobenius pairs for $(A \otimes H^{\op},H,\psi)$ using integrals in place of Casimir morphisms.

\begin{corollary}
Fix an invertible element $\A \in A$ satisfying \eqref{eq:propA}, a left integral $\Lambda \in H$ and a right integral $\lambda \in H^*$ such that $\lambda(\Lambda)=1$. Then all Frobenius pairs $(\T',\t')$ for the cowreath $(A \otimes H^{\op},H,\psi)$ are given by
\begin{align*}
\T'(h)&=\lambda(S(\overline{u}_2h \overline{b}_1\A_1^{-1}))\overline{b}_0\A_0^{-1}\otimes  \overline{u}_1\\
\t'&=\A b \otimes \Lambda_2 u S^{-1}(\Lambda_1) \otimes \Lambda_3,
\end{align*}
with $b \otimes u \in (C_{A \otimes H^{\op}}(coH))^{\times}$ and $\overline{b} \otimes \overline{u}=(b \otimes u)^{-1}$.
\end{corollary}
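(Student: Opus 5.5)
The corollary follows by applying the bijection $\Phi$ of Theorem \ref{thm:maincas}, $B\mapsto \T_B=B(\mathrm{Id}_H\otimes u_H)$, to the Casimir components of the Frobenius pairs listed in Theorem \ref{thm:Frobpair}. By Remark \ref{rmk:Frob-fd}, a Frobenius cowreath has $H$ finite-dimensional, so $S$ is bijective and Theorem \ref{thm:maincas} applies. Every Frobenius pair $(B',\t')$ has $B'$ a Casimir morphism, so $\Phi(B')$ is an integral. Conversely, $B'=B_{\T'}$ is recovered from $\T'=\Phi(B')$ through \eqref{eq:B(hx1)}. Hence the Frobenius conditions \eqref{eq:Frobel1} and \eqref{eq:Frobel2} can be read as conditions on the pair $(\T',\t')$, and Frobenius pairs $(B',\t')$ correspond bijectively to pairs $(\T',\t')=(\Phi(B'),\t')$. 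This is the sense in which the statement is to be understood.

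The Frobenius elements $\t'$ are therefore unchanged and coincide with those in Theorem \ref{thm:Frobpair}. It remains only to evaluate $B'(h\otimes 1_H)$ using the explicit formula from that theorem. Setting $h'=1_H$ gives $h'_1=h'_2=h'_3=1_H$, and $S^{-1}(1_H)=1_H$, so
\[
\T'(h)=B'(h\otimes 1_H)=\lambda(S(\overline{u}_2 h\overline{b}_1\A_1^{-1}))\,\overline{b}_0\A_0^{-1}\otimes \overline{u}_1 ,
\]
which is exactly the claimed formula.

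Nothing here is a real obstacle. The only point needing care is bookkeeping: making sure the Sweedler indices of $\overline{u}$ in Theorem \ref{thm:Frobpair} survive the specialization unchanged. They do, because $h'$ enters only through $S^{-1}(h'_1)$ inside $\lambda$ and through the conjugation $h'_3(\cdot)S^{-1}(h'_2)$ in the $H^{\op}$ factor, and both collapse to the identity at $h'=1_H$.

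As an optional sanity check, one can reconstruct $B_{\T'}$ via \eqref{eq:B(hx1)} and confirm that it reproduces the formula for $B'$. This follows immediately by substituting $S^{-1}(h'_1)h$ for $h$ and conjugating by $h'_3(\cdot)S^{-1}(h'_2)$.
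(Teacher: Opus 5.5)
Your proposal is correct and is essentially the paper's own argument. The paper justifies the corollary in one sentence: since $H$ is finite-dimensional, its antipode is bijective, so Theorem \ref{thm:maincas} applies. One then keeps $\t'$ from Theorem \ref{thm:Frobpair} and replaces $B'$ by $\T'=\Phi(B')=B'(\mathrm{Id}_H\otimes u_H)$, which at $h'=1_H$ collapses to the stated formula exactly as you computed. Your optional check via \eqref{eq:B(hx1)} is consistent but not needed, since it is already guaranteed by the bijectivity of $\Phi$.
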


\begin{remark}
Our result agrees with \cite[Thm. 3.6]{BT2}, that implies that integrals on $(H,\psi)$ are in bijective correspondence with elements of $C_{A \otimes H^{\op}}(coH)$, provided $(A \otimes H^{\op},H,\psi)$ is Frobenius.
\end{remark}

\subsubsection{Frobenius pairs vs normalized Casimir morphisms}

We have seen that when the cowreath $(A \otimes H^{\op},H,\psi)$ is Frobenius, the canonical separability morphism $\underline{B}$ is well-defined and thus $(A \otimes H^{\op},H,\psi)$ is (h-)separable. We should note however that $\underline{B}$ is almost never part of a Frobenius pair. 

\begin{proposition}
The canonical separability morphism $\underline{B}$ is part of a Frobenius pair $(\underline{B},\t)$ for $(A \otimes H^{\op},H,\psi)$ only if $H \cong \Bbbk$.
\end{proposition}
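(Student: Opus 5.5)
Suppose $(\underline{B},\t)$ is a Frobenius pair, with $\t=\t^1\otimes\t^2\otimes\t^3\in A\otimes H\otimes H$. By Remark \ref{rmk:Frob-fd}, $H$ is finite-dimensional and $S^{-1}$ exists, so $\underline{B}$ is well defined. The plan is to push the first equality in \eqref{eq:Frobel2} through the explicit formula $\underline{B}(h\otimes h')=1_A\otimes h'S^{-1}(h)$. This should force $\varepsilon$ to factor through an element of $H$ in a way that only happens when $\dim H=1$.

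First I compute the left-hand side of the first identity in \eqref{eq:Frobel2}, remembering that multiplication in the second factor is opposite:
\[
(\t^1\otimes 1_H)\,\underline{B}(\t^3\otimes h)\,(1_A\otimes\t^2)=\t^1\otimes \t^2\,h\,S^{-1}(\t^3).
\]
Here $(a\otimes x)(b\otimes y)=ab\otimes yx$ in $A\otimes H^{\op}$. Writing the result as an ordinary element of $A\otimes H$, I need to check the order of the $H$-factors carefully against the paper's convention for the omitted products. Either way it has the shape $\sum\t^1\otimes(\text{word in }\t^2,h,S^{-1}(\t^3))$, and the condition reads
\[
\t^1\otimes \t^2\,h\,S^{-1}(\t^3)=\varepsilon(h)\,1_A\otimes 1_H\qquad\text{for all }h\in H
\]
(or the analogous equation with the factors reversed).

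Next I apply a suitable functional to eliminate $A$. Pick $\phi\in A^*$ with $\phi(1_A)=1$; this is possible since $A\neq 0$, as otherwise the statement is vacuous or trivial. Applying $\phi\otimes \mathrm{Id}_H$ gives an element $x\otimes y:=\sum\phi(\t^1)\t^2\otimes S^{-1}(\t^3)\in H\otimes H$ satisfying $\sum x\,h\,y=\varepsilon(h)1_H$ for all $h$. Then the linear map $h\mapsto\sum xhy$ from $H$ to $H$ has image $\Bbbk 1_H$ and kernel $\ker\varepsilon$. I now try to get a contradiction when $\dim H>1$. Take $h\in\ker\varepsilon$ and replace $h$ by $hk$ for arbitrary $k$, so that $\sum x\,hk\,y=\varepsilon(h)\varepsilon(k)1_H$. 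Setting $h=1$ yields $\sum xky=\varepsilon(k)1_H$. This is the same equation, so it gives no new information.

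A better route uses the second identity in \eqref{eq:Frobel2} together with \eqref{eq:Frobel1}. By \eqref{eq:Frobel1}, as in the proof of Theorem \ref{thm:Frob-cow}, each $T_i$ satisfies $T_i(h\otimes 1)=\Delta(h)T_i$. Applying $\varepsilon$ to the first leg, $\t$ reduces to $\sum a_i\otimes\varepsilon(\ldots)$, and in fact \eqref{eq:T_i} forces $T_i=(\Lambda$-type element$)$. More precisely, the map $H\otimes H\to H\otimes H$, $x\otimes y\mapsto x_1\otimes x_2y$, is bijective, and $T_i$ corresponds to an element $z$ with $z h=\ldots$. I expect to derive $T_i=\Delta(\Lambda)(1\otimes w_i)$-like forms, so that the first leg in $H$ is absorbed by an integral. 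Substituting into the identity of the previous step, the factor $\Lambda_1 h S^{-1}(\Lambda_2 w)$ appears. Applying $\varepsilon$ on the $H$-leg gives $\varepsilon(h)\cdot(\text{const})=\varepsilon(h)$, so nothing is lost yet. Choosing $h=\Lambda$ instead makes the left side a multiple of $\varepsilon(\Lambda)$ times something. The right side is $\varepsilon(\Lambda)1$, so both sides vanish unless $H$ is semisimple, and the key comparison is an evaluation where one side is forced to be $1$ and the other $0$.

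The main obstacle is exactly this last comparison. My plan is to evaluate with $\lambda$ rather than $\varepsilon$ on the $H$-leg, using $\lambda(\Lambda)=1$. The left side, through integral properties, becomes proportional to $\varepsilon(h)\cdot\dim$-type quantities, while the right side is $\varepsilon(h)\lambda(1_H)$. Now $\lambda(1_H)=0$ unless $H$ is semisimple, and the trace relation then gives $\dim H=1$. I would try this first: apply $\mathrm{Id}_A\otimes\lambda$ to both identities in \eqref{eq:Frobel2} and compare the resulting scalars to conclude $\dim H=1$, i.e.\ $H\cong\Bbbk$.
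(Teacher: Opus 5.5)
Your proposal has a genuine gap: it never reaches a conclusion. The decisive step is only announced ("I expect to derive", "my plan is", "I would try this first"), and the ingredients it leans on are unsupported or wrong:
\begin{itemize}
\item the forms $T_i=\Delta(\Lambda)(1\otimes w_i)$ are never derived;
\item the comparison in which one side is $1$ and the other $0$ is never set up;
\item the claim that $\lambda(1_H)=0$ unless $H$ is semisimple is false, since $\lambda(1_H)\neq 0$ characterizes cosemisimplicity, not semisimplicity.
\end{itemize}
The integral machinery is not needed anyway. You also left the order of the $H$-factors open, and it matters. Under the paper's convention, omitted products are the ordinary ones in $A\otimes H$, with the swap for $H^{\op}$ already performed. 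So the first identity of \eqref{eq:Frobel2} with $\underline{B}$ reads
\[
\t^1\otimes hS^{-1}(\t^3)\t^2=\varepsilon(h)1_A\otimes 1_H ,
\]
with $h$ on the far left, not $\t^1\otimes \t^2hS^{-1}(\t^3)$.

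The missing idea is to specialize at $h=1_H$, which gives $\t^1\otimes S^{-1}(\t^3)\t^2=1_A\otimes 1_H$, and then move $h$ through. The paper multiplies on the right by $h$. It then uses \eqref{eq:Frobel1} with $a=1_A$, namely $\t^1\otimes\t^2h\otimes\t^3=\t^1\otimes h_1\t^2\otimes h_2\t^3$, together with $S^{-1}(h_2)h_1=\varepsilon(h)1_H$:
\[
1_A\otimes h=\t^1\otimes S^{-1}(\t^3)\t^2h=\t^1\otimes S^{-1}(h_2\t^3)h_1\t^2=\varepsilon(h)\,\t^1\otimes S^{-1}(\t^3)\t^2=\varepsilon(h)1_A\otimes 1_H .
\]
Hence $h=\varepsilon(h)1_H$ for every $h$, so $H\cong\Bbbk$. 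With the correct ordering you could even skip \eqref{eq:Frobel1}: the identity for general $h$ is just $(1_A\otimes h)$ times the one for $h=1_H$, so it reads $1_A\otimes h=\varepsilon(h)1_A\otimes 1_H$.

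The route you abandoned was also not a dead end. Your reduced map $f(h)=\sum xhy=\varepsilon(h)1_H$ sends every two-sided ideal $I$ into $I\cap\Bbbk 1_H$. So every proper ideal lies in $\mathfrak{m}=\ker\varepsilon$, and the finite-dimensional $H$ is local with nilpotent radical $\mathfrak{m}$. For $h$ in the last nonzero power $\mathfrak{m}^N$ with $N\geq 1$, you get $f(h)=\varepsilon(\sum xy)h=h$, while also $f(h)=0$. This forces $\mathfrak{m}=0$, i.e. $H\cong\Bbbk$.
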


\begin{proof}
Suppose $(\underline{B},\t)$ is a Frobenius pair for $(A \otimes H^{\op},H,\psi)$. Then \eqref{eq:Frobel2} implies
\[
\t^1 \otimes S^{-1}(\t^3)\t^2=1_A \otimes 1_H
\]
for every $h \in H$. Hence
\begin{align*}
1_A \otimes h &=\t^1 \otimes S^{-1}(\t^3)\t^2h \overset{\eqref{eq:Frobel1}}{=}\t^1 \otimes S^{-1}(h_2\t^3)h_1\t^2=\varepsilon(h)\t^1 \otimes S^{-1}(\t^3)\t^2=\varepsilon(h)1_A \otimes 1_H,
\end{align*}
i.e. $h=\varepsilon(h)1_H$ for every $h \in H$.
\end{proof}

Nonetheless, normalized Casimir morphisms can be part of a Frobenius pair. As a concrete example, consider $H=H_4$ and $A=Cl(-1,-1,0)$ the four-dimensional Clifford algebra over $\Bbbk$, generated by $G$ and $X$ with $G^2=X^2=-1$ and $GX+XG=0$. $A$ is an $H_4$-comodule algebra with coaction given by $\rho(G)=G \otimes g$ and $\rho(X)=X \otimes g + 1_A \otimes x$, see \cite{PVO2}. The Casimir morphism $B: H_4 \otimes H_4 \rightarrow A \otimes H_4^{\op} $ for $(H_4,\psi)$ defined by
\begin{align*}
&B(1_{H_4} \otimes 1_{H_4})=1_A \otimes 1_{H_4}, &&B(g \otimes 1_{H_4})=-G \otimes 1_{H_4},\\
&B(x \otimes 1_{H_4})=-GX \otimes 1_{H_4}, &&B(gx \otimes 1_{H_4})=1_A \otimes x,
\end{align*}
and \eqref{eq:B(hx1)} is clearly normalized (see also \cite[Thm. 1]{mt2}). Moreover, the reader can check that $(B,\t)$ is a Frobenius system for $(A \otimes H_4^{\op},H_4, \psi)$, if $\t=GX \otimes 1_{H_4} \otimes (1_{H_4}+g)x+1_A \otimes x \otimes (1_{H_4}-g)x$. It can also be recovered from the formulas in the statement of Theorem \ref{thm:Frobpair} with $\A=GX$, $\Lambda=x+gx$, $\lambda(h)=\delta_{h,gx}$ and $b \otimes u =1_A \otimes 1_{H_4} -GX \otimes x$.
%

\section{Integrals on \texorpdfstring{$(E(n),\psi)$}{} when \texorpdfstring{$A=Cl(\alpha,\beta_i, \gamma_i, \lambda_{ij})$}{}}\label{sec:integrals}

In this section we will determine all integrals on cowreaths of type $(A \otimes H^{\op},H,\psi)$ where $H=E(n)$ and $A=Cl(\alpha,\beta_i, \gamma_i, \lambda_{ij})$. We recall the definitions of these algebras and some of their useful properties. From now on we will always assume $\mathrm{char} (\Bbbk) \neq 2$.

\bigskip

\textbf{The Hopf algebras $E(n)$}.
The Hopf algebras denoted $E(n)$ are a family that generalizes Sweedler's Hopf algebra $H_4=E(1)$ and that was introduced in \cite[p. 755]{BDG} and studied in \cite{CD, CC, PVO, PVO2}. 
\begin{definition}{\cite[p. 18]{CD}}\label{defEn} $E(n)$ will denote the $2^{n+1}$-dimensional Hopf algebra over $\Bbbk$, generated by elements $g, x_1, \ldots, x_n$ such that $g^2=1$, $x_i^2=0$, $gx_i=-x_ig$ for any $i=1, \ldots, n$ and $x_ix_j=-x_jx_i$ for $i,j=1, \ldots, n$, $i <j$.
\end{definition}
The canonical Hopf algebra structure of $E(n)$ is given by
\[
\def\arraystretch{1.5}
\begin{matrix*}[l]
\Delta(g)=g \otimes g, & \varepsilon(g)=1, & S(g)=g^{-1}=g, &\\
\Delta(x_i)=x_i \otimes g +1 \otimes x_i, & \varepsilon(x_i)=0, & S(x_i)=gx_i, &  i=1, \ldots, n.
\end{matrix*}
\]
For $P=\lbrace i_1,i_2,\ldots,i_s \rbrace \subseteq \lbrace 1,2, \ldots,n \rbrace$ such that $i_1 < i_2 < \ldots < i_s$, we write $x_P := x_{i_1}x_{i_2}\cdots x_{i_s}$. If $P=\emptyset$ then $x_{\emptyset} = 1$. The set $\mathbb{B}_H=\lbrace g^jx_P \ | \ P\subseteq \lbrace 1, \ldots ,n \rbrace, j \in \lbrace 0,1 \rbrace \rbrace$ is a basis of $E(n)$.
Let $F=\lbrace i_{j_1}, i_{j_2}, \ldots, i_{j_r} \rbrace$ be a subset of $P$ and define
\[S(F,P):=(j_1+\cdots+j_r)-\frac{r(r+1)}{2} \quad \textrm{and} \quad S(\emptyset,P):=0.\]
Then computations show that
\begin{equation}\label{deltagx}
\begin{split}
\Delta(g^jx_P)&=\sum_{F \subseteq P} (-1)^{S(F,P)} g^j x_F \otimes g^{|F|+j} x_{P \setminus F}\\
&=\sum_{F \subseteq P} (-1)^{S(P \setminus F,P)} g^j x_{P \setminus F} \otimes g^{|P|-|F|+j} x_F,
\end{split}
\end{equation}
\begin{equation}\label{antipode}
S(g^jx_P) = (-1)^{j|P|}g^{j+|P|}x_P,
\end{equation}
\begin{equation}\label{reorder}
(-1)^{S(F,P)}x_Fx_{P \setminus F}=x_P.
\end{equation}

\bigskip

\textbf{Clifford algebras}. A Clifford algebra $Cl(V,q)$ is usually defined as a quotient of the tensor algebra $T(V)$ built on a free vector space $V$ by an ideal whose generators depend on a quadratic form $q$ \cite{C,La}. The following is an equivalent definition \cite[Thm. 3.3]{FR2}.
\begin{definition}\cite[Def. 1]{PVO2}\label{clt}
Let $\alpha, \beta_i,\gamma_i \in \Bbbk$ for $i=1, \ldots, n$ and $\lambda_{ij} \in \Bbbk$ for $i, j \in \lbrace 1, \ldots, n \rbrace$, $i<j$.
The Clifford algebra $Cl(\alpha,\beta_i,\gamma_i, \lambda_{ij})$ is the unital associative algebra generated by elements $G, X_1, \ldots, X_n$ such that $G^2=\alpha$, $X_i^2=\beta_i$, $GX_i+X_iG=\gamma_i$ for all $i=1, \ldots, n$ and $X_iX_j+X_jX_i=\lambda_{ij}$ for all $i,j \in \lbrace 1, \ldots, n \rbrace$ with $i<j$. A $\Bbbk$-basis for this algebra is given by $\mathbb{B}_A=\lbrace G^j X_P \ | \ P \subseteq \lbrace 1, \ldots, n \rbrace, j \in \lbrace 0,1 \rbrace \rbrace$, where $X_P:=X_{i_1} \cdots X_{i_s}$ when $P=\lbrace i_1 < i_2 < \ldots < i_s \rbrace$. 
\end{definition}
\begin{notation}
We warn the reader about the slightly misleading notation $Cl(\alpha,\beta_i,\gamma_i, \lambda_{ij})$. This should always be read as
\[Cl(\alpha, \beta_1, \beta_2, \ldots, \beta_n, \gamma_1, \gamma_2, \ldots, \gamma_n, \lambda_{12}, \ldots, \lambda_{1n}, \lambda_{23}, \ldots, \lambda_{nn})\]
and considered a single algebra (not a family) once all the defining scalars are fixed.
\end{notation}

In \cite{PVO2} it was proved that a $2^{n+1}$-dimensional Clifford algebra $A=Cl(\alpha, \beta_i, \gamma_i, \lambda_{ij})$ admits a canonical $E(n)$-comodule algebra structure $\rho: A \rightarrow A \otimes E(n)$ that makes it a cleft extension of the ground field $\Bbbk$. This coaction is given by
\[
\def\arraystretch{1.5}
\begin{matrix*}[l]
\rho(1_A)=1_A \otimes 1_{E(n)}, &\rho(G) =G \otimes g,\\
\rho(X_i)= X_i \otimes g + 1_A \otimes x_i, &\rho(GX_i) = GX_i \otimes 1_{E(n)} +  G \otimes gx_i, & i=1, \ldots, n.
\end{matrix*}
\]
Computations show that in general
\begin{equation}\label{rhoGX}
\begin{split}
\rho(G^jX_P)&=\sum_{F \subseteq P} (-1)^{S(F,P)} G^j X_F \otimes g^{|F|+j} x_{P \setminus F}\\
&=\sum_{F \subseteq P} (-1)^{S(P \setminus F,P)} G^j X_{P \setminus F} \otimes g^{|P|-|F|+j} x_F.
\end{split}
\end{equation}

Clifford algebras are $\mathbb{Z}_2$-graded algebras (superalgebras) and elements $G^jX_P$ can be regarded as monomials in $G,X_1, \ldots, X_n$ with degree $j+|P|$. The same can be said for each Hopf algebra $E(n)$, as it is isomorphic to the $2^{n+1}$-dimensional algebra $Cl(1,0,0,0)$.

\bigskip

\textbf{Determining the integrals}. Let $A=Cl(\alpha, \beta_i, \gamma_i, \lambda_{ij})$ be a $2^{n+1}$-dimensional Clifford algebra, equipped with the $E(n)$-comodule algebra structure defined by \eqref{rhoGX} and consider the cowreath $(A \otimes E(n)^{\op},E(n),\psi)$.
Since $E(n)$ has bijective antipode, by Proposition \ref{prop:weakint2} integrals on the coalgebra $(E(n),\psi)$ correspond to $\Bbbk$-linear maps $\T:E(n) \rightarrow A \otimes E(n)^{\op}$ that satisfy \eqref{eq:weakint2} and \eqref{eq:int2} for every element of the basis $\mathbb{B}_H$ of $E(n)$ and every element of the basis $\mathbb{B}_A$ of $A$. As a matter of fact, equation \eqref{eq:int2} is multiplicative with respect to $a$, therefore a $\Bbbk$-linear map $\T:E(n) \rightarrow A \otimes E(n)^{\op}$ is an integral on $(E(n),\psi)$ if and only if it satisfies \eqref{eq:weakint2} and \eqref{eq:int2} for every element $h \in \mathbb{B}_H$ and every generator $a \in \lbrace G, X_1, \ldots, X_n \rbrace$. The study of the elements $\T(g^jx_P)$ can follow two different approaches, depending on the parity of the degree $j+|P|$.

\subsection{Monomials with odd degree}

We are going to prove that the element $\T(g)$ completely determines every other element $\T(g^jx_P)$ with $j+|P|$ odd. Notice that these elements can also be written in the form $\T(g^{|P|+1}x_P)$. Suppose $\T$ is an integral on $(E(n),\psi)$ and let $g^{|P|+1}x_P$ be an element of $\mathbb{B}_H$ with $P \neq \emptyset$. Let $r$ be the greatest element in $P$ so that $x_P=x_{P'}x_r$ with $P'=P \setminus \lbrace r \rbrace$. Then \eqref{eq:int} with $a=X_r$ and $h=g^{|P|}x_{P'}$ yields
\begin{align}
2\T(g^{|P|+1}x_P)= &(1_A \otimes g)\T(g^{|P'|+1}x_{P'})(X_r \otimes g)-(1_A \otimes gx_r)\T(g^{|P'|+1}x_{P'})(1_A \otimes g)+\label{eq:T-odd}\\
&-\T(g^{|P'|+1}x_{P'})(1_A \otimes x_r)+(-1)^{|P'|+1}(X_r \otimes 1_{E(n)})\T(g^{|P'|+1}x_{P'}).\nonumber
\end{align}
If we iterate this process, it becomes clear that $\T(g^{|P|+1}x_P)$ can be ultimately expressed in terms of $\T(g)$ in $|P|$ steps.
Conversely, the following proposition assures that if $\T(g)$ satisfies \eqref{eq:weakint2} and \eqref{eq:int2} with $a=G$ and every other $\T(g^{|P|+1}x_P)$ is built using \eqref{eq:T-odd}, then the map $\T$ behaves like an integral on monomials with odd degree.
\begin{proposition}
Suppose $\T:E(n) \rightarrow A \otimes E(n)^{\op}$ is a $\Bbbk$-linear map such that \eqref{eq:weakint2} and \eqref{eq:int2} are satisfied for $h=g$ and $a=G$. Suppose further that \eqref{eq:T-odd} holds for every element $g^{|P|+1}x_P \in \mathbb{B}_H$ with $P \neq \emptyset$. Then \eqref{eq:weakint2} and \eqref{eq:int2} hold true for every $h=g^{|P|+1}x_P \in \mathbb{B}_H$ and every generator $a \in \lbrace G, X_1, \ldots, X_n \rbrace$ of $A$.
\end{proposition}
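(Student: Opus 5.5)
Induction on $|P|$. The case $P=\emptyset$ (i.e.\ $h=g$) is partly hypothesis: \eqref{eq:weakint2} and \eqref{eq:int2} with $a=G$ are assumed. For $h=g$ it remains to check \eqref{eq:int2} with $a=X_i$. Here I would not treat this as a separate base case but as part of the inductive mechanism. The key observation is that \eqref{eq:T-odd} is precisely the relation \eqref{eq:int} with $a=X_r$ and $h=g^{|P|}x_{P'}$, solved for $\T(g^{|P|+1}x_P)$. This is possible because $S^{-1}(X_{r,2})hX_{r,1}$ expands, via $\rho(X_r)=X_r\otimes g+1_A\otimes x_r$ and \eqref{deltagx}, into terms $g h g$, $gx_r h$-type terms and $h x_r$-type terms. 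The anticommutation in $E(n)$ makes two of these combine into $\pm 2\,g^{|P|+1}x_P$, and $\mathrm{char}\,\Bbbk\neq 2$ lets us divide.

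\textbf{Step 1: \eqref{eq:weakint2} for all odd monomials.} Assume \eqref{eq:weakint2} holds for $h'=g^{|P'|+1}x_{P'}$. Apply the right-hand side operator of \eqref{eq:weakint2} to each of the four summands of \eqref{eq:T-odd}. Since every summand is of the form $(1_A\otimes l)\T(h')(a\otimes 1_H)$ or its variants, and \eqref{eq:weakint} is equivalent to $\T$ being a morphism to the ``cofree'' structure, I expect each summand to be individually compatible. The natural approach is to check that the set of maps satisfying \eqref{eq:weakint2} is stable under $\T\mapsto (1_A\otimes l)\T(\cdot)(a\otimes 1)$ suitably twisted by $\rho$, mirroring the computation in the proof of Theorem \ref{thm:maincas} where \eqref{eq:Casimir2} was derived for $B_\T$. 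Concretely, write \eqref{eq:weakint2} as $\T(h)=\Gamma(h_1\otimes \T(h_2))$ with $\Gamma$ explicit, and check that \eqref{eq:T-odd} is intertwined by $\Gamma$, using $\Delta(x_r)=x_r\otimes g+1\otimes x_r$.

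\textbf{Step 2: \eqref{eq:int2} for $a=G$ and $a=X_i$.} Apply $(\mathrm{Id}_A\otimes\varepsilon)$ to \eqref{eq:T-odd}, denoting $\tau(h):=(\mathrm{Id}_A\otimes\varepsilon)\T(h)$. The terms with $x_r$ on the $H$-side vanish, so
\[2\tau(g^{|P|+1}x_P)=\tau(g^{|P'|+1}x_{P'})X_r+(-1)^{|P'|+1}X_r\tau(g^{|P'|+1}x_{P'}),\]
a graded-commutator recursion. Then \eqref{eq:int2} for $a=G$ follows by induction, since $G$ graded-anticommutes with $X_r$ up to $\gamma_r$. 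The scalar $\gamma_r$-terms need to be matched against the $S^{-1}(a_2)ha_1$ side, which for $a=G$ is $ghg$, equal to $\pm h$. For $a=X_i$, \eqref{eq:int2} is a relation between $\tau$ at $g^{|P|+1}x_P$ and at $g^{|P|+1}x_{P\cup\{i\}}$ or $x_{P\setminus\{i\}}$-type monomials. When $i>\max P$ it is exactly \eqref{eq:T-odd} again, after applying $\varepsilon$. For $i\in P$ or $i<\max P$, I would reduce to that case using $X_iX_j+X_jX_i=\lambda_{ij}$ and the reordering sign \eqref{reorder}, showing that the recursion \eqref{eq:T-odd} is in fact independent of which element of $P$ is peeled off last. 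Finally, Proposition \ref{prop:weakint2} (with the remark that \eqref{eq:int2} suffices once \eqref{eq:weakint2} holds) upgrades this to \eqref{eq:int}.

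\textbf{Main obstacle.} The hardest step is the order-independence of the recursion: showing that building $\T(g^{|P|+1}x_P)$ by removing $r$ last agrees with removing any other $s\in P$ last. I would prove this by a two-step commutation check on $P''=P\setminus\{r,s\}$. This is a direct but sign-heavy computation in which the $\lambda_{rs}$ and $\gamma$ constants must cancel, using $x_rx_s=-x_sx_r$ in $E(n)$ and $x_r^2=0$ (the latter handles the case $i\in P$, where both sides vanish or reduce to $\beta_i$-terms).
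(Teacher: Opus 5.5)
Your treatment of \eqref{eq:int2} is sound and is in substance the paper's argument. Write $\tau_Q:=(\mathrm{Id}_A\otimes\varepsilon)\T(g^{|Q|+1}x_Q)$. Then \eqref{eq:int2} for $(P,X_i)$ with $i\notin P$ says exactly that the graded-commutator recursion for $P\sqcup\{i\}$ may peel off $i$ instead of the maximum. Your two-step check on $P''$ is what the paper does when it reduces \eqref{eq:T30} to \eqref{eq:T31} via \eqref{eq:IndHyp2}.

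This is the easy part, though, not the main obstacle. The two orders of peeling $r,s$ give expressions differing by $\pm\big((X_rX_s+X_sX_r)\tau_{P''}-\tau_{P''}(X_rX_s+X_sX_r)\big)=0$, and no $\gamma$'s enter. Two minor slips. For $a=G$ the $\gamma_r$-terms cancel between the two summands of the recursion; nothing is matched against $ghg$. For $i\in P$ the extra term in \eqref{eq:int2} is $\tau(g^{|P|}x_Px_i)=0$, so you only need $(-1)^{|P|}X_i\tau_P=\tau_PX_i$, which follows by peeling $i$ and using $X_i^2=\beta_i$.

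The genuine gap is Step 1, which is where the real work lies. \eqref{eq:weakint2} at $h=g^{|P|+1}x_P$ is not a property of $\T(h)$ alone: it involves $\T(h_2)$ for every component of $\Delta(h)$, including $h$ itself. Moreover, $\psi$-compatibility does not hold summand by summand. The right-hand side of \eqref{eq:int} for $a=X_r$ is $\T(h)\bullet c$ with $c=X_r\otimes g-1_A\otimes gx_r$, and $\psi(1_H\otimes c)=c\otimes 1_H$. However, $\psi(1_H\otimes X_r\otimes g)=X_r\otimes g\otimes 1_H+1_A\otimes g\otimes gx_r$, and the last term is cancelled only by the other summand. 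Nor can ``stability of the maps satisfying \eqref{eq:weakint2}'' be applied to $\T$, which is not yet known to satisfy it at the new monomial.

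Your idea can be repaired, and the result is shorter than the paper's explicit computation. The paper splits $\Delta(g^{|P|+1}x_{P'}x_r)$ along $\Delta(x_r)$, rewrites $\tau(g^{|L|}x_Lx_r)$ by \eqref{eq:T-odd}, expands $\rho(\tau X_r)=\rho(\tau)\rho(X_r)$, and reassembles with \eqref{eq:weakint2} for $g^{|P|}x_{P'}$.

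The repair uses a defect. Put $D(k):=\psi(k_1\otimes\T(k_2))-\T(k)\otimes 1_H$, so that $D(k)=0$ is \eqref{eq:weakint} at $k$. For any linear $\T$, the transfer property of $\psi$ gives (with $\bullet$ acting on the $A\otimes H^{\op}$-leg)
\[
\psi\big(h_1\otimes(a_0\otimes S^{-1}(a_3))\bullet\T(S^{-1}(a_2)h_2a_1)\big)=(a_0\otimes S^{-1}(a_3))\bullet\big(\T(S^{-1}(a_2)ha_1)\otimes 1_H+D(S^{-1}(a_2)ha_1)\big).
\]
Also $\psi(h_1\otimes\T(h_2)\bullet c)=\T(h)\bullet c\otimes 1_H$ as soon as $D(h)=0$.

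Now take $a=X_r$ and $h=g^{|P|}x_{P'}$. Every component $h_2$ of $\Delta(h)$ is $\pm g^{|L|+1}x_L$ with $L\subseteq P'$. Since $r>\max L$, \eqref{eq:T-odd} for $L\sqcup\{r\}$ is exactly \eqref{eq:int} at $(X_r,h_2)$. Apply $F\mapsto\psi(h_1\otimes F(h_2))$ to both sides of \eqref{eq:int}, and use $D(h)=0$ and $D(ghg)=\pm D(h)=0$ from the induction. What remains is $2(1_A\otimes g)\bullet D(g^{|P|+1}x_P)=0$, hence $D(g^{|P|+1}x_P)=0$ because $\mathrm{char}\,\Bbbk\neq 2$.

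Your sketch is missing three things: the defect formulation, the use of \eqref{eq:T-odd} on all the sub-monomials $x_Lx_r$, and the isolated invertible coefficient $2(1_A\otimes g)$.
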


\begin{proof}
The proof is by induction on $m=|P|$. When $|P|=0$ the statement is true thanks to our set of hypotheses. In particular, \eqref{eq:int2} for $a=X_i$ corresponds to \eqref{eq:T-odd} with $x_r=x_i$ and $x_{P'}=1_{E(n)}$, once we apply $(\mathrm{Id}_A \otimes \varepsilon)$ on both sides. 
Now suppose \eqref{eq:weakint2} and \eqref{eq:int2} are satisfied for every generator $a \in \lbrace G, X_1, \ldots, X_n \rbrace$, and every element $h=g^{|L|+1}x_L$ with $L \leq m$, for some fixed $m \geq 0$. Explicitly, this means
\begin{equation}\label{eq:IndHyp0}
\T\left(g^{|L|+1}x_L\right)=\varepsilon\left(\T^H\left((g^{|L|+1}x_L)_2\right)\right)\T^A \left((g^{|L|+1}x_L)_2 \right)_0 \otimes S^{-1}\left((g^{|L|+1}x_L)_1\T^A\left((g^{|L|+1}x_L)_2\right)_1\right),
\end{equation}
\begin{equation}\label{eq:IndHyp1}
(-1)^{|L|}G [(\mathrm{Id}_A \otimes \varepsilon)\T(g^{|L|+1}x_L)]=[(\mathrm{Id}_A \otimes \varepsilon)\T(g^{|L|+1}x_L)]G,
\end{equation}
\begin{equation}\label{eq:IndHyp2}
\begin{split}
&(-1)^{|L|}X_i[(\mathrm{Id}_A \otimes \varepsilon)\T(g^{|L|+1}x_L)]+2(\mathrm{Id}_A \otimes \varepsilon)\T(g^{|L|}x_L x_i)\\
&=[(\mathrm{Id}_A \otimes \varepsilon)\T(g^{|L|+1}x_L)]X_i, \quad i=1, \ldots, n.
\end{split}
\end{equation}
Let $g^{|P|+1}x_P \in \mathbb{B}_H$ be an element with $|P|=m+1$. We have to prove that \eqref{eq:weakint2} and \eqref{eq:int2} are verified for $h=g^{|P|+1}x_P$ and every generator $a \in \lbrace G, X_1, \ldots, X_n \rbrace$.

Let $r$ be the greatest element in $P$ so that $x_P=x_{P'}x_r$ with $P'=P \setminus \lbrace r \rbrace$. It follows that
\begin{align*}
&2G[(\mathrm{Id}_A \otimes \varepsilon)\T(gg^{|P|+1}x_Pg)]\\
&=(-1)^{|P|}2G [(\mathrm{Id}_A \otimes \varepsilon)\T(g^{|P|+1}x_P)]\\
\overset{\eqref{eq:T-odd}}&{=}(-1)^{|P|}G [(\mathrm{Id}_A \otimes \varepsilon)\T(g^{|P|}x_{P'})]X_r+GX_r[(\mathrm{Id}_A \otimes \varepsilon)\T(g^{|P|}x_{P'})]\\
\overset{\eqref{eq:IndHyp1}}&{=}-[(\mathrm{Id}_A \otimes \varepsilon)\T(g^{|P|}x_{P'})]GX_r+\gamma_r[(\mathrm{Id}_A \otimes \varepsilon)\T(g^{|P|}x_{P'})]-(-1)^{|P'|}X_r [(\mathrm{Id}_A \otimes \varepsilon)\T(g^{|P|}x_{P'})]G\\
&=[(\mathrm{Id}_A \otimes \varepsilon)\T(g^{|P|}x_{P'})]X_rG+(-1)^{|P'|+1}X_r [(\mathrm{Id}_A \otimes \varepsilon)\T(g^{|P|}x_{P'})]G\\
\overset{\eqref{eq:T-odd}}&{=}2[(\mathrm{Id}_A \otimes \varepsilon)\T(g^{|P|+1}x_P)]G,
\end{align*}
thus \eqref{eq:int2} holds for $h=g^{|P|+1}x_P$ and $a=G$. Now consider a generator $X_i$ with $i \in \lbrace 1, \ldots, n \rbrace$. Equality \eqref{eq:int2} with $h=g^{|P|+1}x_P$ and $a=X_i$ reads
\begin{align}
(-1)^{|P|}X_i[(\mathrm{Id}_A \otimes \varepsilon)\T(g^{|P|+1}x_P)]+2(\mathrm{Id}_A \otimes \varepsilon)\T(g^{|P|}x_Px_i)=[(\mathrm{Id}_A \otimes \varepsilon)\T(g^{|P|+1}x_P)]X_i,\label{eq:T30}
\end{align}
which is true by \eqref{eq:T-odd} if $i>r$. 
If $i\leq r$, we can still use \eqref{eq:T-odd} to rewrite \eqref{eq:T30} as
\begin{align*}
&\frac{1}{2}(-1)^{|P|}X_i[(\mathrm{Id}_A \otimes \varepsilon)\T(g^{|P|}x_{P'})]X_r+\frac{1}{2}X_iX_r [(\mathrm{Id}_A \otimes \varepsilon)\T(g^{|P|}x_{P'})]+2(\mathrm{Id}_A \otimes \varepsilon)\T(g^{|P|}x_Px_i)\\
&=\frac{1}{2}[(\mathrm{Id}_A \otimes \varepsilon)\T(g^{|P|}x_{P'})]X_rX_i+\frac{1}{2}(-1)^{|P'|+1}X_r [(\mathrm{Id}_A \otimes \varepsilon)\T(g^{|P|}x_{P'})]X_i
\end{align*}
Next we can use \eqref{eq:IndHyp2} to expand terms of the form $X_i[(\mathrm{Id}_A \otimes \varepsilon)\T(g^{|P|}x_{P'})]\mathfrak{a}$ and obtain that \eqref{eq:T30} is also equivalent to
\begin{equation}\label{eq:T31}
[(\mathrm{Id}_A \otimes \varepsilon)\T(g^{|P'|}x_{P'} x_i)]X_r+(-1)^{|P'|}X_r [(\mathrm{Id}_A \otimes \varepsilon)\T(g^{|P'|}x_{P'} x_i)]+2(\mathrm{Id}_A \otimes \varepsilon)\T(g^{|P|}x_Px_i)=0.
\end{equation}
Now we have to deal with three cases:
\begin{itemize}
\item If $i \notin P$ we can write $\overline{P}=P \sqcup \lbrace i \rbrace$ and $\overline{P'}=P' \sqcup \lbrace i \rbrace$ and \eqref{eq:T31} becomes
\begin{equation*}
\begin{split}
&(-1)^{S(P',\overline{P'})}[(\mathrm{Id}_A \otimes \varepsilon)\T(g^{|P'|}x_{\overline{P'}})]X_r+(-1)^{|P'|+S(P',\overline{P'})}X_r[(\mathrm{Id}_A \otimes \varepsilon)\T(g^{|P'|}x_{\overline{P'}})]+\\
&+(-1)^{S(P,\overline{P})}2(\mathrm{Id}_A \otimes \varepsilon)\T(g^{|P|}x_{\overline{P}})=0.
\end{split}
\end{equation*}
This is true by \eqref{eq:T-odd}, given that $S(P,\overline{P})=S(P' \sqcup \lbrace r \rbrace,\overline{P'} \sqcup \lbrace r \rbrace)=S(P',\overline{P'})+1$, in view of \cite[Lem. 3.6]{FR3}.
\item If $i \in P$, but $i \neq r$, then $i \in P'$ and consequently $x_Px_i=0=x_{P'}x_i$. In this case \eqref{eq:T31} is trivially true.
\item If $i=r$, \eqref{eq:T31} changes into
\begin{equation*}
[(\mathrm{Id}_A \otimes \varepsilon)\T(g^{|P|+1}x_P)]X_r+(-1)^{|P'|}X_r[(\mathrm{Id}_A \otimes \varepsilon)\T(g^{|P|+1}x_P)]=0.
\end{equation*}
By employing \eqref{eq:T-odd} once again, we get that the above equation is an identity.
\end{itemize}
We can conclude that \eqref{eq:int2} also holds for $h=g^{|P|+1}x_P$ and $a=X_i$ with $i=1, \ldots, n$. 

Finally we check that \eqref{eq:weakint2} is verified for $h=g^{|P|+1}x_P$. We calculate
\begin{align*}
&\varepsilon\left(\T^H\left((g^{|P|+1}x_P)_2\right)\right)\T^A\left((g^{|P|+1}x_P)_2\right)_0 \otimes S^{-1}\left((g^{|P|+1}x_P)_1\T^A\left((g^{|P|+1}x_P)_2\right)_1\right)\\
&=\varepsilon\left(\T^H\left((g^{|P|+1}x_{P'}x_r)_2\right)\right)\T^A\left((g^{|P|+1}x_{P'}x_r)_2\right)_0 \otimes S^{-1}\left((g^{|P|+1}x_{P'}x_r)_1\T^A\left((g^{|P|+1}x_{P'}x_r)_2\right)_1\right)\\
&=\varepsilon\left(\T^H\left((g^{|P|+1}x_{P'})_2g\right)\right)\T^A\left((g^{|P|+1}x_{P'})_2g\right)_0 \otimes S^{-1}\left((g^{|P|+1}x_{P'})_1x_r\T^A\left((g^{|P|+1}x_{P'})_2g\right)_1\right)\\
&+\varepsilon\left(\T^H\left((g^{|P|+1}x_{P'})_2x_r\right)\right)\T^A\left((g^{|P|+1}x_{P'})_2x_r\right)_0 \otimes S^{-1}\left((g^{|P|+1}x_{P'})_1\T^A\left((g^{|P|+1}x_{P'})_2x_r\right)_1\right)\\
\overset{\eqref{deltagx}}&{=}(-1)^{|P'|+|P|}\varepsilon\left(\T^H\left((g^{|P|}x_{P'})_2\right)\right)\T^A\left((g^{|P|}x_{P'})_2\right)_0 \otimes S^{-1}\left(gx_r(g^{|P|}x_{P'})_1\T^A\left((g^{|P|}x_{P'})_2\right)_1\right)\\
&+\sum_{F \subseteq P'} (-1)^{S(F,P')}\left[(\mathrm{Id}_A \otimes S^{-1})(\rho \otimes \varepsilon)\T\left(g^{|P'|+|F|} x_{P' \setminus F}x_r\right)\right]\left[1_A \otimes S^{-1}(g^{|P'|}x_{F'})\right]\\
\overset{\eqref{eq:IndHyp2}}&{=}-\varepsilon\left(\T^H\left((g^{|P|}x_{P'})_2\right)\right)\T^A\left((g^{|P|}x_{P'})_2\right)_0 \otimes S^{-1}\left(gx_r(g^{|P|}x_{P'})_1\T^A\left((g^{|P|}x_{P'})_2\right)_1\right)\\
&+\frac{1}{2}\sum_{F \subseteq P'} (-1)^{S(F,P')}\left[(\mathrm{Id}_A \otimes S^{-1})\rho\left((\mathrm{Id}_A \otimes \varepsilon)\T\left(g^{|P|+|F|} x_{P' \setminus F}\right)X_r\right)\left(1_A \otimes S^{-1}(g^{|P'|}x_{F'})\right)\right.\\
&+\left.(-1)^{|P'|+|F|+1}(\mathrm{Id}_A \otimes S^{-1})\rho\left(X_r(\mathrm{Id}_A \otimes \varepsilon)\T\left(g^{|P|+|F|} x_{P' \setminus F}\right)\right)\left(1_A \otimes S^{-1}(g^{|P'|}x_{F'})\right)\right]\\
\overset{\eqref{deltagx}}&{=}-\varepsilon\left(\T^H\left((g^{|P|}x_{P'})_2\right)\right)\T^A\left((g^{|P|}x_{P'})_2\right)_0 \otimes S^{-1}\left((g^{|P|}x_{P'})_1\T^A\left((g^{|P|}x_{P'})_2\right)_1\right)x_r\\
&+\frac{1}{2}\varepsilon\left(\T^H\left((g^{|P|} x_{P'})_2 \right)\right)\T^A\left((g^{|P|} x_{P'})_2\right)_0X_r \otimes g S^{-1} \left((g^{|P|} x_{P'})_1\T^A\left((g^{|P|} x_{P'})_2\right)_1 \right)g\\
&-\frac{1}{2}\varepsilon\left(\T^H\left((g^{|P|} x_{P'})_2 \right)\right)\T^A\left((g^{|P|} x_{P'})_2\right)_0 \otimes gx_r S^{-1} \left((g^{|P|} x_{P'})_1\T^A\left((g^{|P|} x_{P'})_2\right)_1 \right)g\\
&+\frac{1}{2}(-1)^{|P|}\varepsilon\left(\T^H\left((g^{|P|} x_{P'})_2 \right)\right)X_r\T^A\left((g^{|P|} x_{P'})_2\right)_0 \otimes S^{-1} \left((g^{|P|} x_{P'})_1\T^A\left((g^{|P|} x_{P'})_2\right)_1  \right)\\
&+\frac{1}{2}\varepsilon\left(\T^H\left((g^{|P|} x_{P'})_2 \right)\right)\T^A\left((g^{|P|} x_{P'})_2\right)_0 \otimes S^{-1} \left((g^{|P|} x_{P'})_1\T^A\left((g^{|P|} x_{P'})_2\right)_1 \right)x_r\\
\overset{\eqref{eq:IndHyp0}}&{=}-\frac{1}{2}\T(g^{|P|}x_{P'})(1_A \otimes x_r)+\frac{1}{2}(1_A \otimes g)\T(g^{|P|}x_{P'})(X_r \otimes g)\\
&-\frac{1}{2}(1_A \otimes gx_r)\T(g^{|P|}x_{P'})(1_A \otimes g)+\frac{1}{2}(-1)^{|P'|+1}(X_r \otimes 1_{E(n)})\T(g^{|P|}X_{P'})\\
\overset{\eqref{eq:T-odd}}&{=}\T(g^{|P|+1}x_P).
\end{align*}
\end{proof}
The next step is to determine the explicit form of $\T(g)$.
\begin{proposition}
The element $\T(g)$ satisfies \eqref{eq:weakint2} and \eqref{eq:int2} with $a=G$ if and only if
\begin{equation}\label{eq:Tg}
\T(g)=\sum_{j,P}\sum_{Q \subseteq P} (-1)^{S(P \setminus Q,P)+(j+|P|+1)|Q|} \mu_{j,P}G^jX_{P \setminus Q} \otimes g^{j+|P|+1}x_Q,
\end{equation}
for $\mu_{j,P} \in \Bbbk$ such that
\begin{align}
&(1+(-1)^{|P|+1}) \mu_{0,P}=\sum_{i \in P^c}  (-1)^{S(P,P \sqcup \lbrace i \rbrace)}\gamma_i \mu_{1, P \sqcup \lbrace i \rbrace}, \label{eq:mu-0P}\\
&(1+(-1)^{|P|+1}) \mu_{1,P}\alpha=\sum_{i \in P^c}  (-1)^{S(P,P \sqcup \lbrace i \rbrace)}\gamma_i \mu_{0, P \sqcup \lbrace i \rbrace}, \label{eq:mu-1P}
\end{align}
for every $P \subseteq \lbrace 1, \ldots, n \rbrace$.
\end{proposition}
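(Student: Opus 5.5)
The plan is to use condition \eqref{eq:weakint2} for $h=g$ to show that $\T(g)$ is completely determined by the single element $\mathfrak{a}:=(\mathrm{Id}_A \otimes \varepsilon)\T(g)\in A$. Then \eqref{eq:int2} with $a=G$ becomes a linear condition on $\mathfrak{a}$ alone, which we unpack in the basis $\mathbb{B}_A$. Write
\[
\mathfrak{a}=\sum_{j,P}\mu_{j,P}G^jX_P, \qquad \mu_{j,P}\in\Bbbk .
\]
Since $\Delta(g)=g\otimes g$, \eqref{eq:weakint2} for $h=g$ reads $\T(g)=\mathfrak{a}_0\otimes S^{-1}(g\,\mathfrak{a}_1)$.

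Conversely, suppose $\T(g)$ is defined by this formula for an arbitrary $\mathfrak{a}\in A$. Applying $\mathrm{Id}_A\otimes\varepsilon$ and using $\varepsilon S^{-1}=\varepsilon$, $\varepsilon(g)=1$ and counitality gives back $(\mathrm{Id}_A\otimes\varepsilon)\T(g)=\mathfrak{a}$. Hence \eqref{eq:weakint2} at $h=g$ holds exactly when $\T(g)$ has the shape $\mathfrak{a}_0\otimes S^{-1}(g\mathfrak{a}_1)$ for some $\mathfrak{a}\in A$.

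To make this explicit, I will use the second expression in \eqref{rhoGX}:
\[
\rho(G^jX_P)=\sum_{Q\subseteq P}(-1)^{S(P\setminus Q,P)}G^jX_{P\setminus Q}\otimes g^{|P|-|Q|+j}x_Q .
\]
Then I compute $S^{-1}(g^{|P|-|Q|+j+1}x_Q)$. From \eqref{antipode} one gets $S^{-1}(g^kx_Q)=(-1)^{(k+|Q|)|Q|}g^{k+|Q|}x_Q$, which can be checked from $S^2(g^kx_Q)=(-1)^{|Q|}g^kx_Q$. With $k=j+|P|-|Q|+1$, the power of $g$ becomes $g^{j+|P|+1}$ and the sign becomes $(-1)^{(j+|P|+1)|Q|}$. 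This gives exactly \eqref{eq:Tg}. Only the sign exponent needs care here: one checks $(k+|Q|)|Q|\equiv_2 (j+|P|+1)|Q|$.

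Next comes \eqref{eq:int2} with $a=G$. Since $\rho(G)=G\otimes g$ and $S^{-1}(g)\,g\,g=g$, it reduces to $G\mathfrak{a}=\mathfrak{a}G$. I will move $G$ past $X_P$ using $X_iG=-GX_i+\gamma_i$. By induction on $|P|$ this gives
\[
X_PG=(-1)^{|P|}GX_P+\sum_{i\in P}\pm\gamma_iX_{P\setminus\{i\}} .
\]
Using $G^2=\alpha$, I then expand both $G\cdot G^jX_P$ and $G^jX_P\cdot G$ in the basis $\mathbb{B}_A$, treating $j=0$ and $j=1$ separately. Comparing the coefficients of $GX_P$ should yield \eqref{eq:mu-0P}, where the factor $(1+(-1)^{|P|+1})$ comes from the commutation sign. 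Comparing the coefficients of $X_P$ should yield \eqref{eq:mu-1P}, with the factor $\alpha$ coming from $G^2=\alpha$. In both cases the $\gamma_i$-terms arise from the basis elements indexed by $P\sqcup\{i\}$, $i\in P^c$.

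I expect the main obstacle to be identifying the sign in front of $\gamma_i$, when $X_i$ is extracted from $X_{P\sqcup\{i\}}$, as $(-1)^{S(P,P\sqcup\{i\})}$. I would first check directly that the sign equals $(-1)^{\#\{p\in P\,:\,p>i\}}$, possibly up to a global sign absorbed by $(1+(-1)^{|P|+1})$. I would then match it with the definition of $S(F,P)$, using the reordering identity \eqref{reorder} together with \cite[Lem. 3.6]{FR3}.
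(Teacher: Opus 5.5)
Your proposal is correct and follows the paper's proof. First, \eqref{eq:weakint2} at $h=g$ forces $\T(g)=\mathfrak{a}_0\otimes S^{-1}(g\mathfrak{a}_1)=\mathfrak{a}_0\otimes S^{-1}(\mathfrak{a}_1)g$, which gives \eqref{eq:Tg}; second, \eqref{eq:int2} with $a=G$ reduces to $G\mathfrak{a}=\mathfrak{a}G$, which is expanded via the commutation rule for $X_PG$ (the paper cites \cite[Lem. 3.8]{FR3} where you use induction) and compared coefficient by coefficient. The sign is exactly $(-1)^{S(P,P\sqcup\{i\})}=(-1)^{\#\{p\in P:\,p>i\}}$, so your hedge about a global sign is unnecessary; a global sign could not be absorbed anyway, since $1+(-1)^{|P|+1}\in\{0,2\}$.
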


\begin{proof}
From \eqref{eq:weakint2} we see that $\T(g)=a_0 \otimes S^{-1}(a_1)g$ for some $a \in A$. Write $a=\sum_{j,P} \mu_{j,P}G^jX_P$, then $\T(g)$ is of the form displayed in \eqref{eq:Tg}. Equality \eqref{eq:int2} with $h=g$ and $a=G$ now reads
\[
\sum_{j,P} \mu_{j,P}G^{j+1}X_P=\sum_{j,P} \mu_{j,P}G^jX_PG.
\]
Thanks to \cite[Lem. 3.8]{FR3} we can further rewrite this as 
\[
\sum_{j,P} (1+(-1)^{|P|+1})\mu_{j,P}G^{j+1}X_P=\sum_{j,P} \sum_{i \in P} (-1)^{S(P \setminus \lbrace i \rbrace,P)}\mu_{j,P}\gamma_{i} G^j X_{P \setminus \lbrace i \rbrace}.
\]
A suitable rearrangement of indexes yields
\[
\sum_{j,P} (1+(-1)^{|P|+1})\mu_{j,P}G^{j+1}X_P=\sum_{j,R} \sum_{i \in R^c} (-1)^{S(R,R \sqcup \lbrace i \rbrace)}\mu_{j,R \sqcup \lbrace i \rbrace} \gamma_{i}G^j X_R,
\]
which, by linear independence, is equivalent to \eqref{eq:mu-0P}, \eqref{eq:mu-1P}.
\end{proof}

\begin{remark}\label{rmk:principal1}
The canonical total integral $\underline{\T}$ defined in Proposition \ref{prop:principal} corresponds to the following solution of \eqref{eq:mu-0P} and \eqref{eq:mu-1P}: $\mu_{0,\emptyset}=1$ and $\mu_{j,P}=0$ for $(j,P) \neq (0, \emptyset)$.
\end{remark}

\subsection{Monomials with even degree}

Now we study elements $\T(g^jx_P)$ with $j+|P|$ even, i.e. elements of the form $\T(g^{|P|}x_P)$. If we write \eqref{eq:weakint2} and \eqref{eq:int2} for $h=g^{|P|}x_P \in \mathbb{B}_H$ and $a \in \lbrace G, X_1, \ldots, X_n \rbrace$ we find
\begin{align}
&\T(g^{|P|}x_P)=\varepsilon(\T^H((g^{|P|}x_P)_2))\T^A((g^{|P|}x_P)_2)_0 \otimes S^{-1}((g^{|P|}x_P)_1\T^A((g^{|P|}x_P)_2)_1),\label{eq:10_on_gen}\\
&(-1)^{|P|}G [(\mathrm{Id}_A \otimes \varepsilon)\T(g^{|P|}x_P)]=[(\mathrm{Id}_A \otimes \varepsilon)\T(g^{|P|}x_P)]G,\label{eq:11_on_G}\\
&(-1)^{|P|}X_i[(\mathrm{Id}_A \otimes \varepsilon)\T(g^{|P|}x_P)]=[(\mathrm{Id}_A \otimes \varepsilon)\T(g^{|P|}x_P)]X_i.\label{eq:11_on_X}
\end{align}

Let $\mathfrak{U}_P:=[(\mathrm{Id}_A \otimes S^{-1})(\rho \otimes \varepsilon)\T(g^{|P|}x_P)](1_A \otimes g^{|P|})$. Then, by means of \eqref{deltagx} and \eqref{antipode}, we can rephrase \eqref{eq:10_on_gen} in the following way.
\begin{equation}\label{eq41}
\begin{split}
\T(g^{|P|}x_P)&=\sum_{R \subseteq P} (-1)^{S(P \setminus R,P)}\varepsilon(\T^H(g^{|R|}x_R))\T^A(g^{|R|}x_R)_0 \otimes S^{-1}(g^{|P|}x_{P \setminus R}\T^A(g^{|R|}x_R)_1)\\
&=\sum_{R \subseteq P} (-1)^{S(P \setminus R,P)+|R|(|P|-|R|)}\mathfrak{U}_R(1_A \otimes x_{P \setminus R}).
\end{split}
\end{equation}
\begin{lemma}
The family of equations \eqref{eq:10_on_gen} for $P \subseteq \lbrace 1, \ldots, n \rbrace$ is equivalent to the family
\begin{equation}\label{eq:T-even}
\begin{split}
\mathfrak{U}_P=\sum_{F \subseteq P}(-1)^{S(F,P)+|P|+|F|}\T(g^{|F|}x_{F}) (1_A \otimes x_{P \setminus F}).
\end{split}
\end{equation}
for $P \subseteq \lbrace 1, \ldots, n \rbrace$. Each element $\T(g^{|P|}x_P)$ satisfy \eqref{eq:11_on_G} and \eqref{eq:11_on_X} if and only if
\begin{equation}\label{eq:UP_G}
(-1)^{|P|}G[(\mathrm{Id}_A \otimes \varepsilon)\mathfrak{U}_P]=[(\mathrm{Id}_A \otimes \varepsilon)\mathfrak{U}_P]G,
\end{equation}
\begin{equation}\label{eq:UP_X}
(-1)^{|P|}X_i[(\mathrm{Id}_A \otimes \varepsilon)\mathfrak{U}_P]=[(\mathrm{Id}_A \otimes \varepsilon)\mathfrak{U}_P]X_i,
\end{equation}
for $i=1, \ldots, n$.
\end{lemma}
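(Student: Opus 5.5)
The plan is to treat \eqref{eq41} as a unitriangular linear system relating the family $\{\T(g^{|P|}x_P)\}_P$ to the family $\{\mathfrak{U}_P\}_P$, and invert it by Möbius inversion over the Boolean lattice of subsets of $P$.

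First, I will verify directly that \eqref{eq41}, which is a rewriting of \eqref{eq:10_on_gen}, has the form
\[
\T(g^{|P|}x_P)=\sum_{R\subseteq P} c_{R,P}\,\mathfrak{U}_R(1_A\otimes x_{P\setminus R}),\qquad c_{P,P}=1 .
\]
This uses \eqref{deltagx} with $j=|P|$: the coproduct component $g^{|P|}x_{P\setminus R}\otimes g^{|R|}x_R$ comes with sign $(-1)^{S(P\setminus R,P)}$. It also uses \eqref{antipode} to move $S^{-1}(g^{|P|}x_{P\setminus R})$ past the factor $g^{|P|}$ hidden in $\mathfrak{U}_R$, which produces the sign $(-1)^{|R|(|P|-|R|)}$.

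Next, I will substitute the proposed formula \eqref{eq:T-even} for $\mathfrak{U}_R$ into the right-hand side. The product $x_{R\setminus F}x_{P\setminus R}$ equals $\pm x_{P\setminus F}$, with the sign read off from \eqref{reorder}. So the right-hand side becomes
\[
\sum_{F\subseteq P}\Big(\sum_{F\subseteq R\subseteq P}\pm\Big)\T(g^{|F|}x_F)(1_A\otimes x_{P\setminus F}).
\]
The task is then to show that the inner signed sum equals $\delta_{F,P}$. After collecting signs it should reduce to $\sum_{R}(-1)^{|R\setminus F|}=0$ whenever $F\subsetneq P$.

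This gives one direction. For the converse I will argue by induction on $|P|$. Both systems are triangular with diagonal entries $\pm1$: \eqref{eq41} expresses $\T(g^{|P|}x_P)$ as $\mathfrak{U}_P$ plus terms with $R\subsetneq P$, and \eqref{eq:T-even} expresses $\mathfrak{U}_P$ as $\pm\T(g^{|P|}x_P)$ plus terms with $F\subsetneq P$. Hence each system determines the other uniquely, and the two families of equations are equivalent.

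The main obstacle is the sign bookkeeping in this composition. The exponents $S(F,R)$, $S(R\setminus F,\cdot)$ and the reordering sign must combine into $(-1)^{|R|-|F|}$ times a factor independent of $R$. For this I will use additivity of the $S$-signs, in the spirit of \cite[Lem. 3.6]{FR3}, which identifies the sign of a shuffle; if needed, I will reduce to the case where $P\setminus F$ is a single element and then induct.

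For the second part, apply $(\mathrm{Id}_A\otimes\varepsilon)$ to $\mathfrak{U}_P$. Since $(\mathrm{Id}_A\otimes\varepsilon S^{-1})\rho=\mathrm{Id}_A$ and $\varepsilon(g^{|P|})=1$, we get
\[
(\mathrm{Id}_A\otimes\varepsilon)\mathfrak{U}_P=(\mathrm{Id}_A\otimes\varepsilon)\T(g^{|P|}x_P).
\]
Therefore \eqref{eq:UP_G} and \eqref{eq:UP_X} are literally \eqref{eq:11_on_G} and \eqref{eq:11_on_X}.
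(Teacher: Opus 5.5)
Your proposal is correct and follows the paper's proof in substance. The paper also substitutes \eqref{eq:T-even} into the right-hand side of \eqref{eq41}, reduces the sign to $(-1)^{S(F,P)+|R|+|F|}$ via \cite[Lem.~3.6]{FR3}, and kills the terms with $F\subsetneq P$ using $\sum_{T\subseteq P\setminus F}(-1)^{|T|}=0$. It gets the second part from $(\mathrm{Id}_A\otimes\varepsilon)\mathfrak{U}_P=(\mathrm{Id}_A\otimes\varepsilon)\T(g^{|P|}x_P)$; your derivation of this directly from the definition of $\mathfrak{U}_P$ is slightly cleaner, since it needs no hypothesis.

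The only deviation is the other implication. The paper proves it by a second, symmetric substitution of \eqref{eq41} into the right-hand side of \eqref{eq:T-even}. You instead combine the formal identity you computed with injectivity of the unitriangular transform $(\mathfrak{U}_R)_R\mapsto\bigl(\sum_{R\subseteq P}c_{R,P}\,\mathfrak{U}_R(1_A\otimes x_{P\setminus R})\bigr)_P$. This is valid, but say it that way: it is this injectivity together with the computed identity that gives the equivalence, not the triangularity of both systems on its own. Also, the power of $g$ absorbed into $\mathfrak{U}_R$ is $g^{|R|}$, not $g^{|P|}$.
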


\begin{proof}
Throughout this proof, $(\star)$ refers to \cite[Lem. 3.6]{FR3}.
Suppose \eqref{eq:10_on_gen}, i.e. \eqref{eq41}, is verified.  Then
\begin{align*}
&\sum_{F \subseteq P}(-1)^{S(F,P)+|P|+|F|}\T(g^{|F|}x_{F}) (1_A \otimes x_{P \setminus F})\\
\overset{\eqref{eq41}}&{=}\sum_{F \subseteq P}\sum_{R \subseteq F}(-1)^{S(F,P)+|P|+|F|+S(F \setminus R,F)+|R|(|F|-|R|)}\mathfrak{U}_R(1_A \otimes x_{F \setminus R}x_{P \setminus F})\\
\overset{(\star)}&{=}\sum_{F \subseteq P}\sum_{R \subseteq F}(-1)^{S(R,P)+|P|+|F|}\mathfrak{U}_R(1_A \otimes x_{P \setminus R})\\
&=\sum_{R \subseteq P}\left[\sum_{T \subseteq P \setminus R}(-1)^{|T|}\right](-1)^{S(R,P)+|P|+|R|}\mathfrak{U}_R(1_A \otimes x_{P \setminus R})\\
&=\mathfrak{U}_P.
\end{align*}
Conversely, suppose \eqref{eq:T-even} holds. Then
\begin{align*}
&\sum_{R \subseteq P} (-1)^{S(P \setminus R,P)+|R|(|P|-|R|)}\mathfrak{U}_R(1_A \otimes x_{P \setminus R})\\
&\overset{\eqref{eq:T-even}}{=}\sum_{R \subseteq P} (-1)^{S(P \setminus R,P)+|R|(|P|-|R|)}\sum_{F \subseteq R}(-1)^{S(F,R)+|R|+|F|}\T(g^{|F|}x_{F}) (1_A \otimes x_{R \setminus F}x_{P \setminus R})\\
&\overset{(\star)}{=}\sum_{R \subseteq P}\sum_{F \subseteq R} (-1)^{S(F,P)+|R|+|F|}\T(g^{|F|}x_{F}) (1_A \otimes x_{P \setminus F})\\
&=\sum_{F \subseteq P}\left[\sum_{T \subseteq P \setminus F}(-1)^{|T|} \right] (-1)^{S(F,P)}\T(g^{|F|}x_{F}) (1_A \otimes x_{P \setminus F})\\
&=\T(g^{|P|}x_P),
\end{align*}
thus \eqref{eq41} is verified. As shown before, this is equivalent to \eqref{eq:10_on_gen}. Since $(\mathrm{Id}_A \otimes \varepsilon)\T(g^{|P|}x_P)=(\mathrm{Id}_A \otimes \varepsilon)\mathfrak{U}_P$ (for example, using \eqref{eq41}), the last part easily follows.
\end{proof}

The previous lemma shows that each $\T(g^{|P|}x_P)$ is determined by the images of $\T$ on monomials of lower degree and the additional term $\mathfrak{U}_P$. In fact, \eqref{eq:T-even} implies that
\[
\T(g^{|P|}x_P)=\sum_{F \subsetneq P}(-1)^{S(F,P)+|P|+|F|+1}\T(g^{|F|}x_{F}) (1_A \otimes x_{P \setminus F})+\mathfrak{U}_P.
\]
As a consequence, to determine all the elements $\T(g^{|P|}x_P)$ is equivalent to determine every $\mathfrak{U}_P$. To identify the exact form of $\mathfrak{U}_P$ we recall that $\mathfrak{U}_P=[(\mathrm{Id}_A \otimes S^{-1})(\rho \otimes \varepsilon)\T(g^{|P|}x_P)](1_A \otimes g^{|P|})$ by definition and thus $\mathfrak{U}_P=a_0 \otimes S^{-1}(a_1)g^{|P|}$ for some $a \in A$. Let $a=\sum_{k,Q} \eta(P;k,Q)G^kX_Q$ for some $\eta(P;k,Q) \in \Bbbk$, so that
\begin{equation}\label{eq:UP-form}
\mathfrak{U}_P=\sum_{k,Q}\sum_{F \subseteq Q}  \eta(P;k,Q)(-1)^{S(Q \setminus F,Q)+(|P|+|Q|+k)|F|} G^k X_{Q \setminus F} \otimes g^{|P|+|Q|+k}x_F.
\end{equation}

\begin{proposition}
The element $\mathfrak{U}_P$ satisfies \eqref{eq:UP_G} if and only if
\begin{equation}\label{eq:etaG}
[(-1)^{|P|}+(-1)^{|Q|+1}]\alpha^k \eta(P;k,Q)=\sum_{i \in Q^c}  (-1)^{S(Q,Q \sqcup \lbrace i \rbrace)} \gamma_i\eta(P;1-k,Q \sqcup \lbrace i \rbrace).
\end{equation}
for every $Q \subseteq \lbrace 1, \ldots, n \rbrace$ and $k=0,1$.
\end{proposition}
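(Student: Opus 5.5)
The argument runs exactly as in the proof of the analogous statement for $\T(g)$ (equations \eqref{eq:mu-0P}, \eqref{eq:mu-1P}). First we compute $(\mathrm{Id}_A \otimes \varepsilon)\mathfrak{U}_P$ from \eqref{eq:UP-form}. Since $\varepsilon(g^{m}x_F)=\delta_{F,\emptyset}$, only the summands with $F=\emptyset$ survive, and the sign $(-1)^{S(Q,Q)}$ equals $1$. This gives
\[
(\mathrm{Id}_A \otimes \varepsilon)\mathfrak{U}_P=\sum_{k,Q}\eta(P;k,Q)G^kX_Q=a,
\]
which is consistent with $\mathfrak{U}_P=a_0\otimes S^{-1}(a_1)g^{|P|}$. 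Condition \eqref{eq:UP_G} therefore becomes
\[
(-1)^{|P|}\sum_{k,Q}\eta(P;k,Q)\,G^{k+1}X_Q=\sum_{k,Q}\eta(P;k,Q)\,G^kX_QG .
\]

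Next we move $G$ to the left on the right-hand side using \cite[Lem. 3.8]{FR3}, as in the proof for $\T(g)$:
\[
X_QG=(-1)^{|Q|}GX_Q+\sum_{i\in Q}(-1)^{S(Q\setminus\lbrace i\rbrace,Q)}\gamma_iX_{Q\setminus\lbrace i\rbrace}.
\]
Substituting and collecting the terms containing $G^{k+1}X_Q$, the condition takes the form
\[
\sum_{k,Q}[(-1)^{|P|}+(-1)^{|Q|+1}]\eta(P;k,Q)G^{k+1}X_Q=\sum_{k,Q}\sum_{i\in Q}(-1)^{S(Q\setminus\lbrace i\rbrace,Q)}\gamma_i\eta(P;k,Q)G^kX_{Q\setminus\lbrace i\rbrace}.
\]

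We then rewrite both sides in the basis $\mathbb{B}_A$. On the left, $G^{k+1}$ equals $G$ when $k=0$ and equals $\alpha\cdot 1$ when $k=1$. Hence the coefficient of $G^{k'}X_Q$ on the left is $[(-1)^{|P|}+(-1)^{|Q|+1}]\alpha^{k}\eta(P;k,Q)$ with $k=1-k'$. On the right we set $R=Q\setminus\lbrace i\rbrace$, so that $Q=R\sqcup\lbrace i\rbrace$ with $i\in R^c$. The coefficient of $G^{k}X_R$ is then $\sum_{i\in R^c}(-1)^{S(R,R\sqcup\lbrace i\rbrace)}\gamma_i\eta(P;k,R\sqcup\lbrace i\rbrace)$.

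Comparing coefficients of $G^{k'}X_Q$ by linear independence of $\mathbb{B}_A$, and renaming $k'=1-k$, yields exactly \eqref{eq:etaG}. Every step is an equivalence, so both directions of the "if and only if" follow at once.

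The only delicate point is the index bookkeeping in this last comparison. The power $\alpha^k$ attaches to $\eta(P;k,Q)$, while the right-hand side involves $\eta(P;1-k,\cdot)$, because the left side raises the $G$-degree by one whereas the $\gamma_i$-terms keep it fixed. One must also check that the sign $(-1)^{S(Q\setminus\lbrace i\rbrace,Q)}$ coming from \cite[Lem. 3.8]{FR3} becomes $(-1)^{S(R,R\sqcup\lbrace i\rbrace)}$ after the reindexing; this holds by definition since $R\sqcup\lbrace i\rbrace=Q$.
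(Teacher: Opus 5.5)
Your proposal is correct and follows essentially the same route as the paper. You substitute the form of $\mathfrak{U}_P$ into \eqref{eq:UP_G}, commute $G$ to the left using the explicit formula for $X_QG$ (whose signs $(-1)^{S(Q\setminus\lbrace i\rbrace,Q)}$ you state correctly), reindex with $R=Q\setminus\lbrace i\rbrace$, and compare coefficients in $\mathbb{B}_A$. Your explicit handling of $G^2=\alpha$, which produces the factor $\alpha^k$ and the shift $k\mapsto 1-k$, spells out a step the paper leaves implicit.
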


\begin{proof}
If we substitute \eqref{eq:UP-form} into \eqref{eq:UP_G} we find
\[
\sum_{k,Q}(-1)^{|P|}\eta(P;k,Q)G^{k+1} X_{Q}=\sum_{k,Q}\eta(P;k,Q)G^k X_{Q}G,
\]
which, by \cite[Lem. 3.6]{FR3}, is equivalent to
\begin{equation*}
\sum_{k,Q}[(-1)^{|P|}-(-1)^{|Q|}]\eta(P;k,Q)G^{k+1} X_{Q}=\sum_{k,Q}\sum_{i \in Q} (-1)^{S(Q \setminus \lbrace i \rbrace,Q)}\eta(P;k,Q)\gamma_{i} G^kX_{Q \setminus \lbrace i \rbrace}.
\end{equation*}
By further rearranging indexes on the right we get
\begin{equation*}
\sum_{k,Q}[(-1)^{|P|}-(-1)^{|Q|}]\eta(P;k,Q)G^{k+1} X_{Q}=\sum_{k,R}\sum_{i \in R^c} (-1)^{S(R,R \sqcup  \lbrace i \rbrace)}\eta(P;k,R \sqcup  \lbrace i \rbrace)\gamma_{i} G^kX_R
\end{equation*}
and by linear independence we can conclude.
\end{proof}

\begin{proposition}
The element $\mathfrak{U}_P$ satisfies \eqref{eq:UP_X} for a fixed $i \in \lbrace 1, \ldots, n \rbrace$ if and only if
\begin{equation}\label{eq:eta1.1}
\begin{split}
&\left[(-1)^{|P|+1}+(-1)^{|Q|+1} \right](-1)^{S(Q,Q \sqcup \lbrace i \rbrace)}\eta(P;1,Q)\\
&=\sum_{\underset{l<i}{l \in Q^c}}(-1)^{S(\lbrace l \rbrace,Q \sqcup \lbrace i,l \rbrace)+|P|+|Q|}\lambda_{li} \eta(P;1,Q \sqcup \lbrace i,l \rbrace)+\\
&+\sum_{\underset{l>i}{l \in Q^c}}(-1)^{S(\lbrace l \rbrace,Q \sqcup \lbrace i,l \rbrace)+1}\lambda_{li} \eta(P;1,Q \sqcup \lbrace i,l \rbrace),
\end{split}
\end{equation}
\begin{equation}\label{eq:eta1.2}
\begin{split}
&\left[(-1)^{|P|}+(-1)^{|Q|}\right](-1)^{S(Q, Q \sqcup \lbrace i \rbrace)}\beta_i\eta(P;1,Q \sqcup \lbrace i \rbrace)\\
&=\sum_{\underset{l<i}{l \in Q^c}} (-1)^{S(\lbrace l \rbrace,Q \sqcup \lbrace l \rbrace)+|P|+|Q|+1}\lambda_{li}\eta(P;1,Q \sqcup \lbrace l \rbrace)+\\
&+\sum_{\underset{l>i}{l \in Q^c}}(-1)^{S(\lbrace l \rbrace,Q \sqcup \lbrace l \rbrace)+1}\lambda_{li}\eta(P;1,Q \sqcup \lbrace l \rbrace),
\end{split}
\end{equation}
\begin{equation}\label{eq:eta0.1}
\begin{split}
&\left[(-1)^{|P|+1}+(-1)^{|Q|}\right](-1)^{S(Q,Q \sqcup \lbrace i \rbrace)}\eta(P;0,Q)+\gamma_i\eta(P;1,Q \sqcup \lbrace i \rbrace)\\
&=\sum_{\underset{l<i}{l \in Q^c}} (-1)^{S(\lbrace l \rbrace,Q \sqcup \lbrace i,l \rbrace)+1}\lambda_{li}\eta(P;0,Q \sqcup \lbrace i,l \rbrace)+\\
&+\sum_{\underset{l>i}{l \in Q^c}}(-1)^{S(\lbrace l \rbrace,Q \sqcup \lbrace i,l \rbrace)+|P|+|Q|+1}\lambda_{li}\eta(P;0,Q \sqcup \lbrace i,l \rbrace),
\end{split}
\end{equation}
\begin{equation}\label{eq:eta0.2}
\begin{split}
&\left[(-1)^{|P|+1}+(-1)^{|Q|} \right](-1)^{S(Q, Q \sqcup \lbrace i \rbrace)}\beta_i\eta(P;0,Q \sqcup \lbrace i \rbrace)+\gamma_i\eta(P;1,Q)\\
&=\sum_{\underset{l<i}{l \in Q^c}}(-1)^{S(\lbrace l \rbrace,Q \sqcup \lbrace l \rbrace)+1}\lambda_{li}\eta(P;0,Q \sqcup \lbrace l \rbrace)+\\
&+\sum_{\underset{l>i}{l \in Q^c}}(-1)^{S(\lbrace l \rbrace,Q \sqcup \lbrace l \rbrace)+|P|+|Q|}\lambda_{li} \eta(P;0,Q \sqcup \lbrace l \rbrace),
\end{split}
\end{equation}
hold for every $Q \subseteq \lbrace 1, \ldots, n \rbrace$ such that $i \notin Q$.
\end{proposition}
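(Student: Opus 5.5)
The plan is to follow the same pattern as the proof of the preceding proposition for \eqref{eq:UP_G}. First, since $(\mathrm{Id}_A \otimes \varepsilon)\mathfrak{U}_P=a=\sum_{k,Q}\eta(P;k,Q)G^kX_Q$ by \eqref{eq:UP-form}, condition \eqref{eq:UP_X} for the fixed index $i$ becomes
\[
\sum_{k,Q}(-1)^{|P|}\eta(P;k,Q)X_iG^kX_Q=\sum_{k,Q}\eta(P;k,Q)G^kX_QX_i.
\]
Both sides must then be expanded in the basis $\mathbb{B}_A$. I will split the sum over $Q$ according to whether $i\in Q$ or $i\notin Q$. Accordingly, I write every index set either as $Q$ or as $Q\sqcup\lbrace i\rbrace$ with $i\notin Q$, and treat $k=0$ and $k=1$ separately. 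This yields the four families \eqref{eq:eta1.1}--\eqref{eq:eta0.2}, indexed by $(k,\,i\in Q\text{ or not})$ of the coefficient being compared.

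On the left-hand side, I use $X_iG=-GX_i+\gamma_i$ to move $X_i$ past $G$. This produces the $\gamma_i$-terms, which appear only in the $k=0$ equations, coming from $\eta(P;1,\cdot)$. Then I move $X_i$ into the ordered monomial $X_Q$. On the right-hand side, I move $X_i$ leftwards into $X_Q$.

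Moving $X_i$ across each $X_l$ with $l\in Q$ uses $X_lX_i=-X_iX_l+\lambda_{li}$ (or $\lambda_{il}$ depending on order), and across $X_i$ itself uses $X_i^2=\beta_i$. The analogue of \cite[Lem. 3.8]{FR3} and the sign bookkeeping \cite[Lem. 3.6]{FR3} give
\[
X_iX_Q=(-1)^{S(Q,Q\sqcup\lbrace i\rbrace)}X_{Q\sqcup\lbrace i\rbrace}+\sum_{l\in Q}(\pm)\lambda_{li}X_{Q\setminus\lbrace l\rbrace}
\]
when $i\notin Q$. A similar formula holds with $\beta_i X_{Q\setminus\lbrace i\rbrace}$ plus $\lambda$-corrections when $i\in Q$. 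The same holds for $X_QX_i$ with the complementary sign $(-1)^{|Q|}$ relative to the left-multiplication version. The difference in sign between the cases $l<i$ and $l>i$ arises from whether $X_l$ sits before or after the insertion point of $X_i$. This is what produces the two sums over $l<i$ and $l>i$, with the extra factor $(-1)^{|P|+|Q|}$ on one of them.

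Having both sides expressed in $\mathbb{B}_A$, I reindex the $\lambda$-terms, where $X_{Q\setminus\lbrace l\rbrace}$ becomes $X_R$ with $Q=R\sqcup\lbrace l\rbrace$ (respectively $R\sqcup\lbrace i,l\rbrace$), exactly as in the previous proof. I then compare coefficients of $G^kX_Q$ and $G^kX_{Q\sqcup\lbrace i\rbrace}$ for $i\notin Q$. By linear independence of $\mathbb{B}_A$, equality of the two expansions is equivalent to the four displayed systems. Conversely, these systems imply equality of the expansions, which gives the ``if and only if''.

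The main obstacle is purely the sign bookkeeping: expressing the commutation signs in the form $(-1)^{S(\lbrace l\rbrace,Q\sqcup\lbrace i,l\rbrace)}$ and $(-1)^{S(Q,Q\sqcup\lbrace i\rbrace)}$. I would handle it by writing $X_Q=X_{Q_{<i}}X_{Q_{>i}}$, computing the signs for one $l$ at a time, and repeatedly invoking \cite[Lem. 3.6]{FR3} to convert the raw counts of transpositions into the $S(\cdot,\cdot)$ notation.
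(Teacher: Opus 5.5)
Your proposal follows the paper's proof step by step: rewrite \eqref{eq:UP_X} as $\sum(-1)^{|P|}\eta(P;k,Q)X_iG^kX_Q=\sum\eta(P;k,Q)G^kX_QX_i$, then separate the $G$-part using $X_iG=-GX_i+\gamma_i$, expand with \cite[Lem. 3.8]{FR3} according to whether $X_i$ survives, reindex so that every sum runs over $Q\not\ni i$, compare coefficients by linear independence, and normalize signs with \cite[Lem. 3.6]{FR3}. One correction to your sign sketch: for $i\notin Q$ the leading term of $X_iX_Q$ carries $(-1)^{S(\lbrace i\rbrace,Q\sqcup\lbrace i\rbrace)}$, whereas $(-1)^{S(Q,Q\sqcup\lbrace i\rbrace)}$ belongs to $X_QX_i$; moreover, the $\lambda$-corrections of $X_iX_Q$ involve only $l<i$ and those of $X_QX_i$ only $l>i$, which is why the two $\lambda$-sums split and, before the final normalization, only the $l<i$ sum carries the factor $(-1)^{|P|}$ inherited from the left-hand side.
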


\begin{proof}
Condition \eqref{eq:UP_X} reads
\[
\sum_{k,Q}(-1)^{|P|}\eta(P;k,Q)X_iG^kX_Q=\sum_{k,Q} \eta(P;k,Q)G^kX_QX_i,
\]
which is equivalent to
\[
\begin{cases}
\sum_{Q}(-1)^{|P|+1}\eta(P;1,Q)GX_iX_Q=\sum_{Q} \eta(P;1,Q)GX_QX_i,\\
\sum_{Q}(-1)^{|P|}\eta(P;0,Q)X_iX_Q+\sum_{Q}(-1)^{|P|}\eta(P;1,Q)\gamma_iX_Q=\sum_{Q} \eta(P;k,Q)X_QX_i,
\end{cases}
\]
if we make a distinction between terms that contain $G$ and the rest. Then we can use \cite[Lem. 3.8]{FR3} and further split both equalities by looking at terms where $X_i$ appears (or does not appear):
\begin{equation*}
\begin{split}
\sum_{Q \not \ni i}\eta(P;1,Q)\left[(-1)^{|P|+1+S(\lbrace i \rbrace, Q \sqcup \lbrace i \rbrace)}GX_{Q \sqcup \lbrace i \rbrace}+ (-1)^{S(Q,Q \sqcup \lbrace i \rbrace)+1}GX_{Q \sqcup \lbrace i \rbrace}\right]\\
=\sum_{Q \ni i} \eta(P;1,Q)\left[\sum_{\underset{l>i}{l \in Q}}(-1)^{S(Q \setminus \lbrace l \rbrace,Q)}\lambda_{li} GX_{Q \setminus \lbrace l \rbrace}+\sum_{\underset{l<i}{l \in Q}}(-1)^{S(\lbrace l \rbrace,Q)+|P|}\lambda_{li} GX_{Q \setminus \lbrace l \rbrace}\right]
\end{split}
\end{equation*}

\bigskip

\begin{equation*}
\begin{split}
&\sum_{Q \ni i}\eta(P;1,Q)\left[(-1)^{|P|+1+S(\lbrace i \rbrace, Q)}\beta_iGX_{Q \setminus \lbrace i \rbrace}+ (-1)^{S(Q \setminus \lbrace i \rbrace, Q)+1}\beta_i GX_{Q \setminus \lbrace i \rbrace}\right]\\
&=\sum_{Q \not\ni i}\eta(P;1,Q)\left[\sum_{\underset{l>i}{l \in Q}}(-1)^{S(Q \setminus \lbrace l \rbrace,Q)}\lambda_{li} GX_{Q \setminus \lbrace l \rbrace}+\sum_{\underset{l<i}{l \in Q}} (-1)^{|P|+S(\lbrace l \rbrace,Q)}\lambda_{li} GX_{Q \setminus \lbrace l \rbrace}\right],
\end{split}
\end{equation*}

\bigskip

\begin{equation*}
\begin{split}
&\sum_{Q \ni i}\eta(P;0,Q)\left[\sum_{\underset{l<i}{l \in Q}} (-1)^{S(\lbrace l \rbrace,Q)+|P|}\lambda_{li} X_{Q \setminus \lbrace l \rbrace}+\sum_{\underset{l>i}{l \in Q}}(-1)^{S(Q \setminus \lbrace l \rbrace,Q)+1}\lambda_{li} X_{Q \setminus \lbrace l \rbrace}\right]\\
&+\sum_{Q \ni i}(-1)^{|P|}\eta(P;1,Q)\gamma_i X_Q\\
&=\sum_{Q \not\ni i}\eta(P;0,Q)\left[(-1)^{S(Q,Q \sqcup \lbrace i \rbrace)} X_{Q \sqcup \lbrace i \rbrace}+(-1)^{|P|+S(\lbrace i \rbrace, Q \sqcup \lbrace i \rbrace)+1}X_{Q \sqcup \lbrace i \rbrace}\right],
\end{split}
\end{equation*}

\bigskip

\begin{equation*}
\begin{split}
&\sum_{Q \ni i}\eta(P;0,Q)\left[(-1)^{|P|+S(\lbrace i \rbrace, Q)}\beta_i X_{Q \setminus \lbrace i \rbrace}+(-1)^{S(Q \setminus \lbrace i \rbrace, Q)+1}\beta_i X_{Q \setminus \lbrace i \rbrace}\right]\\
&=\sum_{Q \not\ni i} \eta(P;0,Q)\left[\sum_{\underset{l>i}{l \in Q}}(-1)^{S(Q \setminus \lbrace l \rbrace,Q)}\lambda_{li} X_{Q \setminus \lbrace l \rbrace}+\sum_{\underset{l<i}{l \in Q}}(-1)^{|P|+S(\lbrace l \rbrace,Q)+1}\lambda_{li} X_{Q \setminus \lbrace l \rbrace}\right]+\\
&+\sum_{Q \not\ni i}(-1)^{|P|+1}\eta(P;1,Q)\gamma_iX_Q.
\end{split}
\end{equation*}
Now we recall that each subset $Q$ of $\lbrace 1, \ldots n \rbrace$ containing $i$ is in bijection with a subset of $\lbrace 1, \ldots, n \rbrace$ that does not contain $i$ (namely $Q \setminus \lbrace i \rbrace$). Thus we can rewrite each equality so that every sum runs over subsets $Q$ not containing $i$. Then we can rearrange indexes of the kind $Q \setminus \lbrace l \rbrace$ and make use of linear independence to find 
\begin{equation*}
\begin{split}
&\left[(-1)^{|P|+1+S(\lbrace i \rbrace, Q \sqcup \lbrace i \rbrace)}+ (-1)^{S(Q,Q \sqcup \lbrace i \rbrace)+1}\right]\eta(P;1,Q)\\
&=\sum_{\underset{l<i}{l \in Q^c}}(-1)^{S(\lbrace l \rbrace,Q \sqcup \lbrace i,l \rbrace)+|P|} \eta(P;1,Q \sqcup \lbrace i,l \rbrace)\lambda_{li}+\\
&+\sum_{\underset{l>i}{l \in Q^c}}(-1)^{S(Q \sqcup \lbrace i \rbrace,Q \sqcup \lbrace i,l \rbrace)} \eta(P;1,Q \sqcup \lbrace i,l \rbrace)\lambda_{li},
\end{split}
\end{equation*}

\bigskip

\begin{equation*}
\begin{split}
&\left[(-1)^{S(Q, Q \sqcup \lbrace i \rbrace)}+(-1)^{|P|+S(\lbrace i \rbrace, Q \sqcup \lbrace i \rbrace)}\right]\eta(P;1,Q \sqcup \lbrace i \rbrace)\beta_i\\
&=\sum_{\underset{l<i}{l \in Q^c}} (-1)^{S(\lbrace l \rbrace,Q \sqcup \lbrace l \rbrace)+|P|+1}\eta(P;1,Q \sqcup \lbrace l \rbrace)\lambda_{li}+\\
&+\sum_{\underset{l>i}{l \in Q^c}}(-1)^{S(Q,Q \sqcup \lbrace l \rbrace)+1}\eta(P;1,Q \sqcup \lbrace l \rbrace)\lambda_{li},
\end{split}
\end{equation*}

\bigskip

\begin{equation*}
\begin{split}
&\left[(-1)^{S(Q,Q \sqcup \lbrace i \rbrace)}+(-1)^{|P|+S(\lbrace i \rbrace, Q \sqcup \lbrace i \rbrace)+1}\right]\eta(P;0,Q)+(-1)^{|P|+1}\eta(P;1,Q \sqcup \lbrace i \rbrace)\gamma_i\\
&=\sum_{\underset{l<i}{l \in Q^c}} (-1)^{S(\lbrace l \rbrace,Q \sqcup \lbrace i,l \rbrace)+|P|}\eta(P;0,Q \sqcup \lbrace i,l \rbrace)\lambda_{li}+\\
&+\sum_{\underset{l>i}{l \in Q^c}}(-1)^{S(Q \sqcup \lbrace i \rbrace,Q \sqcup \lbrace i,l \rbrace)+1}\eta(P;0,Q \sqcup \lbrace i,l \rbrace)\lambda_{li},
\end{split}
\end{equation*}

\bigskip

\begin{equation*}
\begin{split}
&\left[(-1)^{|P|+S(\lbrace i \rbrace, Q \sqcup \lbrace i \rbrace)} +(-1)^{S(Q, Q \sqcup \lbrace i \rbrace)+1} \right]\eta(P;0,Q \sqcup \lbrace i \rbrace)\beta_i+(-1)^{|P|}\eta(P;1,Q)\gamma_i\\
&=\sum_{\underset{l<i}{l \in Q^c}}(-1)^{|P|+S(\lbrace l \rbrace,Q \sqcup \lbrace l \rbrace)+1} \eta(P;0,Q \sqcup \lbrace l \rbrace)\lambda_{li}+\\
&+\sum_{\underset{l>i}{l \in Q^c}}(-1)^{S(Q,Q \sqcup \lbrace l \rbrace)} \eta(P;0,Q \sqcup \lbrace l \rbrace)\lambda_{li}.
\end{split}
\end{equation*}
for every $Q$ such that $i \notin Q$. Finally we can use \cite[Lem. 3.6]{FR3} to obtain \eqref{eq:eta1.1}-\eqref{eq:eta0.2}.
\end{proof}

\begin{remark}\label{rmk:principal2}
The canonical total integral $\underline{\T}$ defined in Proposition \ref{prop:principal} corresponds to the following solution of \eqref{eq:etaG} and \eqref{eq:eta1.1}-\eqref{eq:eta0.2}: $\eta(\emptyset;0,\emptyset)=1$ and $\eta(P;k,Q)=0$ for $(P,k,Q) \neq (\emptyset, 0, \emptyset)$.
\end{remark}

Now we can we give a complete characterization of integrals on the coalgebra $(E(n),\psi)$. 
\begin{theorem}\label{thm:mainint}
A $\Bbbk$-linear map $\T:E(n) \rightarrow A \otimes E(n)^{\op}$ is an integral on the coalgebra $(E(n),\psi)$ if and only if
\begin{itemize}\setlength\itemsep{1em}
\item $\displaystyle \T(g)=\sum_{j,P}\sum_{F \subseteq P} (-1)^{S(P \setminus F,P)+(j+|P|+1)|F|} \mu_{j,P}G^jX_{P \setminus F} \otimes g^{j+|P|+1}x_F$
\\[0.5em] with $\mu_{j,P} \in \Bbbk$ such that \eqref{eq:mu-0P}, \eqref{eq:mu-1P} hold for every $P \subseteq \lbrace 1, \ldots, n \rbrace$.
\item $\displaystyle \begin{aligned}[t]\T(g^{|P|}x_{P}x_r)&= \frac{1}{2}\left[(1_A \otimes g)\T(g^{|P|+1}x_{P})(X_r \otimes g)-(1_A \otimes gx_r)\T(g^{|P|+1}x_{P})(1_A \otimes g)+\right.\\
&\left.-\T(g^{|P|+1}x_{P})(1_A \otimes x_r)+(-1)^{|P|+1}(X_r \otimes 1_{E(n)})\T(g^{|P|+1}x_{P})\right]\end{aligned}$
\\[0.5em]for every $P \subseteq \lbrace 1, \ldots, n \rbrace \ni r$, such that $r >j$ for every $j \in P$.
\item $\displaystyle 
\T(g^{|P|}x_P)=\sum_{F \subsetneq P}(-1)^{S(F,P)+|P|+|F|+1}\T(g^{|F|}x_{F}) (1_A \otimes x_{P \setminus F})+\mathfrak{U}_P$
\\[0.5em] for every $P \subseteq \lbrace 1, \ldots, n\rbrace$, where
\[
\mathfrak{U}_P=\sum_{k,Q}\sum_{F \subseteq Q}  \eta(P;k,Q)(-1)^{S(Q \setminus F,Q)+(|P|+|Q|+k)|F|} G^k X_{Q \setminus F} \otimes g^{|P|+|Q|+k}x_F
\]
and $\eta(P;k,Q) \in \Bbbk$ satisfy \eqref{eq:etaG} and \eqref{eq:eta1.1}-\eqref{eq:eta0.2} for $i=1, \ldots, n$ and every $Q \not \ni i$.
\end{itemize}
\end{theorem}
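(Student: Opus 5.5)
The proof assembles the results of this section. By Proposition \ref{prop:weakint2}, a $\Bbbk$-linear map $\T:E(n)\rightarrow A\otimes E(n)^{\op}$ is an integral on $(E(n),\psi)$ if and only if it satisfies \eqref{eq:weakint2} and \eqref{eq:int2}. Since \eqref{eq:weakint2} is linear in $h$ and \eqref{eq:int2} is multiplicative in $a$, it suffices to check them for $h\in\mathbb{B}_H$ and for $a$ among the generators $G,X_1,\ldots,X_n$. The basis $\mathbb{B}_H$ splits into monomials $g^{|P|+1}x_P$ of odd degree and monomials $g^{|P|}x_P$ of even degree. The only interaction between the two families is the $a=X_i$ instance of \eqref{eq:int2} applied to an odd monomial: it produces the term $2(\mathrm{Id}_A\otimes\varepsilon)\T(g^{|L|}x_Lx_i)$, and $g^{|L|}x_Lx_i$ is again odd up to sign. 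The even monomials are not involved here, because for $h=g^{|P|}x_P$ the sign from conjugating by $g$ cancels and one gets \eqref{eq:11_on_G}, \eqref{eq:11_on_X} with no extra terms. So the two families can be treated separately, and the plan is to prove the ``if and only if'' on each family.

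For odd degree, necessity works as follows. If $\T$ is an integral, \eqref{eq:weakint2} and \eqref{eq:int2} at $h=g$, $a=G$ force the form \eqref{eq:Tg} with constraints \eqref{eq:mu-0P}, \eqref{eq:mu-1P}, by the proposition describing $\T(g)$. Next, \eqref{eq:int} with $a=X_r$ and $h=g^{|P|}x_P$, where $r$ exceeds every element of $P$, gives exactly the recursive formula of the second bullet, namely \eqref{eq:T-odd}. This uses that $\mathrm{char}\,\Bbbk\neq2$ in order to divide by $2$. For sufficiency, suppose $\T(g)$ has the form \eqref{eq:Tg} with the stated constraints and the higher odd values are defined recursively. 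Then the preceding proposition, proved by induction on $|P|$, shows that \eqref{eq:weakint2} and \eqref{eq:int2} hold on all odd monomials for every generator. One point must be checked: every $P$ nonempty is reached uniquely by the recursion, taking $r=\max P$, so the recursive definition is consistent.

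For even degree, I use the lemma establishing that \eqref{eq:10_on_gen} is equivalent to \eqref{eq:T-even}. It shows that \eqref{eq:weakint2} on the even monomials is equivalent to the triangular relation of the third bullet, where $\mathfrak{U}_P$ is defined as $[(\mathrm{Id}_A\otimes S^{-1})(\rho\otimes\varepsilon)\T(g^{|P|}x_P)](1_A\otimes g^{|P|})$. Because this relation is triangular, the values $\mathfrak{U}_P$ may be prescribed freely, subject to $\mathfrak{U}_P$ having the shape $a_0\otimes S^{-1}(a_1)g^{|P|}$. This shape is exactly \eqref{eq:UP-form} with coefficients $\eta(P;k,Q)$, and it is forced by the definition of $\mathfrak{U}_P$ in the necessity direction. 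Conversely, suppose arbitrary $\eta$'s are given and $\T$ is defined by the triangular recursion. Then one must verify that $(\mathrm{Id}_A\otimes S^{-1})(\rho\otimes\varepsilon)$ applied to $\T(g^{|P|}x_P)$ returns the prescribed $\mathfrak{U}_P$. This holds because $(\mathrm{Id}_A\otimes\varepsilon)\T(g^{|P|}x_P)=(\mathrm{Id}_A\otimes\varepsilon)\mathfrak{U}_P$: every term with $F\subsetneq P$ carries a factor $x_{P\setminus F}$ with nonempty $P\setminus F$, so $\varepsilon$ kills it. Applying $(\mathrm{Id}_A\otimes S^{-1})\rho$ to $a$ then recovers $\mathfrak{U}_P$. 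Finally, the same lemma reduces \eqref{eq:11_on_G} and \eqref{eq:11_on_X} to \eqref{eq:UP_G} and \eqref{eq:UP_X}, and these are in turn equivalent to \eqref{eq:etaG} and \eqref{eq:eta1.1}--\eqref{eq:eta0.2} by the two propositions just proved.

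I expect the main obstacle to be the bookkeeping that justifies treating the families and the generators independently. This amounts to confirming that \eqref{eq:int2} for $a=X_i$ on even monomials really has no cross term, and that the recursion in the third bullet together with arbitrary admissible $\eta$'s is self-consistent. In particular, the value $\mathfrak{U}_P$ extracted back from the constructed $\T$ must coincide with the prescribed one. I would handle this by the $\varepsilon$-vanishing argument above, carried out by induction on $|P|$.
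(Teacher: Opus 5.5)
Your proposal is correct and is essentially the paper's argument. Theorem \ref{thm:mainint} is assembled from the following pieces:
\begin{itemize}
\item Proposition \ref{prop:weakint2}, together with the reduction to $h\in\mathbb{B}_H$ and to the generators $a$;
\item the two odd-degree propositions;
\item the even-degree lemma, plus the two propositions that convert \eqref{eq:UP_G} and \eqref{eq:UP_X} into \eqref{eq:etaG} and \eqref{eq:eta1.1}--\eqref{eq:eta0.2}.
\end{itemize}
Your $\varepsilon$-vanishing check shows that the $\eta$-prescribed $\mathfrak{U}_P$ coincides with $[(\mathrm{Id}_A\otimes S^{-1})(\rho\otimes\varepsilon)\T(g^{|P|}x_P)](1_A\otimes g^{|P|})$. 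This makes explicit a step the paper leaves tacit, and it holds directly for each $P$, so no induction is needed.

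There is one indexing slip. The recursion in the second bullet comes from \eqref{eq:int} with $a=X_r$ and the odd monomial $h=g^{|P|+1}x_P$, where $r>\max P$. It does not come from $h=g^{|P|}x_P$: for that even monomial the two cross terms cancel, as you yourself note later.
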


\begin{corollary}\label{cor:totint}
The map $\T$ is a total integral on $(E(n),\psi)$ if and only if in addition $\eta(\emptyset;0,\emptyset)=1$ and $\eta(\emptyset;k,Q)=0$ for $(k,Q) \neq (0,\emptyset)$.
\end{corollary}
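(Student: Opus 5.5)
Since Theorem \ref{thm:mainint} already describes all integrals, the task is to identify which of them also satisfy the totality condition \eqref{eq:totint}, namely $\T(1_{E(n)})=1_A\otimes 1_{E(n)}$. The key observation is that $1_{E(n)}=g^{|\emptyset|}x_\emptyset$ is the even-degree basis element with $P=\emptyset$. So $\T(1_{E(n)})$ is governed entirely by the third bullet of Theorem \ref{thm:mainint} with $P=\emptyset$. In that case the sum over $F\subsetneq\emptyset$ is empty, so $\T(1_{E(n)})=\mathfrak{U}_\emptyset$. Equivalently, one can read this off directly from \eqref{eq:T-even} with $P=\emptyset$.

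Next I would write $\mathfrak{U}_\emptyset$ explicitly using \eqref{eq:UP-form}:
\[
\mathfrak{U}_\emptyset=\sum_{k,Q}\sum_{F\subseteq Q}\eta(\emptyset;k,Q)(-1)^{S(Q\setminus F,Q)+(|Q|+k)|F|}G^kX_{Q\setminus F}\otimes g^{|Q|+k}x_F .
\]
Recall that $\mathfrak{U}_\emptyset=a_0\otimes S^{-1}(a_1)$, where $a=\sum_{k,Q}\eta(\emptyset;k,Q)G^kX_Q$. The cleanest way to extract the coefficients is to apply $(\mathrm{Id}_A\otimes\varepsilon)$. Since $\varepsilon(g^{m}x_F)=\delta_{F,\emptyset}$, only the $F=\emptyset$ terms survive, and we get $(\mathrm{Id}_A\otimes\varepsilon)\mathfrak{U}_\emptyset=a$.

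With this in hand, both directions follow. Suppose $\T$ is total. Then $\mathfrak{U}_\emptyset=1_A\otimes1$ forces $a=1_A$. Because $\mathbb{B}_A$ is a basis, this gives $\eta(\emptyset;0,\emptyset)=1$ and all other $\eta(\emptyset;k,Q)=0$. Conversely, if the $\eta(\emptyset;\cdot,\cdot)$ take exactly these values, then $a=1_A$. Since $\rho(1_A)=1_A\otimes 1$, we obtain $\mathfrak{U}_\emptyset=1_A\otimes1$, and so $\T(1)=1_A\otimes 1$.

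It remains to check that these prescribed values are compatible with the constraints \eqref{eq:etaG} and \eqref{eq:eta1.1}--\eqref{eq:eta0.2} at $P=\emptyset$, so that "in addition" makes sense. This is the only point that needs any care, but it is immediate from Remark \ref{rmk:principal2}: the canonical integral $\underline{\T}$ realizes precisely these values at $P=\emptyset$. Alternatively, one can see it directly, because $a=1_A$ is central and hence \eqref{eq:UP_G} and \eqref{eq:UP_X} hold trivially for $|P|=0$.
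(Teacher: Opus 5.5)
Your proposal is correct and follows the paper's proof. Both arguments use the third bullet of Theorem \ref{thm:mainint} with $P=\emptyset$ to get $\T(1_{E(n)})=\mathfrak{U}_\emptyset$ and then read off the coefficients; your extraction via $(\mathrm{Id}_A\otimes\varepsilon)\mathfrak{U}_\emptyset=a$ and the converse via $\rho(1_A)=1_A\otimes 1_{E(n)}$ just spell out what the paper calls ``easily conclude''. Your final compatibility check is not needed for the equivalence itself, since it only shows that total integrals exist, but it is correct and harmless.
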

\begin{proof}
By definition, an integral on $(E(n),\psi)$ is total if and only if $\T(1_{E(n)})=1_A \otimes 1_{E(n)}$. Given that, by the previous theorem,
\[
\T(1_{E(n)})=\mathfrak{U}_{\emptyset}=\sum_{k,Q}\sum_{F \subseteq Q}  \eta(\emptyset;k,Q)(-1)^{S(Q \setminus F,Q)+(|Q|+k)|F|} G^k X_{Q \setminus F} \otimes g^{|Q|+k}x_F,
\]
we can easily conclude.
\end{proof}

\begin{remark}
In view of Theorem \ref{thm:maincas}, the above results are equivalent to a complete characterization of (normalized) Casimir morphisms for the coalgebra $(E(n),\psi)$. The system of equations presented above admit (at least) one solution, by Proposition \ref{prop:principal} (see Remarks \ref{rmk:principal1} and \ref{rmk:principal2}).
\end{remark}

\subsection{Integrals satisfying the h-property}

We can also determine what integrals $\T$ share the h-property \eqref{eq:hprop2}. The conditions to check are listed in the following proposition.
\begin{proposition}\label{prop:h-prop}
Let $\T:E(n) \rightarrow A \otimes E(n)^{\op}$ be a total integral on $(E(n),\psi)$. Then $\T$ satisfies the h-property if and only if
\begin{align}
&(\mathrm{Id}_A \otimes \varepsilon)(\T(g)\T(g))=1_A, \label{eq:Tgg}\\
&(\mathrm{Id}_A \otimes \varepsilon)(\T(x_P)\T(g))=(\mathrm{Id}_A \otimes \varepsilon)\T(gx_P),\label{eq:TgxP}
\end{align}
for every $x_P$ with $P \subseteq \lbrace 1, \ldots, n \rbrace$.
\end{proposition}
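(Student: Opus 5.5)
The plan is to work entirely with $\varphi:=(\mathrm{Id}_A\otimes\varepsilon)\T:E(n)\rightarrow A$. By the Remark following the definition of the h-property, $\T$ satisfies the h-property iff $\varphi$ is an algebra anti-homomorphism. Since $\T$ is total, $\varphi(1)=1_A$. The two displayed conditions say exactly that $\varphi(g)\varphi(g)=\varphi(g^2)$ and $\varphi(x_P)\varphi(g)=\varphi(gx_P)$. So necessity is immediate, and all the work is in sufficiency.

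For sufficiency, the main tool is \eqref{eq:int2} specialised to the generators of $A$. With $a=G$ it gives
\[
G\,\varphi(ghg)=\varphi(h)\,G.
\]
With $a=X_i$ it gives a Leibniz-type rule: $2\varphi(h' x_i)$ (for a suitable $g$-twist $h'$ of $h$) equals $\pm X_i\varphi(\cdot)\pm\varphi(\cdot)X_i$, as in \eqref{eq:IndHyp2} and \eqref{eq:T30}. Thus $\varphi$ on a monomial $g^jx_Px_r$ is obtained from $\varphi$ on $g^{j'}x_P$ by a super-commutator with $X_r$.

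First I reduce to even monomials using the hypothesis. Every basis element is $x_P$ or $gx_P$. Using $\varphi(gx_P)=\varphi(x_P)\varphi(g)$ together with $\varphi(g)^2=1_A$, the required identity $\varphi(h)\varphi(h')=\varphi(h'h)$ for general basis elements reduces to the cases $h,h'\in\{x_P\}\cup\{g\}$. For instance, $\varphi(g)\varphi(x_P)=\varphi(x_Pg)=\pm\varphi(gx_P)$. This should follow from the $G$-relation above applied to $h=x_P$, combined with the description of $\varphi(g)$ coming from \eqref{eq:Tg} and the relations \eqref{eq:mu-0P}, \eqref{eq:mu-1P}.

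Second, I prove $\varphi(x_Q)\varphi(x_P)=\varphi(x_Px_Q)$ by induction on $|Q|$, for all $P$ simultaneously. Write $x_Q=x_{Q'}x_r$ with $r$ maximal. On the left I expand $\varphi(x_{Q'}x_r)$ via the $X_r$-Leibniz rule as a combination of $X_r\varphi(\cdot)$ and $\varphi(\cdot)X_r$, where $(\cdot)$ is $g$-twisted data of smaller degree. I then apply the induction hypothesis to products such as $\varphi(x_{Q'})\varphi(x_P)$, and use the $X_r$-rule again on $\varphi(x_Px_{Q'})$ in the other direction. This should reassemble into $\varphi(x_Px_{Q'}x_r)$. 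The inductive hypothesis has to include $g$-twisted versions, such as $\varphi(gx_{Q'})$; those are converted back through the first step, which is why the hypothesis on $\varphi(x_P)\varphi(g)$ is needed for all $P$.

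The main obstacle is this second step. The sign bookkeeping, through $S(F,P)$, \cite[Lem. 3.6]{FR3} and the parity factors $(-1)^{|P|}$, must make the two Leibniz expansions match exactly. The extra terms produced when $r\in P$, which involve $x_r^2=0$ and the constants $\beta_r,\gamma_r,\lambda_{lr}$, must cancel. I would first verify the cases $|Q|=1$ and $n=1,2$ by hand to fix the signs before writing the general induction.
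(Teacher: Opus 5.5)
There is a genuine gap. The sufficiency direction is not actually proved, and the one concrete mechanism you give, for your first step, does not work.

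Your reduction to products of elements of $\{x_P\}\cup\{g\}$ is valid, and necessity is indeed immediate. The problem is your first step, $\varphi(g)\varphi(x_P)=(-1)^{|P|}\varphi(x_P)\varphi(g)$. It cannot come from \eqref{eq:int2} with $a=G$ together with \eqref{eq:mu-0P}, \eqref{eq:mu-1P}. Those relations only say that $\varphi(x_P)$ and $\varphi(g)$ (super)commute with $G$; they say nothing about how $\varphi(g)$ and $\varphi(x_P)$ commute with each other.

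Already for $n=1$ this fails (notation of Section~\ref{sec:classification}, with $\mathbf{Z}=0$). There $\varphi(g)=\mathbf{A}1_A+\mathbf{B}G$ and $\varphi(x)=-\frac{\gamma}{2}\mathbf{B}1_A+\mathbf{B}GX$ for arbitrary $\mathbf{A},\mathbf{B}$, and
\[
\varphi(g)\varphi(x)+\varphi(x)\varphi(g)=\mathbf{A}\mathbf{B}\,(2GX-\gamma 1_A).
\]
This vanishes only because \eqref{eq:Tgg} forces $\mathbf{A}\mathbf{B}=0$.

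The real input is the formula $\varphi(x_i)=\frac12\bigl(\varphi(g)X_i-X_i\varphi(g)\bigr)$, which is \eqref{eq:T-odd} with $P=\{i\}$ after applying $\mathrm{Id}_A\otimes\varepsilon$, combined with $\varphi(g)^2=1_A$. For larger $P$ you would need two more facts:
\begin{itemize}
\item when $|P|$ is even, $\varphi(x_P)$ is central, by \eqref{eq:int2} with $a=G$ and $a=X_i$;
\item when $|P|$ is odd, \eqref{eq:T-odd}, \eqref{eq:TgxP} and this centrality give $\varphi(x_P)=\varphi(x_{P\setminus\{r\}})\varphi(x_r)$.
\end{itemize}
Note also that the $X_r$-``Leibniz rule'' exists only for odd-degree monomials. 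On even-degree ones, \eqref{eq:int2} with $a=X_i$ gives a pure super-commutation relation, because the $\varphi(\,\cdot\,x_i)$ terms cancel. This is why those values carry the free parts $\mathfrak{U}_P$.

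Your second step, which you flag as the main obstacle, is left as a plan. The expected cancellation of terms in $\beta_r,\gamma_r,\lambda_{lr}$ suggests a coordinate computation that is never needed.

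The missing idea is to test anti-multiplicativity only against generators, placed on the \emph{right}. That is, prove $\varphi(h)\varphi(g)=\varphi(gh)$ and $\varphi(h)\varphi(x_i)=\varphi(x_ih)$ for every basis element $h$. The general case then follows by writing $h'$ as a product of generators, with no induction on $|Q|$.

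This orientation matters because \eqref{eq:TgxP} has $\varphi(g)$ on the right:
\[
\varphi(x_P)\varphi(x_i)=\tfrac12\bigl(\varphi(x_P)\varphi(g)X_i-\varphi(x_P)X_i\varphi(g)\bigr)=\tfrac12\bigl(\varphi(gx_P)X_i-\varphi(x_P)X_i\varphi(g)\bigr).
\]
Now \eqref{eq:int2} with $a=X_i$, at $h=x_P$ and at $h=gx_P$, moves $X_i$ to the left. After one more use of \eqref{eq:TgxP}, the two terms $(-1)^{|P|}X_i\varphi(gx_P)$ cancel. What remains is $\frac12\bigl[(1+(-1)^{|P|})\varphi(x_Px_i)-(1-(-1)^{|P|})\varphi(gx_Px_i)\varphi(g)\bigr]=(-1)^{|P|}\varphi(x_Px_i)$, since $\varphi(gx_Q)\varphi(g)=\varphi(x_Q)$ by \eqref{eq:TgxP} and \eqref{eq:Tgg}.

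The remaining cases $\varphi(g)\varphi(x_i)$, $\varphi(gx_P)\varphi(g)$ and $\varphi(gx_P)\varphi(x_i)$ are one-line consequences, and no structure constants of $A$ ever appear. With the generator on the left, as in your $\varphi(x_r)\varphi(x_P)$, you are forced to move $\varphi(g)$ past an arbitrary $\varphi(x_P)$. That puts you back into the unproved first step.
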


\begin{proof}
First of all, we will show that the h-property \eqref{eq:hprop2} is equivalent to $\T$ verifying
\begin{align}
&(\mathrm{Id}_A \otimes \varepsilon)(\T(g^jx_P)\T(g))=(\mathrm{Id}_A \otimes \varepsilon)\T(g^{j+1}x_P),\label{eq:hprop_g}\\
&(\mathrm{Id}_A \otimes \varepsilon)(\T(g^jx_P)\T(x_i))=(-1)^{j+|P|}(\mathrm{Id}_A \otimes \varepsilon)\T(g^jx_Px_i),\label{eq:hprop_x_i}
\end{align}
for every element $g^jx_P \in \mathbb{B}_H$ and every $x_i$ with $i=1, \ldots, n$.
Clearly, if $\T$ satisfies the h-property, then \eqref{eq:hprop_g} and \eqref{eq:hprop_x_i} are satisfied. Conversely, suppose \eqref{eq:hprop_g} and \eqref{eq:hprop_x_i} hold true. Then it is easy to see that \begin{equation}\label{eq:aux64}
(\mathrm{Id}_A \otimes \varepsilon)\T(g^kx_Q)=(\mathrm{Id}_A \otimes \varepsilon)\T(x_{i_r})\cdots (\mathrm{Id}_A \otimes \varepsilon)\T(x_{i_1})(\mathrm{Id}_A \otimes \varepsilon)\T(g^k),
\end{equation}
where $g^kx_Q=g^jx_{i_1}\cdots x_{i_r}$. Hence
\begin{align*}
&(\mathrm{Id}_A \otimes \varepsilon)(\T(g^jx_P)\T(g^kx_Q))\\
&=(\mathrm{Id}_A \otimes \varepsilon)\T(g^jx_P)(\mathrm{Id}_A \otimes \varepsilon)\T(g^kx_Q)\\
\overset{\eqref{eq:aux64}}&{=}(\mathrm{Id}_A \otimes \varepsilon)\T(g^jx_P)(\mathrm{Id}_A \otimes \varepsilon)\T(x_{i_r})\cdots (\mathrm{Id}_A \otimes \varepsilon)\T(x_{i_1})(\mathrm{Id}_A \otimes \varepsilon)\T(g^k)\\
\overset{\eqref{eq:hprop_x_i}}&{=}(\mathrm{Id}_A \otimes \varepsilon)\T(x_{i_r}g^jx_P)(\mathrm{Id}_A \otimes \varepsilon)\T(x_{i_{r-1}})\cdots (\mathrm{Id}_A \otimes \varepsilon)\T(x_{i_1})(\mathrm{Id}_A \otimes \varepsilon)\T(g^k)\\
&\ldots\\
\overset{\eqref{eq:hprop_x_i}}&{=}(\mathrm{Id}_A \otimes \varepsilon)\T(x_{i_1}\cdots x_{i_r}g^jx_P)(\mathrm{Id}_A \otimes \varepsilon)\T(g^k)\\
\overset{\eqref{eq:hprop_g}}&{=}(\mathrm{Id}_A \otimes \varepsilon)\T(g^kx_{i_1}\cdots x_{i_r}g^jx_P)\\
&=(\mathrm{Id}_A \otimes \varepsilon)\T(g^kx_Qg^jx_P).
\end{align*}
By linearity of $\T$, we can conclude that \eqref{eq:hprop2} holds for every $h,h' \in H$.

Now we observe that \eqref{eq:hprop_g} and \eqref{eq:hprop_x_i} for $P=\emptyset$ are equivalent to \eqref{eq:Tgg} and 
\begin{equation}
(\mathrm{Id}_A \otimes \varepsilon)(\T(g)\T(x_i))=-(\mathrm{Id}_A \otimes \varepsilon)\T(gx_i) \label{eq:trivial3},\\
\end{equation}
while for every other $P$ we find \eqref{eq:TgxP} and
\begin{align}
&(\mathrm{Id}_A \otimes \varepsilon)(\T(x_P)\T(x_i))=(-1)^{|P|}(\mathrm{Id}_A \otimes \varepsilon)\T(x_Px_i),\label{eq:trivial4}\\
&(\mathrm{Id}_A \otimes \varepsilon)(\T(gx_P)\T(g))=(\mathrm{Id}_A \otimes \varepsilon)\T(x_P), \label{eq:trivial1}\\
&(\mathrm{Id}_A \otimes \varepsilon)(\T(gx_P)\T(x_i))=(-1)^{|P|+1}(\mathrm{Id}_A \otimes \varepsilon)\T(gx_Px_i).\label{eq:trivial2}
\end{align}
To conclude, it is enough to show that \eqref{eq:trivial3}-\eqref{eq:trivial2} can all be deduced from \eqref{eq:Tgg} and \eqref{eq:TgxP}.
Indeed, Theorem \ref{thm:mainint} implies that
\begin{equation}\label{eq:aux77}
(\mathrm{Id}_A \otimes \varepsilon)\T(x_i)= \frac{1}{2}\left[(\mathrm{Id}_A \otimes \varepsilon)\T(g)X_i-X_i (\mathrm{Id}_A \otimes \varepsilon)\T(g)\right],
\end{equation}
therefore
\begin{align*}
(\mathrm{Id}_A \otimes \varepsilon)(\T(g)\T(x_i))\overset{\eqref{eq:aux77}}&{=}(\mathrm{Id}_A \otimes \varepsilon)\T(g)\cdot \frac{1}{2}\left[(\mathrm{Id}_A \otimes \varepsilon)\T(g)X_i-X_i(\mathrm{Id}_A \otimes \varepsilon)\T(g)\right]\\
\overset{\eqref{eq:Tgg}}&{=} \frac{1}{2}\left[X_i(\mathrm{Id}_A \otimes \varepsilon)\T(g)-(\mathrm{Id}_A \otimes \varepsilon)\T(g) X_i\right] \cdot (\mathrm{Id}_A \otimes \varepsilon)\T(g)\\
\overset{\eqref{eq:aux77}}&{=}-(\mathrm{Id}_A \otimes \varepsilon)(\T(x_i)\T(g))\\
\overset{\eqref{eq:TgxP}}&{=}-(\mathrm{Id}_A \otimes \varepsilon)\T(gx_i),
\end{align*}
and
\[
(\mathrm{Id}_A \otimes \varepsilon)(\T(gx_P)\T(g))\overset{\eqref{eq:TgxP}}{=}(\mathrm{Id}_A \otimes \varepsilon)(\T(x_P)\T(g)\T(g))\overset{\eqref{eq:Tgg}}{=}(\mathrm{Id}_A \otimes \varepsilon)\T(x_P),
\]
which means that \eqref{eq:trivial3} and \eqref{eq:trivial1} follow directly from \eqref{eq:Tgg} and \eqref{eq:TgxP}.
To prove that the same is true for \eqref{eq:trivial4} we need to write \eqref{eq:int} for $a=X_i$ and $h=x_P,gx_P$ and apply $(\mathrm{Id}_A \otimes \varepsilon)$ on both sides of these equalities:
\begin{align}
&(-1)^{|P|}X_i(\mathrm{Id}_A \otimes \varepsilon)\T(x_P)+[1+(-1)^{|P|+1}](\mathrm{Id}_A \otimes \varepsilon)\T(gx_Px_i)=(\mathrm{Id}_A \otimes \varepsilon)\T(x_P)X_i,\label{eq:aux73}\\
&(-1)^{|P|}X_i(\mathrm{Id}_A \otimes \varepsilon)\T(gx_P)+[1+(-1)^{|P|}](\mathrm{Id}_A \otimes \varepsilon)\T(x_Px_i)=(\mathrm{Id}_A \otimes \varepsilon)\T(gx_P)X_i. \label{eq:aux74}
\end{align}
Then we observe that
\begin{align*}
&(\mathrm{Id}_A \otimes \varepsilon)(\T(x_P)\T(x_i))\\
\overset{\eqref{eq:aux77}}&{=}(\mathrm{Id}_A \otimes \varepsilon)\T(x_P)\cdot \frac{1}{2}\left[(\mathrm{Id}_A \otimes \varepsilon)\T(g)X_i-X_i(\mathrm{Id}_A \otimes \varepsilon)\T(g)\right]\\
\overset{\eqref{eq:TgxP}}&{=}\frac{1}{2}\left[(\mathrm{Id}_A \otimes \varepsilon)\T(gx_P)X_i-(\mathrm{Id}_A \otimes \varepsilon)\T(x_P)X_i(\mathrm{Id}_A \otimes \varepsilon)\T(g)\right]\\
\overset{\eqref{eq:aux73},\eqref{eq:aux74}}&{=}\frac{1}{2}\left[(-1)^{|P|}X_i(\mathrm{Id}_A \otimes \varepsilon)\T(gx_P)+[1+(-1)^{|P|}](\mathrm{Id}_A \otimes \varepsilon)\T(x_Px_i)+\right.\\
&\left.-(-1)^{|P|}X_i(\mathrm{Id}_A \otimes \varepsilon)(\T(x_P)\T(g))-[1+(-1)^{|P|+1}](\mathrm{Id}_A \otimes \varepsilon)(\T(gx_Px_i)\T(g))\right]\\
\overset{\eqref{eq:TgxP}}&{=}\frac{1}{2}\left[[1+(-1)^{|P|}](\mathrm{Id}_A \otimes \varepsilon)\T(x_Px_i)-[1+(-1)^{|P|+1}](\mathrm{Id}_A \otimes \varepsilon)(\T(gx_Px_i)\T(g))\right]\\
\end{align*}
If $i \in P$, then $x_Px_i=0$ and
\[(\mathrm{Id}_A \otimes \varepsilon)(\T(x_P)\T(x_i))=0=(-1)^{|P|}(\mathrm{Id}_A \otimes \varepsilon)\T(x_Px_i).
\]
If instead $i \notin P$, we can write
\begin{align*}
&(\mathrm{Id}_A \otimes \varepsilon)(\T(x_P)\T(x_i))\\
&=\frac{1}{2}\left[[1+(-1)^{|P|}](\mathrm{Id}_A \otimes \varepsilon)\T(x_Px_i)-[1+(-1)^{|P|+1}](\mathrm{Id}_A \otimes \varepsilon)(\T(gx_Px_i)\T(g))\right]\\
\overset{\eqref{reorder}}&{=}\frac{(-1)^{S(P,P\sqcup \lbrace i \rbrace)}}{2}\left[[1+(-1)^{|P|}](\mathrm{Id}_A \otimes \varepsilon)\T(x_{P \sqcup \lbrace i \rbrace})-[1+(-1)^{|P|+1}](\mathrm{Id}_A \otimes \varepsilon)(\T(gx_{P \sqcup \lbrace i \rbrace})\T(g))\right]\\
\overset{\eqref{eq:trivial1}}&{=}(-1)^{S(P,P\sqcup \lbrace i \rbrace)}(-1)^{|P|}(\mathrm{Id}_A \otimes \varepsilon)\T(x_{P \sqcup \lbrace i \rbrace})\\
\overset{\eqref{reorder}}&{=}(-1)^{|P|}(\mathrm{Id}_A \otimes \varepsilon)\T(x_Px_i),
\end{align*}
thus also \eqref{eq:trivial4} follows from \eqref{eq:Tgg} and \eqref{eq:TgxP}.
Finally
\begin{align*}
(\mathrm{Id}_A \otimes \varepsilon)(\T(gx_P)\T(x_i)) \overset{\eqref{eq:TgxP}}&{=}
(\mathrm{Id}_A \otimes \varepsilon)(\T(x_P)\T(g)\T(x_i))\\
\overset{\eqref{eq:trivial3}}&{=}-(\mathrm{Id}_A \otimes \varepsilon)(\T(x_P)\T(gx_i))\\
\overset{\eqref{eq:TgxP}}&{=}-(\mathrm{Id}_A \otimes \varepsilon)(\T(x_P)\T(x_i)\T(g))\\
\overset{\eqref{eq:trivial4}}&{=}(-1)^{|P|+1}(\mathrm{Id}_A \otimes \varepsilon)(\T(x_Px_i)\T(g)),
\end{align*}
from which we can deduce that \eqref{eq:trivial2} is a consequence of \eqref{eq:Tgg} and \eqref{eq:TgxP}. In fact, if $i \in P$, then $(-1)^{|P|+1}(\mathrm{Id}_A \otimes \varepsilon)(\T(x_Px_i)\T(g))=0=(-1)^{|P|+1}(\mathrm{Id}_A \otimes \varepsilon)\T(gx_Px_i)$. On the other hand, if $i \notin P$, it follows that
\begin{align*}
(-1)^{|P|+1}(\mathrm{Id}_A \otimes \varepsilon)(\T(x_Px_i)\T(g))\overset{\eqref{reorder}}&{=}(-1)^{|P|+1+S(P,P \sqcup \lbrace i \rbrace)}(\mathrm{Id}_A \otimes \varepsilon)(\T(x_{P \sqcup \lbrace i \rbrace})\T(g))\\
\overset{\eqref{eq:TgxP}}&{=}(-1)^{|P|+1+S(P,P \sqcup \lbrace i \rbrace)}(\mathrm{Id}_A \otimes \varepsilon)\T(gx_{P \sqcup \lbrace i \rbrace})\\
\overset{\eqref{reorder}}&{=}(-1)^{|P|+1}(\mathrm{Id}_A \otimes \varepsilon)\T(gx_Px_i).
\end{align*}
\end{proof}

\begin{remark}
Equations \eqref{eq:Tgg} and \eqref{eq:TgxP} admit (at least) one solution, given by the canonical total integral $\underline{\T}$, in view of Proposition \ref{Prop:hsepcow}.
\end{remark}

We can immediately notice that the solutions of \eqref{eq:Tgg} are exactly the square roots of $1_A$ in $A=Cl(\alpha,\beta_i,\gamma_i, \lambda_{ij})$, although to identify these elements in full generality is a hard task. While separability conditions \eqref{eq:mu-0P}, \eqref{eq:mu-1P}, \eqref{eq:etaG} and \eqref{eq:eta1.1}-\eqref{eq:eta0.2} are linear equations in the coefficients $\mu_{j,P}$ and $\eta(P;k,Q)$, h-separability equations \eqref{eq:Tgg} and \eqref{eq:TgxP} are quadratic and their explicit form on coefficients is not easily determined in general. This is mostly due to the fact that writing the coordinates of a product $G^uX_{U}G^vX_{V}$ on the elements of the basis $\mathbb{B}_A$ of $A$ requires an iterated use of \cite[Lem. 3.8]{FR3} and calculations becomes chaotic pretty quickly. Indeed, it seems that the number and the form of square roots of $1_A$ depend heavily on the choice of the defining constants $\alpha,\beta_i,\gamma_i, \lambda_{ij}$ and the ground field $\Bbbk$. We include a couple of concrete examples in hope that they can better suggest the diversity that runs through the whole series of cases. 
\begin{example}
Consider equation \eqref{eq:Tgg} when $n=1$. An element $u=u_11_A+u_2G+u_3X+u_4GX \in Cl(\alpha, \beta,\gamma)$ squares to $1_A$ if and only if
\[
\begin{cases}
u_1^2+\alpha u_2^2+\beta u_3^2+\gamma  u_2 u_3-\alpha \beta  u_4^2=1\\ 
u_2(2 u_1+\gamma u_4)=0\\ 
u_3(2 u_1+\gamma u_4)=0\\ 
u_4(2 u_1+\gamma u_4)=0,
\end{cases}
\] 
from which we deduce that $u=\pm 1_A$ or $u \in \left\lbrace \frac{\gamma^2}{4}u_4^2+\alpha u_2^2+\beta u_3^2+\gamma  u_2 u_3-\alpha \beta  u_4^2=1 \right \rbrace$. Let us call the elements of the second family the \emph{non-trivial} roots of $1_A$. When $\alpha=\beta=-1$ and $\gamma=0$ there are no non-trivial roots of $1_A$ if $\Bbbk=\mathbb{R}$, and there is an infinitude of them if $\Bbbk=\mathbb{C}$. On the other hand, when $\alpha=\beta=1$ and $\gamma=0$ there are infinitely many non-trivial roots of $1_A$ both when $\Bbbk=\mathbb{R}$ and when $\Bbbk=\mathbb{C}$. Finally, if $\alpha=\beta=\gamma=0$ the set of non-trivial roots is empty, independently of the choice of $\Bbbk$. 
\end{example}
\begin{example}
When $n=2$, the Clifford algebra $A=Cl(\alpha, \beta_1,\beta_2, \gamma_1, \gamma_2, \lambda)$ contains three subalgebras isomorphic to the previous example, namely the one generated by $G$ and $X_1$, the one generated by $G$ and $X_2$, and the one generated by $X_1$ and $X_2$. This immediately implies that among the square roots of $1_A$ appear the three sets
\begin{align*}
&\left\lbrace \frac{\gamma_1^2}{4}u_4^2+\alpha u_2^2+\beta_1 u_3^2+\gamma_1  u_2 u_3-\alpha \beta_1  u_4^2=1 \right \rbrace,\\
&\left\lbrace \frac{\gamma_2^2}{4}u_6^2+\alpha u_2^2+\beta_2 u_5^2+\gamma_2  u_2 u_5-\alpha \beta_2  u_6^2=1 \right \rbrace,\\
&\left\lbrace \frac{\lambda^2}{4}u_7^2+\beta_1 u_3^2+\beta_2 u_5^2+\lambda  u_3 u_5-\beta_1 \beta_2  u_7^2=1 \right \rbrace,
\end{align*}
which, however, do not include every possible occurrence. For example, when $\alpha=0$ and $\gamma_1,\gamma_2 \neq 0$ the element $u=-1_A +\frac{2}{\gamma_1+\gamma_2}GX_1+\frac{2}{\gamma_1+\gamma_2}GX_2$  squares to $1_A$ but do not belong in any of the previous families if $\gamma_1 \neq -\frac{\gamma_2}{2}$ and $\gamma_2 \neq -\frac{\gamma_1}{2}$.
\end{example}

\section{Classification results}\label{sec:classification}

In this section we present the explicit classifications of integrals on $(E(n),\psi)$ we obtain from Theorem \ref{thm:mainint} when $n=1,2$. These can be compared with the partial results displayed in \cite{mt2,mt3}. We also show how to recover \cite[Thm. 5.7]{FR3} that contains a classification of Casimir morphisms of the form $B=B^A \otimes u_H$ for $(E(n),\psi)$.

\subsection{Integrals on \texorpdfstring{$(H_4,\psi)$}{} when \texorpdfstring{$A=Cl(\alpha,\beta, \gamma)$}{}}
Remember that $E(1)=H_4$, Sweedler's Hopf algebra. We use Theorem \ref{thm:mainint} to obtain a complete classification of integrals on $(H_4, \psi)$. We find
\begin{align*}
\T(g)&=\mu_{0,\emptyset}(1_A \otimes g)+\mu_{1,\emptyset}(G \otimes 1_{H_4})+\mu_{1,\lbrace 1 \rbrace}(GX \otimes g)- \mu_{1,\lbrace 1 \rbrace}(G \otimes gx),\\
\T(x)&=-\frac{\gamma}{2}\mu_{1,\emptyset} (1_A \otimes 1_{H_4})+\mu_{1,\emptyset}(GX \otimes 1_{H_4})+\beta\mu_{1,\lbrace 1 \rbrace}(G \otimes g)-\mu_{0,\emptyset}(1_A \otimes gx)+\\
&-\mu_{1,\lbrace 1 \rbrace}(GX \otimes gx),
\end{align*}
with $\mu_{1,\lbrace 1 \rbrace}\alpha=0$, $\mu_{1, \lbrace 1 \rbrace}\gamma=0$, and
\begin{align*}
\T(1_{H_4})&=\eta(\emptyset;0,\emptyset)1_A\otimes 1_{H_4}+\eta(\emptyset;1,\lbrace 1 \rbrace)GX \otimes 1_{H_4}+\eta(\emptyset;1,\lbrace 1 \rbrace) G \otimes x,\\
\T(gx)&=\left[\eta(\emptyset;0,\emptyset)+\eta(\lbrace 1 \rbrace;0,\lbrace 1 \rbrace) \right]1_A \otimes x+\eta(\emptyset;1,\lbrace 1 \rbrace)GX \otimes x-\frac{\gamma}{2}\eta(\lbrace 1 \rbrace;1,\lbrace 1 \rbrace) 1_A \otimes g+\\
&+\eta(\lbrace 1 \rbrace;1,\emptyset) G \otimes 1_{H_4}+\eta(\lbrace 1 \rbrace;0,\lbrace 1 \rbrace)X \otimes 1_{H_4}+\eta(\lbrace 1 \rbrace;1,\lbrace 1 \rbrace)GX \otimes g+\\
&-\eta(\lbrace 1 \rbrace;1,\lbrace 1 \rbrace) G \otimes gx,
\end{align*}
with $\eta(\emptyset;1, \lbrace 1 \rbrace)\alpha=0$, $\eta(\emptyset;1,\lbrace 1 \rbrace)\beta=0$, $\eta(\emptyset;1, \lbrace 1 \rbrace)\gamma=0$ and
\[
\begin{cases}
\eta(\lbrace 1 \rbrace;0,\lbrace 1 \rbrace)\gamma +2\eta(\lbrace 1 \rbrace;1,\emptyset)\alpha=0,\\
\eta(\lbrace 1 \rbrace;1,\emptyset)\gamma+2\eta(\lbrace 1 \rbrace;0, \lbrace 1 \rbrace)\beta=0.
\end{cases}
\]
We perform a relabelling, in order to compare our result with the one previously obtained in \cite{mt2}. Let $\textbf{U}:=\eta(\emptyset;0,\emptyset)$, $\textbf{V}:=\eta(\emptyset;1,\lbrace 1 \rbrace)$, $\textbf{A}:=\mu_{0,\emptyset}$, $\textbf{B}:=\mu_{1,\emptyset}$, $\textbf{Z}:=\mu_{1,\lbrace 1 \rbrace}$, $\textbf{C}:=-\eta(\lbrace 1 \rbrace;1,\emptyset)$, $\textbf{D}:=\eta(\lbrace 1 \rbrace;0,\lbrace 1 \rbrace)+\eta(\emptyset;0,\emptyset)$, $\textbf{E}:=-\eta(\lbrace 1 \rbrace; 1, \lbrace 1 \rbrace)$.

\begin{theorem}
The elements of $\int_{(H_4,\psi)}$ are $\Bbbk$-linear maps $\T:H_4 \rightarrow A \otimes H_4^{\op}$ defined by
\begin{align*}
\T(1_{H_4})&=\textbf{U}(1_A\otimes 1_{H_4})+\textbf{V}(GX \otimes 1_{H_4})+\textbf{V}(G \otimes x),\\
\T(g)&=\textbf{A}(1_A \otimes g)+\textbf{B}(G \otimes 1_{H_4})+\textbf{Z}(GX \otimes g)-\textbf{Z}(G \otimes gx),\\
\T(x)&=-\frac{\gamma}{2}\textbf{B}(1_A \otimes 1_{H_4})+\textbf{B}(GX \otimes 1_{H_4})+\beta\textbf{Z}(G \otimes g)-\textbf{A}(1_A \otimes gx)-\textbf{Z}(GX \otimes gx),\\
\T(gx)&=\textbf{D}(1_A \otimes x)+\textbf{V}(GX \otimes x)+\frac{\gamma}{2}\textbf{E}(1_A \otimes g)-\textbf{C}(G \otimes 1_{H_4})+[\textbf{D}-\textbf{U}](X \otimes 1_{H_4})+\\
&-\textbf{E}(GX \otimes g)+\textbf{E}(G \otimes gx),
\end{align*}
with $\textbf{A},\textbf{B},\textbf{C},\textbf{D},\textbf{E},\textbf{U}, \textbf{V},\textbf{Z} \in \Bbbk$ satisfying
\[
\begin{cases}
\textbf{Z}\alpha=\textbf{Z}\gamma=0,\\
\textbf{V}\alpha=\textbf{V}\beta=\textbf{V}\gamma=0,\\
(\textbf{D}-\textbf{U})\gamma -2\textbf{C}\alpha=0,\\
2(\textbf{D}-\textbf{U})\beta-\textbf{C}\gamma=0.
\end{cases}
\]
An integral of this kind is total if and only if $\textbf{U}=1$ and $\textbf{V}=0$.
\end{theorem}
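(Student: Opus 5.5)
The plan is to specialize Theorem \ref{thm:mainint} and Corollary \ref{cor:totint} to $n=1$, where $E(1)=H_4$ has basis $1,g,x,gx$ and $A=Cl(\alpha,\beta,\gamma)$ has basis $1_A,G,X,GX$. The only subsets are $\emptyset$ and $\{1\}$. The task therefore reduces to three steps: write out the finitely many linear constraints on the parameters $\mu_{j,P}$ and $\eta(P;k,Q)$, solve them, and then substitute into the three formulas of Theorem \ref{thm:mainint}.

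\textbf{Step 1: the element $\T(g)$.} Apply \eqref{eq:mu-0P} and \eqref{eq:mu-1P} with $P=\emptyset$ and $P=\{1\}$.
\begin{itemize}
\item For $P=\emptyset$ the left-hand coefficient $1+(-1)^{1}$ vanishes. The equations become $0=\gamma\mu_{1,\{1\}}$ and $0=\gamma\mu_{0,\{1\}}$.
\item For $P=\{1\}$ the left-hand coefficient equals $2$ and the right-hand sums are empty. This gives $\mu_{0,\{1\}}=0$ and $\alpha\mu_{1,\{1\}}=0$, using $\mathrm{char}\,\Bbbk\neq 2$.
\end{itemize}
The free parameters are $\mu_{0,\emptyset},\mu_{1,\emptyset},\mu_{1,\{1\}}$, subject to $\mu_{1,\{1\}}\alpha=\mu_{1,\{1\}}\gamma=0$. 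Substituting into \eqref{eq:Tg}, with sign $(-1)^{S(P\setminus Q,P)+(j+|P|+1)|Q|}$, gives the stated $\T(g)$.

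\textbf{Step 2: the element $\T(x)$.} Obtain it from the second bullet of Theorem \ref{thm:mainint} with $P=\emptyset$, $r=1$:
\[
\T(x)=\tfrac12\big[(1\otimes g)\T(g)(X\otimes g)-(1\otimes gx)\T(g)(1\otimes g)-\T(g)(1\otimes x)-(X\otimes 1)\T(g)\big].
\]
Each product is computed in $A\otimes H_4^{\op}$, using the swap convention, $GX+XG=\gamma$ and $X^2=\beta$. Collecting terms should produce the $-\tfrac{\gamma}{2}\mu_{1,\emptyset}$ term, the $\beta\mu_{1,\{1\}}$ term, and the remaining terms of the displayed formula.

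\textbf{Step 3: the elements $\T(1)$ and $\T(gx)$.} For $P=\emptyset$, equation \eqref{eq:etaG} and \eqref{eq:eta1.1}--\eqref{eq:eta0.2} (with $i=1$, $Q=\emptyset$, and no $\lambda$ terms since $n=1$) give linear conditions on $\eta(\emptyset;k,Q)$. With coefficients $(-1)^{0}\pm(-1)^{|Q|}$, I expect these to force $\eta(\emptyset;0,\{1\})=\eta(\emptyset;1,\emptyset)=0$, together with $\eta(\emptyset;1,\{1\})\alpha=\eta(\emptyset;1,\{1\})\beta=\eta(\emptyset;1,\{1\})\gamma=0$. Then $\T(1)=\mathfrak U_\emptyset$ follows from \eqref{eq:UP-form}.

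For $P=\{1\}$ the same equations with $|P|=1$ should kill $\eta(\{1\};0,\emptyset)$. They should leave $\eta(\{1\};1,\emptyset)$, $\eta(\{1\};0,\{1\})$ and $\eta(\{1\};1,\{1\})$, subject to the two-by-two system in $\alpha,\beta,\gamma$. Then $\T(gx)=\T(1)(1\otimes x)+\mathfrak U_{\{1\}}$, up to the sign given by the third bullet with $F=\emptyset$.

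\textbf{Step 4: relabelling and totality.} Perform the relabelling to $\textbf{U},\textbf{V},\textbf{A},\dots,\textbf{Z}$. The relation $\textbf{D}-\textbf{U}=\eta(\{1\};0,\{1\})$ turns the system into the stated form. By Corollary \ref{cor:totint}, the integral is total exactly when $\textbf{U}=1$ and $\textbf{V}=0$.

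The main obstacle is careful sign bookkeeping. This concerns the sign exponents $S(\cdot,\cdot)$, the opposite multiplication in $H^{\op}$, and the anticommutation relations in the products of Step 2 and in $\mathfrak U_{\{1\}}$. I would cross-check the final formulas against \cite{mt2} and against the canonical integral of Remarks \ref{rmk:principal1} and \ref{rmk:principal2}.
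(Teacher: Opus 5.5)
Your route is the paper's own: the statement is just Theorem \ref{thm:mainint} and Corollary \ref{cor:totint} written out for $n=1$ and then relabelled. Steps 1, 2 and 4 check out, as does the $P=\emptyset$ half of Step 3. In Step 2, note that two terms $\mp\gamma\textbf{Z}$ appear, coming from $XGX=\gamma X-\beta G$ and $XG=\gamma-GX$. They drop out only because of the constraint $\textbf{Z}\gamma=0$ from Step 1.

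There is, however, one concrete error in Step 3: for $P=\{1\}$ the coefficient $\eta(\{1\};0,\emptyset)$ is \emph{not} killed. Take \eqref{eq:etaG} with $P=\{1\}$, $k=0$, $Q=\emptyset$. The left coefficient is $(-1)^1+(-1)^1=-2$, and the right-hand side is not empty, so you get
\[
2\,\eta(\{1\};0,\emptyset)=-\gamma\,\eta(\{1\};1,\{1\}).
\]
Equation \eqref{eq:eta0.1} with $i=1$, $Q=\emptyset$ gives the same relation. Now $\mathfrak U_{\{1\}}$ contains the term $\eta(\{1\};0,\emptyset)(1_A\otimes g)$, and $\textbf{E}=-\eta(\{1\};1,\{1\})$. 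So this coefficient is precisely the summand $\frac{\gamma}{2}\textbf{E}(1_A\otimes g)$ of $\T(gx)$. Treating it as zero either drops that summand, or, if you keep the equation, imposes the spurious constraint $\gamma\textbf{E}=0$, which is absent from the statement.

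For the correct bookkeeping at $P=\{1\}$, use the following facts.
\begin{itemize}
\item Equations \eqref{eq:eta1.1}, \eqref{eq:eta1.2} and \eqref{eq:etaG} with $Q=\{1\}$ are trivial, since their coefficients vanish.
\item Equation \eqref{eq:etaG} with $k=1$, $Q=\emptyset$ gives $2\alpha\,\eta(\{1\};1,\emptyset)+\gamma\,\eta(\{1\};0,\{1\})=0$.
\item Equation \eqref{eq:eta0.2} gives $\gamma\,\eta(\{1\};1,\emptyset)+2\beta\,\eta(\{1\};0,\{1\})=0$.
\end{itemize}
With $\textbf{C}=-\eta(\{1\};1,\emptyset)$ and $\textbf{D}-\textbf{U}=\eta(\{1\};0,\{1\})$, the last two are exactly the last two lines of the stated system. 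The sign in the third bullet of Theorem \ref{thm:mainint} for $F=\emptyset$ is $(-1)^{0+1+0+1}=+1$. Hence $\T(gx)=\T(1_{H_4})(1_A\otimes x)+\mathfrak U_{\{1\}}$, where the first summand contributes $\textbf{U}(1_A\otimes x)+\textbf{V}(GX\otimes x)$. With these corrections your computation reproduces the statement exactly.
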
 

Remember that, by Theorem \ref{thm:maincas}, to each total integral $\T$ corresponds a normalized Casimir morphism $B_{\T}$ for $(H_4, \psi)$ and that $B_{\T}(h \otimes 1_{H_4})=\T(h)$ for every $h \in H_4$. Then such a morphism must be defined by
\begin{align*}
B_{\T}(1_{H_4} \otimes g)\overset{\eqref{eq:B(hx1)}}&{=}(1_A \otimes g)B_{\T}(g \otimes 1_{H_4})(1_A \otimes g)=(1_A \otimes g)\T(g)(1_A \otimes g)\\
&=\textbf{A}(1_A \otimes g)+\textbf{B}(G \otimes 1_{H_4})+\textbf{Z}(GX \otimes g)+\textbf{Z}(G \otimes gx),
\end{align*}
\begin{align*}
B_{\T}(x \otimes 1_{H_4})&=\T(x)\\
&=-\frac{\gamma}{2}\textbf{B}(1_A \otimes 1_{H_4})+\textbf{B}(GX \otimes 1_{H_4})+\beta\textbf{Z}(G \otimes g)-\textbf{A}(1_A \otimes gx)-\textbf{Z}(GX \otimes gx),
\end{align*}
\begin{align*}
B_{\T}(1_A \otimes x)\overset{\eqref{eq:B(hx1)}}&{=}-(1_A \otimes g)B_{\T}(gx \otimes 1_{H_4})(1_A \otimes g)=-(1_A \otimes g)\T(gx)(1_A \otimes g)\\
&=\textbf{D}(1_A \otimes x)-\frac{\gamma}{2}\textbf{E}(1_A \otimes g)+\textbf{C}(G \otimes 1_{H_4})+\\
&+[1-\textbf{D}](X \otimes 1_{H_4})+\textbf{E}(GX \otimes g)+\textbf{E}(G \otimes gx),
\end{align*}
with $\textbf{A},\textbf{B},\textbf{C},\textbf{D},\textbf{E},\textbf{Z} \in \Bbbk$ satisfying
\begin{equation}\label{sys1}
\begin{cases}
\textbf{Z}\alpha=\textbf{Z}\gamma=0,\\
(\textbf{D}-1)\gamma -2\textbf{C}\alpha=0,\\
2(\textbf{D}-1)\beta-\textbf{C}\gamma=0.
\end{cases}
\end{equation}
Now it is immediate to check that the above description complete the partial classification of normalized Casimir morphisms for $(H_4,\psi)$ presented by Menini and Torrecillas in \cite[Thm. 1]{mt2}, where the coefficient $\textbf{Z}$ vanishes.

\subsubsection{Integrals satisfying the h-property}
Thanks to Proposition \ref{prop:h-prop} we can also determine for what values of $\textbf{A},\textbf{B},\textbf{C},\textbf{D},\textbf{E},\textbf{Z}\in \Bbbk$ a total integral $\T$ satisfies the h-property. 
\begin{theorem}
An integral on $(H_4, \psi)$ satisfies the h-property if and only if either
\begin{enumerate}[1)]
\itemsep0.5em
\item $\textbf{A}=1$, $\textbf{B}=0$, $\textbf{C}=0$, $\textbf{D}=1$, $\textbf{E}=0$, $\textbf{Z}=0$ (the canonical total integral $\underline{\T}$).
\item $\textbf{A}=-1$, $\textbf{B}=0$, $\textbf{C}=0$, $\textbf{D}=1$, $\textbf{E}=0$, $\textbf{Z}=0$.
\item $\textbf{A}=0$, $\alpha\textbf{B}^2=1$, $\textbf{C}=-\frac{\gamma}{2\alpha}$, $\textbf{D}=0$, $\textbf{E}=0$, $\textbf{Z}=0$, assuming $\gamma^2-4\alpha\beta=0$ and $\alpha \in (\Bbbk^\times)^2$.
\end{enumerate}
\end{theorem}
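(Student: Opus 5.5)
The plan is to specialize Proposition \ref{prop:h-prop} to $n=1$ and solve the resulting quadratic equations directly, using the parametrisation of the previous theorem. Two preliminary remarks frame the computation. The h-property is only defined for total integrals, so we assume $\textbf{U}=1$ and $\textbf{V}=0$ throughout. Moreover, since $\varepsilon$ is multiplicative on $H_4^{\op}$, for any $h,h'$ we have
\[
(\mathrm{Id}_A\otimes\varepsilon)(\T(h)\T(h'))=t_h\,t_{h'},\qquad t_h:=(\mathrm{Id}_A\otimes\varepsilon)\T(h)\in A .
\]
Reading these off the classification (where $\varepsilon(g)=1$ and $\varepsilon(x)=\varepsilon(gx)=0$) gives
\[
t_{1}=1_A,\quad t_g=\textbf{A}1_A+\textbf{B}G+\textbf{Z}GX,\quad t_x=-\tfrac{\gamma}{2}\textbf{B}1_A+\beta\textbf{Z}G+\textbf{B}GX,
\]
\[
t_{gx}=\tfrac{\gamma}{2}\textbf{E}1_A-\textbf{C}G+(\textbf{D}-1)X-\textbf{E}GX .
\]
With these, the conditions of Proposition \ref{prop:h-prop} become $t_g^2=1_A$ from \eqref{eq:Tgg}, and from \eqref{eq:TgxP} the equation $t_1t_g=t_g$ (trivial) for $P=\emptyset$ and $t_xt_g=t_{gx}$ for $P=\{1\}$. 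These must be solved together with the system \eqref{sys1}.

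First we solve $t_g^2=1_A$ using the square-root computation of the $n=1$ example, with $u_1=\textbf{A}$, $u_2=\textbf{B}$, $u_3=0$, $u_4=\textbf{Z}$, and simplify using $\textbf{Z}\alpha=\textbf{Z}\gamma=0$. This leaves $\textbf{A}^2+\alpha\textbf{B}^2=1$, $\textbf{A}\textbf{B}=0$ and $\textbf{A}\textbf{Z}=0$. If $\textbf{A}\neq 0$, then $\textbf{A}=\pm1$ and $\textbf{B}=\textbf{Z}=0$. If $\textbf{A}=0$, then $\alpha\textbf{B}^2=1$, which forces $\alpha\in(\Bbbk^\times)^2$; since $\alpha\neq0$, the relation $\textbf{Z}\alpha=0$ gives $\textbf{Z}=0$.

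Next we impose $t_xt_g=t_{gx}$ in each case and compare coefficients in the basis $\{1_A,G,X,GX\}$.

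In the case $\textbf{A}=\pm1$ we have $t_x=0$, so $t_{gx}=0$, which gives $\textbf{E}=\textbf{C}=0$ and $\textbf{D}=1$. The system \eqref{sys1} is then automatic. This produces cases 1) and 2), and case 1) is the canonical $\underline{\T}$.

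In the case $\textbf{A}=0$ we compute $t_xt_g=\textbf{B}^2(-\tfrac{\gamma}{2}G+GXG)$. Using $GXG=G(\gamma-GX)=\gamma G-\alpha X$ this becomes $\textbf{B}^2(\tfrac{\gamma}{2}G-\alpha X)$. Matching coefficients with $t_{gx}$ gives $\textbf{E}=0$, $\textbf{C}=-\tfrac{\gamma}{2}\textbf{B}^2=-\tfrac{\gamma}{2\alpha}$, and $\textbf{D}-1=-\alpha\textbf{B}^2=-1$, i.e. $\textbf{D}=0$. Substituting into \eqref{sys1}, the first relation holds trivially and the second reads $-\gamma+\gamma=0$. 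The third reads $-2\beta+\tfrac{\gamma^2}{2\alpha}=0$, which is exactly $\gamma^2-4\alpha\beta=0$. This produces case 3).

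Since every step is reversible, the three cases are also sufficient, which proves the equivalence. The only delicate points are the reduction from $t_g^2=1_A$ (checking that $\textbf{Z}$ is forced to vanish in both branches) and the computation of $GXG$ in $Cl(\alpha,\beta,\gamma)$. I expect the latter sign bookkeeping to be the main place where errors could creep in.
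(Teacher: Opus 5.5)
Your proposal is correct and follows the paper's proof. Like the paper, you specialize Proposition \ref{prop:h-prop} to $n=1$, reduce \eqref{eq:Tgg} via \eqref{sys1} to $\textbf{A}^2+\alpha\textbf{B}^2=1$, $\textbf{A}\textbf{B}=\textbf{A}\textbf{Z}=0$, and match coefficients in \eqref{eq:TgxP} for $P=\{1\}$. The only difference is cosmetic: you split into the cases $\textbf{A}=\pm1$ and $\textbf{A}=0$ before expanding $t_xt_g$, whereas the paper expands the general product first; your computations, including $GXG=\gamma G-\alpha X$, are right.
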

\begin{proof}
Equality \eqref{eq:Tgg} becomes
\[(\textbf{A}^2+\alpha\textbf{B}^2-\alpha\beta \textbf{Z}^2)1_A +\textbf{B}(2\textbf{A}+\gamma\textbf{Z})G+\textbf{Z}(2\textbf{A}+\gamma \textbf{Z})GX=(\textbf{A}1_A +\textbf{B}G+\textbf{Z}GX)^2=1_A,\]
thus, considering \eqref{sys1}, we must have
\[
\begin{cases}
\textbf{A}^2+\alpha\textbf{B}^2=1,\\
\textbf{A}\textbf{B}=0,\\
\textbf{A}\textbf{Z}=0,\\
\textbf{Z}\alpha=\textbf{Z}\gamma=0,\\
(\textbf{D}-1)\gamma -2\textbf{C}\alpha=0,\\
2(\textbf{D}-1)\beta-\textbf{C}\gamma=0.
\end{cases}
\]
The family of equalities \eqref{eq:TgxP} reduces to
\[(-\frac{\gamma}{2}\textbf{B}1_A+\beta\textbf{Z}G+\textbf{B}GX)(\textbf{A}1_A +\textbf{B}G+\textbf{Z}GX)=\frac{\gamma}{2}\textbf{E}1_A -\textbf{C}G+(\textbf{D}-1)X-\textbf{E}GX,\]
i.e.
\[
\begin{cases}
\frac{\gamma}{2}\textbf{B}^2+\beta \textbf{A}\textbf{Z}=-\textbf{C},\\
\textbf{A}\textbf{B}=-\textbf{E},\\
\alpha\textbf{B}^2=1-\textbf{D},
\end{cases}
\]
in view of \eqref{sys1}. Putting everything altogether we find the solutions described in the statement.
\end{proof}
\begin{remark}
These values completely agree with the ones already found in the proof of \cite[Thm. 2]{mt2}.
\end{remark}

\subsection{Integrals on \texorpdfstring{$(E(2),\psi)$}{} when \texorpdfstring{$A=Cl(\alpha,\beta_1,\beta_2, \gamma_1,\gamma_2, \lambda)$}{}}\label{subsec:case2}

We use Theorem \ref{thm:mainint} to obtain a complete classification of integrals on $(E(2), \psi)$. In this case $A=Cl(\alpha,\beta_1, \beta_2, \gamma_1, \gamma_2, \lambda)$ and an integral $\T:E(2) \rightarrow A \otimes E(2)^{\op}$ is defined by
\begin{equation}\label{Tgcase2}
\begin{split}
\T(g)&=\mu_{0,\emptyset}(1_A \otimes g)+\frac{\gamma_2}{2}\mu_{1, \lbrace 1,2 \rbrace}(1_A \otimes x_1)-\frac{\gamma_1}{2}\mu_{1, \lbrace 1,2 \rbrace}(1_A \otimes x_2)+\mu_{0,\lbrace 1,2 \rbrace}(1_A \otimes gx_1x_2)+\\
&+\mu_{1,\emptyset}(G \otimes 1_{E(2)})-\mu_{1,\lbrace 1 \rbrace}(G \otimes gx_1)-\mu_{1,\lbrace 2 \rbrace}(G \otimes gx_2)+\mu_{1,\lbrace 1,2 \rbrace}(G \otimes x_1x_2)+\\
&+\frac{\gamma_2}{2}\mu_{1, \lbrace 1,2 \rbrace}(X_1 \otimes 1_{E(2)})-\mu_{0,\lbrace 1,2 \rbrace}(X_1 \otimes gx_2)+\mu_{1,\lbrace 1 \rbrace}(GX_1 \otimes g)+\mu_{1,\lbrace 1,2 \rbrace}(GX_1 \otimes x_2)+\\
&-\frac{\gamma_1}{2}\mu_{1, \lbrace 1,2 \rbrace}(X_2 \otimes 1_{E(2)})+\mu_{0,\lbrace 1,2 \rbrace}(X_2 \otimes gx_1)+\mu_{1,\lbrace 2 \rbrace}(GX_2 \otimes g)-\mu_{1,\lbrace 1,2 \rbrace}(GX_2 \otimes x_1)+\\
&+\mu_{0,\lbrace 1,2 \rbrace}(X_1X_2 \otimes g)+\mu_{1,\lbrace 1,2 \rbrace}(GX_1X_2 \otimes 1_{E(2)}),
\end{split}
\end{equation}
\begin{equation}\label{Tx1case2}
\begin{split}
\T(x_1)&= -\frac{\gamma_1}{2}\left(\mu_{1,\emptyset}+\frac{\lambda}{2}\mu_{1, \lbrace 1,2 \rbrace}\right)(1_A \otimes 1_{E(2)})+\left(\frac{\gamma_1}{2}\mu_{1,\lbrace 1 \rbrace}-\frac{\lambda}{2}\mu_{0,\lbrace 1,2 \rbrace}-\mu_{0,\emptyset}\right)(1_A \otimes gx_1)+\\
&+\left(\frac{\gamma_1}{2}\mu_{1,\lbrace 2 \rbrace}+\beta_1\mu_{0,\lbrace 1,2 \rbrace}\right)(1_A \otimes gx_2)-\frac{\gamma_1}{2}\mu_{1,\lbrace 1,2 \rbrace}(1_A \otimes x_1x_2)+\\
&+\left(\frac{\lambda}{2}\mu_{1,\lbrace 2 \rbrace}+\beta_1\mu_{1,\lbrace 1 \rbrace}\right)(G \otimes g)+\frac{\lambda}{2}\mu_{1,\lbrace 1,2 \rbrace}(G \otimes x_1)-\mu_{1,\lbrace 2 \rbrace}(G \otimes gx_1x_2)+\\
&+\left(\frac{\lambda}{2}\mu_{0,\lbrace 1,2 \rbrace}-\frac{\gamma_1}{2}\mu_{1,\lbrace 1 \rbrace}\right)(X_1 \otimes g)-\frac{\gamma_2}{2}\mu_{1, \lbrace 1,2 \rbrace}(X_1 \otimes x_1)-\mu_{0,\lbrace 1,2 \rbrace}(X_1 \otimes gx_1x_2)+\\
&+\left(\mu_{1,\emptyset}+\frac{\lambda}{2}\mu_{1,\lbrace 1,2 \rbrace}\right)(GX_1 \otimes 1_{E(2)})-\mu_{1,\lbrace 1 \rbrace}(GX_1 \otimes gx_1)+\mu_{1,\lbrace 1,2 \rbrace}(GX_1 \otimes x_1x_2)+\\
&-\left(\frac{\gamma_1}{2}\mu_{1,\lbrace 2 \rbrace}+\beta_1\mu_{0,\lbrace 1,2 \rbrace}\right)(X_2 \otimes g)+\frac{\gamma_1}{2}\mu_{1,\lbrace 1,2 \rbrace}(X_2 \otimes x_1)-\mu_{1,\lbrace 2 \rbrace}(GX_2 \otimes gx_1)+\\
&-\mu_{0,\lbrace 1,2 \rbrace}(X_1X_2 \otimes gx_1)-\mu_{1,\lbrace 1,2 \rbrace}(GX_1X_2 \otimes x_1),
\end{split}
\end{equation}
\begin{equation}\label{Tx2case2}
\begin{split}
\T(x_2)&=-\frac{\gamma_2}{2}\left(\mu_{1,\emptyset}+\frac{\lambda}{2}\mu_{1, \lbrace 1,2 \rbrace}\right)(1_A \otimes 1_{E(2)})+\left(\frac{\gamma_2}{2}\mu_{1,\lbrace 1 \rbrace}-\beta_2\mu_{0,\lbrace 1,2 \rbrace}\right)(1_A \otimes gx_1)+\\
&+\left(\frac{\gamma_2}{2}\mu_{1,\lbrace 2 \rbrace}+\frac{\lambda}{2}\mu_{0,\lbrace 1,2 \rbrace}-\mu_{0,\emptyset}\right)(1_A \otimes gx_2)-\frac{\gamma_2}{2}\mu_{1,\lbrace 1,2 \rbrace}(1_A \otimes x_1x_2)+\\
&+\left(\frac{\lambda}{2}\mu_{1,\lbrace 1 \rbrace}+\beta_2\mu_{1,\lbrace 2 \rbrace}\right)(G \otimes g)+\frac{\lambda}{2}\mu_{1,\lbrace 1,2 \rbrace}(G \otimes x_2)+\mu_{1,\lbrace 1 \rbrace}(G \otimes gx_1x_2)+\\
&+\left(-\frac{\gamma_2}{2}\mu_{1,\lbrace 1 \rbrace}+\beta_2\mu_{0,\lbrace 1,2 \rbrace}\right)(X_1 \otimes g)-\frac{\gamma_2}{2}\mu_{1,\lbrace 1,2 \rbrace}(X_1 \otimes x_2)-\mu_{1,\lbrace 1 \rbrace}(GX_1 \otimes gx_2)+\\
&-\left(\frac{\gamma_2}{2}\mu_{1,\lbrace 2 \rbrace}+\frac{\lambda}{2}\mu_{0,\lbrace 1,2 \rbrace}\right)(X_2 \otimes g)+\frac{\gamma_1}{2}\mu_{1, \lbrace 1,2 \rbrace}(X_2 \otimes x_2)-\mu_{0,\lbrace 1,2 \rbrace}(X_2 \otimes gx_1x_2)+\\
&+\left(\frac{\lambda}{2}\mu_{1,\lbrace 1,2 \rbrace}+\mu_{1,\emptyset}\right)(GX_2 \otimes 1_{E(2)})-\mu_{1,\lbrace 2 \rbrace}(GX_2 \otimes gx_2)+\mu_{1,\lbrace 1,2 \rbrace}(GX_2 \otimes x_1x_2)+\\
&-\mu_{0,\lbrace 1,2 \rbrace}(X_1X_2 \otimes gx_2)-\mu_{1,\lbrace 1,2 \rbrace}(GX_1X_2 \otimes x_2),
\end{split}
\end{equation}
\begin{equation}\label{Tgx1x2case2}
\begin{split}
\T(gx_1x_2)&=\frac{1}{4}\left[\left(2\beta_1\gamma_2-\gamma_1\lambda\right)\mu_{1,\lbrace 1 \rbrace}+\left(\gamma_2\lambda-2\beta_2\gamma_1\right)\mu_{1,\lbrace 2 \rbrace}+\left(\lambda^2-4\beta_1\beta_2\right)\mu_{0,\lbrace 1,2 \rbrace}\right](1_A \otimes g)+\\
&+\frac{\gamma_2\lambda}{4}\mu_{1,\lbrace 1,2 \rbrace}(1_A \otimes x_1)+\left(-\frac{\gamma_1}{2}\mu_{1,\lbrace 1 \rbrace}+\mu_{0,\emptyset}-\frac{\gamma_2}{2}\mu_{1,\lbrace 2 \rbrace}\right)(1_A \otimes gx_1x_2)+\\
&-\frac{\lambda}{2}\left(\mu_{1,\emptyset}+\frac{\lambda}{2}\mu_{1,\lbrace 1,2 \rbrace}\right)(G \otimes 1_{E(2)})+\left(\frac{\lambda}{2}\mu_{1,\lbrace 1 \rbrace}+\beta_2\mu_{1,\lbrace 2 \rbrace}\right)(G \otimes gx_1)+\\
&-\left(\frac{\lambda}{2}\mu_{1,\lbrace 2 \rbrace}+\beta_1\mu_{1,\lbrace 1 \rbrace}\right)(G \otimes gx_2)-\frac{\lambda}{2}\mu_{1,\lbrace 1,2 \rbrace}(G \otimes x_1x_2)+\\
&+\frac{\gamma_2}{2}\left(\mu_{1,\emptyset}+\frac{\lambda}{2}\mu_{1,\lbrace 1,2 \rbrace}\right)(X_1 \otimes 1_{E(2)})+\left(\beta_2\mu_{0,\lbrace 1,2 \rbrace}-\frac{\gamma_2}{2}\mu_{1,\lbrace 1 \rbrace}\right)(X_1 \otimes gx_1)+\\
&-\left(\frac{\lambda}{2}\mu_{0,\lbrace 1,2 \rbrace}-\frac{\gamma_1}{2}\mu_{1,\lbrace 1 \rbrace}\right)(X_1 \otimes gx_2)+\frac{\gamma_2}{2}\mu_{1,\lbrace 1,2 \rbrace}(X_1 \otimes x_1x_2)+\\
&+\mu_{1,\lbrace 1 \rbrace}(GX_1 \otimes gx_1x_2)-\frac{\gamma_1}{2}\left(\mu_{1,\emptyset}+\frac{\lambda}{2}\mu_{1, \lbrace 1,2 \rbrace}\right)(X_2 \otimes 1_{E(2)})+\\
&-\left(\frac{\gamma_2}{2}\mu_{1,\lbrace 2 \rbrace}+\frac{\lambda}{2}\mu_{0,\lbrace 1,2 \rbrace}\right)(X_2 \otimes gx_1)+\left(\frac{\gamma_1}{2}\mu_{1,\lbrace 2 \rbrace}+\beta_1\mu_{0,\lbrace 1,2 \rbrace}\right)(X_2 \otimes gx_2)+\\
&-\frac{\gamma_1}{2}\mu_{1,\lbrace 1,2 \rbrace}(X_2 \otimes x_1x_2)+\frac{\lambda}{2}\mu_{1,\lbrace 1,2 \rbrace}(GX_2 \otimes x_1)+\mu_{1,\lbrace 2 \rbrace}(GX_2 \otimes gx_1x_2)+\\
&+\frac{\gamma_2}{4}\mu_{1, \lbrace 1,2 \rbrace}(X_1X_2 \otimes x_1)+\mu_{0,\lbrace 1,2 \rbrace}(X_1X_2 \otimes gx_1x_2)+\\
&+\left(\mu_{1,\emptyset}+\frac{\lambda}{2}\mu_{1,\lbrace 1,2 \rbrace}\right)(GX_1X_2 \otimes 1_{E(2)})+\mu_{1,\lbrace 1,2 \rbrace}(GX_1X_2 \otimes x_1x_2)
\end{split}
\end{equation}
with $\mu_{j,P} \in \Bbbk$ such that
\begin{equation}\label{sys:mu}
\begin{cases}
&\mu_{1, \lbrace 1 \rbrace}\gamma_1+\mu_{1, \lbrace 2 \rbrace}\gamma_2=0,\\
&2\mu_{1,\lbrace 1 \rbrace}\alpha-\mu_{0, \lbrace 1,2 \rbrace}\gamma_2=0,\\
&2\mu_{1,\lbrace 2 \rbrace}\alpha+\mu_{0, \lbrace 1,2 \rbrace}\gamma_1=0.
\end{cases}
\end{equation}
The values on even monomials are
\begin{equation}\label{T1case2}
\begin{split}
\T(1_{E(2)})&= \eta(\emptyset;0,\emptyset)(1_A \otimes 1_{E(2)})-\frac{\gamma_2}{2}\eta(\emptyset;1,\lbrace 1,2 \rbrace)(1_A \otimes gx_1)+\frac{\gamma_1}{2}\eta(\emptyset;1,\lbrace 1,2 \rbrace) (1_A \otimes gx_2)+\\
&+\eta(\emptyset;0,\lbrace 1,2 \rbrace)(1_A \otimes x_1x_2)-\frac{\lambda}{2}\eta(\emptyset;1,\lbrace 1,2 \rbrace)(G\otimes g)+\eta(\emptyset;1,\lbrace 1 \rbrace)(G \otimes x_1)+\\
&+\eta(\emptyset;1,\lbrace 2 \rbrace)( G \otimes x_2)+\eta(\emptyset;1,\lbrace 1,2 \rbrace)(G \otimes gx_1x_2)+\frac{\gamma_2}{2}\eta(\emptyset;1,\lbrace 1,2 \rbrace) (X_1 \otimes g)+\\
&+\eta(\emptyset;0,\lbrace 1,2 \rbrace)(X_1 \otimes x_2)+\eta(\emptyset;1,\lbrace 1 \rbrace)(GX_1 \otimes 1_{E(2)})-\eta(\emptyset;1,\lbrace 1,2 \rbrace)(G X_1 \otimes gx_2)+\\
&-\frac{\gamma_1}{2}\eta(\emptyset;1,\lbrace 1,2 \rbrace)(X_2 \otimes g)-\eta(\emptyset;0,\lbrace 1,2 \rbrace)(X_2 \otimes x_1)+\eta(\emptyset;1,\lbrace 2 \rbrace) (G X_2 \otimes 1_{E(2)})+\\
&+\eta(\emptyset;1,\lbrace 1,2 \rbrace)(G X_2 \otimes gx_1)+\eta(\emptyset;0,\lbrace 1,2 \rbrace) (X_1X_2\otimes 1_{E(2)})+\\
&+\eta(\emptyset;1,\lbrace 1,2 \rbrace) (G X_1X_2 \otimes g),
\end{split}
\end{equation}
\begin{equation}\label{Tgx1case2}
\begin{split}
\T(gx_1)&=\left[\eta(\emptyset;0,\emptyset)+\eta(\lbrace 1 \rbrace;0,\lbrace 1 \rbrace)\right](1_A \otimes x_1)+\eta(\lbrace 1 \rbrace;0,\lbrace 2 \rbrace)(1_A \otimes x_2)+\\
&-\frac{\gamma_1}{2}\eta(\emptyset;1,\lbrace 1,2 \rbrace) (1_A \otimes gx_1x_2)+\eta(\lbrace 1 \rbrace;1,\emptyset) (G \otimes 1_{E(2)})+\\
&-\frac{\lambda}{2}\eta(\emptyset;1,\lbrace 1,2 \rbrace)(G\otimes gx_1)+\left[\eta(\lbrace 1 \rbrace;1,\lbrace 1,2 \rbrace)-\eta(\emptyset;1,\lbrace 2 \rbrace)\right]( G \otimes x_1x_2)+\\
&+\eta(\lbrace 1 \rbrace;0,\lbrace 1 \rbrace) (X_1 \otimes 1_{E(2)})+\frac{\gamma_2}{2}\eta(\emptyset;1,\lbrace 1,2 \rbrace) (X_1 \otimes gx_1)+\\
&-\eta(\emptyset;0,\lbrace 1,2 \rbrace)(X_1 \otimes x_1x_2)+\eta(\emptyset;1,\lbrace 1 \rbrace)(GX_1 \otimes x_1)+\\
&+\eta(\lbrace 1 \rbrace;1,\lbrace 1,2 \rbrace) (GX_1 \otimes x_2)+\eta(\emptyset;1,\lbrace 1,2 \rbrace)(G X_1 \otimes gx_1x_2)+\\
&+\eta(\lbrace 1 \rbrace;0,\lbrace 2 \rbrace)(X_2 \otimes 1_{E(2)})-\frac{\gamma_1}{2}\eta(\emptyset;1,\lbrace 1,2 \rbrace)(X_2 \otimes gx_1)+\\
&+\left[\eta(\emptyset;1,\lbrace 2 \rbrace)-\eta(\lbrace 1 \rbrace;1,\lbrace 1,2 \rbrace)\right](GX_2 \otimes x_1)+\eta(\emptyset;0,\lbrace 1,2 \rbrace) (X_1X_2\otimes x_1)+\\
&+\eta(\lbrace 1 \rbrace;1,\lbrace 1,2 \rbrace) (GX_1X_2 \otimes 1_{E(2)})+\eta(\emptyset;1,\lbrace 1,2 \rbrace) (G X_1X_2 \otimes gx_1),
\end{split}
\end{equation}
\begin{equation}\label{Tgx2case2}
\begin{split}
\T(gx_2)&=\eta(\lbrace 2 \rbrace;0,\lbrace 1 \rbrace) (1_A \otimes x_1)+\left[\eta(\emptyset;0,\emptyset)+\eta(\lbrace 2 \rbrace;0,\lbrace 2 \rbrace)\right] (1_A \otimes x_2)+\\
&-\frac{\gamma_2}{2}\eta(\emptyset;1,\lbrace 1,2 \rbrace)(1_A \otimes gx_1x_2)+\eta(\lbrace 2 \rbrace;1,\emptyset) (G \otimes 1_{E(2)})+\\
&-\frac{\lambda}{2}\eta(\emptyset;1,\lbrace 1,2 \rbrace)(G\otimes gx_2)+\left[\eta(\emptyset;1,\lbrace 1 \rbrace)
+\eta(\lbrace 2 \rbrace;1,\lbrace 1,2 \rbrace) \right](G \otimes x_1x_2)+\\
&+\eta(\lbrace 2 \rbrace;0,\lbrace 1 \rbrace) (X_1 \otimes 1_{E(2)})+\frac{\gamma_2}{2}\eta(\emptyset;1,\lbrace 1,2 \rbrace) (X_1 \otimes gx_2)+\\
&+\left[\eta(\emptyset;1,\lbrace 1 \rbrace)+\eta(\lbrace 2 \rbrace;1,\lbrace 1,2 \rbrace)\right] (GX_1 \otimes x_2)+\eta(\lbrace 2 \rbrace;0,\lbrace 2 \rbrace) (X_2 \otimes 1_{E(2)})+\\
&-\frac{\gamma_1}{2}\eta(\emptyset;1,\lbrace 1,2 \rbrace)(X_2 \otimes gx_2)-\eta(\emptyset;0,\lbrace 1,2 \rbrace)(X_2 \otimes x_1x_2)+\\
&-\eta(\lbrace 2 \rbrace;1,\lbrace 1,2 \rbrace)(GX_2 \otimes x_1)+\eta(\emptyset;1,\lbrace 2 \rbrace) (G X_2 \otimes x_2)+\\
&+\eta(\emptyset;1,\lbrace 1,2 \rbrace)(G X_2 \otimes gx_1x_2)+\eta(\emptyset;0,\lbrace 1,2 \rbrace) (X_1X_2\otimes x_2)+\\
&+\eta(\lbrace 2 \rbrace;1,\lbrace 1,2 \rbrace) (GX_1X_2 \otimes 1_{E(2)})+\eta(\emptyset;1,\lbrace 1,2 \rbrace) (G X_1X_2 \otimes gx_2),
\end{split}
\end{equation}
\begin{equation}\label{Tx1x2case2}
\begin{split}
\T(x_1x_2)&=\eta(\lbrace 1, 2\rbrace;0,\emptyset) (1_A \otimes 1_{E(2)})-\frac{\gamma_2}{2}\eta(\lbrace 1,2 \rbrace;1,\lbrace 1,2 \rbrace) (1_A \otimes gx_1)+\\
&+\frac{\gamma_1}{2}\eta(\lbrace 1,2 \rbrace;1,\lbrace 1,2 \rbrace)(1_A \otimes gx_2)+\sum_{F \subseteq \lbrace 1, 2 \rbrace}\eta(F;0,F) (1_A \otimes x_1x_2)+\\
&-\frac{\lambda}{2}\eta(\lbrace 1,2 \rbrace;1,\lbrace 1,2 \rbrace) (G \otimes g)+\left[\eta(\lbrace 1, 2\rbrace;1,\lbrace 1 \rbrace)-\eta(\lbrace 2 \rbrace;1,\emptyset)\right] (G \otimes x_1)+\\
&+\left[\eta(\lbrace 1, 2\rbrace;1,\lbrace 2 \rbrace)+\eta(\lbrace 1 \rbrace;1,\emptyset)\right] (G \otimes x_2)+\\
&+\left[\eta(\lbrace 1, 2\rbrace;1,\lbrace 1,2 \rbrace)-\frac{\lambda}{2}\eta(\emptyset;1,\lbrace 1,2 \rbrace)\right](G\otimes gx_1x_2)+\\
&+\frac{\gamma_2}{2}\eta(\lbrace 1,2 \rbrace;1,\lbrace 1,2 \rbrace)(X_1 \otimes g)+\eta(\lbrace 2 \rbrace;0,\lbrace 1 \rbrace) (X_1 \otimes x_1)+\\
&+\left[\eta(\lbrace 1 \rbrace;0,\lbrace 1 \rbrace)+\eta(\lbrace 1, 2\rbrace;0,\lbrace 1,2 \rbrace)\right] (X_1 \otimes x_2)+\frac{\gamma_2}{2}\eta(\emptyset;1,\lbrace 1,2 \rbrace) (X_1 \otimes gx_1x_2)+\\
&+\eta(\lbrace 1, 2\rbrace;1,\lbrace 1 \rbrace) (GX_1 \otimes 1_{E(2)})-\eta(\lbrace 1, 2\rbrace;1,\lbrace 1,2 \rbrace) (GX_1 \otimes gx_2)+\\
&+\left[\eta(\emptyset;1,\lbrace 1 \rbrace)+\eta(\lbrace 2 \rbrace;1,\lbrace 1,2 \rbrace)\right] (GX_1 \otimes x_1x_2)-\frac{\gamma_1}{2}\eta(\lbrace 1,2 \rbrace;1,\lbrace 1,2 \rbrace) (X_2 \otimes g)+\\
&-\left[\eta(\lbrace 2 \rbrace;0,\lbrace 2 \rbrace)+\eta(\lbrace 1, 2\rbrace;0,\lbrace 1,2 \rbrace)\right](X_2 \otimes x_1)+\eta(\lbrace 1 \rbrace;0,\lbrace 2 \rbrace)(X_2 \otimes x_2)+\\
&-\frac{\gamma_1}{2}\eta(\emptyset;1,\lbrace 1,2 \rbrace)(X_2 \otimes gx_1x_2)+\eta(\lbrace 1, 2\rbrace;1,\lbrace 2 \rbrace) (GX_2 \otimes 1_{E(2)})+\\
&+\eta(\lbrace 1, 2\rbrace;1,\lbrace 1,2 \rbrace)(GX_2 \otimes gx_1)+\left[\eta(\emptyset;1,\lbrace 2 \rbrace)-\eta(\lbrace 1 \rbrace;1,\lbrace 1,2 \rbrace)\right](GX_2 \otimes x_1x_2)+\\
&+\eta(\lbrace 1, 2\rbrace;0,\lbrace 1,2 \rbrace) (X_1X_2 \otimes 1_{E(2)})+\eta(\emptyset;0,\lbrace 1,2 \rbrace) (X_1X_2\otimes x_1x_2)+\\
&-\eta(\lbrace 2 \rbrace;1,\lbrace 1,2 \rbrace) (GX_1X_2 \otimes x_1)+\eta(\lbrace 1 \rbrace;1,\lbrace 1,2 \rbrace) (GX_1X_2 \otimes x_2)+\\
&+\eta(\lbrace 1, 2\rbrace;1,\lbrace 1,2 \rbrace) (GX_1X_2 \otimes g)+\eta(\emptyset;1,\lbrace 1,2 \rbrace) (G X_1X_2 \otimes gx_1x_2),
\end{split}
\end{equation}
with $\eta(P;k,Q) \in \Bbbk$ such that
\begin{equation}\label{sys:eta1}
\begin{cases}
\gamma_1\eta(\emptyset;1,\lbrace 1 \rbrace) +\gamma_2\eta(\emptyset;1,\lbrace 2 \rbrace)=0,\\
2\alpha \eta(\emptyset;1,\lbrace 1\rbrace)-\gamma_2\eta(\emptyset;0,\lbrace 1,2 \rbrace)=0,\\
2\alpha\eta(\emptyset;1,\lbrace 2 \rbrace)+\gamma_1\eta(\emptyset;0,\lbrace 1,2 \rbrace)=0,\\
2\beta_1\eta(\emptyset;1,\lbrace 1 \rbrace)+\lambda\eta(\emptyset;1,\lbrace 2 \rbrace)=0,\\
\gamma_1\eta(\emptyset;1,\lbrace 1 \rbrace)-\lambda\eta(\emptyset;0,\lbrace 1,2 \rbrace)=0,\\
2\beta_1\eta(\emptyset;0,\lbrace 1,2 \rbrace)+\gamma_1\eta(\emptyset;1,\lbrace 2 \rbrace)=0,\\
2\beta_2\eta(\emptyset;1,\lbrace 2 \rbrace)+\lambda\eta(\emptyset;1,\lbrace 1 \rbrace)=0,\\
2\beta_2\eta(\emptyset;0,\lbrace 1,2 \rbrace)-\gamma_2\eta(\emptyset;1,\lbrace 1 \rbrace)=0,
\end{cases}
\end{equation}
\begin{equation}\label{sys:eta2}
\begin{cases}
2\alpha \eta(\lbrace i \rbrace;1,\emptyset)+\gamma_1\eta(\lbrace i \rbrace;0,\lbrace 1 \rbrace)+\gamma_2\eta(\lbrace i \rbrace;0,\lbrace 2 \rbrace)=0,\\
\gamma_2\eta(\lbrace i \rbrace;1,\lbrace 1,2 \rbrace)=0,\\
\gamma_1\eta(\lbrace i \rbrace;1,\lbrace 1,2 \rbrace)=0,\\
\alpha\eta(\lbrace i \rbrace;1,\lbrace 1,2 \rbrace)=0,\\
\lambda\eta(\lbrace i \rbrace;1,\lbrace 1,2 \rbrace)=0,\\
2\beta_1\eta(\lbrace i \rbrace;0,\lbrace 1 \rbrace)+\gamma_1\eta(\lbrace i \rbrace;1,\emptyset)+\lambda\eta(\lbrace i \rbrace;0,\lbrace 2 \rbrace)=0,\\
\beta_1\eta(\lbrace i \rbrace;1,\lbrace 1,2 \rbrace)=0,\\
2\beta_2\eta(\lbrace i \rbrace;0,\lbrace 2 \rbrace)+\gamma_2\eta(\lbrace i \rbrace;1,\emptyset)+\lambda\eta(\lbrace i \rbrace;0,\lbrace 1 \rbrace)=0,\\
\beta_2\eta(\lbrace i \rbrace;1,\lbrace 1,2 \rbrace)=0,
\end{cases}
\end{equation}
for $i=1,2$, and
\begin{equation}\label{sys:eta3}
\begin{cases}
\gamma_1\eta(\lbrace 1,2 \rbrace;1,\lbrace 1 \rbrace)+ \gamma_2\eta(\lbrace 1,2 \rbrace;1,\lbrace 2 \rbrace)=0,\\
2\alpha\eta(\lbrace 1,2 \rbrace;1,\lbrace 1\rbrace)-\gamma_2\eta(\lbrace 1,2 \rbrace;0,\lbrace 1,2 \rbrace)=0,\\
2\alpha\eta(\lbrace 1,2 \rbrace;1,\lbrace 2 \rbrace)+\gamma_1\eta(\lbrace 1,2 \rbrace;0,\lbrace 1,2 \rbrace)=0,\\
2\beta_1\eta(\lbrace 1,2 \rbrace;1,\lbrace 1 \rbrace)+\lambda\eta(\lbrace 1,2 \rbrace;1,\lbrace 2 \rbrace)=0,\\
\gamma_1\eta(\lbrace 1,2 \rbrace;1,\lbrace 1 \rbrace)-\lambda\eta(\lbrace 1,2 \rbrace;0,\lbrace 1,2 \rbrace)=0,\\
2\beta_1\eta(\lbrace 1,2 \rbrace;0,\lbrace 1,2 \rbrace)+\gamma_1\eta(\lbrace 1,2 \rbrace;1,\lbrace 2 \rbrace)=0,\\
2\beta_2\eta(\lbrace 1,2 \rbrace;1,\lbrace 2 \rbrace)+\lambda\eta(\lbrace 1,2 \rbrace;1,\lbrace 1 \rbrace)=0,\\
2\beta_2\eta(\lbrace 1,2 \rbrace;0,\lbrace 1,2 \rbrace)-\gamma_2\eta(\lbrace 1,2 \rbrace;1,\lbrace 1 \rbrace)=0.
\end{cases}
\end{equation}

\begin{theorem}
The elements of $\int_{(E(2),\psi)}$ are $\Bbbk$-linear maps $\T:E(2) \rightarrow A \otimes E(2)^{\op}$ defined by \eqref{Tgcase2}--\eqref{Tgx1x2case2} and \eqref{T1case2}--\eqref{Tx1x2case2}
with $\mu_{j,P}, \eta(P;k,Q) \in \Bbbk$ satisfying \eqref{sys:mu}, \eqref{sys:eta1}, \eqref{sys:eta2} for $i=1,2$ and \eqref{sys:eta3}.
An integral of this kind is total if and only if the coefficients $\eta(\emptyset; k,Q)$ further satisfy the equalities described in Corollary \ref{cor:totint}.
\end{theorem}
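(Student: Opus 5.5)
This is a specialization of Theorem \ref{thm:mainint} to $n=2$, together with Corollary \ref{cor:totint}. So the plan is to substitute $n=2$ into the general recipe, solve the linear constraints, and expand everything in the bases $\mathbb{B}_A$ and $\mathbb{B}_H$. No new theory is needed.

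\textbf{Odd part.} First I treat the value $\T(g)$.
\begin{itemize}
\item I take the general form \eqref{eq:Tg} with $j\in\{0,1\}$ and $P\subseteq\{1,2\}$, which gives eight coefficients $\mu_{j,P}$.
\item I write out \eqref{eq:mu-0P} and \eqref{eq:mu-1P} for the four subsets $P$. For $|P|$ odd the left-hand side coefficient is $2$; for $|P|$ even it vanishes.
\item This should express $\mu_{0,\{i\}}$ and $\mu_{1,\{i\}}$ (for $|P|=1$) through $\gamma$'s times the $\mu_{\cdot,\{1,2\}}$. For example, $\mu_{0,\{1\}}=\tfrac{1}{2}(-1)^{S(\{1\},\{1,2\})}\gamma_2\mu_{1,\{1,2\}}$, which accounts for the $\tfrac{\gamma_2}{2}\mu_{1,\{1,2\}}$ terms in \eqref{Tgcase2}.
\item The remaining equations for $P=\emptyset,\{1,2\}$ together with the $|P|=1$ substitutions produce exactly the three relations \eqref{sys:mu}.
\end{itemize}
Expanding $\rho$ via \eqref{rhoGX} and applying $S^{-1}$ then gives \eqref{Tgcase2}.

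Next I apply the recursion \eqref{eq:T-odd} (second bullet of Theorem \ref{thm:mainint}):
\begin{itemize}
\item once with $P=\emptyset$ and $r=1,2$, which yields $\T(x_1)$ and $\T(x_2)$;
\item once with $P=\{1\}$ and $r=2$, which yields $\T(gx_1x_2)$.
\end{itemize}
Here the products $X_r\cdot(\ldots)$ and $(\ldots)\cdot X_r$ in $A$ are reduced with the Clifford relations, i.e.\ \cite[Lem. 3.8]{FR3}. The products in the $E(2)^{\op}$ factor need the swapped order, as the paper warns.

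\textbf{Even part.} For each $P\in\{\emptyset,\{1\},\{2\},\{1,2\}\}$ I write $\mathfrak{U}_P$ in the form \eqref{eq:UP-form}. I then impose \eqref{eq:etaG} and \eqref{eq:eta1.1}--\eqref{eq:eta0.2} for $i=1,2$ and all $Q\not\ni i$.

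The parity factors $(-1)^{|P|}\pm(-1)^{|Q|}$ decide, for each $P$, which $\eta$'s are free and which are determined.
\begin{itemize}
\item For $|P|$ even, the $\eta$'s with $|Q|+k$ odd must satisfy homogeneous relations; these give \eqref{sys:eta1} and \eqref{sys:eta3}.
\item In the same case, the coefficients $\eta(P;1,\{i\})$ and $\eta(P;0,\{1,2\})$ are tied by the $\gamma$-, $\beta$- and $\lambda$-relations. I also expect the $\eta(P;1,\emptyset)$- and $\eta(P;0,\{i\})$-type terms with odd combined degree to be forced to zero or expressed, which is how the $-\tfrac{\gamma_2}{2}\eta(\emptyset;1,\{1,2\})$ terms appear in \eqref{T1case2}.
\item For $|P|=1$, the analogous analysis gives \eqref{sys:eta2}, including the vanishing conditions on $\eta(\{i\};1,\{1,2\})$ multiplied by every structure constant.
\end{itemize}

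Then I assemble the even values via the third bullet of Theorem \ref{thm:mainint}:
\begin{align*}
\T(1)&=\mathfrak{U}_\emptyset,\\
\T(gx_i)&=\T(1)(1_A\otimes x_i)\cdot(\pm1)+\mathfrak{U}_{\{i\}},\\
\T(x_1x_2)&=\text{combination of }\T(1),\ \T(gx_1),\ \T(gx_2)\text{ and }\mathfrak{U}_{\{1,2\}}.
\end{align*}
This reproduces \eqref{T1case2}--\eqref{Tx1x2case2}, for instance the coefficient $\sum_F\eta(F;0,F)$ on $1_A\otimes x_1x_2$. The totality statement is read off directly from Corollary \ref{cor:totint}.

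\textbf{Main obstacle.} The hard part is purely bookkeeping: keeping the signs $(-1)^{S(F,P)}$ straight and reducing products like $GX_1X_2\cdot X_1$ in the Clifford algebra. The latter produces the $\lambda$-, $\beta$- and $\gamma$-corrections, for example in $\T(gx_1x_2)$. I would organize the computation by first tabulating $\rho$ and $(\mathrm{Id}\otimes S^{-1})\rho$ on all eight basis monomials of $A$ and the left and right multiplications by $X_1,X_2$. After that, every term becomes a lookup followed by collecting coefficients.
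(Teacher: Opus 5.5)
Your plan is correct and follows the paper's own route: the paper obtains this theorem simply by specializing Theorem \ref{thm:mainint} and Corollary \ref{cor:totint} to $n=2$ and expanding in the bases $\mathbb{B}_A$, $\mathbb{B}_H$. One bookkeeping slip to fix when you carry it out: for $|P|$ even, the homogeneous systems \eqref{sys:eta1} and \eqref{sys:eta3} constrain the coefficients with $|Q|+k$ \emph{even}, namely $\eta(P;1,\lbrace i \rbrace)$ and $\eta(P;0,\lbrace 1,2 \rbrace)$, as your second bullet correctly says; among those with $|Q|+k$ odd, $\eta(P;1,\lbrace 1,2 \rbrace)$ stays free and $\eta(P;1,\emptyset)$, $\eta(P;0,\lbrace i \rbrace)$ are determined by it, while $\mu_{1,\lbrace i \rbrace}$ is only constrained through $\alpha$ rather than expressed, which is why \eqref{sys:mu} keeps those two relations.
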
 

\begin{remark}
A partial result when $n=2$ was already obtained in \cite{mt3}, under the further assumption that $\alpha,\beta_1,\beta_2, \gamma_1, \gamma_2, \lambda \neq 0$. As in \cite{mt2}, the authors work with Casimir morphisms instead of integrals.
\end{remark}

\subsubsection{Integrals satisfying the h-property}
Let $\T:E(2) \rightarrow A \otimes E(2)^{\op}$ be a total integral on $(E(2),\psi)$. To determine which integrals satisfy the h-property, we write down the values of $(\mathrm{Id}_A \otimes \varepsilon)\T(h)$ for each element of the canonical basis of $E(2)$, since they will be useful for calculations.
\begin{equation}\label{eq:epsTg}
\begin{split}
(\mathrm{Id}_A \otimes \varepsilon)\T(g)&=\mu_{0,\emptyset}1_A+\mu_{1,\emptyset}G +\frac{\gamma_2}{2}\mu_{1, \lbrace 1,2 \rbrace}X_1+\mu_{1,\lbrace 1 \rbrace}GX_1-\frac{\gamma_1}{2}\mu_{1, \lbrace 1,2 \rbrace}X_2+\\
&+\mu_{1,\lbrace 2 \rbrace}GX_2+\mu_{0,\lbrace 1,2 \rbrace}X_1X_2+\mu_{1,\lbrace 1,2 \rbrace}GX_1X_2,
\end{split}
\end{equation}
\begin{equation}\label{eq:epsTx1}
\begin{split}
(\mathrm{Id}_A \otimes \varepsilon)\T(x_1)&= -\frac{\gamma_1}{2}\left(\mu_{1,\emptyset}+\frac{\lambda}{2}\mu_{1, \lbrace 1,2 \rbrace}\right)1_A+\left(\frac{\lambda}{2}\mu_{1,\lbrace 2 \rbrace}+\beta_1\mu_{1,\lbrace 1 \rbrace}\right)G+\\
&+\left(\frac{\lambda}{2}\mu_{0,\lbrace 1,2 \rbrace}-\frac{\gamma_1}{2}\mu_{1,\lbrace 1 \rbrace}\right)X_1+\left(\mu_{1,\emptyset}+\frac{\lambda}{2}\mu_{1,\lbrace 1,2 \rbrace}\right)GX_1+\\
&-\left(\frac{\gamma_1}{2}\mu_{1,\lbrace 2 \rbrace}+\beta_1\mu_{0,\lbrace 1,2 \rbrace}\right)X_2,
\end{split}
\end{equation}
\begin{equation}\label{eq:epsTx2}
\begin{split}
(\mathrm{Id}_A \otimes \varepsilon)\T(x_2)&=-\frac{\gamma_2}{2}\left(\mu_{1,\emptyset}+\frac{\lambda}{2}\mu_{1, \lbrace 1,2 \rbrace}\right)1_A+\left(\frac{\lambda}{2}\mu_{1,\lbrace 1 \rbrace}+\beta_2\mu_{1,\lbrace 2 \rbrace}\right)G+\\
&+\left(-\frac{\gamma_2}{2}\mu_{1,\lbrace 1 \rbrace}+\beta_2\mu_{0,\lbrace 1,2 \rbrace}\right)X_1-\left(\frac{\gamma_2}{2}\mu_{1,\lbrace 2 \rbrace}+\frac{\lambda}{2}\mu_{0,\lbrace 1,2 \rbrace}\right)X_2+\\
&+\left(\frac{\lambda}{2}\mu_{1,\lbrace 1,2 \rbrace}+\mu_{1,\emptyset}\right)GX_2,
\end{split}
\end{equation}
\begin{equation}\label{eq:epsTgx1x2}
\begin{split}
(\mathrm{Id}_A \otimes \varepsilon)\T(gx_1x_2)&=\frac{1}{4}\left[\left(2\beta_1\gamma_2-\gamma_1\lambda\right)\mu_{1,\lbrace 1 \rbrace}+\left(\gamma_2\lambda-2\beta_2\gamma_1\right)\mu_{1,\lbrace 2 \rbrace}+\left(\lambda^2-4\beta_1\beta_2\right)\mu_{0,\lbrace 1,2 \rbrace}\right]1_A+\\
&-\frac{\lambda}{2}\left(\mu_{1,\emptyset}+\frac{\lambda}{2}\mu_{1,\lbrace 1,2 \rbrace}\right)G+\frac{\gamma_2}{2}\left(\mu_{1,\emptyset}+\frac{\lambda}{2}\mu_{1,\lbrace 1,2 \rbrace}\right)X_1+\\
&-\frac{\gamma_1}{2}\left(\mu_{1,\emptyset}+\frac{\lambda}{2}\mu_{1, \lbrace 1,2 \rbrace}\right)X_2+\left(\mu_{1,\emptyset}+\frac{\lambda}{2}\mu_{1,\lbrace 1,2 \rbrace}\right)GX_1X_2,
\end{split}
\end{equation}
with $\mu_{j,P} \in \Bbbk$ satisfying \eqref{sys:mu}. The values on even monomials are
\begin{equation}\label{eq:epsT1}
(\mathrm{Id}_A \otimes \varepsilon)\T(1_{E(2)})= 1_A,
\end{equation}
\begin{equation}\label{eq.epsTgx1}
(\mathrm{Id}_A \otimes \varepsilon)\T(gx_1)=\eta(\lbrace 1 \rbrace;1,\emptyset)G+\eta(\lbrace 1 \rbrace;0,\lbrace 1 \rbrace) X_1+\eta(\lbrace 1 \rbrace;0,\lbrace 2 \rbrace)X_2+\eta(\lbrace 1 \rbrace;1,\lbrace 1,2 \rbrace)GX_1X_2,
\end{equation}
\begin{equation}\label{eq:epsTgx2}
(\mathrm{Id}_A \otimes \varepsilon)\T(gx_2)=\eta(\lbrace 2 \rbrace;1,\emptyset) G+\eta(\lbrace 2 \rbrace;0,\lbrace 1 \rbrace) X_1+\eta(\lbrace 2 \rbrace;0,\lbrace 2 \rbrace) X_2+\eta(\lbrace 2 \rbrace;1,\lbrace 1,2 \rbrace)GX_1X_2,
\end{equation}
\begin{equation}\label{eq:epsTx1x2}
\begin{split}
(\mathrm{Id}_A \otimes \varepsilon)\T(x_1x_2)&=\eta(\lbrace 1, 2\rbrace;0,\emptyset)1_A-\frac{\lambda}{2}\eta(\lbrace 1,2 \rbrace;1,\lbrace 1,2 \rbrace)G+\frac{\gamma_2}{2}\eta(\lbrace 1,2 \rbrace;1,\lbrace 1,2 \rbrace)X_1+\\
&+\eta(\lbrace 1, 2\rbrace;1,\lbrace 1 \rbrace) GX_1-\frac{\gamma_1}{2}\eta(\lbrace 1,2 \rbrace;1,\lbrace 1,2 \rbrace)X_2+\eta(\lbrace 1, 2\rbrace;1,\lbrace 2 \rbrace) GX_2+\\
&+\eta(\lbrace 1, 2\rbrace;0,\lbrace 1,2 \rbrace)X_1X_2+\eta(\lbrace 1, 2\rbrace;1,\lbrace 1,2 \rbrace)GX_1X_2,
\end{split}
\end{equation}
with $\eta(P;k,Q) \in \Bbbk$ verifying \eqref{sys:eta2} for $i=1,2$ and \eqref{sys:eta3}. Our first goal is to show that if $\T$ sastisfies the h-property, then $\mu_{0,\lbrace 1,2 \rbrace}=0$. 
\begin{lemma}\label{lemma:useful1}
Let $\T:E(2) \rightarrow A \otimes E(2)^{\op}$ be a total integral satisfying the h-property. Then $\mu_{0,\lbrace 1,2 \rbrace}=0$.
\end{lemma}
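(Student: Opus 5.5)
The plan is to use Proposition \ref{prop:h-prop}: a total integral $\T$ has the h-property if and only if \eqref{eq:Tgg} and \eqref{eq:TgxP} hold. I would combine these with the explicit values \eqref{eq:epsTg}--\eqref{eq:epsTx1x2} and the linear constraints \eqref{sys:mu}. Write $u:=(\mathrm{Id}_A \otimes \varepsilon)\T(g)$ and $t_P:=(\mathrm{Id}_A\otimes\varepsilon)\T(x_P)$. The relevant identities are then:
\begin{itemize}
\item $u^2=1_A$, from \eqref{eq:Tgg};
\item $t_{\lbrace i\rbrace}u=(\mathrm{Id}_A\otimes\varepsilon)\T(gx_i)$ for $i=1,2$, and $t_{\lbrace 1,2\rbrace}u=(\mathrm{Id}_A\otimes\varepsilon)\T(gx_1x_2)$, from \eqref{eq:TgxP}.
\end{itemize}

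The key structural input is a parity comparison using the $\mathbb{Z}_2$-grading of $A$. By \eqref{eq.epsTgx1} and \eqref{eq:epsTgx2}, the elements $(\mathrm{Id}_A\otimes\varepsilon)\T(gx_i)$ have no components on the basis vectors $1_A$, $GX_1$, $GX_2$, $X_1X_2$. So in the product $t_{\lbrace i\rbrace}u$, computed from \eqref{eq:epsTx1}, \eqref{eq:epsTx2} and \eqref{eq:epsTg}, the coefficients of these four basis vectors must vanish. Likewise, by \eqref{eq:epsTgx1x2}, the coefficients of $GX_1$, $GX_2$ and $X_1X_2$ in $(\mathrm{Id}_A\otimes\varepsilon)\T(gx_1x_2)$ are zero. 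I would first extract the $X_1X_2$- and $1_A$-coefficients of $t_{\lbrace 1\rbrace}u$ and $t_{\lbrace 2\rbrace}u$.

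The product $u\cdot u$ also gives useful relations. Its $X_1X_2$-coefficient is, to leading order, $2\mu_{0,\emptyset}\mu_{0,\lbrace 1,2\rbrace}$ plus cross terms involving $\mu_{1,\lbrace 1\rbrace}$, $\mu_{1,\lbrace 2\rbrace}$, $\mu_{1,\emptyset}$, $\mu_{1,\lbrace 1,2\rbrace}$ and the structure constants. Its $G$- and $GX_i$-coefficients similarly constrain products with $\mu_{0,\emptyset}$. From these I expect to show that either $\mu_{0,\emptyset}=\pm1$ and the remaining $\mu$'s are forced to vanish, or $\mu_{0,\emptyset}=0$.

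In the first case, the vanishing $X_1X_2$-coefficient of $t_{\lbrace 1\rbrace}u$ directly yields a multiple $\pm\beta_1\mu_{0,\lbrace 1,2\rbrace}$ (or, for $t_{\lbrace 2\rbrace}u$, $\pm\mu_{0,\lbrace 1,2\rbrace}$ coming from the $(1_A\otimes gx_2)$-type terms). This should give $\mu_{0,\lbrace 1,2\rbrace}=0$ once combined with \eqref{sys:mu}. In the second case, $\mu_{0,\emptyset}=0$, I would use \eqref{sys:mu} to substitute $2\alpha\mu_{1,\lbrace 1\rbrace}=\gamma_2\mu_{0,\lbrace 1,2\rbrace}$ and $2\alpha\mu_{1,\lbrace 2\rbrace}=-\gamma_1\mu_{0,\lbrace 1,2\rbrace}$. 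Feeding these into the coefficient equations should reduce everything to an identity of the form $c\,\mu_{0,\lbrace 1,2\rbrace}^2=0$ or $c\,\mu_{0,\lbrace 1,2\rbrace}=0$, with $c$ a nonzero constant such as $2$. If $\alpha=0$, I would use instead that $\gamma_1\mu_{0,\lbrace 1,2\rbrace}=\gamma_2\mu_{0,\lbrace 1,2\rbrace}=0$, and split further on whether the $\gamma_i$ vanish.

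The main obstacle is the bookkeeping in the products $t_{\lbrace i\rbrace}u$ and $u^2$, which requires repeated use of \cite[Lem. 3.8]{FR3}. The real difficulty is choosing which coefficient yields a constant multiple of $\mu_{0,\lbrace 1,2\rbrace}$ that is independent of $\alpha,\beta_i,\gamma_i,\lambda$. I would first try the $X_1X_2$-coefficient of $t_{\lbrace 1\rbrace}u+t_{\lbrace 2\rbrace}u$ combined with the $X_1X_2$-coefficient of $u^2$. I expect the degenerate case $\alpha=\gamma_1=\gamma_2=0$ to need a separate check using \eqref{eq:TgxP} with $P=\lbrace 1,2\rbrace$.
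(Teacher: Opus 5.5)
Your plan has a genuine gap. From \eqref{eq:TgxP} with $P=\{i\}$ you only extract parity, namely that $t_{\{i\}}u$ has no components on $1_A,GX_1,GX_2,X_1X_2$. Combined with $u^2=1_A$, \eqref{sys:mu} and the shapes \eqref{eq:epsTg}--\eqref{eq:epsTx1x2}, this does not force $\mu_{0,\{1,2\}}=0$.

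Here is a counterexample. Take $\beta_1=\beta_2=\gamma_1=\gamma_2=0$, $\lambda=2$, $\alpha$ arbitrary, $\mu_{0,\emptyset}=-1$, $\mu_{0,\{1,2\}}=1$, and all other $\mu_{j,P}=0$. Then:
\begin{enumerate}[(i)]
\item \eqref{sys:mu} holds.
\item $u=X_1X_2-1_A$ satisfies $u^2=1_A$, because $(X_1X_2)^2=X_1(2-X_1X_2)X_2=2X_1X_2$.
\item \eqref{eq:epsTx1} and \eqref{eq:epsTx2} give $t_{\{1\}}=X_1$ and $t_{\{2\}}=-X_2$. Hence $t_{\{1\}}u=-X_1$ and $t_{\{2\}}u=-X_2$ are purely odd.
\item \eqref{eq:epsTgx1x2} gives $(\mathrm{Id}_A\otimes\varepsilon)\T(gx_1x_2)=1_A$. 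This equals $t_{\{1,2\}}u$ for $t_{\{1,2\}}:=u$, which has the shape \eqref{eq:epsTx1x2}.
\end{enumerate}
So every relation you list holds, yet $\mu_{0,\{1,2\}}=1$. The example also refutes your expected dichotomy, since $\mu_{0,\emptyset}=-1$ while $\mu_{0,\{1,2\}}\neq0$.

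Your branch $\mu_{0,\emptyset}=0$ fails in the same way. With $\beta_1=1$, $\beta_2=-1$, $\gamma_1=\gamma_2=\lambda=0$ and $u=X_1X_2$, one gets $u^2=1_A$, $t_{\{1\}}u=-X_1$ and $t_{\{2\}}u=-X_2$.

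The mechanism is general. Whenever $u$ is even, $t_{\{i\}}=\frac12(uX_i-X_iu)$ by \eqref{eq:aux77} is odd. Then $t_{\{i\}}u$ is automatically odd, so no coefficient comparison of the kind you propose can produce $c\,\mu_{0,\{1,2\}}=0$.

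The missing ingredient is the full integral condition on $\T(gx_i)$. The element $(\mathrm{Id}_A\otimes\varepsilon)\T(gx_i)$ is not merely odd: by \eqref{eq:UP_G} and \eqref{eq:UP_X} with $|P|$ odd, it anticommutes with $G$, $X_1$ and $X_2$. This is exactly the linear system \eqref{sys:eta2}. In the first example, $X_2(-X_1)+(-X_1)X_2=-\lambda=-2\neq0$. In the second, $X_1(-X_1)+(-X_1)X_1=-2\beta_1=-2\neq0$. Equivalently, the value $\eta(\{1\};0,\{1\})=-1$ violates \eqref{sys:eta2}.

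This is how the paper argues. It assumes $\mu_{0,\{1,2\}}\neq0$ and reads \eqref{eq:aux6} off the $X_1X_2$- and $GX_1X_2$-coefficients of $u^2$. In the branches $\alpha=0$ and $2\mu_{0,\emptyset}+\lambda\mu_{0,\{1,2\}}=0$, $u$ turns out to be even and $t_{\{1\}}u$ odd, which is precisely your blind spot. The contradiction there comes from reading off the $\eta(\{1\};k,Q)$ from $t_{\{1\}}u$ and inserting them into \eqref{sys:eta2} with $i=1$. The case $\alpha\notin(\Bbbk^\times)^2$ dies from $\alpha\mu_{1,\{1,2\}}^2=\mu_{0,\{1,2\}}^2$. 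Only in the last branch do the vanishing $1_A$-, $GX_1$- and $GX_2$-components of $t_{\{1\}}u$ suffice.

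To repair your argument, you must bring in the constraints \eqref{sys:eta2} (and, for $t_{\{1,2\}}$, \eqref{sys:eta3}). Once you do, you are essentially led into the paper's case analysis.
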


\begin{proof}
Since $\T$ has the h-property, it verifies \eqref{eq:Tgg} and \eqref{eq:TgxP} for $P=\lbrace 1 \rbrace, \lbrace 2 \rbrace, \lbrace 1,2 \rbrace$. We assume that $\mu_{0,\lbrace 1,2 \rbrace} \neq 0$ and show that this assumption necessarily leads to a contradiction. With a bit of work (or the help of a software) one can check that the coefficients of $(\mathrm{Id}_A \otimes \varepsilon)\left(\T(g)\T(g)\right)$ relative to $X_1X_2$ and $GX_1X_2$ are respectively $\lambda \mu_{0,\lbrace 1,2 \rbrace}^2+2\mu_{0,\emptyset}\mu_{0,\lbrace 1,2 \rbrace}+\alpha\lambda\mu_{1,\lbrace 1,2 \rbrace}^2+2\alpha \mu_{1,\emptyset}\mu_{1,\lbrace 1,2 \rbrace}$ and $2 \left(\mu_{0,\emptyset}\mu_{1, \lbrace 1,2 \rbrace}+\mu_{1,\emptyset}\mu_{0, \lbrace 1,2 \rbrace}+\lambda\mu_{0, \lbrace 1,2 \rbrace}\mu_{1, \lbrace 1,2 \rbrace} \right)$, thanks to \eqref{sys:mu}. By \eqref{eq:Tgg}, they both must vanish, hence
\begin{equation}\label{eq:aux6}
\begin{cases}
\mu_{1,\emptyset}=-\lambda\mu_{1, \lbrace 1,2 \rbrace}-\frac{\mu_{0,\emptyset}\mu_{1, \lbrace 1,2 \rbrace}}{\mu_{0, \lbrace 1,2 \rbrace}},\\
\left(\alpha \mu_{1,\lbrace 1,2 \rbrace}^2-\mu_{0,\lbrace 1,2 \rbrace}^2 \right)\left(2 \mu_{0,\emptyset}+\lambda \mu_{0,\lbrace 1,2 \rbrace} \right)=0.
\end{cases}
\end{equation}
To proceed further, we show that $\alpha$ must be non-zero. If $\alpha=0$ we immediately get $\gamma_1=\gamma_2=0$ from \eqref{sys:mu} and $\mu_{0,\emptyset}=-\frac{\lambda}{2}\mu_{0,\lbrace 1,2 \rbrace}$, $\mu_{1,\emptyset}=-\frac{\lambda}{2}\mu_{1,\lbrace 1,2 \rbrace}$ thanks to \eqref{eq:aux6}. Then
\begin{equation}\label{eq:contradiction1}
(\mathrm{Id}_A \otimes \varepsilon)\left(\T(g)\T(g)\right)=(\lambda^2-4\beta_1\beta_2)\left[\frac{\mu_{0,\lbrace 1,2 \rbrace}^2}{4}1_A+\frac{\mu_{0,\lbrace 1,2 \rbrace}\mu_{1,\lbrace 1,2 \rbrace}}{2}G\right]
\end{equation}
and \eqref{eq:Tgg} implies $\mu_{1,\lbrace 1,2 \rbrace}=0$. Consequently
\[
(\mathrm{Id}_A \otimes \varepsilon)\left(\T(x_1)\T(g)\right)=-\frac{\mu_{1,\lbrace 2 \rbrace}}{\mu_{0,\lbrace 1,2 \rbrace}} G-X_1
\]
and \eqref{eq:TgxP} with $P=\lbrace 1 \rbrace$ implies $\eta(\lbrace 1 \rbrace; 1, \emptyset)=-\frac{\mu_{1,\lbrace 2 \rbrace}}{\mu_{0,\lbrace 1,2 \rbrace}}$, $\eta(\lbrace 1 \rbrace; 0, \lbrace 1 \rbrace)=-1$ and $\eta(\lbrace 1 \rbrace; 0, \lbrace 2 \rbrace)=\eta(\lbrace 1 \rbrace; 1, \lbrace 1,2 \rbrace)=0$, so that \eqref{sys:eta2} with $i=1$ yields $\beta_1=\lambda=0$. In view of \eqref{eq:contradiction1} this contradicts \eqref{eq:Tgg}, therefore $\alpha$ cannot be zero. Accordingly \eqref{sys:mu} reduces to $\mu_{1,\lbrace 1 \rbrace}=\frac{\gamma_2}{2 \alpha} \mu_{0,\lbrace 1,2 \rbrace}$ and $\mu_{1,\lbrace 2 \rbrace}=-\frac{\gamma_1}{2 \alpha} \mu_{0,\lbrace 1,2 \rbrace}$. Now we show that $2\mu_{0,\emptyset}+\lambda \mu_{0,\lbrace 1,2 \rbrace} \neq 0$. If $\mu_{0,\emptyset}=-\frac{\lambda}{2} \mu_{0,\lbrace 1,2 \rbrace}$, \eqref{eq:aux6} yields $\mu_{1,\emptyset}=-\frac{\lambda}{2} \mu_{1,\lbrace 1,2 \rbrace}$ and \eqref{eq:Tgg} boils down to
\[
\begin{cases}
\frac{\mu_{0,\lbrace 1,2 \rbrace}^2+\alpha\mu_{1,\lbrace 1,2 \rbrace}^2}{4\alpha}\left(\beta_2\gamma_1^2-\gamma_1\gamma_2\lambda +\beta_1 \gamma_2^2+\alpha\lambda^2-4\alpha \beta_1 \beta_2 \right)=1,\\
\frac{\mu_{0,\lbrace 1,2 \rbrace}\mu_{1,\lbrace 1,2 \rbrace}}{2 \alpha}\left(\beta_2\gamma_1^2-\gamma_1\gamma_2\lambda +\beta_1 \gamma_2^2+\alpha\lambda^2-4\alpha \beta_1 \beta_2 \right)=0,
\end{cases}
\]
i.e. $\mu_{1,\lbrace 1,2 \rbrace}=0$ and
\begin{equation}\label{eq:contradiction2}
\mu_{0,\lbrace 1,2 \rbrace}^2\left(\beta_2\gamma_1^2-\gamma_1\gamma_2\lambda +\beta_1 \gamma_2^2+\alpha\lambda^2-4\alpha \beta_1 \beta_2 \right)=4\alpha.
\end{equation}
Then $(\mathrm{Id}_A \otimes \varepsilon)\left(\T(x_1)\T(g)\right)=\frac{\gamma_1}{2\alpha}G-X_1 $ and \eqref{eq:TgxP} with $P=\lbrace 1 \rbrace$ implies $\eta(\lbrace 1 \rbrace; 1, \emptyset)=\frac{\gamma_1}{2\alpha}$, $\eta(\lbrace 1 \rbrace; 0, \lbrace 1 \rbrace)=-1$ and $\eta(\lbrace 1 \rbrace; 0, \lbrace 2 \rbrace)=\eta(\lbrace 1 \rbrace; 1, \lbrace 1,2 \rbrace)=0$, so that \eqref{sys:eta2} with $i=1$ yields $\gamma_1^2-4 \alpha \beta_1=0$, $\gamma_1\gamma_2-2 \alpha\lambda=0$. There follows
\[\beta_2\gamma_1^2-\gamma_1\gamma_2\lambda +\beta_1 \gamma_2^2+\alpha\lambda^2-4\alpha \beta_1 \beta_2=\beta_2\gamma_1^2-\frac{\gamma_1^2\gamma_2^2}{2\alpha} +\frac{\gamma_1^2\gamma_2^2}{4\alpha}+\frac{\gamma_1^2\gamma_2^2}{4\alpha}-\beta_2\gamma_1^2=0
\]
and \eqref{eq:contradiction2} becomes $0=4\alpha \neq 0$, which is a contradiction. We deduce that $2\mu_{0,\emptyset}+\lambda \mu_{0,\lbrace 1,2 \rbrace} \neq 0$ and \eqref{eq:aux6} forces $\alpha \mu_{1,\lbrace 1,2 \rbrace}^2=\mu_{0,\lbrace 1,2 \rbrace}^2$. If $\alpha \notin (\Bbbk^{\times})^2$ the proof has reached its end, otherwise we calculate that
\begin{align*}
&(\mathrm{Id}_A \otimes \varepsilon)\left(\T(g)\T(g)\right)\\
&=\left[ \frac{\beta_2\gamma_1^2-\gamma_1\gamma_2\lambda+\beta_1\gamma_2^2+\alpha \lambda^2 -4\alpha \beta_1 \beta_2}{2}\mu_{1,\lbrace 1,2 \rbrace}^2+2 \left(\mu_{0,\emptyset}+\frac{\lambda}{2} \mu_{0,\lbrace 1,2 \rbrace}\right)^2\right]1_A+\\
&+2\frac{\mu_{1,\lbrace 1,2 \rbrace}}{\mu_{0,\lbrace 1,2 \rbrace}}\left[\frac{\beta_2\gamma_1^2-\gamma_1\gamma_2\lambda +\beta_1\gamma_2^2+\alpha \lambda^2-4\alpha\beta_1\beta_2}{4}\mu_{1,\lbrace 1,2 \rbrace}^2-\left(\mu_{0,\emptyset} +\frac{\lambda}{2} \mu_{0,\lbrace 1,2 \rbrace} \right)^2\right]G,
\end{align*}
which, given \eqref{eq:Tgg}, yields
\begin{equation}\label{eq:aux7}
\begin{cases}
 (\beta_2\gamma_1^2-\gamma_1\gamma_2\lambda+\beta_1\gamma_2^2+\alpha \lambda^2 -4\alpha \beta_1 \beta_2)\mu_{1,\lbrace 1,2 \rbrace}^2=1,\\
\left(\mu_{0,\emptyset} +\frac{\lambda}{2} \mu_{0,\lbrace 1,2 \rbrace} \right)^2=\frac{1}{4}.
\end{cases}
\end{equation}
Then we have a solution only if $\beta_2\gamma_1^2-\gamma_1\gamma_2\lambda+\beta_1\gamma_2^2+\alpha \lambda^2 -4\alpha \beta_1 \beta_2 \in (\Bbbk^{\times})^2$ and in this case
\begin{align*}
(\mathrm{Id}_A \otimes \varepsilon)\left(\T(x_1)\T(g)\right)&=\frac{(2\beta_1\gamma_2-\gamma_1\lambda)(2\mu_{0,\emptyset}+\lambda \mu_{0,\lbrace 1,2 \rbrace})}{4}\mu_{1,\lbrace 1,2 \rbrace}1_A +\frac{\gamma_1}{4\alpha}G-\frac{1}{2}X_1+\\
&-\frac{(\gamma_1\gamma_2-2\alpha\lambda)(2\mu_{0,\emptyset}+\lambda \mu_{0,\lbrace 1,2 \rbrace})}{4\alpha}\mu_{1,\lbrace 1,2 \rbrace}GX_1+\\
&+\frac{(\gamma_1^2-4\alpha\beta_1)(2\mu_{0,\emptyset}+\lambda \mu_{0,\lbrace 1,2 \rbrace})}{4\alpha}\mu_{1,\lbrace 1,2 \rbrace}GX_2.
\end{align*}
Equality \eqref{eq:TgxP} with $P=\lbrace 1 \rbrace$ implies
\[
\begin{cases}
(2\beta_1\gamma_2-\gamma_1\lambda)(2\mu_{0,\emptyset}+\lambda \mu_{0,\lbrace 1,2 \rbrace})\mu_{1,\lbrace 1,2 \rbrace}=0,\\
(\gamma_1\gamma_2-2\alpha\lambda)(2\mu_{0,\emptyset}+\lambda \mu_{0,\lbrace 1,2 \rbrace})\mu_{1,\lbrace 1,2 \rbrace}=0,\\
(\gamma_1^2-4\alpha\beta_1)(2\mu_{0,\emptyset}+\lambda \mu_{0,\lbrace 1,2 \rbrace})\mu_{1,\lbrace 1,2 \rbrace}=0.
\end{cases}
\]
and, since $2\mu_{0,\emptyset}+\lambda \mu_{0,\lbrace 1,2 \rbrace} \neq 0$ and $\mu_{1,\lbrace 1,2 \rbrace}^2=\frac{\mu_{0,\lbrace 1,2 \rbrace}^2}{\alpha} \neq 0$, we get $\lambda=\frac{\gamma_1\gamma_2}{2 \alpha}$ and $\beta_1=\frac{\gamma_1^2}{4\alpha}$.
Thus the first equality of \eqref{eq:aux7} reads
\[
1=(\beta_2\gamma_1^2-\gamma_1\gamma_2\lambda+\beta_1\gamma_2^2+\alpha \lambda^2 -4\alpha \beta_1 \beta_2)\mu_{1,\lbrace 1,2 \rbrace}^2= (\beta_2\gamma_1^2-\gamma_1^2\beta_2)\mu_{1,\lbrace 1,2 \rbrace}^2=0,\\
\]
which is clearly a contradiction. We can conclude that if $\T: E(2) \rightarrow A \otimes E(2)^{\op}$ is a total integral satisfying the h-property, the coefficient $\mu_{0,\lbrace 1,2 \rbrace}$ must vanish.
\end{proof}
Now we can explicitly describe the integrals on $(E(2),\psi)$ that verify the h-property.
\begin{theorem}
An integral $\T:E(2) \rightarrow A \otimes E(2)^{\op}$ on $(E(2),\psi)$ satisfies the h-property if and only if either
\begin{enumerate}[1)]
\itemsep1em
\item $\T(h)=1_A \otimes S^{-1}(h)$ for every $h \in E(2)$ (the canonical total integral $\underline{\T}$).\label{hprop_case1}
\item $\T(g^jx_P)=(-1)^{j+P}1_A \otimes S^{-1}(g^jx_P)$ for every $g^jx_P$.\label{hprop_case2}
\item $\begin{aligned}[t]
&\T(1_{E(2)})=1_A \otimes 1_{E(2)}, &&\T(g)=\textbf{A}(G \otimes 1_{E(2)}),\\
&\T(x_1)=\textbf{A}\left[ -\frac{\gamma_1}{2}(1_A \otimes 1_{E(2)})+GX_1 \otimes 1_{E(2)}\right], &&\T(gx_1)=\frac{\gamma_1}{2\alpha} (G \otimes 1_{E(2)})-X_1 \otimes 1_{E(2)},\\
&\T(x_2)=\textbf{A}\left[-\frac{\gamma_2}{2}(1_A \otimes 1_{E(2)})+GX_2 \otimes 1_{E(2)}\right], &&\T(gx_2)=\frac{\gamma_2}{2\alpha} (G \otimes 1_{E(2)})-X_2 \otimes 1_{E(2)},\\
\end{aligned}$\\[1.2ex]
$\begin{aligned}[t]
&\T(x_1x_2)=-\frac{\gamma_1\gamma_2}{4\alpha}(1_A \otimes 1_{E(2)})+\frac{\gamma_2}{2\alpha} (GX_1 \otimes 1_{E(2)})-\frac{\gamma_1}{2\alpha}(GX_2 \otimes 1_{E(2)})+X_1X_2 \otimes 1_{E(2)},\\
&\T(gx_1x_2)=\textbf{A}\left[-\frac{\gamma_1\gamma_2}{4\alpha}(G \otimes 1_{E(2)})+\frac{\gamma_2}{2}(X_1 \otimes 1_{E(2)})-\frac{\gamma_1}{2}(X_2 \otimes 1_{E(2)})+GX_1X_2 \otimes 1_{E(2)}\right],
\end{aligned}$\\[1.2ex]
with $\textbf{A} \in \Bbbk$ such that $\alpha\textbf{A}^2=1$, provided $\alpha \in (\Bbbk^{\times})^2$, $\beta_1=\frac{\gamma_1^2}{4\alpha}$, $\beta_2=\frac{\gamma_2^2}{4\alpha}$, $\lambda=\frac{\gamma_1\gamma_2}{2\alpha}$. \label{hprop_case3}
\item $\T(1_{E(2)})= 1_A \otimes 1_{E(2)}$, $\T(gx_1)=1_A \otimes x_1$, $\T(gx_2)=1_A \otimes x_2$, $\T(x_1x_2)=1_A \otimes x_1x_2$,
\begin{align*}
\T(g)&=\textbf{A}\bigg(\frac{\gamma_2}{2}(1_A \otimes x_1)-\frac{\gamma_1}{2}(1_A \otimes x_2)-\frac{\lambda}{2}(G \otimes 1_{E(2)})+G \otimes x_1x_2+\frac{\gamma_2}{2}(X_1 \otimes 1_{E(2)})+\\
&+GX_1 \otimes x_2-\frac{\gamma_1}{2}(X_2 \otimes 1_{E(2)})-GX_2 \otimes x_1+GX_1X_2 \otimes 1_{E(2)}\bigg),\\[0.5ex]
\T(x_1)&=\textbf{A}\bigg(-\frac{\gamma_1}{2}(1_A \otimes x_1x_2)+\frac{\lambda}{2}(G \otimes x_1)-\frac{\gamma_2}{2}(X_1 \otimes x_1)+GX_1 \otimes x_1x_2+\\
&+\frac{\gamma_1}{2}(X_2 \otimes x_1)-GX_1X_2 \otimes x_1\bigg),\\[1ex]
\T(x_2)&=\textbf{A}\bigg(-\frac{\gamma_2}{2}(1_A \otimes x_1x_2)+\frac{\lambda}{2}(G \otimes x_2)-\frac{\gamma_2}{2}(X_1 \otimes x_2)+\frac{\gamma_1}{2}(X_2 \otimes x_2)+\\
&+GX_2 \otimes x_1x_2-GX_1X_2 \otimes x_2\bigg),\\[0.5ex]
\T(gx_1x_2)&=\textbf{A}\bigg(\frac{\gamma_2\lambda}{4}(1_A \otimes x_1)-\frac{\lambda}{2}(G \otimes x_1x_2)+\frac{\gamma_2}{2}(X_1 \otimes x_1x_2)-\frac{\gamma_1}{2}(X_2 \otimes x_1x_2)+\\
&+\frac{\lambda}{2}(GX_2 \otimes x_1)+\frac{\gamma_2}{4}(X_1X_2 \otimes x_1)+GX_1X_2 \otimes x_1x_2\bigg),
\end{align*}
with $\left(\beta_2\gamma_1^2-\gamma_1\gamma_2\lambda+\beta_1\gamma_2^2+\alpha\lambda^2-4\alpha\beta_1\beta_2\right)\textbf{A}^2=4$, provided $\beta_2\gamma_1^2-\gamma_1\gamma_2\lambda+\beta_1\gamma_2^2+\alpha\lambda^2-4\alpha\beta_1\beta_2\in (\Bbbk^{\times})^2$.
\label{hprop_case4}
\end{enumerate}
\end{theorem}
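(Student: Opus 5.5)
The plan is to use Proposition \ref{prop:h-prop}. A total integral has the h-property if and only if \eqref{eq:Tgg} holds and \eqref{eq:TgxP} holds for $P=\emptyset,\{1\},\{2\},\{1,2\}$. For $P=\emptyset$, condition \eqref{eq:TgxP} is automatic, since $(\mathrm{Id}_A\otimes\varepsilon)\T(1)=1_A$ by \eqref{eq:epsT1}. So I would work entirely with the explicit expressions \eqref{eq:epsTg}--\eqref{eq:epsTx1x2} under the linear constraints \eqref{sys:mu}, \eqref{sys:eta2}, \eqref{sys:eta3}, with $\eta(\emptyset;\cdot,\cdot)$ fixed by Corollary \ref{cor:totint}.

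The first reduction is Lemma \ref{lemma:useful1}, which gives $\mu_{0,\{1,2\}}=0$. System \eqref{sys:mu} then reduces to $\alpha\mu_{1,\{1\}}=\alpha\mu_{1,\{2\}}=0$ and $\gamma_1\mu_{1,\{1\}}+\gamma_2\mu_{1,\{2\}}=0$. Next I would expand $u^2$ for $u:=(\mathrm{Id}_A\otimes\varepsilon)\T(g)=\mu_{0,\emptyset}1_A+\mu_{1,\emptyset}G+\tfrac{\gamma_2}{2}\mu_{1,\{1,2\}}X_1+\dots$ in the basis $\mathbb{B}_A$ and set the result equal to $1_A$. The $G$, $GX_i$ and $GX_1X_2$ coefficients should factor as $(2\mu_{0,\emptyset}+c)\cdot(\ldots)$, in the same way as the $n=1$ system $\textbf{B}(2\textbf{A}+\gamma\textbf{Z})=0$. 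This splits the analysis into the branch $\mu_{0,\emptyset}\neq0$ and the branch $\mu_{0,\emptyset}=0$.

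In the first branch I expect all odd-type coefficients $\mu_{1,\emptyset},\mu_{1,\{i\}},\mu_{1,\{1,2\}}$ to be forced to vanish, leaving $\mu_{0,\emptyset}=\pm1$. The equations \eqref{eq:TgxP} then read $\pm(\mathrm{Id}_A\otimes\varepsilon)\T(x_P)=(\mathrm{Id}_A\otimes\varepsilon)\T(gx_P)$. The left side is $0$ when $|P|=1$ by \eqref{eq:epsTx1}, and by \eqref{eq.epsTgx1}, \eqref{eq:epsTgx2} this kills $\eta(\{i\};\cdot,\cdot)$. For $P=\{1,2\}$, using \eqref{eq:epsTgx1x2}, all $\eta(\{1,2\};\cdot,\cdot)$ vanish except $\eta(\{1,2\};0,\emptyset)$, which must be $0$ as well. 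Reassembling $\T$ through Theorem \ref{thm:mainint} should give cases \ref{hprop_case1} and \ref{hprop_case2}. This step needs a small check: the sign pattern $(-1)^{j+|P|}$ in case \ref{hprop_case2} should come out of \eqref{eq:T-odd} and \eqref{eq:T-even}.

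In the branch $\mu_{0,\emptyset}=0$, the vanishing of the $1_A$-, $X_i$- and $X_1X_2$-components of $u^2$ should separate two sub-cases. In the first, $\mu_{1,\{1,2\}}=0$ and $u=\textbf{A}G$ with $\alpha\textbf{A}^2=1$. I would then compute $(\mathrm{Id}_A\otimes\varepsilon)(\T(x_P)\T(g))$ and read off the $\eta(P;\cdot,\cdot)$. Substituting these into \eqref{sys:eta2} and \eqref{sys:eta3} should produce the constraints $\gamma_i^2=4\alpha\beta_i$ and $\lambda=\gamma_1\gamma_2/2\alpha$, exactly as in the proof of Lemma \ref{lemma:useful1}; this gives case \ref{hprop_case3}. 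In the second sub-case, $\mu_{1,\emptyset}=-\tfrac{\lambda}{2}\mu_{1,\{1,2\}}$ and $\mu_{1,\{i\}}=0$. Then $u^2$ is a scalar multiple of $\Delta:=\beta_2\gamma_1^2-\gamma_1\gamma_2\lambda+\beta_1\gamma_2^2+\alpha\lambda^2-4\alpha\beta_1\beta_2$, namely $u^2=\tfrac{\Delta}{4}\textbf{A}^2\,1_A$ where $\textbf{A}=\mu_{1,\{1,2\}}$. This matches the quantity appearing in \eqref{eq:aux7} and yields case \ref{hprop_case4}; here \eqref{eq:epsTx1}, \eqref{eq:epsTx2}, \eqref{eq:epsTgx1x2} all vanish, which forces the $\eta$'s to be trivial.

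The converse direction is direct verification of \eqref{eq:Tgg} and \eqref{eq:TgxP} for the four listed families; note that case \ref{hprop_case1} is already covered by Proposition \ref{Prop:hsepcow}. I expect the main obstacle to be the exhaustive case split in the $\mu_{0,\emptyset}=0$ branch. There the squaring of a general element of $Cl(\alpha,\beta_i,\gamma_i,\lambda)$ produces mixed quadratic conditions, and one must show that no hybrid solution exists, i.e. one with both $\mu_{1,\emptyset}+\tfrac{\lambda}{2}\mu_{1,\{1,2\}}\neq0$ and $\mu_{1,\{1,2\}}\neq0$. I would first try to rule such solutions out via the $G$-coefficient of $u^2$, which I expect to be proportional to $\mu_{1,\{1,2\}}\,(\mu_{1,\emptyset}+\tfrac{\lambda}{2}\mu_{1,\{1,2\}})$. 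If that fails, I would fall back on using \eqref{eq:TgxP} with $P=\{1\}$ together with \eqref{sys:eta2}, as in Lemma \ref{lemma:useful1}.
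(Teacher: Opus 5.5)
Your skeleton is the same as the paper's. You reduce via Proposition \ref{prop:h-prop}, remove $\mu_{0,\lbrace 1,2\rbrace}$ with Lemma \ref{lemma:useful1}, and solve $u^2=1_A$. You then read off the $\eta$'s from \eqref{eq:TgxP} and the constraints on $\alpha,\beta_i,\lambda$ from \eqref{sys:eta2}. The paper splits first on $\mu_{1,\lbrace 1,2\rbrace}$ and then on $\mu_{0,\emptyset}$, which is immaterial.

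The gap is that you mispredict the computation your whole $\mu_{0,\emptyset}=0$ branch rests on. With $\mu_{0,\emptyset}=\mu_{0,\lbrace 1,2\rbrace}=0$ and \eqref{sys:mu} one actually has
\[
u^2=\Big(\alpha\mu_{1,\emptyset}^2+\tfrac{D'}{4}\mu_{1,\lbrace 1,2\rbrace}^2\Big)1_A+\mu_{1,\lbrace 1,2\rbrace}\big(\beta_1\gamma_2\mu_{1,\lbrace 1\rbrace}+(\gamma_2\lambda-\beta_2\gamma_1)\mu_{1,\lbrace 2\rbrace}\big)G+\kappa\Big(\tfrac{\gamma_2}{2}GX_1-\tfrac{\gamma_1}{2}GX_2+\alpha X_1X_2\Big),
\]
where $\kappa:=(2\mu_{1,\emptyset}+\lambda\mu_{1,\lbrace 1,2\rbrace})\mu_{1,\lbrace 1,2\rbrace}$ and $D':=\beta_2\gamma_1^2-\gamma_1\gamma_2\lambda+\beta_1\gamma_2^2-4\alpha\beta_1\beta_2$.

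The $G$-coefficient is not proportional to $\mu_{1,\lbrace 1,2\rbrace}(\mu_{1,\emptyset}+\frac{\lambda}{2}\mu_{1,\lbrace 1,2\rbrace})$, so your first attempt at excluding hybrid solutions fails. The components you list do not separate the sub-cases either. The $X_i$-components vanish identically here, and the $X_1X_2$-component $\alpha\kappa$ gives nothing when $\alpha=0$.

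The missing argument is this. If $\kappa\neq0$, the $GX_1$, $GX_2$ and $X_1X_2$ components force $\gamma_2=\gamma_1=\alpha=0$. Then $D'=0$, so the $1_A$-component of $u^2$ is $0\neq1$. The contradiction already sits inside \eqref{eq:Tgg}, so your fallback through \eqref{eq:TgxP} and \eqref{sys:eta2} is unnecessary.

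Second, in case 4 you assert $\mu_{1,\lbrace 1\rbrace}=\mu_{1,\lbrace 2\rbrace}=0$ without justification. For $\alpha\neq0$ this follows from \eqref{sys:mu}. For $\alpha=0$ it needs exactly the $G$-coefficient above. The equation $\beta_1\gamma_2\mu_{1,\lbrace 1\rbrace}+(\gamma_2\lambda-\beta_2\gamma_1)\mu_{1,\lbrace 2\rbrace}=0$, together with $\gamma_1\mu_{1,\lbrace 1\rbrace}+\gamma_2\mu_{1,\lbrace 2\rbrace}=0$, is a $2\times 2$ linear system. Its determinant is $\beta_1\gamma_2^2-\gamma_1\gamma_2\lambda+\beta_2\gamma_1^2$, which is your $\Delta$ at $\alpha=0$. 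It is nonzero because $\Delta\mu_{1,\lbrace 1,2\rbrace}^2=4$.

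Your claim that the $G$, $GX_i$ and $GX_1X_2$ coefficients all factor through $2\mu_{0,\emptyset}+c$ is also false in general. The $\mu_{0,\emptyset}\neq0$ branch survives only because the $GX_1X_2$-coefficient is exactly $2\mu_{0,\emptyset}\mu_{1,\lbrace 1,2\rbrace}$. So one must first obtain $\mu_{1,\lbrace 1,2\rbrace}=0$, after which
$u^2=(\mu_{0,\emptyset}^2+\alpha\mu_{1,\emptyset}^2)1_A+2\mu_{0,\emptyset}(\mu_{1,\emptyset}G+\mu_{1,\lbrace 1\rbrace}GX_1+\mu_{1,\lbrace 2\rbrace}GX_2)$.

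With these corrections, the rest goes through as you describe. This covers cases 1--3, and in case 4 the vanishing of all $\eta(P;\cdot,\cdot)$, using invertibility of $u$ for $P=\lbrace 1,2\rbrace$.
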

\begin{proof}
Consider a total integral satisfying the h-property and such that $\mu_{1,\lbrace 1, 2 \rbrace}=0$. Then
\[
(\mathrm{Id}_A \otimes \varepsilon)\left(\T(g)\T(g)\right)=(\mu_{0,\emptyset}^2+\alpha\mu_{1,\emptyset}^2)1_A+2\mu_{0,\emptyset}\mu_{1,\emptyset}G +2\mu_{0,\emptyset}\mu_{1,\lbrace 1 \rbrace}GX_1+2\mu_{0,\emptyset}\mu_{1,\lbrace 2 \rbrace}GX_2.
\]
If $\mu_{0,\emptyset} \neq 0$, \eqref{eq:Tgg} implies $\mu_{1,\emptyset}=\mu_{1,\lbrace 1 \rbrace}=\mu_{1,\lbrace 2 \rbrace}=0$ and $\mu_{0,\emptyset}=\pm 1$. Therefore
\[(\mathrm{Id}_A \otimes \varepsilon)\T(gx_P)\overset{\eqref{eq:TgxP}}{=}\pm(\mathrm{Id}_A \otimes \varepsilon)\T(x_P)=0\]
for $P=\lbrace 1 \rbrace, \lbrace 2 \rbrace, \lbrace 1,2 \rbrace$ and using \eqref{eq:epsTg}--\eqref{eq:epsTx1x2} we find \ref{hprop_case1} and \ref{hprop_case2}.
On the other hand, if $\mu_{0,\emptyset}=0$, \eqref{eq:Tgg} forces $\alpha \mu_{1,\emptyset}^2=1$ which has a solution only if $\alpha \in (\Bbbk^{\times})^2$. In this case \eqref{sys:mu} reduces to $\mu_{1,\lbrace 1 \rbrace}=\mu_{1, \lbrace 2 \rbrace}=0$ and from
\[(\mathrm{Id}_A \otimes \varepsilon)\T(gx_P)\overset{\eqref{eq:TgxP}}{=}\mu_{1,\emptyset}(\mathrm{Id}_A \otimes \varepsilon)\T(x_P)G\]
for $P=\lbrace 1 \rbrace, \lbrace 2 \rbrace, \lbrace 1,2 \rbrace$ and \eqref{sys:eta2} we obtain \ref{hprop_case3}. 

Now consider a total integral satisfying the h-property and such that $\mu_{1,\lbrace 1, 2 \rbrace} \neq 0$. One can check that the coefficient of $(\mathrm{Id}_A \otimes \varepsilon)\left(\T(g)\T(g)\right)$ relative to $GX_1X_2$ is $2\mu_{0,\emptyset}\mu_{1,\lbrace 1, 2 \rbrace}$, so that \eqref{eq:Tgg} yields $\mu_{0,\emptyset}=0$. At this point
\begin{align*}
(\mathrm{Id}_A \otimes \varepsilon)\left(\T(g)\T(g)\right)&=\left(\frac{\beta_2\gamma_1^2-\gamma_1\gamma_2\lambda+\beta_1\gamma_2^2-4\alpha\beta_1\beta_2}{4}\mu_{1,\lbrace 1,2 \rbrace}^2 + \alpha \mu_{1,\emptyset}^2  \right)1_A+\\
&+\left(\beta_1\gamma_2 \mu_{1,\lbrace 1 \rbrace}- \beta_2 \gamma_1 \mu_{1,\lbrace 2 \rbrace}+ \gamma_2 \lambda \mu_{1,\lbrace 2 \rbrace}\right)\mu_{1,\lbrace 1,2 \rbrace} G+\\
&+\frac{\gamma_2(2\mu_{1,\emptyset}+\lambda\mu_{1,\lbrace 1,2 \rbrace})}{2}\mu_{1,\lbrace 1,2 \rbrace}GX_1-\frac{\gamma_1(2\mu_{1,\emptyset}+\lambda\mu_{1,\lbrace 1,2 \rbrace})}{2}\mu_{1,\lbrace 1,2 \rbrace}GX_2+\\
&+\alpha(2\mu_{1,\emptyset}+\lambda\mu_{1,\lbrace 1,2 \rbrace})\mu_{1,\lbrace 1,2 \rbrace}X_1X_2,
\end{align*}
in view of \eqref{sys:mu}. If $2\mu_{1,\emptyset}+\lambda\mu_{1,\lbrace 1,2 \rbrace} \neq 0$, condition \eqref{eq:Tgg} forces $\alpha=\gamma_1=\gamma_2$ and the above quantity vanishes generating a contradiction. Therefore $2\mu_{1,\emptyset}+\lambda\mu_{1,\lbrace 1,2 \rbrace}=0$ and \eqref{eq:Tgg} together with \eqref{sys:mu}
become equivalent to
\begin{equation}\label{eq:aux8}
\begin{cases}
\left(\beta_2\gamma_1^2-\gamma_1\gamma_2\lambda+\beta_1\gamma_2^2+\alpha\lambda^2-4\alpha\beta_1\beta_2\right)\mu_{1,\lbrace 1,2 \rbrace}^2=4,\\
\beta_1\gamma_2 \mu_{1,\lbrace 1 \rbrace}+(- \beta_2 \gamma_1+ \gamma_2 \lambda )\mu_{1,\lbrace 2 \rbrace}=0,\\
\gamma_1 \mu_{1, \lbrace 1 \rbrace}+\gamma_2 \mu_{1,\lbrace 2 \rbrace}=0,\\
\alpha \mu_{1,\lbrace 1 \rbrace}=0,\\
\alpha \mu_{1,\lbrace 2 \rbrace}=0.
\end{cases}
\end{equation}
It is clear that if $\alpha \neq 0$ then $\mu_{1,\lbrace 1 \rbrace}=\mu_{1,\lbrace 2 \rbrace}=0$. On the other hand, if $\alpha=0$ system \eqref{eq:aux8} rewrites as
\begin{equation*}
\begin{cases}
\left(\beta_2\gamma_1^2-\gamma_1\gamma_2\lambda+\beta_1\gamma_2^2\right)\mu_{1,\lbrace 1,2 \rbrace}^2=4,\\[1ex]
\begin{pmatrix}
\beta_1\gamma_2 & -\beta_2 \gamma_1+\gamma_2 \lambda\\
\gamma_1  & \gamma_2
\end{pmatrix}
\begin{pmatrix}
\mu_{1,\lbrace 1 \rbrace}\\
\mu_{1,\lbrace 2 \rbrace}
\end{pmatrix}
=
\begin{pmatrix}
0\\
0
\end{pmatrix},
\end{cases}
\end{equation*}
which clearly yields $\mu_{1,\lbrace 1 \rbrace}=\mu_{1,\lbrace 2 \rbrace}=0$. Therefore the coefficients $\mu_{1,\lbrace 1 \rbrace}$ and $\mu_{1,\lbrace 2 \rbrace}$ must vanish in any case. Then we obtain 
$\left(\beta_2\gamma_1^2-\gamma_1\gamma_2\lambda+\beta_1\gamma_2^2+\alpha\lambda^2-4\alpha\beta_1\beta_2\right)\mu_{1,\lbrace 1,2 \rbrace}^2=4$ from \eqref{eq:aux8} and by employing \eqref{eq:TgxP} for $P=\lbrace 1 \rbrace, \lbrace 2 \rbrace, \lbrace 1,2 \rbrace$ we get \ref{hprop_case4}.
\end{proof}

\subsection{rt-Separability}

\cite[Thm. 6.1]{mt} and its generalization \cite[Thm. 5.7]{FR3} were obtained with the additional hypothesis that the normalized Casimir morphism $B:H \otimes H \rightarrow A \otimes H^{\op}$ realizing separability be of the form $B(h \otimes h')=B^A(h \otimes h') \otimes 1_H$ for all $h,h' \in H$, in order to simplify calculations. A cowreath endowed with a morphism of this type was named rt-separable in \cite[Def. 2.11]{FR3} to distinguish it from classic separable ones. In view of Theorem \ref{thm:maincas}, a cowreath $(A \otimes H^{\op}, H, \psi)$ is rt-separable if and only if there is a total integral $\T:H \rightarrow A \otimes H^{\op}$ on $(H,\psi)$ of the form $\T(h)=\T^A(h) \otimes 1_H$ for every $h \in H$. We show how to retrieve \cite[Thm. 5.7]{FR3} from Theorem \ref{thm:mainint}, after rephrasing the statement of the former using integrals in place of Casimir morphisms. The image of $\T^A$ on odd monomials is rather complicated and makes use of notations introduced in \cite[Sec. 3]{FR3} that, for sake of brevity, we do not recall here.

\begin{theorem}\cite[Thm. 5.7]{FR3}
Let $H=E(n)$ and $A=Cl(\alpha,\beta_i, \gamma_i, \lambda_{ij})$ be a Clifford algebra endowed with the canonical $H$-coaction \eqref{rhoGX}. The cowreath $(A\otimes H^{\op},H,\psi )$ is rt-separable if, and only if,
\begin{enumerate}
\item $A=Cl(0,0,0,0)$ \\
or
\item $A=Cl\left(\alpha,\frac{\gamma_i^2}{4\alpha}, \gamma_i, \frac{\gamma_i\gamma_j}{2 \alpha}\right)$ with $\alpha \neq 0$.  
\end{enumerate}
Let $\overline{Q}=Q \sqcup \lbrace 0 \rbrace$, $X_0=G$, $\lambda_{0i}=\gamma_i$ for $i=1, \ldots, n$. The total integral $\T:H \rightarrow A \otimes H^{\op}$ is given by $\T(h)=\T^A(h) \otimes 1_H$ where
\begin{equation}\label{eq:even_int}
\T^A(g^{|Q|}x_Q)=\sum_{\underset{|R| \equiv_2 |Q|}{R \subseteq Q}}(-1)^{S(Q \setminus R, Q)+|Q|}\eta_{Q \setminus R} X_R+\sum_{\underset{|R| \not\equiv_2 |Q|}{R \subseteq Q}}(-1)^{S(Q \setminus R, Q)+|Q|}\eta_{Q \setminus R} GX_R,
\end{equation}
\begin{equation}\label{eq:odd_int}
\T^A(g^{|Q|+1}x_Q)=\frac{\mu}{2^{\left\lfloor \frac{|\overline{Q}|}{2}\right\rfloor}}\sum_{\underset{|R| \equiv_2 |\overline{Q}|}{R \subseteq \overline{Q}}}(-1)^{S(R,\overline{Q})+\frac{|\overline{Q}|-|R|}{2}}2^{\left\lfloor \frac{|R|}{2}\right\rfloor}\sum_{\mathcal{P} \in \pi_2(\overline{Q}\setminus R)}\sgn(\mathcal{P})\Lambda(\mathcal{P})X_R,
\end{equation}
for every $Q \subseteq \lbrace 1, \ldots, n \rbrace$, where $\mu \in \Bbbk$ and $\eta_S \in \Bbbk$ for every $S \subseteq \lbrace 1, \ldots, n \rbrace$, $\eta_{\emptyset}=1$. If $\alpha \neq 0$, then $\eta_T =\sum_{i \in T}(-1)^{S(\lbrace i \rbrace, T)+1}\eta_{T \setminus  \lbrace i \rbrace}\frac{\gamma_i}{2\alpha}$ for every $T \subseteq \lbrace 1, \ldots, n \rbrace$ with $|T|$ odd.
\end{theorem}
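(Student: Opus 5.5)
We specialize Theorem \ref{thm:mainint} and Corollary \ref{cor:totint} to total integrals of the form $\T(h)=\T^A(h)\otimes 1_H$; by Theorem \ref{thm:maincas} these correspond exactly to rt-separating Casimir morphisms. The first step is to write down what ``$\T^H\equiv 1_H$'' forces on the coefficients. For $\T(g)$, the formula \eqref{eq:Tg} contains the summands $\mu_{j,P}G^jX_{P\setminus F}\otimes g^{j+|P|+1}x_F$. Their second legs are linearly independent over distinct $(j+|P|+1 \bmod 2, F)$, so all terms with $F\neq\emptyset$ must vanish. The $F=\emptyset$ terms must also have $j+|P|$ odd. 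Hence $\mu_{j,P}=0$ unless $P=\emptyset$ and $j=1$, which gives $\T(g)=\mu\, G\otimes 1_H$ with $\mu:=\mu_{1,\emptyset}$. Similarly, in \eqref{eq:UP-form}, $\mathfrak{U}_P$ must have trivial $H$-part after the lower-degree corrections of the third bullet of Theorem \ref{thm:mainint} are absorbed. We will show by induction on $|P|$ that this forces $\eta(P;k,Q)=0$ unless $Q=\emptyset$ or $|Q|$ is small, leaving exactly one free scalar $\eta_P$ per $P$. This scalar is the coefficient of the top piece, and $\eta_\emptyset=1$ by totality. Unwinding the recursion \eqref{eq:T-even}/\eqref{eq41} should then produce formula \eqref{eq:even_int}, with signs governed by \eqref{reorder} and \cite[Lem. 3.6]{FR3}.

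Next, the odd monomials are handled by the recursion of the second bullet of Theorem \ref{thm:mainint}. The terms $(1_A\otimes gx_r)\T(\cdot)(1_A\otimes g)$ and $\T(\cdot)(1_A\otimes x_r)$ introduce $x_r$ in the $H$-leg. For the $H$-leg to remain trivial these contributions must cancel or vanish. With $\T^H=1$ they do cancel identically, since $-gx_rg - x_r=0$. What survives is
\[\T^A(g^{|P|}x_Px_r)=\tfrac12\bigl(\T^A(g^{|P|+1}x_P)X_r+(-1)^{|P|+1}X_r\T^A(g^{|P|+1}x_P)\bigr),\]
a graded commutator starting from $\mu G$. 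Iterating this and using $X_iX_j+X_jX_i=\lambda_{ij}$ (with $X_0=G$, $\lambda_{0i}=\gamma_i$) expresses the result as a sum over partial pairings $\mathcal P\in\pi_2(\overline{Q}\setminus R)$. This yields \eqref{eq:odd_int}; we will prove it by induction on $|Q|$, which requires a combinatorial identity for signs of pairings under adding the largest index.

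The constraints on $(\alpha,\beta_i,\gamma_i,\lambda_{ij})$ then come from the linear systems. Equations \eqref{eq:mu-0P}--\eqref{eq:mu-1P} are automatic for this $\T(g)$. The systems \eqref{eq:etaG}, \eqref{eq:eta1.1}--\eqref{eq:eta0.2} restricted to the surviving $\eta$'s are another matter. For $|P|=1$, $\T^A(gx_i)=\eta_{\{i\}}G+\dots-X_i$-type terms must satisfy graded (anti)commutation with $G$ and all $X_j$, as in the $n=1,2$ computations of Section \ref{sec:classification}. This yields $2\alpha\eta_{\{i\}}=\gamma_i$-type relations together with $2\beta_i\cdot(\cdot)=\gamma_i\eta$ and $\lambda_{ij}$ relations. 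If $\alpha\neq0$ these force $\eta_{\{i\}}=\gamma_i/2\alpha$, $\beta_i=\gamma_i^2/4\alpha$, $\lambda_{ij}=\gamma_i\gamma_j/2\alpha$, and give the stated recursion for $\eta_T$ with $|T|$ odd. If $\alpha=0$ they force $\gamma_i=0$, then $\beta_i=0$ and $\lambda_{ij}=0$, i.e.\ $A=Cl(0,0,0,0)$. Conversely, in both cases we will exhibit the solution and check the remaining equations, which reduces to Theorem \ref{thm:mainint}.

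The main obstacle is the sign and pairing bookkeeping needed to match the closed form \eqref{eq:odd_int} with the iterated commutator recursion, including the normalizing powers $2^{\lfloor\cdot\rfloor}$. A second difficulty is showing that the higher $\eta(P;k,Q)$ systems impose no constraints beyond the $|P|=1$ ones. For the latter we will use that, in the two admissible algebras, $X_i-\frac{\gamma_i}{2\alpha}G$ anticommutes with $G$ and with each other and squares to zero. This lets us change generators so that the algebra becomes a Grassmann-type extension, where the verification is transparent.
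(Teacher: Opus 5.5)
\textbf{Verdict.} Your outline is sound overall and departs usefully from the paper's proof at two points. Two claims in it are false as written and need fixing.

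\textbf{Where your route differs.} For the constraints on $(\alpha,\beta_i,\gamma_i,\lambda_{ij})$ you stay at level one. Comparing $H$-legs in $\T(gx_i)=1_A\otimes x_i+\mathfrak{U}_{\{i\}}$ forces $\T^A(gx_i)=c_iG-X_i$. Then \eqref{eq:11_on_G} and \eqref{eq:11_on_X} (anticommutation with $G$, $X_i$, $X_j$) give $2\alpha c_i=\gamma_i$, $2\beta_i=c_i\gamma_i$ and $\lambda_{ij}=c_i\gamma_j$, from which both cases follow at once. The paper instead evaluates \eqref{eq:etaG}, \eqref{eq:eta0.1} and \eqref{eq:eta0.2} at the top level $P=\mathbf{n}$, with $Q=\mathbf{n}\setminus\{i\}$ or $\mathbf{n}\setminus\{i,j\}$. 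That needs the full relation \eqref{eq:rt-sepk} first, both to know $\eta(\mathbf{n};0,\mathbf{n})=(-1)^n$ and to rewrite $\eta(\mathbf{n};1,\mathbf{n}\setminus\{i\})$ as $\eta(\{i\};1,\emptyset)$. You only need its $|P|=1$ case.

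For the converse with $\alpha\neq0$, the paper only asserts that the remaining equations produce nothing new. Your substitution $Z_i=X_i-\frac{\gamma_i}{2\alpha}G$ (with $Z_i=X_i$ for $Cl(0,0,0,0)$) gives an actual verification, provided you add the following points.
\begin{itemize}
\item Make explicit that $\rho(Z_i)=Z_i\otimes g+1_A\otimes x_i$. This makes the substitution an isomorphism of $E(n)$-comodule algebras onto $Cl(\alpha,0,0,0)$, so rt-integrals transport along it.
\item In $Cl(\alpha,0,0,0)$, one checks directly that $\T^A(g^{|Q|+1}x_Q)=\mu GZ_Q$, together with the $Z$-analogue of \eqref{eq:even_int}, is an integral. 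The only conditions are $\alpha\eta'_T=0$ for $|T|$ odd.
\item You still have to match this family with the statement's parametrization in the $X$-basis. One way is to compare dimensions with the necessary form derived in the $X$-basis.
\end{itemize}

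\textbf{First error: the even-degree coefficients.} The $H$-leg analysis does not make $\eta(P;k,Q)$ vanish for large $|Q|$. Since $\T=\T^A\otimes u_H$, $\eta(P;k,Q)$ is just the coefficient of $G^kX_Q$ in $\T^A(g^{|P|}x_P)$. What the induction actually yields is \eqref{eq:rt-sepk}:
\begin{itemize}
\item $\eta(P;k,Q)=0$ unless $Q\subseteq P$ and $|P|+k+|Q|$ is even;
\item in that case, $\eta(P;k,Q)=(-1)^{S(P\setminus Q,P)+|Q|}\eta(P\setminus Q;k,\emptyset)$.
\end{itemize}
In particular, totality forces the top coefficient $\eta(P;0,P)=(-1)^{|P|}$ to be nonzero; your $-X_i$ at level one is an instance of this. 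The only coefficient at level $P$ not fixed by lower levels is the bottom one, the coefficient of $1_A$ or $G$. This is what produces \eqref{eq:even_int}.

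\textbf{Second error: the odd recursion.} The recursion for $\eta_T$ with $|T|$ odd and $|T|\geq 3$ does not follow from the $|P|=1$ equations. It is the $1_A$-component of $G\,\T^A(g^{|T|}x_T)+\T^A(g^{|T|}x_T)\,G=0$, i.e. \eqref{eq:etaG} with $k=1$, $P=T$, $Q=\emptyset$, exactly as in the paper. So the higher systems do impose conditions beyond level one; in the $Z$-frame they are precisely $\alpha\eta'_T=0$. Also, with the statement's sign conventions, $\eta_{\{i\}}=-\gamma_i/2\alpha$, not $+\gamma_i/2\alpha$.
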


\begin{proof}
Assume $(A \otimes H^{\op},H,\psi)$ is rt-separable via the total integral $\T:H \rightarrow A \otimes H^{\op}$. Then $\T(g)=\T^A(g) \otimes 1_H$ and by Theorem \ref{thm:mainint} we must have
\[\T(g)=\sum_{j,P}\sum_{F \subseteq P} (-1)^{S(P \setminus F,P)+(j+|P|+1)|F|} \mu_{j,P}G^jX_{P \setminus F} \otimes g^{j+|P|+1}x_F.\]
As a consequence $\mu_{j,P}=0$ if $(j,P)\neq (1,\emptyset)$ and $\T^A(g)=\mu_{1,\emptyset}G$, which is \eqref{eq:odd_int} for $Q=\emptyset$ once we put $\mu=\mu_{1,\emptyset}$. Notice that \eqref{eq:mu-0P} and \eqref{eq:mu-1P} are trivially satisfied for every $P \subseteq \lbrace 1, \ldots, n \rbrace$. Now observe that Theorem \ref{thm:mainint} also implies
\[
\T^A(g^{|Q|}x_{Q}x_r)= \frac{1}{2}\left[\T^A(g^{|Q|+1}x_Q)X_r+(-1)^{|Q|+1}X_r \T^A(g^{|Q|+1}x_Q)\right]
\]
for every $Q \subseteq \lbrace 1, \ldots, n \rbrace \ni r$ such that $r >j$ for every $j \in Q$. This is enough to conclude that \eqref{eq:odd_int} must hold for every $Q \subseteq \lbrace 1, \ldots, n \rbrace$, as one can prove by following the same steps performed in the first part of the proof of \cite[Prop. 5.5]{FR3}. Next, Theorem \ref{thm:mainint} also implies that
\[
\T(g^{|P|}x_P)=\sum_{F \subsetneq P}(-1)^{S(F,P)+|P|+|F|+1}\T(g^{|F|}x_{F}) (1_A \otimes x_{P \setminus F})+\mathfrak{U}_P
\]
 for every $P \subseteq \lbrace 1, \ldots, n\rbrace$, where
\begin{equation}\label{eq:aux9}
\mathfrak{U}_P=\sum_{k,Q}\sum_{F \subseteq Q}  \eta(P;k,Q)(-1)^{S(Q \setminus F,Q)+(|P|+|Q|+k)|F|} G^k X_{Q \setminus F} \otimes g^{|P|+|Q|+k}x_F
\end{equation}
and $\eta(P;k,Q) \in \Bbbk$ satisfy \eqref{eq:etaG} and \eqref{eq:eta1.1}-\eqref{eq:eta0.2} for $i=1, \ldots, n$ and every $Q \not \ni i$. Since
\begin{equation*}
\mathfrak{U}_P\overset{\eqref{eq:T-even}}{=}\sum_{F \subseteq P}(-1)^{S(F,P)+|P|+|F|}\T(g^{|F|}x_{F}) (1_A \otimes x_{P \setminus F})
\end{equation*}
and $\T$ is of the form $\T=\T^A \otimes u_H$, it becomes clear that each non-zero term in the RHS of \eqref{eq:aux9} must be of the form $\mathfrak{a} \otimes x_F$ with $\mathfrak{a}\in A$ and $F \subseteq P$. Then $\eta(P;k,Q)=0$ if $|P|+k+|Q|$ is odd or $Q \not\subseteq P$. Now we can calculate
\begin{align*}
&\T(g^{|P|}x_P)\\
\overset{\eqref{eq41}}&{=}\sum_{R \subseteq P} (-1)^{S(P \setminus R,P)+(|P|-|R|)|R|}\mathfrak{U}_R(1_A \otimes x_{P \setminus R})\\
&=\sum_{R \subseteq P} (-1)^{S(R,P)}\sum_{\underset{|Q| \equiv_2 |R|}{Q \subseteq R}}\sum_{F \subseteq Q}  \eta(R;0,Q)(-1)^{S(Q \setminus F,Q)} X_{Q \setminus F} \otimes x_Fx_{P \setminus R}+\\
&+\sum_{R \subseteq P} (-1)^{S(R,P)}\sum_{\underset{|Q| \not\equiv_2 |R|}{Q \subseteq R}}\sum_{F \subseteq Q}  \eta(R;1,Q)(-1)^{S(Q \setminus F,Q)} GX_{Q \setminus F} \otimes x_Fx_{P \setminus R}\\
&=\sum_{T \subseteq P} (-1)^{S(P \setminus T,P)}\sum_{\underset{|Q| \equiv_2 |P|-|T|}{Q \subseteq P \setminus T}}\sum_{F \subseteq Q}  \eta(P \setminus T;0,Q)(-1)^{S(Q \setminus F,Q)+S(F, F \sqcup T)} X_{Q \setminus F} \otimes x_{F \sqcup T}+\\
&+\sum_{T \subseteq P} (-1)^{S(P \setminus T,P)}\sum_{\underset{|Q| \not\equiv_2 |P|-|T|}{Q \subseteq P \setminus T}}\sum_{F \subseteq Q}  \eta(P \setminus T;1,Q)(-1)^{S(Q \setminus F,Q)+S(F, F \sqcup T)} GX_{Q \setminus F} \otimes x_{F \sqcup T}
\end{align*}
where $(-1)^{S(P \setminus R,P)+(|P|-|R|)|R|}=(-1)^{S(R,P)}$ follows from \cite[Lem. 3.6]{FR3}.
If we put $U:=F \sqcup T$, $V:=Q \sqcup T$, then we find
\begin{align*}
&\T(g^{|P|}x_P)\\
&=\sum_{T \subseteq P}\sum_{\underset{|V| \equiv_2 |P|}{T \subseteq V \subseteq P}}\sum_{T \subseteq U \subseteq V}  \eta(P \setminus T;0,V \setminus T)(-1)^{S(P \setminus T,P)+S(V \setminus U,V \setminus T)+S(U \setminus T,U)} X_{V \setminus U} \otimes x_U+\\
&+\sum_{T \subseteq P}\sum_{\underset{|V| \not\equiv_2 |P|}{T \subseteq V  \subseteq P }}\sum_{T \subseteq U \subseteq V}  \eta(P \setminus T;1,V \setminus T)(-1)^{S(P \setminus T,P)+S(V \setminus U,V \setminus T)+S(U \setminus T, U)} GX_{V \setminus U} \otimes x_U.
\end{align*}
and since $\T=\T^A \otimes u_H$ we get
\begin{equation}\label{eq:T-even2}
\T^A(g^{|P|}x_P)=\sum_{\underset{|V| \equiv_2 |P|}{V \subseteq P}}\eta(P;0,V) X_V +\sum_{\underset{|V| \not\equiv_2 |P|}{V  \subseteq P }}\eta(P;1,V)GX_V
\end{equation}
and
\begin{align*}
\sum_{T \subseteq U}\eta(P \setminus T;k,V \setminus T)(-1)^{S(P \setminus T,P)+S(V \setminus U,V \setminus T)+S(U \setminus T,U)}=0
\end{align*}
for each $\emptyset \neq U \subseteq V \subseteq P$, $|V|+k \equiv_2 |P|$ and $k=0,1$.
Notice that we can simply consider the case when $U=V$ to avoid many redundancies. Therefore
\[
\sum_{T \subseteq V}\eta(P \setminus T;k,V \setminus T)(-1)^{S(P \setminus T,P)+S(V \setminus T,V)}=0
\]
for each $\emptyset \neq V \subseteq P$, $|V|+k \equiv_2 |P|$ and $k=0,1$. With the help of the above, one can prove  by induction on $|P|$ that 
\begin{equation}\label{eq:rt-sepk}
\eta(P;k,V)=(-1)^{S(P \setminus V,P)+|V|}\eta(P \setminus V; k, \emptyset) \quad \textrm{for every } V \subseteq P, \ |V|+k \equiv_2 |P|, \ k=0,1
\end{equation}
hence we can rewrite \eqref{eq:T-even2} as
\begin{equation*}
\T^A(g^{|P|}x_P)=\sum_{\underset{|V| \equiv_2 |P|}{V \subseteq P}}(-1)^{S(P \setminus V,P)+|P|}\eta_{P \setminus V} X_V +\sum_{\underset{|V| \not\equiv_2 |P|}{V  \subseteq P }}(-1)^{S(P \setminus V,P)+|P|}\eta_{P \setminus V}GX_V,
\end{equation*}
where $\eta_{P\setminus V}$ is equal to $\eta(P \setminus V; 0, \emptyset)$ when $|P| \equiv_2 |V|$ and to $-\eta(P \setminus V; 1, \emptyset)$ otherwise. Thus we have shown that \eqref{eq:even_int} holds true. Moreover $\eta_{\emptyset}=\eta(\emptyset;0,\emptyset)=1$ since $\T$ is total (see Corollary \ref{cor:totint}).

To conclude the proof we need to show that $A$ can only be $Cl(0,0,0,0)$ or $Cl\left(\alpha,\frac{\gamma_i^2}{4\alpha}, \gamma_i, \frac{\gamma_i\gamma_j}{2 \alpha}\right)$ when $\alpha \neq 0$, and that in the second case $\eta_T =\sum_{i \in T}(-1)^{S(\lbrace i \rbrace, T)+1}\frac{\gamma_i}{2\alpha}\eta_{T \setminus  \lbrace i \rbrace}$ for every $T \subseteq \lbrace 1, \ldots, n \rbrace$ with $|T|$ odd. To this aim we consider \eqref{eq:etaG} with $k=1$, $P=\textbf{n}:=\lbrace 1, \ldots, n \rbrace$ and $Q=\textbf{n} \setminus \lbrace i \rbrace$ for $i \in \textbf{n}$
\[
(-1)^{n}2\alpha \eta(\textbf{n};1,\textbf{n} \setminus \lbrace i \rbrace)=  (-1)^{S(\textbf{n} \setminus \lbrace i \rbrace,\textbf{n})} \gamma_i\eta(\textbf{n};0,\textbf{n}).
\]
Since $\eta(\textbf{n};0,\textbf{n})=(-1)^n$ by \eqref{eq:rt-sepk}, we immediately see that $\alpha=0$ implies $\gamma_i=0$ for every $i \in \textbf{n}$. Then \eqref{eq:eta0.2} with the same choice of $P$ and $Q$ gives $(-1)^{n+1}(-1)^{S(\textbf{n} \setminus \lbrace i \rbrace, \textbf{n})}2\beta_i\eta(\textbf{n};0,\textbf{n})=0$ and we find $\beta_i=0$ for every $i \in \textbf{n}$. Finally fix $i \in \textbf{n}$, $l \in \textbf{n} \setminus \lbrace i \rbrace$ and consider \eqref{eq:eta0.2} for
$P=\textbf{n}$ and $Q=\textbf{n} \setminus \lbrace l \rbrace$
\[
(-1)^{S(\lbrace l \rbrace,\textbf{n})+1}\lambda_{li}\eta(\textbf{n};0,\textbf{n})=0.
\]
It follows that $\lambda_{il}=0$ for every $i,l \in \textbf{n}$, $i<l$, so that \eqref{eq:etaG} and \eqref{eq:eta1.1}-\eqref{eq:eta0.2} are trivially satisfied and the cowreath $(A \otimes H^{\op},H,\psi)$ is rt-separable without further restrictions on the $\eta_T$'s.

On the other hand, assume $\alpha \neq 0$ and consider \eqref{eq:eta0.2} with $P=\textbf{n}$ and $Q=\textbf{n} \setminus \lbrace i \rbrace$ for some $i \in \textbf{n}$. We obtain
\[
(-1)^{n+1}(-1)^{S(\textbf{n} \setminus \lbrace i \rbrace,\textbf{n})}2\beta_i\eta(\textbf{n};0,\textbf{n})+\gamma_i\eta(\textbf{n};1,\textbf{n} \setminus \lbrace i \rbrace)=0,
\]
i.e. $2\beta_i-\gamma_i\eta(\lbrace i \rbrace;1,\emptyset)=0$, thanks to \eqref{eq:rt-sepk}. Similarly, equality \eqref{eq:etaG} with $k=1$ and the same $P$ and $Q$ yields $\eta(\lbrace i \rbrace;1,\emptyset)=\frac{\gamma_i}{2 \alpha}$, so that we must have $\beta_i=\frac{\gamma_i^2}{4\alpha}$. 
Finally consider $i,j \in \textbf{n}$, $i \neq j$ and set $P=\textbf{n}$ and $Q=\textbf{n} \setminus \lbrace i,j \rbrace$ in \eqref{eq:eta0.1} which gives $\lambda_{ji}=\frac{\gamma_i\gamma_j}{2 \alpha}$. 
Then we have shown that if the cowreath $(A \otimes H^{\op},H,\psi)$ is rt-separable and $\alpha \neq 0$, $A$ must be of type $Cl\left(\alpha,\frac{\gamma_i^2}{4\alpha}, \gamma_i, \frac{\gamma_i\gamma_j}{2 \alpha}\right)$. To conclude we observe that \eqref{eq:etaG} for $k=1$, $|P|$ odd and $Q=\emptyset$ reads 
\[
\eta(P;1,\emptyset)=\sum_{l \in P} (-1)^{S(\lbrace l \rbrace,P)}\frac{ \gamma_l}{2\alpha}\eta(P \setminus \lbrace l \rbrace; 0, \emptyset),
\]
i.e
\[
\eta_P=\sum_{l \in P} (-1)^{S(\lbrace l \rbrace,P)+1}\frac{ \gamma_l}{2\alpha}\eta_{P \setminus \lbrace l \rbrace}.
\]
One can check that \eqref{eq:etaG} and \eqref{eq:eta1.1}-\eqref{eq:eta0.2} do not produce any further equality. 
\end{proof}

\section{Frobenius cowreaths \texorpdfstring{$\left(A \otimes E(n)^{\op},E(n),\psi\right)$}{}}

Let $H=E(n)$ and $(A,\rho_A)$ be a finite-dimensional $H$-comodule algebra. We want to determine under what conditions the cowreath $(A \otimes H^{\op},H,\psi)$ is Frobenius, using Theorem \ref{thm:Frob-cow}. Once we consider the left integral $\Lambda=(1+g)x_1x_2 \cdots x_n \in H$, it is not hard to see that the distinguished grouplike element of $H^*$ is determined by $\mu(g)=(-1)^n$ and $\mu(g^jx_P)=0$ for $g^jx_P \in \mathbb{B}_H$ and $P \neq \emptyset$.
If $n$ is even, we have $\mu=\varepsilon_H$ (that is, $H$ is unimodular) and  $(\mathrm{Id}_A \otimes \mu)\rho_A=\mathrm{Id}_A$, which is clearly an inner automorphism of $A$. Therefore, when $n$ is even the cowreath $(A \otimes H^{\op}, H,\psi)$ is Frobenius, independently from the $H$-coaction involved.

Now suppose $n$ is odd and consider the following description of $H$-coactions on $A$, obtained in \cite{FR2}.

\begin{theorem}\cite[Thm. 4.4]{FR2}
Let $A$ be a finite dimensional algebra over a field $\Bbbk$ of characteristic $\mathrm{char}(\Bbbk) \neq 2$. Then an $E(n)$-comodule algebra structure on $A$ is given by:
\begin{equation*}
\rho(a)= \underset{P \subseteq \lbrace 1, \ldots, n \rbrace}{\sum}(-1)^{\frac{|P|(|P|+1)}{2}} \left[ d_P(a) \otimes \frac{x_P +(-1)^{|P|}gx_P}{2}+\varphi(d_P(a)) \otimes \frac{x_P +(-1)^{|P|+1}gx_P}{2} \right],
\end{equation*}
where
\begin{enumerate}
\item $\varphi$ is an automorphism  of $A$ of order $o(\varphi) \leq 2$ (i.e. an involution of $A$),
\item $d_{\emptyset}=\mathrm{Id}_A$ and $d_P=d_{i_1}d_{i_2}\cdots d_{i_{|P|}}$ is a composition of $\varphi$-derivations such that $d_i^2 \equiv 0$, $\varphi d_i = -d_i \varphi$ and $d_id_j=-d_jd_i$.
\end{enumerate}
\end{theorem}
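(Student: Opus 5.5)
Since $\mathrm{char}(\Bbbk)\neq 2$, the plan is to split the $E(n)$-coaction into $g$-eigenspaces and read off $\varphi$ and the maps $d_P$ from $\rho$.

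\emph{Extracting the data from $\rho$.} Let $\rho$ be an $E(n)$-comodule algebra structure on $A$. Expand $\rho(a)=\sum_{j,P} a_{j,P}\otimes g^jx_P$ in the basis $\mathbb{B}_H$ and regroup using the basis of $E(n)$ given by $\frac{x_P\pm gx_P}{2}$. Define
\[
\varphi:=(\mathrm{Id}_A\otimes \pi_g)\rho ,
\]
where $\pi_g$ is the functional sending $g\mapsto 1$ and all other basis elements to $0$. This is the restriction of $\rho$ to the grouplike $g$, so it is an algebra map. Coassociativity, evaluated on $g\otimes g$, gives $\varphi^2=\mathrm{Id}$. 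Next define
\[
d_i:=(\mathrm{Id}_A\otimes \xi_i)\rho ,
\]
where $\xi_i$ is the dual functional picking the coefficient of $x_i$ (or of $gx_i$, with a sign).

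\emph{Properties of $\varphi$ and $d_i$.} Multiplicativity of $\rho$, together with $\Delta(x_i)=x_i\otimes g+1\otimes x_i$, yields a $\varphi$-twisted Leibniz rule, so each $d_i$ is a $\varphi$-derivation. The relations $x_i^2=0$, $gx_i=-x_ig$ and $x_ix_j=-x_jx_i$ in $E(n)$ dualize to functional identities. Through coassociativity these become $d_i^2=0$, $\varphi d_i=-d_i\varphi$ and $d_id_j=-d_jd_i$.

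\emph{Recovering the formula.} Use the comultiplication formula \eqref{deltagx} iteratively: coassociativity expresses the coefficient of $x_P$ in $\rho(a)$ as $d_{i_1}\cdots d_{i_s}(a)$, up to the sign $(-1)^{|P|(|P|+1)/2}$ coming from reordering. Similarly, the coefficient of $gx_P$ is $\varphi d_P(a)$ with the appropriate sign. Rewriting $\lambda x_P+\mu gx_P$ in the projector basis $\frac{x_P\pm gx_P}{2}$ then produces the displayed expression.

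\emph{Converse.} Given data $(\varphi,d_i)$ satisfying (1) and (2), define $\rho$ by the displayed formula and check the following:
\begin{itemize}
\item counitality is immediate from $d_\emptyset=\mathrm{Id}$;
\item multiplicativity follows by induction on $|P|$ from the $\varphi$-Leibniz rule;
\item coassociativity follows from \eqref{deltagx} and \eqref{reorder}, using the commutation relations among $\varphi$ and the $d_i$.
\end{itemize}

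The main obstacle is the sign bookkeeping, namely matching the factor $(-1)^{|P|(|P|+1)/2}$ with the signs $S(F,P)$ in \eqref{deltagx}. I would handle it by first proving the case $|P|\le 2$ and then inducting with $x_P=x_{P'}x_r$, as in the earlier odd-degree arguments.
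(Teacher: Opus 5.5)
Your overall strategy is the right one: realize $\varphi$ and the $d_i$ as maps $T_f:=(\mathrm{Id}_A\otimes f)\rho$ for suitable $f\in E(n)^*$, get their relations from $T_fT_{f'}=T_{f*f'}$ (coassociativity), and rebuild $\rho$. The paper itself only quotes this result from \cite{FR2}, so there is no proof there to compare against. The gap is that the functionals you choose are wrong, and the steps resting on them fail as written.

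\emph{The definition of $\varphi$.} The coefficient functional $\pi_g$ is not a character, since $\pi_g(1)=0$. From \eqref{deltagx} one gets $\pi_g*\pi_g=\pi_g$. So $(\mathrm{Id}_A\otimes\pi_g)\rho$ is not unital, hence not an algebra map, and it is idempotent rather than involutive. Reading off the $P=\emptyset$ term of the target formula, it equals $\frac12(\mathrm{Id}_A-\varphi)$, the projection onto the $(-1)$-eigenspace of $\varphi$. The coefficient of a grouplike in a coaction is a graded component, not an automorphism. The correct choice is the character $\chi$ with $\chi(g)=-1$ and $\chi(x_i)=0$, so that $\varphi(a)$ is the coefficient of $1$ minus the coefficient of $g$. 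Then $\chi*\chi=\varepsilon$ gives $\varphi^2=\mathrm{Id}_A$. This is exactly the computation $(\mathrm{Id}_A\otimes\mu)\rho_A=\varphi$ that the paper performs right after the statement for $n$ odd.

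\emph{The definition of $d_i$.} Neither the coefficient of $x_i$ nor that of $gx_i$ is a $\varphi$-derivation. In the displayed formula they are $-\frac12(d_i+\varphi d_i)(a)$ and $\frac12(d_i-\varphi d_i)(a)$ respectively. You need $\xi_i$ to be the coefficient of $gx_i$ minus the coefficient of $x_i$. This functional satisfies $\xi_i(hk)=\xi_i(h)\varepsilon(k)+\chi(h)\xi_i(k)$, and together with multiplicativity of $\rho$ it gives $d_i(ab)=d_i(a)b+\varphi(a)d_i(b)$.

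\emph{The inputs are swapped.} The Leibniz rule uses the \emph{multiplication} of $E(n)$, not $\Delta(x_i)$. Conversely, $d_i^2=0$, $\varphi d_i=-d_i\varphi$ and $d_id_j=-d_jd_i$ come from $\xi_i*\xi_i=0$, $\chi*\xi_i=-\xi_i*\chi$ and $\xi_i*\xi_j=-\xi_j*\xi_i$. These are computed from $\Delta_{E(n)}$, not from the relations $x_i^2=0$, $gx_i=-x_ig$, $x_ix_j=-x_jx_i$. The shapes only match because $E(n)\cong E(n)^*$.

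\emph{The reconstruction step.} This is false as stated. The coefficients of $x_P$ and $gx_P$ in $\rho(a)$ are $\pm\frac12(d_P+\varphi d_P)(a)$ and $\pm\frac12(d_P-\varphi d_P)(a)$, not $\pm d_P(a)$ and $\pm\varphi d_P(a)$. Had they been the latter, passing to the basis $\frac12(x_P\pm gx_P)$ would put $\pm d_P(a)\pm\varphi d_P(a)$ on each projector, which is not the displayed expression. What is true is the following. For $P=\{i_1<\cdots<i_s\}$ set $\xi_P:=\xi_{i_1}*\cdots*\xi_{i_s}$. The functionals $\xi_P$ and $\chi*\xi_P$ form a basis of $E(n)^*$, with dual basis $(-1)^{s(s+1)/2}\frac12(x_P+(-1)^{s}gx_P)$ and $(-1)^{s(s+1)/2}\frac12(x_P+(-1)^{s+1}gx_P)$. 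This follows from $\xi_P(x_P)=(-1)^{\lceil s/2\rceil}$ and $\xi_P(gx_P)=(-1)^{\lfloor s/2\rfloor}$. So the displayed formula is just $\rho(a)=\sum_f T_f(a)\otimes f^\vee$ over this pair of dual bases, and all the sign bookkeeping you anticipated reduces to this dual-basis computation.

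\emph{The converse.} Rather than checking coassociativity and multiplicativity by hand, observe that $\chi,\xi_1,\dots,\xi_n$ generate $E(n)^*$ subject exactly to the relations in (1)--(2). Moreover $\chi$ is grouplike and $\Delta(\xi_i)=\xi_i\otimes\varepsilon+\chi\otimes\xi_i$. Hence the data define an $E(n)^*$-module algebra structure on $A$, where it suffices to check the module-algebra condition on generators. This dualizes to the required $E(n)$-comodule algebra.
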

It follows that
\[
(\mathrm{Id}_A \otimes \mu)\rho_A(a)=\frac{1+(-1)^n}{2}a+ \frac{1 +(-1)^{n+1}}{2}\varphi(a)=\varphi(a),
\]
thus, when $n$ is odd, the cowreath $(A \otimes H^{\op},H,\psi)$ is Frobenius if and only if the $H$-coaction $\rho_A$ is defined via an inner involution $\varphi$.

\begin{theorem}
Let $H=E(n)$ and $(A,\rho_A)$ be a finite-dimensional $H$-comodule algebra. The cowreath $(A \otimes H^{\op},H,\psi)$ is Frobenius if and only if either $n$ is even or the coaction $\rho_A$ corresponds to a tuple $(\varphi, d_1, \ldots, d_n)$ where $\varphi$ is an inner involution of $A$.
\end{theorem}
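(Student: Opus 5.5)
The plan is to reduce everything to Theorem \ref{thm:Frob-cow}. Since $E(n)$ is finite-dimensional, the cowreath $(A \otimes H^{\op},H,\psi)$ is Frobenius if and only if the endomorphism $(\mathrm{Id}_A \otimes \mu)\rho_A$ of $A$ is inner, that is, if and only if there is $\A \in A^{\times}$ with $a\A=\mu(a_1)\A a_0$. So the proof consists of computing $\mu$ explicitly and evaluating $(\mathrm{Id}_A \otimes \mu)\rho_A$ on the general form of an $E(n)$-coaction.

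\emph{Step 1: compute $\mu$.} I take the left integral $\Lambda=(1+g)x_1\cdots x_n$. To check that it is a left integral, note that $g\Lambda=\Lambda$, and $x_i(1+g)=(1-g)x_i$, so $x_i\Lambda=(1-g)x_ix_1\cdots x_n=0$ because $x_i^2=0$ and the $x_j$ anticommute. For the modular element: $\Lambda g=(1+g)x_1\cdots x_n g=(-1)^n(1+g)g\,x_1\cdots x_n=(-1)^n\Lambda$, while $\Lambda x_i=0$. Hence $\mu(g)=(-1)^n$ and $\mu(g^jx_P)=0$ for $P\neq\emptyset$. If $n$ is even, $\mu=\varepsilon$, so $(\mathrm{Id}_A\otimes\mu)\rho_A=\mathrm{Id}_A$, which is inner via $\A=1_A$. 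This settles the even case without any assumption on $\rho_A$.

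\emph{Step 2: the odd case.} For $n$ odd, I write $\rho_A$ via the classification \cite[Thm. 4.4]{FR2} in terms of a tuple $(\varphi,d_1,\ldots,d_n)$. I then apply $\mathrm{Id}_A\otimes\mu$ term by term. Every summand with $P\neq\emptyset$ pairs with $\mu(x_P)=\mu(gx_P)=0$ and vanishes. For $P=\emptyset$, the summand is $a\otimes\frac{1+g}{2}+\varphi(a)\otimes\frac{1-g}{2}$, and $\mu$ sends it to
\[
\tfrac{1-1}{2}\,a+\tfrac{1+1}{2}\,\varphi(a)=\varphi(a).
\]
So $(\mathrm{Id}_A\otimes\mu)\rho_A=\varphi$, and Theorem \ref{thm:Frob-cow} gives that the cowreath is Frobenius if and only if $\varphi$ is inner.

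\emph{Main difficulty.} The only genuinely nontrivial points are invoking \cite[Thm. 4.4]{FR2}, which requires $\mathrm{char}(\Bbbk)\neq 2$ (standing from Section \ref{sec:integrals}), and observing that the correspondence between coactions and tuples identifies $\varphi$ uniquely from $\rho_A$. Indeed, $\varphi=(\mathrm{Id}_A\otimes\mu)\rho_A$ by the computation above, so the condition is intrinsic to $\rho_A$ and not to a choice of presentation. The remaining checks, namely the integral computation and the evaluation of $\mu$, are routine.
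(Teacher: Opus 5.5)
Your proposal is correct and follows the paper's argument essentially step for step. You compute $\mu$ from the left integral $\Lambda=(1+g)x_1\cdots x_n$ and settle the even case by unimodularity; for $n$ odd you evaluate $(\mathrm{Id}_A\otimes\mu)\rho_A=\varphi$ via the classification in \cite{FR2} and conclude with Theorem \ref{thm:Frob-cow}.
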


Clearly, if $A$ is a central simple algebra over $\Bbbk$, the cowreath $(A \otimes H^{\op},H,\psi)$ must be Frobenius (cf. Corollary \ref{cor:simpleA}).

\bigskip

When we consider $A=Cl(\alpha,\beta_i, \gamma_i, \lambda_{ij})$, a $2^{n+1}$-dimensional Clifford algebra, and we assume that $n$ is odd and $A$ is semisimple, \cite[Cor. 3.5]{FR2} and \cite[Thm. 3.7]{FR2} assure us that $A$ is a central simple algebra over $\Bbbk$, and thus the cowreath $(A \otimes H^{\op},H,\psi)$ is Frobenius. If the comodule algebra structure $\rho_A$ of $A$ is the canonical $H$-coaction \eqref{rhoGX}  we can prove that the semisimplicity of $A$ is also a necessary condition for the same cowreath to be Frobenius. It was first observed for the case $n=1$ in \cite[Example 6.8]{BT}.

\begin{proposition}
Let $A=Cl(\alpha,\beta_i, \gamma_i, \lambda_{ij})$ be a Clifford algebra endowed with the canonical $E(n)$-comodule algebra structure given by \eqref{rhoGX}. The cowreath $(A\otimes E(n)^{\op},E(n),\psi )$ is Frobenius
if and only if either $n$ is even or $n$ is odd and $A$ is simple.
\end{proposition}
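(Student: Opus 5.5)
By Theorem \ref{thm:Frob-cow}, the cowreath is Frobenius if and only if the map $(\mathrm{Id}_A \otimes \mu)\rho_A$ is inner. So the whole proof reduces to deciding when this map is inner.

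For $E(n)$ we already know that $\mu(g)=(-1)^n$ and that $\mu(g^jx_P)=0$ for $P\neq\emptyset$. Applying $\mu$ to the canonical coaction \eqref{rhoGX} kills every summand with $F\neq\emptyset$, so
\[(\mathrm{Id}_A\otimes\mu)\rho_A(G^jX_P)=(-1)^{n(j+|P|)}G^jX_P .\]
This gives the following two cases.
\begin{itemize}
\item If $n$ is even, the map is the identity and the cowreath is Frobenius, as observed before the statement.
\item If $n$ is odd, the map is the grading automorphism $\varphi$ of the superalgebra $A$: it fixes even monomials and negates odd ones.
\end{itemize}
It therefore remains to prove that, for $n$ odd, $\varphi$ is inner if and only if $A$ is simple.

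\emph{($\Leftarrow$).} If $A$ is simple, then its center is $\Bbbk$, because for $n$ odd the center of a simple $Cl$ of this (even) total rank is trivial; this is what \cite[Cor. 3.5, Thm. 3.7]{FR2} give. So $A$ is central simple, and Skolem--Noether (Corollary \ref{cor:simpleA}) shows that $\varphi$ is inner.

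\emph{($\Rightarrow$).} Suppose there is an invertible $\A$ with $a\A=\varphi(a)\A$ for all $a$, i.e.\ $\A a=\varphi(a)\A$. I would take the generators $e_0=G$, $e_i=X_i$, which span an $(n+1)$-dimensional quadratic space $V$ with bilinear form determined by $\alpha,\beta_i,\gamma_i,\lambda_{ij}$. Then $A=Cl(V,q)$ with $\dim V=n+1$ even. The condition says that $\A$ anticommutes with every $e_k$.

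The key step is to show that $q$ must be nondegenerate. Suppose instead that $v\in\mathrm{rad}(V)$ is nonzero. Then $v$ anticommutes with all generators and $v^2=q(v)=0$, so $v$ generates a nilpotent ideal. I would show that such an $\A$ cannot be invertible. The plan is to use the filtration by the ideal generated by $v$: $A\cong Cl(V/\Bbbk v)\otimes\Lambda(\Bbbk v)$, a graded tensor product. Reducing modulo $v$, the image $\bar\A$ must still implement the grading automorphism on $Cl(V')$, where $V'=V/\Bbbk v$ has odd dimension $n$. For an odd-dimensional $V'$, however, the product of an orthogonal basis is an odd central element when $V'$ is nondegenerate. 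More generally, any element implementing the grading must anticommute with the odd central part. I would argue by splitting $V'=V'_{\mathrm{nd}}\oplus \mathrm{rad}$ and reducing further, which leads to a contradiction with invertibility.

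Once $q$ is nondegenerate with $\dim V$ even, $Cl(V,q)$ is central simple by \cite[Thm. 3.7]{FR2}, hence simple.

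I expect the main obstacle to be the degenerate case: proving cleanly that the grading automorphism is not inner when $\mathrm{rad}(q)\neq0$. First I would try a direct computation. Write $\A=\A_0+\A_1$ in even and odd parts. Then $\A$ anticommuting with the odd generators forces $\A_0$ to be super-anticentral and $\A_1$ to be super-central. The plan is then to show that these spaces lie in the ideal generated by $\mathrm{rad}(V)$ together with elements that are nilpotent modulo it. Since that ideal is nilpotent, $\A$ would lie in the Jacobson radical, contradicting invertibility.
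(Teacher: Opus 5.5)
Your reduction to deciding when $\varphi=(\mathrm{Id}_A\otimes\mu)\rho_A$ is inner is fine and agrees with the paper. So are the computation $\varphi(G^jX_P)=(-1)^{n(j+|P|)}G^jX_P$, the case $n$ even, and the direction ($\Leftarrow$). One slip: \eqref{eq:propA} reads $a\A=\A\varphi(a)$, not $a\A=\varphi(a)\A$; your ``i.e.'' form $\A a=\varphi(a)\A$ is the correct equivalent, since $\varphi^2=\mathrm{Id}$.

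The gap is in ($\Rightarrow$), exactly where you flag it: the case where $V'=V/\Bbbk v$ is itself degenerate, i.e.\ whenever $\dim\mathrm{rad}(V)\geq 2$. You only claim centrality of the product of an orthogonal basis for nondegenerate $V'$. Your plan to split off $\mathrm{rad}(V')$ and ``reduce further'' cannot work in general. An element is invertible if and only if its image modulo a nilpotent ideal is, and quotienting by the whole radical lands in $Cl(V_{\mathrm{nd}})$ with $\dim V_{\mathrm{nd}}=n+1-\dim\mathrm{rad}(V)$. When $\dim\mathrm{rad}(V)$ is even, this is an even-dimensional nondegenerate Clifford algebra, in which the grading \emph{is} inner. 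For instance, with $n=1$ and $A=Cl(0,0,0)$, reducing modulo $G$ and then modulo $\bar X$ ends in $\Bbbk$, where $\varphi=\mathrm{Id}$. What kills $\A$ is its anticommutation with radical vectors, and that information is invisible in such quotients. Your fallback has the same hole. Anticommutation with a nondegenerate complement $W$ alone does not force the even part of $\A$ into the ideal generated by $\mathrm{rad}(V)$: think of the product of an orthogonal basis of $W$ when $\dim W$ is even. You do not say how anticommutation with $v$ is to be used.

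The fix is short and makes your first plan work with no further splitting. Let $e_1,\dots,e_n$ be any orthogonal basis of $V'$ and $\omega'=e_1\cdots e_n$. Then $e_i\omega'=(-1)^{i-1}e_i^2\,\omega'_i$ and $\omega'e_i=(-1)^{n-i}e_i^2\,\omega'_i$, where $\omega'_i$ is the ordered product of the $e_j$ with $j\neq i$ and $e_i^2\in\Bbbk$. So for $n$ odd, $\omega'$ is central whether or not $q$ is degenerate (both sides may vanish). It is odd, and it is nonzero because it is a basis monomial. Hence
$\bar\A\omega'=\varphi(\omega')\bar\A=-\omega'\bar\A=-\bar\A\omega'$,
which gives $\bar\A\omega'=0$. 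This contradicts the invertibility of $\bar\A$ in $A/(v)\cong Cl(V')$.

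With this fix, your route is a conceptual alternative to the paper's proof, which stays inside $A$. The paper chooses orthogonal generators $Y_0,\dots,Y_n$, rewrites \eqref{eq:propA} as $Y_i\A=-\A Y_i$, and expands $\A$ in the monomials $Y_P$. Comparing coefficients kills everything except the top monomial and the odd monomials in the isotropic $Y_i$. Then $\A\,Y_0\cdots Y_k=0$ exhibits $\A$ as a zero divisor.
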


\begin{proof}
It is enough to show that when $n$ is odd and $A$ is not semisimple, then any element $\A \in a$ satisfying \eqref{eq:propA} is not invertible. To this aim, we work with a basis of $A$ which can be, in principle, different from $\mathbb{B}_A$. Given that $\mathrm{char} (\Bbbk) \neq 2$, $A$ admits a set of orthogonal generators, i.e. elements $Y_0, Y_1, \ldots, Y_n \in A$ such that they generate $A$ as an algebra and
\[Y_iY_j+Y_jY_i=0, \quad \textrm{for every }i \neq j.\]
This can be seen as a consequence of \cite[Cor. I.2.4]{La} and \cite[Thm. V.1.8]{La}. Then, a basis of $A$ is given by the set of elements $\mathbb{B}'_A=\lbrace Y_P \ | \ P \subseteq \lbrace 0, 1 \ldots, n \rbrace \rbrace$. Here the meaning of the notation is the same used for $X_P$, that is, $Y_P:=Y_{i_1}Y_{i_2} \cdots Y_{i_r}$ where $P=\lbrace i_1 <i_2 < \ldots < i_r \rbrace$ and $Y_{\emptyset}=1_A$. Write $Y_i=\theta_{i0}G+\sum_{j=1}^n \theta_{ij}X_j$ and define $y_i:=\sum_{j=1}^n \theta_{ij}x_j$ for every $i=0,1, \ldots, n$. It follows that the coaction on each $Y_i$ is given by
\begin{equation}\label{eq:rhoY}
\rho(Y_i)=\sum_{j=0}^n \theta_{ij}\rho(X_j)=\theta_{i0}G \otimes g+\sum_{j=1}^n \theta_{ij}(X_j \otimes g +1_A \otimes x_j)=Y_i \otimes g +1_A \otimes y_i.
\end{equation}

Suppose that there exists an element $\A \in A$ such that \eqref{eq:propA} holds. Since the $Y_i$'s are generators for $A$, which is an $H$-comodule algebra, and $\mu$ is an algebra map, \eqref{eq:propA} is equivalent to
\[Y_i \A=\mu((Y_i)_1)\A (Y_i)_0\overset{\eqref{eq:rhoY}}{=}\mu(g)\A Y_i +\sum_{j=1}^n \theta_{ij}\mu(x_j)\A=-\A Y_i,\]
for every $i=0, 1, \ldots, n$, hence
\begin{equation}\label{eq:eiA}
Y_i \A=-\A Y_i \qquad \textrm{for every } i=0, 1, \ldots, n.
\end{equation}
Let us write $\A=\sum_P \eta_P Y_P$ using the basis $\mathbb{B}'_A$ of $A$. For every $P \subsetneq \lbrace 0,1,\ldots, n \rbrace$ with even cardinality we can always find an $Y_i$ such that $i \notin P$ and for such $i$ we have $Y_iY_P=Y_PY_i \neq 0$. Then \eqref{eq:eiA} forces $\eta_P=0$ for every $P \subsetneq \lbrace 0,1,\ldots, n \rbrace$ with even cardinality. Hence
\[\A=\zeta Y_0Y_1 \cdots Y_n+\sum_{|P| \ \textrm{odd}} \eta_P Y_P,\]
where $\zeta=\eta_{\lbrace 0,1, \ldots, n \rbrace}$.
Since $A$ is not semisimple, by \cite[Thm. 3.4]{FR2} some of the $Y_i$'s square to zero. We can assume, without losing of generality, that they are $Y_0, \ldots, Y_k$ for some $k$ with $0 \leq k \leq n$. For every $P \not\subseteq \lbrace 0, \ldots, k \rbrace$ with $|P|$ odd we can always find an $Y_i$ such that $i \in P$ and $i \geq k$, so that $Y_iY_P=Y_PY_i \neq 0$. As a consequence \eqref{eq:eiA} forces $\eta_P=0$ for every $P \not \subseteq \lbrace 0, \ldots, k \rbrace$ with $|P|$ odd. Then we can write
\[\A=\zeta Y_0Y_1 \cdots Y_n+\sum_{\underset{|P| \ \textrm{odd}}{P \subseteq \lbrace 0, \ldots, k \rbrace}} \eta_P Y_P.\]
If we multiply $\A$ by the product $Y_0\cdots Y_k$ we see that
\[\A Y_0\cdots Y_k=\zeta Y_0Y_1 \cdots Y_n(Y_0\cdots Y_k)+\sum_{\underset{|P| \ \textrm{odd}}{P \subseteq \lbrace 0, \ldots, k \rbrace}} \eta_P Y_P(Y_0\cdots Y_k)=0,\]
since each term contains the square of some $Y_i$ with $0 \leq i \leq k$. This shows that $\A$ is a zero divisor in $A$ and thus it cannot be invertible.
\end{proof}

\begin{corollary}
Let $A=Cl(\alpha,\beta_i, \gamma_i, \lambda_{ij})$ be a Clifford algebra endowed with the canonical $E(n)$-comodule algebra structure given by \eqref{rhoGX}. If $n$ is odd and $A$ is not semisimple, the cowreath $(A\otimes E(n)^{\op},E(n),\psi )$ is (h-)separable, but not Frobenius.
\end{corollary}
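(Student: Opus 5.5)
The corollary combines two results already established in the paper, so the plan is to verify each half separately.

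\emph{(h-)separability.} $E(n)$ is finite-dimensional, so its antipode is bijective; explicitly, $S^2=\mathrm{Id}$, since $S^2(g)=g$ and $S^2(x_i)=S(gx_i)=S(x_i)S(g)=gx_ig=-x_i\cdot(-1)\cdot g^2$, which gives $x_i$ after reordering with $gx_i=-x_ig$. I will then invoke Proposition \ref{Prop:hsepcow}. It says that for any Hopf algebra with bijective antipode and any comodule algebra $A$, the canonical total integral $\underline{\T}(h)=1_A\otimes S^{-1}(h)$ lies in $\int^{\mathrm{h-tot}}_{(H,\psi)}$. Equivalently, $\underline{B}\in\mathrm{Casim^h_{norm}}(H,\psi)$. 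Hence the cowreath is h-separable, and in particular separable, since an h-separable Casimir morphism is normalized by definition. This half uses no hypothesis on $n$ or on the semisimplicity of $A$.

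\emph{Not Frobenius.} Here I apply the preceding proposition directly. When $n$ is odd, the cowreath is Frobenius if and only if $A$ is simple. So it suffices to note that a non-semisimple algebra is not simple. This holds because a finite-dimensional simple algebra is semisimple by Wedderburn's theorem; equivalently, the Jacobson radical is a proper nonzero two-sided ideal. If one prefers to bypass the word ``simple'', the proof of that proposition already shows the needed fact in the non-semisimple case. There, any $\A$ satisfying \eqref{eq:propA} is a zero divisor, since $\A Y_0\cdots Y_k=0$ with $Y_0\cdots Y_k\neq 0$. So no invertible such $\A$ exists, and Theorem \ref{thm:Frob-cow} rules out the Frobenius property.

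\emph{Main obstacle.} The only point needing care is the non-Frobenius direction. In particular, one must make sure that the element $Y_0\cdots Y_k$ used in the preceding proof is nonzero, so that $\A$ really is a zero divisor. This follows because it is a basis element of $\mathbb{B}'_A$. Since the preceding proposition is taken as given, the corollary follows immediately from it together with Proposition \ref{Prop:hsepcow}.
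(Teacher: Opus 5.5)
Your proof is correct and follows the paper's route: h-separability comes from Proposition \ref{Prop:hsepcow}, since $E(n)$ is finite-dimensional and so has bijective antipode, and the failure of the Frobenius property comes from the preceding proposition, because a non-semisimple finite-dimensional algebra cannot be simple. One small slip: $S^2\neq\mathrm{Id}$ on $E(n)$. In fact $S^2(x_i)=gx_ig=-x_i$, so $S^2$ is conjugation by $g$ and $S^4=\mathrm{Id}$; this still gives bijectivity of $S$, so nothing else in your argument is affected.
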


\begin{remark}
These examples, together with Theorem \ref{thm:maincas}, suggest that there is a class of non-Frobenius Galois cowreaths for which the existence of a total integral is equivalent to separability, and thus that the hypothesis of \cite[Thm. 5.4]{BT} can be relaxed. 
\end{remark}

\end{document}